\renewcommand{\-}{\hspace{-.3ex}-\hspace{-.3ex}}
\newcommand{\+}{\hspace{-.3ex}+\hspace{-.3ex}}
\renewcommand{\d}{{\rm d}}
\renewcommand{\epsilon}{\varepsilon}
\newcommand{\E}{\mathbb{E}}
\newcommand{\G}{\Gamma^*}
\newcommand{\Gtau}{\Gamma^{*(\tau)}}
\newcommand{\Gtaut}{\Gamma^{*(\tau')}}
\renewcommand{\H}{\mathbb{H}}
\newcommand{\id}{{\rm id}}
\newcommand{\m}{\mathbf{m}}
\newcommand{\n}{\mathbf{n}}
\newcommand{\N}{\mathbb{N}}
\renewcommand{\phi}{\varphi}
\renewcommand{\preceq}{\preccurlyeq}
\newcommand{\Q}{{\mathbb Q}}
\newcommand{\R}{\mathbb{R}}
\renewcommand{\S}{\mathcal{S}}
\newcommand{\T}{\mathsf{T}^*}
\newcommand{\Tbar}{\mathsf{\bar{T}}^*}
\newcommand{\Ttilde}{\mathsf{\tilde T}^*}
\newcommand{\tG}{\widetilde\Gamma^*}
\newcommand{\tGtau}{\widetilde\Gamma^{*(\tau)}}
\newcommand{\vertiii}[1]{{\vert\kern-0.25ex\vert\kern-0.25ex\vert #1 \vert\kern-0.25ex\vert\kern-0.25ex\vert}}
\newcommand{\Z}{\mathbb{Z}}
\newcommand{\z}{\mathsf{z}}
\newcommand{\0}{\mathbf{0}}
\theoremstyle{plain}
\newtheorem{theorem}{Theorem}[section]
\newtheorem*{theorem*}{Theorem}
\newtheorem{proposition}[theorem]{Proposition}
\newtheorem{corollary}[theorem]{Corollary}
\newtheorem{lemma}[theorem]{Lemma}
\theoremstyle{definition}
\newtheorem{definition}[theorem]{Definition}
\newtheorem{assumption}[theorem]{Assumption}
\newtheorem{remark}[theorem]{Remark}
\newtheorem*{notation}{Notation}
\theoremstyle{remark}
\numberwithin{equation}{section}
\title{\bf\Large Characterizing models in regularity structures:\\ 
A quasilinear case}
\author{Markus Tempelmayr% \\[1ex] 
%{\footnotesize\textsc{Max--Planck Institute for Mathematics in the Sciences,}}\\
%{\footnotesize\textsc{04103 Leipzig, Germany}}\\
%{\footnotesize\textsc{current address: University of Münster}}\\
%{\footnotesize\textsc{University of Münster}}\\
%{\footnotesize\textsc{48149 Münster, Germany}}\\
%{\footnotesize\texttt{markus.tempelmayr@uni-muenster.de}}
}
\date{}
\renewenvironment{abstract}
{
\begin{center}
\begin{minipage}{.9\textwidth}\small\textbf{Abstract.}%\\[.5ex]
}
{
\end{minipage}
\end{center}
}
\newenvironment{keywords}
{
\begin{center}
\begin{minipage}{.9\textwidth}\small\textbf{Keywords:}%
}
{
\end{minipage}
\end{center}
}
\newenvironment{msc}
{
\begin{center}
\begin{minipage}{.9\textwidth}\small\textbf{MSC 2020:}%
}
{
\end{minipage}
\end{center}
}
\begin{document}

\maketitle

\begin{abstract}
We give a novel characterization of the centered model in regularity structures 
which persists for rough drivers even as a mollification fades away. 
We present our result for a class of quasilinear equations driven by noise, 
however we believe that the method is robust 
and applies to a much broader class of subcritical equations. 

Furthermore, we prove that a convergent sequence of noise ensembles, 
satisfying uniformly a spectral gap assumption, 
implies the corresponding convergence of the associated models. 
Combined with the characterization, 
this establishes a universality-type result.
\end{abstract}

%%%%%%%%%%%%%%%%%%%%%%%%%%%%%%%%%%%%%%%%%%%%%%%%%%%%%%%%%%%%%%%%%%%%%%%%%%%%%%%%%%%%

\begin{keywords}
Singular SPDE, 
Regularity Structures, 
% BPHZ renormalization, 
Malliavin calculus. 
% quasi-linear PDE, 
\end{keywords}

%%%%%%%%%%%%%%%%%%%%%%%%%%%%%%%%%%%%%%%%%%%%%%%%%%%%%%%%%%%%%%%%%%%%%%%%%%%%%%%%%%%%

\begin{msc} 
60H17, % Singular stochastic partial differential equations
60L30, % Regularity structures
60H07. % Stochastic calculus of variations and the Malliavin calculus
% 81T16;  % Nonperturbative methods of renormalization applied to problems in quantum field theory
\end{msc}

%%%%%%%%%%%%%%%%%%%%%%%%%%%%%%%%%%%%%%%%%%%%%%%%%%%%%%%%%%%%%%%%%%%%%%%%%%%%%%%%%%%%

\tableofcontents

%%%%%%%%%%%%%%%%%%%%%%%%%%%%%%%%%%%%%%%%%%%%%%%%%%%%%%%%%%%%%%%%%%%%%%%%%%%%%%%%%%%%

\section{Introduction}\label{intro}

We continue the program initiated in \cite{OW19} and 
consider quasilinear parabolic equations of the form
\begin{equation}\label{spde}
(\partial_0-\Delta) u = a(u)\Delta u + \xi
\end{equation}
with a rough random forcing $\xi$. 
We are interested in the regime where the nonlinear term $a(u)\Delta u$ 
is singular but renormalizable. 
The nonlinearity $a$ is assumed to be scalar-valued, 
smooth and such that the equation is uniformly parabolic. 

In \cite{OSSW21}, the authors suggested an alternative 
point of view on Hairer's regularity structures \cite{Hai14}, 
the main differences being a different index set than trees 
and working with objects that are dual to Hairer's. 
We comment throughout the text on differences and similarities between 
the two approaches, 
see in particular Remarks~\ref{rem:comparison} and \ref{rem:ossw}. 
However, the main philosophy of splitting the problem 
into a probabilistic construction of the model and 
a deterministic solution theory stays the same. 
Using the notion of modelled distributions, 
an a-priori estimate was established in \cite{OSSW21} 
by assuming existence of a model that is suitably 
(stochastically) estimated. 
The main ingredients of a regularity structure and 
the notion of model in this setting were introduced there. 

In \cite{LOT23}, this regularity structure for quasilinear 
equations was systematically obtained and algebraically characterized. 
It was shown that, analogous to Hairer's original approach, 
a Hopf-algebraic structure lies behind the structure group. 
Moreover, working with a dual perspective allowed for the insight 
that this Hopf algebra arises from a natural Lie algebra: 
the generators of actions on nonlinearities and solutions. 
While this provides a deeper insight into regularity structures, 
it is logically not necessary for \cite{OSSW21}, \cite{LOTT21} or this work. 

In \cite{LOTT21}, the construction and stochastic estimates of the 
model were provided, which serves as an input for \cite{OSSW21}. 
Although the main result can be compared (in much less generality, of course) 
to the one in \cite{CH16}, 
the technique to obtain this result is drastically different. 
It is based on Malliavin calculus and a spectral gap assumption 
on the noise ensemble, 
which can be seen as a substitute for Gaussian calculus. 
Recently, this technique has been picked up and applied 
to the tree setting in great generality \cite{HS23}, 
which shows the robustness of the method. 
One further difference compared to the works in the framework
of \cite{Hai14}, is that in \cite{LOTT21}
the renormalization procedure is top-down rather than 
bottom-up. 
By this we mean that, guided by symmetries, we postulate a counterterm 
in the equation with as little degree of freedom as possible. 
The counterterm is then translated to the model, 
and the remaining degree of freedom is fixed\footnote{
That there is no choice left is specific to the assumption of a spectral gap inequality with $\alpha\not\in\Q$ in combination with estimating 
small and large scales.} 
by the BPHZ-choice of renormalization. 
This strategy has the advantage that symmetries are built in, 
which comes at the price that it is a-priori not clear whether the 
equation can be renormalized with such a restricted counterterm. 

Although the Malliavin derivative may be seen as a mere technical tool 
in \cite{LOTT21}, 
it actually allows for a conceptual gain: 
it allows for a characterization of models which persists for rough noise,
as we shall establish in this work. 
This is in contrast to the existing literature on regularity structures, 
where the model is only tangible for smooth approximations, 
but not in the limit as the smoothness fades away. 
We believe that this characterization will prove useful 
in establishing qualitative properties of solutions of singular SPDEs, 
that are not present on the level of approximations. 
We think in particular of lattice approximations, 
as e.g.~obtained in \cite{MW17, SW18} for two dimensional 
Ising-like models and in \cite{GMW23} for three dimensional ones, 
which properly rescaled and with suitable interaction converge to 
the dynamical $\Phi^{2n}_2$ resp.~$\Phi^4_3$ model;
for further results on discrete approximations we refer to \cite{EH21} and references therein. 
A first simple application is given in Corollary~\ref{lem:covariant}, 
where we show scale invariance of the model. 
We also think that the method is robust 
and can be applied to other equations; 
in Remark~\ref{rem:other_equations} we provide two examples, 
a multiplicative stochastic heat equation 
and a version of the stochastic thin-film equation, 
and we outline what has to be modified for those equations. 
We also expect that weak universality results can be obtained,
which we aim to address in future work. 

In the following we briefly summarize the main results obtained in this work.

\subsection*{Main results}

\begin{itemize}
\item {\bf Characterization.} 
In Definition~\ref{def} we make precise what 
we mean by a \emph{model}. 
With help of the Malliavin derivative, 
the definition is weak enough to allow for rough noise, 
but strong enough for a unique characterization. 
In Theorem~\ref{thm} we establish existence and uniqueness 
of the model associated to \eqref{spde} 
under a spectral gap assumption on the driving noise. 
\item {\bf Convergence.} 
If a sequence of noise ensembles converges 
in law (resp.~in probability resp.~in $L^p$), 
and satisfies uniformly a spectral gap inequality, 
then the corresponding models obtained via Theorem~\ref{thm} converge 
in law (resp.~in probability resp.~in $L^p$), 
see Theorem~\ref{thm2} for a precise statement. 
\item {\bf Universality.} 
The limit obtained in the previous item 
is independent of the approximating sequence. 
This is an immediate consequence of combining Theorem~\ref{thm} and Theorem~\ref{thm2}, 
and establishes universality in the class of (not necessarily Gaussian) 
noise ensembles satisfying uniformly a spectral gap inequality. 
We want to emphasize that such situations 
(in particular for non-Gaussian approximations) 
naturally appear in applications, 
e.g.~in thin ferromagnetic films whose idealized 
physical model is triggered by white noise, 
but the actual physical approximation is not Gaussian due to the 
polycrystallinity of the material \cite{IO19,IORT20}. 
\item {\bf Lifting symmetries.} 
The characterizing Definition~\ref{def} 
allows to propagate symmetries of the noise ensemble to the associated model, 
cf.~Corollary~\ref{lem:covariant}. 
This is of particular interest for symmetries that are not preserved 
under approximation, e.g.~scale invariance for smooth noise. 
\end{itemize}
Let us now comment on related work. 
A characterization of models in the rough setting 
as obtained in this work has not been 
carried out in the existing literature on regularity structures, 
although a characterization by fixing the expectation, the BPHZ-model, is available, 
see \cite[Theorem~6.18]{BHZ19}.
However, this BPHZ characterization is restricted to the setting of smooth random models, 
and depends on a (non-canonical!) truncation of the solution operator. 
In view of the recent work \cite{HS23} 
that exploits the Malliavin derivative in the tree setting, 
it is conceivable that the characterization of the present work 
can also be achieved in the tree setting, 
possibly by appealing to the dictionary recently developed in \cite{BN22}.
Let us point out that convergence of models was already obtained 
in \cite{Hai14} for specific equations, 
and in \cite{CH16} in great generality. 
However, the convergence and universality obtained there is within the class 
of noise ensembles satisfying uniform cumulant bounds. 
A similar convergence and universality result was obtained in \cite{Duc21} via the renormalization group flow approach,
again in the realm of cumulant bounds. 
Let us also mention the work \cite{FG19ii}, 
which establishes a weak universality result for a class of reaction diffusion equations (driven by Gaussian noise) 
by making use of the Malliavin derivative in the framework of paracontrolled distributions \cite{GIP15}; 
for further results on weak universality we refer to references therein. 
% HairerQuastel (KPZ and Gaussian)
% HairerXu (reaction diffusion and Gaussian) 
% ShenXu (reaction diffusion and non Gaussian)
A first universality result in the context of a spectral gap assumption 
was obtained in \cite{IORT20} for a mildly singular SPDE; 
it relies on a characterization of the limiting model 
by suitable commutator estimates 
instead of the Malliavin derivative. 
Although such commutator estimates could also serve for a characterization of the model associated to \eqref{spde}, 
we believe that the approach via the Malliavin derivative pursued here 
is more suitable for extensions to other equations. 

\begin{notation}
We denote a generic space-time point by $x=(x_0,\dots,x_d)\in\R^{1+d}$, 
where we think of $0$ as the time-like coordinate and 
$1,\dots,d$ for $d\geq1$ as the space-like coordinates. 
If not further specified, we take suprema over space-time points over all of $\R^{1+d}$. 
A typical element of $\N_0$ is denoted by $k$, where we will often 
write $k\geq0$ instead of $k\in\N_0$. 
Similarly, $\n$ denotes a typical element of $\N_0^{1+d}$, 
where we shall write $\n\neq\0$ instead of $\n\in\N_0^{1+d}\setminus\{\0\}$.
Partial derivatives are denoted by 
$\partial^\n=\partial_0^{n_0}\cdots\partial_d^{n_d}$, 
and for the spatial Laplacian we write $\Delta=\partial_1^2+\dots+\partial_d^2$.  
In line with the parabolic operator $(\partial_0-\Delta)$ we measure
length with respect to 
\begin{equation}
|x|=\sqrt{|x_0|}+|x_1|+\cdots+|x_d| \, , 
\end{equation}
so that the effective dimension of $\R^{1+d}$ equipped with this distance is given by $D=2+d$. 
By $\|\cdot\|_{\dot H^s}$ we denote an anisotropic version of the 
homogeneous Sobolev norm of order $s\in\R$:
\begin{equation}\label{eq:sob}
\|f\|_{\dot H^s}:=\Big( \int_{\R^{1+d}} dx\, 
\big( (-\partial_0^2+\Delta^2)^\frac{s}{4} f(x)\big)^2 \Big)^\frac{1}{2}\, . 
\end{equation}
We will write $\eqref{estPi}_\beta$ when we refer to the corresponding statement for a specific multi-index $\beta$, 
and sometimes write $\eqref{eh14}_\beta^\gamma$ when we want to specify also a second multi-index. 
By $\lesssim$ like in \eqref{Pi-} we mean $\leq C$ for a constant $C<\infty$, 
where the constant may depend on the integrability exponent $p<\infty$ and
the multi-index $\beta$, 
but is uniform in $x,y\in\R^{1+d}$ and the mollification length scale $t>0$ 
(see \eqref{psi} for the definition of the scaled test function $\psi_t$).
\end{notation}

%%%%%%%%%%%%%%%%%%%%%%%%%%%%%%%%%%%%%%%%%%%%%%%%%%%%%%%%%%%%%%%%%%%%%%%%%%%%%%%%%%%%

\subsection{The centered model}\label{sec:model}

In this section we recall in a rather heuristic discussion 
the notion of centered model from \cite{LOTT21}, 
which should serve as a motivation for Definition~\ref{def}. 
Most of it is a repetition of \cite[Section~2.2]{LOTT21} and \cite[Section~5]{OST23}, 
however we prefer to provide a self contained introduction. 
The reader familiar with this setting may directly jump to Section~\ref{sec:stable}. 
For more details and rigorous arguments we refer the reader to the aforementioned references or \cite{GT23}, 
and for an introduction and additional motivation we recommend to have a look at \cite{OST23}. 
We provide a brief comparison to the model in \cite{Hai14} in Remark~\ref{rem:comparison}.

We follow a top-down renormalization ansatz, meaning that 
we replace $\xi$ by a smooth version $\xi_\tau$, 
and guided by symmetries we postulate a counterterm $h$ in \eqref{spde}: 
\begin{equation}
(\partial_0-\Delta) u = a(u)\Delta u + \xi_\tau - h \, .
\end{equation}
The aim is to control the solution manifold 
in the limit of vanishing smoothness $\xi_\tau\to\xi$, 
by fine-tuning the counterterm $h$ 
in a $\tau$ dependent way that preserves as much symmetries of the 
original equation \eqref{spde} as possible. 
We are interested in ensembles $\xi$ (and $\xi_\tau$) that are stationary 
in space-time and reflection invariant in space, 
see Assumption~\ref{ass} i) and ii), 
which suggests a counterterm of the form 
\begin{equation}
h(u(x))=c[a(\cdot+u(x))]
\end{equation}
for some deterministic functional $c$ on the space of nonlinearities $a$. 
For such a modified equation, it is expected that for fixed $a$, 
the space of solutions $u$ (up to constants, and satisfying the equation modulo space-time analytic functions) is parametrized by 
space-time analytic functions
$p$ satisfying $p(x)=0$ for a distinguished point $x\in\R^{1+d}$. 
The model components $\Pi_{x\beta}$ are then introduced by the formal power series ansatz 
\begin{equation}
u(y)-u(x) 
= \sum_\beta \Pi_{x\beta}[\xi](y) 
\prod_{k\geq0} \Big(\frac{1}{k!}\frac{d^k a}{du^k}(u(x))\Big)^{\beta(k)}
\prod_{\n\neq\0} \Big(\frac{1}{\n!}\partial^\n p(x)\Big)^{\beta(\n)} \, ,
\end{equation}
where we sum over multi-indices $\beta:\N_0 \, \dot{\cup} \, (\N_0^{1+d}\setminus\{\0\})\to\N_0$. 
Introducing for $k\geq0$ and $\n\neq\0$ the coordinates 
\begin{equation}
\z_k[a]:= \frac{1}{k!}\frac{d^k a}{du^k}(0), \quad 
\z_\n[p]:=\frac{1}{\n!} \partial^\n p(0) 
\end{equation}
on the space of nonlinearities $a$ and space-time analytic functions $p$ with $p(0)=0$,
and for multi-indices $\beta$ as above the monomials
\begin{equation}
\z^\beta := \prod_{k\geq0} \z_k^{\beta(k)} \prod_{\n\neq\0} \z_\n^{\beta(\n)} \, ,
\end{equation}
the previous power series ansatz can be more compactly written as 
\begin{equation}\label{ansatz}
u(y)-u(x) 
= \sum_\beta \Pi_{x\beta}[\xi](y) \, \z^\beta[a(\cdot+u(x)),p(\cdot+x)-p(x)] \, .
\end{equation}
From this ansatz one can deduce for multi-indices 
$\beta\neq0$ satisfying $\beta(k)=0$ for all $k\geq0$ that 
\begin{equation}\label{polypart}
\Pi_{x\beta}(y) = 
\left\{
\begin{array}{cl}
(y-x)^\n & \text{if }\beta=e_\n, \\
0 & \text{otherwise,}
\end{array}
\right.
\end{equation}
where $e_\n$ denotes the multi-index that maps $k\geq0$ to $0$ and $\m$ to $\delta_\n^\m$. 
For all other multi-indices, 
one can together with the equation for $u$ read off an equation for the coefficients $\Pi_{x\beta}$ 
\begin{subequations}\label{Pi-Pi}
\begin{align}
(\partial_0-\Delta)\Pi_{x\beta} &= \Pi^-_{x\beta} \, , \quad\text{where} \label{linPDE}\\
\Pi^-_{x\beta} &= \big( \sum_{k\geq0} \z_k \Pi_x^k \Delta\Pi_x + \xi_\tau 1
- \sum_{l\geq0} \tfrac{1}{l!} \Pi_x^l (D^{(\0)})^l c \big)_\beta \, , \label{defPi-components} 
\end{align}
\end{subequations}
which is an infinite hierarchy of linear PDEs as we shall see below. 
First, let us comment on the individual components of this PDE. 
By $\Pi_x$ we denote the formal power series whose coefficients are the 
space-time functions $\Pi_{x\beta}$, 
i.e.~$\Pi_x(y) = \sum_\beta \Pi_{x\beta}(y) \, \z^\beta\in\R[[\z_k,\z_\n]]$. 
Products have to be understood in the algebra $\R[[\z_k,\z_\n]]$, 
where $1$ denotes the neutral element. 
Similar to $\Pi_x$, the counterterm $c$ can -- at least formally -- 
be expressed as $c[a]=\sum_\beta c_\beta \, \z^\beta[a]$; 
we thus write $c=\sum_\beta c_\beta \, \z^\beta\in\R[[\z_k]]$.
By $D^{(\0)}$ we denote the derivation on $\R[[\z_k,\z_\n]]$ defined by 
\begin{equation}\label{D0}
D^{(\0)}:=\sum_{k\geq0} (k+1) \z_{k+1} \partial_{\z_k} \, ,
\end{equation}
which is made such that for $v\in\R$ and polynomial $c\in\R[\z_k]$ we have 
\begin{equation}\label{shift}
c[a(\cdot+v)] =  \big( \sum_{l\geq0}\tfrac{1}{l!} v^l (D^{(\0)})^l c\big)[a] \, ,
\end{equation}
see the more general \eqref{transformation_hopf}.
Together with $a(v)=\sum_{k\geq0} \z_k[a] v^k$ 
which is a consequence of the definition of $\z_k$, 
this should serve as enough motivation for how \eqref{defPi-components} arises from the equation for $u$. 
Let us point out that all seemingly infinite sums over $k,l\geq0$ 
in the previous expressions are in fact effectively finite sums, 
meaning that they are finite sums on the level of every $\beta$-component. 

We turn to the notion of \emph{homogeneity}, 
which arises from a formal scaling argument. 
Denoting by $S$ the parabolic rescaling $x\mapsto(s^2x_0,sx_1,\dots,sx_d)$ for $s>0$, we define $\hat{u}=s^{-\alpha}u(S\cdot)$, 
$\hat{a}=a(s^\alpha\cdot)$ and $\hat{\xi}=s^{2-\alpha}\xi(S\cdot)$. 
It is then straightforward to check that if $(u,a,\xi)$ is a solution to \eqref{spde}, 
then $(\hat{u},\hat{a},\hat{\xi})$ is again a solution to \eqref{spde}. 
Postulating that the renormalized equation inherits this invariance, 
and rescaling $p$ consistently with $u$ by $\hat{p}=s^{-\alpha}p(S\cdot)$, 
one can read off the power series ansatz \eqref{ansatz}, that 
\begin{equation}\label{scaling}
\Pi_{x\beta}[s^{2-\alpha}\xi(S\cdot)](y) 
= s^{-|\beta|} \Pi_{Sx \beta}[\xi](Sy) \, , 
\end{equation}
where the homogeneity $|\beta|$ is given by 
\begin{equation}\label{homogeneity}
|\beta|:=\alpha\Big(1+\sum_{k\geq0}k\beta(k)-\sum_{\n\neq\0}\beta(\n)\Big)
+\sum_{\n\neq\0} |\n|\beta(\n), 
\quad
|\n|:=2n_0+n_1+\cdots+n_d \, .
\end{equation}
Here, one should think of $1+\sum k\beta(k)-\sum\beta(\n)$ as 
the number of instances of $\xi$ contained in $\Pi_{x\beta}$ 
(in form of a multilinear expression), 
while $\sum|\n|\beta(\n)$ is the (parabolic) degree of polynomials contained in $\Pi_{x\beta}$. 

In view of this, it is natural to consider only components $\Pi_{x\beta}$ 
that contain at least one instance of $\xi$, 
together with the monomials from \eqref{polypart}, 
which motivates the following definition. 
We call a multi-index $\beta$ \emph{populated}, if 
\begin{equation}
[\beta]\geq0 \quad\text{or}\quad\beta=e_\n\text{ for some }\n\neq\0 \, ,
\end{equation}
where we call the latter \emph{purely polynomial}, 
and $[\beta]$ is defined by 
\begin{equation}\label{noisehomogeneity}
[\beta]:=\sum_{k\geq0}k\beta(k)-\sum_{\n\neq\0}\beta(\n) \, .
\end{equation}
We therefore consider $\Pi_x$ as a $\T$-valued map, 
where $\T\subset\R[[\z_k,\z_\n]]$ is defined by 
\begin{equation}\label{T}
\T:=\big\{\textstyle\sum_\beta\pi_\beta\,\z^\beta \in\R[[\z_k,\z_\n]] \, 
\big| \, \pi_\beta\neq0\implies\beta\text{ populated}\big\} \, .
\end{equation}
For later use we also define the purely polynomial part $\Tbar$ of $\T$ by 
\begin{equation}
\Tbar:=\big\{\textstyle\sum_\beta\pi_\beta\,\z^\beta \in\R[[\z_k,\z_\n]] \, 
\big| \, \pi_\beta\neq0\implies\beta\text{ purely polynomial}\big\} \, , 
\end{equation}
and its natural complement $\Ttilde$ in $\T$ such that 
\begin{equation}
\T = \Tbar\oplus\Ttilde \, .
\end{equation}
Similar to $\Pi_x$, we want to consider $\Pi^-_x$ as taking values in $\T$, 
where we observe the following: 
on the one hand $\pi,\pi'\in\T$ does not\footnote{
e.g. $\z_{\n_1},\z_{\n_2}\in\T$ but $\z_1\z_{\n_1}\z_{\n_2}\not\in\T$} 
imply $\z_k\pi^k\pi'\in\T$, however if the product is contained in $\T$, 
then due to the presence of $\z_k$ it automatically belongs to $\Ttilde$; 
on the other hand, one can check that $\pi\in\T$ and $c\in\R[[\z_k]]\subset\Ttilde$ does imply $\pi^l (D^{(\0)})^l c\in\Ttilde$. 
For that reason, we introduce the projection $P$ from $\R[[\z_k,\z_\n]]$ to $\Ttilde$ 
in the definition of $\Pi^-_x$ to obtain the $\Ttilde$-valued map 
\begin{equation}\label{defPi-}
\Pi^-_x = P\sum_{k\geq0}\z_k\Pi^k_x\Delta\Pi_x + \xi_\tau 1 
-\sum_{l\geq0}\tfrac{1}{l!} \Pi^l_x (D^{(\0)})^l c \, .
\end{equation}

Let us also mention that similar to \eqref{scaling}, 
the invariance of \eqref{spde} under $\hat{u}=u(\cdot+z)$, 
$\hat{a}=a$ and $\hat{\xi}=\xi(\cdot+z)$ for $z\in\R^{1+d}$
propagates via \eqref{ansatz} to $\Pi_x$, yielding 
\begin{equation}\label{translation}
\Pi_{x \beta} [\xi(\cdot+z)](y) 
= \Pi_{x+z \, \beta}[\xi](y+z) \, .
\end{equation}
Thinking of a noise $\xi$ whose law is invariant under translation 
$\xi\mapsto\xi(\cdot+z)$ and the rescaling 
$\xi\mapsto s^{2-\alpha}\xi(S\cdot)$, 
we expect by \eqref{translation} that the law of 
$|y-x|^{-|\beta|} \Pi_{x\beta}(y)$ coincides with the law of 
$|y-x|^{-|\beta|} \Pi_{0\beta}(y-x)$, 
which by \eqref{scaling} coincides with 
the law of $\Pi_{0\beta}\big(\tfrac{y-x}{|y-x|}\big)$.
Since $\Pi_{x\beta}$ is expected to be a (H\"older-) 
continuous function and the range of $(y-x)/|y-x|$ is compact, 
this indicates that 
\begin{equation}\label{estPi}
\sup_{x\neq y} |y-x|^{-|\beta|} \, \E^\frac{1}{p}| \Pi_{x\beta}(y) |^p <\infty \, .
\end{equation}

We turn to the reexpansion maps $\G$, 
where we will mainly follow \cite{OST23}. 
Recall that the distinguished point $x$ that we fixed for the 
power series ansatz led to $\Pi_x$. 
Of course, we could as well have chosen any other point $y$, 
leading to $\Pi_y$. 
It is then clear from 
\begin{equation}\label{eh48}
u = \left\{
\begin{array}{l}
u(x)+ \Pi_x [a(\cdot+u(x)),p_x] \\
u(y)+ \Pi_y \, [a(\cdot+u(y)),p_y] 
\end{array}
\right.
\end{equation}
for some polynomials $p_x,p_y$ vanishing at the origin, 
that there is an algebraic relation between $\Pi_x$ and $\Pi_y$. 
Indeed, we recall from \cite[Proposition~5.1]{LOT23} that if we define $\G_{xy}$ by 
\begin{align}
\G_{xy} &= \sum_{l\geq0}\tfrac{1}{l!} \Pi_x^l(y) (D^{(\0)})^l 
\quad \text{on}\quad \R[[\z_k]] \, , \label{Gamma_Ttilde} \\
\G_{xy}\z_\n &= \z_\n+\pi^{(\n)}_{xy} \quad\text{for some}\quad \pi^{(\n)}_{xy}\in\T \, , \label{Gamma_zn}
\end{align}
and imposing that $\G_{xy}$ is an algebra endomorphism\footnote{
that is, for $\pi,\pi'\in\R[[\z_k,\z_\n]]$ it holds $\G_{xy}\pi\pi'=(\G_{xy}\pi)(\G_{xy}\pi')$} 
of $\R[[\z_k,\z_\n]]$, then formally 
\begin{equation}\label{transformation_hopf}
\G_{xy} \pi[a,p] = \pi\big[a\big(\cdot+\Pi_x[a,p](y)\big), 
p + \sum_{\n\neq\0}\pi_{xy}^{(\n)}[a,p] (\cdot)^\n\big] \, ,
\end{equation}
for functionals $\pi$ of pairs $(a,p)$. 
Hence we see from subtracting the two equations in \eqref{eh48} 
and the ansatz \eqref{ansatz}, 
that for a suitable choice of $\pi^{(\n)}_{xy}$ we have 
\begin{equation}\label{recenter}
\Pi_x = \G_{xy}\Pi_y + \Pi_x(y) \, . 
\end{equation}
Note that $\G_{xy}$ being an algebra endomorphism is compatible with \eqref{Gamma_Ttilde}, 
and because of that \eqref{Gamma_Ttilde} may be equivalently expressed as 
\begin{equation}\label{Gamma_zk}
\G_{xy}\z_k = \sum_{l\geq0}\tbinom{k+l}{k} \z_{k+l} \Pi^l_x(y) \, . 
\end{equation}
We want to point out that it is not always possible to extend 
a linear map from the coordinates $\z_k,\z_\n$ in a multiplicative 
way to the space of formal power series $\R[[\z_k,\z_\n]]$ 
-- as opposed to extending from $\z_k,\z_\n$ to the space of polynomials $\R[\z_k,\z_\n]$. 
That this is possible in our specific setting relies on the restriction of $\pi^{(\n)}_{xy}$ to 
\begin{equation}\label{rest_pin}
\pi^{(\n)}_{xy\beta} \neq 0 \quad \implies \quad |\n|<|\beta| \, , 
\end{equation}
which is in line with the required recentering, and which we shall assume; 
a proof of this fact can be found in \cite[Lemma~3]{OST23}. 
It is also worth mentioning that $\G_{xy}$ preserves $\T$ and $\Ttilde$ in the sense of 
\begin{equation}\label{Gamma_preserves}
\G_{xy} \T \subset \T\, , \quad 
\G_{xy} \Ttilde \subset \Ttilde \, , 
\end{equation}
which is a consequence of $\Pi_x(y), \pi^{(\n)}_{xy}\in\T$, 
see \cite[(2.59)]{LOTT21}. 

Let us also mention that the recentering \eqref{recenter} of $\Pi_x$ 
is inherited by $\Pi^-_x$ through \eqref{defPi-}. 
Due to the presence of the projection $P$ in $\Pi^-_x$, 
it takes the form of 
\begin{equation}\label{recenter-}
\Pi^-_x = \G_{xy} \Pi^-_y 
+ P \sum_{k\geq0} \z_k \big(\G_{xy}(\id-P)\Pi_y + \Pi_x(y)\big)^k \, 
\G_{xy}(\id-P)\Delta\Pi_y \, ,
\end{equation}
cf.~\cite[(2.63)]{LOTT21}. 
Here, $\id-P$ takes the role of projecting to $\Tbar$, 
i.e.~$(\id-P)\Pi_y = \sum_{\n\neq\0} \z_\n (\cdot-y)^\n$, 
which can be used to see\footnote{
for a detailed argument see \cite[(2.60) and (2.63)]{LOTT21}} 
that the $\beta$-component 
of the second right-hand side term of \eqref{recenter-} 
is a polynomial of degree $\leq|\beta|-2$, i.e. 
\begin{equation}\label{recenter-diff}
(\Pi^-_x - \G_{xy} \Pi^-_y)_\beta \ 
\text{ is a polynomial of degree} \leq|\beta|-2 \, .
\end{equation}
Furthermore, the scaling and translation invariances \eqref{scaling} and \eqref{translation} 
together with the recentering \eqref{recenter} suggest 
for scaling and translation invariant noise that 
the law of $(\G_{xy})_\beta^\gamma$ coincides with the one of 
$(\G_{x+z \, y+z})_\beta^\gamma$, and the law of 
$s^{|\beta|-|\gamma|} (\G_{xy})_\beta^\gamma$ coincides with the one of 
$(\G_{SxSy})_\beta^\gamma$. 
Here, $(\G)_\beta^\gamma$ denote the matrix coefficients of $\G$ w.r.t.~the monomials $\{\z^\beta\}_\beta$ in the sense of 
\begin{equation}
(\G)_\beta^\gamma = (\G \z^\gamma)_\beta \, .
\end{equation}
Since $\G_{SxSy}$ converges to $\G_{00}$ as $s\to0$, 
and we expect $\G_{00}$ to be the identity, 
this indicates that $\G_{xy}$ is strictly triangular in the sense of 
\begin{equation}\label{triGamma_hom}
(\G_{xy}-\id)_\beta^\gamma \neq 0 
\quad\implies\quad 
|\gamma|<|\beta| \, . 
\end{equation}
By a similar argument that led to the estimate \eqref{estPi} for $\Pi$,
we expect 
\begin{equation}\label{eh14}
\sup_{x\neq y} |x-y|^{-(|\beta|-|\gamma|)} \, 
\mathbb{E}^\frac{1}{p}|(\G_{xy})_{\beta}^{\gamma}|^p <\infty \, .
\end{equation}
%

%%%%%%%%%%%%%%%%%%%%%%%%%%%%%%%%%%%%%%%%%%%%%%%%%%%%%%%%%%%%%%%%%%%%%%%%%%%%%%%%%%%
\subsection{Seeking for a stable characterization}\label{sec:stable}

At this point we want to recapitulate, 
and clarify what of the above 
can potentially survive the limit of vanishing smoothness $\xi_\tau\to\xi$. 
For smooth noise $\xi_\tau$ we obtained $\Pi_x$, 
of which the purely polynomial components are given by \eqref{polypart}, 
and the remaining populated and not purely polynomial components 
are determined\footnote{
It is legitimate to use the term ``determined'' here, 
since there is a unique solution to \eqref{linPDE} 
in the class of $\Pi_{x\beta}$ satisfying \eqref{estPi}, 
provided $\Pi^-_{x\beta}$ satisfies an appropriate growth condition 
which is satisfied here, see \cite[Lemma~1]{OST23}. } 
by \eqref{Pi-Pi}. 
Moreover, we have introduced the reexpansion maps $\G_{xy}$ 
that recenter $\Pi$ and $\Pi^-$ according to \eqref{recenter} and \eqref{recenter-}. 

Clearly, \eqref{polypart}, \eqref{linPDE} and \eqref{recenter} 
do make sense for rough noise $\xi$ as well, 
where in line with Assumption~\ref{ass}~iii) we think of $\xi$ as having $(\alpha-2)$-H\"older continuous realizations for $\alpha\in(0,1)$, 
and expect correspondingly $\Pi_{x\beta}$ and $\Pi^-_{x\beta}$ to be 
$\alpha$ respectively $(\alpha-2)$-H\"older continuous. 
Also \eqref{recenter-} does not cause any problems in the rough setting: 
although $\Pi^-_{x\beta}$ is expected to be a distribution in this case, 
the first right-hand side term $\G_{xy}\Pi^-_y$ of \eqref{recenter-} 
is component wise a well-defined linear combination of distributions, 
while the second right-hand side term of \eqref{recenter-} 
contains only products of smooth objects due to the presence of the projection $\id-P$. 
However, even if $\Pi^-_{x\beta}$ can be given a sense for rough noise, 
there is no way that its individual constituents in \eqref{defPi-components} 
survive a limiting procedure, 
except for $\beta=0$ where
\begin{equation}
\Pi^-_{x\beta=0} = \xi_\tau - c_0 \, .
\end{equation}
Nevertheless, we claim that the transformation behaviour \eqref{recenter-} 
of $\Pi^-_x$ is enough to provide a stable characterization, 
at least for multi-indices $\beta$ with $|\beta|>2$. 
Indeed, observe that \eqref{estPi} implies 
\begin{equation}
\E^\frac{1}{p}| \Pi^-_{y\beta} (\phi_y^\lambda) |^p 
\lesssim \lambda^{|\beta|-2} \, , 
\end{equation}
where $\phi_y^\lambda$ is a Schwartz function, 
recentered at $y\in\R^{1+d}$ and rescaled by $\lambda>0$. 
We will thus informally write $\Pi^-_{y\beta}(y) = 0$ for $|\beta|>2$. 
With this, we observe that $(\G_{xy}\Pi^-_y)_\beta(y) = ((\G_{xy}-\id)\Pi^-_y)_\beta(y)$, 
where by the triangularity \eqref{triGamma_hom} the right-hand side involves only $\Pi^-_{y\gamma}$ 
for $|\gamma|<|\beta|$. 
Provided also the second right-hand side term of \eqref{recenter-} 
involves only components $\Pi_{\gamma}$ for $\gamma$ ``smaller'' than $\beta$, 
we see that \eqref{recenter-} determines $\Pi_{x\beta}(y)$ in a recursive way. 
This argument will be made rigorous in Step~2 of the proof of Proposition~\ref{unique}. 

For multi-indices $\beta$ with $|\beta|<2$, the above argument fails 
since one cannot pass from $\G_{xy}$ to $\G_{xy}-\id$ and benefit from strict triangularity anymore. 
This is where we will profit from the Malliavin derivative as we shall argue now. 
We will introduce the Malliavin derivative rigorously in Section~\ref{sec:ass}, 
for the moment we think of $\delta$ as being the operation of
taking the derivative of an object like $\Pi_{x\beta}(y)$ 
as a functional of $\xi$, in the direction of an infinitesimal perturbation $\delta\xi$. 
Applying $\delta$ to $\Pi^-_{x\beta}(y)=(\G_{xy}\Pi^-_y)_\beta(y)$, 
which is a consequence of \eqref{recenter-diff}, we obtain by Leibniz' rule
\begin{equation}
\delta\Pi^-_{x\beta}(y) 
= (\delta\G_{xy}\Pi^-_y)_\beta(y) + (\G_{xy}\delta\Pi^-_y)_\beta(y) \, .
\end{equation}
The first right-hand side term involves only $\Pi^-_{y\gamma}$ for $|\gamma|<|\beta|$, 
since the triangularity \eqref{triGamma_hom} is inherited by $\delta\G_{xy}$, 
and therefore allows for a recursive argument. 
To investigate the second right-hand side term, 
we first note that $\Pi_y(y)=0$ as a consequence of the ansatz \eqref{ansatz}, 
which together with \eqref{defPi-components} yields 
\begin{equation}
\delta\Pi^-_y (y) = \z_0\Delta\delta\Pi_y (y) + \delta\xi_\tau(y)1 \, ,
\end{equation}
where we have used that $c$ is deterministic and hence $\delta c=0$, 
and we have dropped the projection $P$ since we will use this identity 
only for $\gamma$-components with $|\gamma|<2$ where $P$ is inactive anyway. 
Applying $\G_{xy}$ to this identity, 
we obtain since $\G_{xy}$ is multiplicative, 
satisfies $\G_{xy}\z_0=\sum_k \z_k \Pi_x^k(y)$ 
as a consequence of \eqref{Gamma_Ttilde}, 
and from \eqref{recenter} which implies by Leibniz' rule
$\delta\Pi_x = \delta\G_{xy}\Pi_y+\G_{xy}\delta\Pi_y+\delta\Pi_x(y)$, 
that 
\begin{equation}
\G_{xy}\delta\Pi^-_y(y) 
= \sum_{k\geq0} \z_k \Pi_x^k(y) \Delta \big(\delta\Pi_x - \delta\Pi_x(y) - \delta\G_{xy}\Pi_y\big)(y)
+ \delta\xi_\tau(y)1 \, .
\end{equation}
This right-hand side has again the potential for a recursive argument, 
however it contains the problematic product of $\Pi_x^k$ and 
$\Delta(\delta\Pi_x-\delta\Pi_x(y)-\delta\G_{xy}\Pi_y)$. 
Here we note that applying $\delta$ to a multilinear expression like $\Pi_x$ 
amounts to replacing one instance of $\xi$ by $\delta\xi$. 
Although $\delta\xi$ gains $D/2$ orders of regularity compared to $\xi$, 
see \eqref{directional_derivative} and \eqref{MallSobNorm}, 
$\delta\Pi$ does not benefit from this gain of regularity 
since there might be other instances of $\xi$ left. 
However, one can expect that $\delta\Pi$ is modelled to order $\alpha+D/2$ in terms of $\Pi$. 
For the above product to be given a sense in the rough setting, 
we would then need that $\alpha + (\alpha+D/2-2)>0$, 
which is the reason for the restriction $\alpha>1-D/4$ in Assumption~\ref{ass}. 
Unfortunately, $\delta\G$ is not the right object to express this gain in modelledness by $D/2$: 
since $\alpha+D/2>1$, every $\beta$-component of $\delta\G_{xy}\Pi_y$ 
would have to contain the first order monomials $\Pi_{y e_\n}=(\cdot-y)^\n$ for $|\n|=1$, 
which contradicts the triangularity of $\delta\G_{xy}$ with respect to 
the homogeneity $|\cdot|$ for multi-indices $|\beta|<1$. 

We therefore aim to replace $\delta\G_{xy}$ by a map that allows
for this gain of modelledness, 
while keeping the necessary algebraic properties of $\delta\G_{xy}$ that led to 
\begin{equation}
\delta\Pi^-_x(y) 
= \delta\G_{xy}\Pi^-_y(y)
+ \sum_{k\geq0} \z_k \Pi_x^k(y) \Delta \big(\delta\Pi_x - \delta\Pi_x(y) - \delta\G_{xy}\Pi_y\big)(y)
+ \delta\xi_\tau(y)1 \, .
\end{equation}
The latter can be achieved for any $\d\G_{xy}\in{\rm End}(\R[[\z_k,\z_\n]])$ 
that coincides with $\delta\G_{xy}$ on $\R[[\z_k]]$, 
and that satisfies for $\pi,\pi'\in\R[[\z_k,\z_\n]]$ 
\begin{equation}
\d\G_{xy}(\pi\pi') 
= \d\G_{xy}\pi \, \G_{xy}\pi' + \G_{xy}\pi \, \d\G_{xy}\pi' \, .
\end{equation}
Indeed, applying $\d\G_{xy}$ to $\Pi^-_y(y) = \z_0\Delta\Pi_y(y) +\xi_\tau(y) 1 - c$, we obtain 
\begin{equation}
\d\G_{xy}\Pi^-_y(y) 
= \d\G_{xy}\z_0 \, \Delta\G_{xy}\Pi_y(y) 
+ \G_{xy}\z_0 \, \Delta\d\G_{xy}\Pi_y(y) 
- \d\G_{xy} c \, .
\end{equation}
By \eqref{Gamma_Ttilde}, to which we apply $\delta$, 
and by \eqref{recenter} which implies $\Delta\G_{xy}\Pi_y=\Delta\Pi_x$, 
the first right-hand side term coincides with 
$\sum_k \z_k \delta(\Pi_x^k(y)) \Delta\Pi_x(y)$. 
Since $c\in\R[[\z_k]]\subset\Ttilde$, the same argumentation leads to 
$\d\G_{xy} c = \sum_{l} \tfrac{1}{l!} \delta(\Pi_x^l(y)) (D^{(\0)})^l c$. 
Again by \eqref{Gamma_Ttilde}, we see that the second right-hand side term coincides with 
$\sum_{k} \z_k \Pi_x^k(y) \Delta\d\G_{xy}\Pi_y(y)$, 
which altogether yields the desired 
\begin{equation}\label{magic}
\delta\Pi^-_x(y) 
= \d\G_{xy}\Pi^-_y(y)
+ \sum_{k\geq0} \z_k \Pi_x^k(y) \Delta \big(\delta\Pi_x - \delta\Pi_x(y) - \d\G_{xy}\Pi_y\big)(y)
+ \delta\xi_\tau(y)1 \, ,
\end{equation}
which coincides with \cite[(4.50)]{LOTT21}.

By \eqref{Gamma_Ttilde}, we also see that 
$\delta\G_{xy} = \delta\Pi_x(y)\G_{xy}D^{(\0)}$ on $\R[[\z_k]]$, 
hence such a map $\d\G_{xy}$ has to coincide with 
$\delta\Pi_x(y)\G_{xy}D^{(\0)} 
+ \sum_{\n\neq\0} \d\pi^{(\n)}_{xy} \G_{xy} \partial_{\z_\n}$ 
for some $\d\pi^{(\n)}_{xy}\in\R[[\z_k,\z_\n]]$. 
To ease the notation we shall write 
$\sum_\n \d\pi^{(\n)}_{xy} \G_{xy} D^{(\n)}$ with 
$\d\pi^{(\0)}_{xy}=\delta\Pi_x(y)$ and 
\begin{equation}\label{Dn}
D^{(\n)}:=\partial_{\z_\n} \quad\text{for}\quad \n\neq\0 \, .
\end{equation}
We will only ever apply such a map $\d\G_{xy}$ to elements in $\T$ 
that are populated on multi-indices $|\beta|<2$, 
hence we can drop the summands with $|\n|\geq2$. 
To obtain equally nice mapping properties as for $\G_{xy}$ in \eqref{Gamma_preserves}, we will restrict $\d\pi^{(\n)}_{xy}$ 
for $\n\neq\0$ from $\R[[\z_k,\z_\n]]$ to $\Ttilde$, which yields 
together with $D^{(\n)}\T\subset\Ttilde$ and \eqref{Gamma_preserves}
\begin{equation}\label{dGamma_preserves}
\d\G_{xy} \T \subset \Ttilde \, .
\end{equation}

Summarizing this discussion, \eqref{magic} determines $\delta\Pi^-_x(y)$, 
and hence $\Pi^-_x(y)$ up to its expectation by an application of the spectral gap inequality \eqref{sg}, 
provided we can choose $\d\pi^{(\n)}_{xy}\in\Ttilde$ for $|\n|=1$ such that 
\begin{equation}\label{modelledness}
\delta\Pi_x - \delta\Pi_x(y) - \d\G_{xy}\Pi_y 
= O(|\cdot-y|^{\alpha+\frac{D}{2}}) \, , 
\end{equation}
where 
\begin{equation}\label{dGamma}
\d\G_{xy} = \sum_{|\n|\leq1}\d\pi^{(\n)}_{xy}\G_{xy}D^{(\n)}\, . 
\end{equation}
Moreover, we have to come up with an ordering $\prec$ on multi-indices 
with respect to which $\d\G_{xy}$ is strictly triangular, 
which is necessarily different from the homogeneity $|\cdot|$ as we saw above.  
We want to emphasize that \eqref{magic} and \eqref{modelledness} 
have the potential to make sense even in the rough setting: 
neither singular products nor diverging constants appear. 
The remaining difficulty to overcome is to find a suitable 
probabilistic and weak formulation that avoids point evaluations of distributions. 
This will be achieved in Definition~\ref{def} in \eqref{ho29} and \eqref{ho28}. 

Finally, let us point out the following geometric interpretation 
recently given in \cite{BOT24} (which is set up for a semilinear equation with additive noise, however the following applies to the quasilinear case as well).
There, for fixed nonlinearity $a$, the map $p\mapsto\Pi_x[a,p]$ 
is informally interpreted as an inverse chart on the solution manifold of \eqref{spde},
and $\Gamma_{yx}$ as a transition function between two inverse charts $\Pi_x$ and $\Pi_y$. 
Furthermore, $\d\G_{xy}\Pi_y$ maps the parameter-space $p$ 
to the tangent space of the solution manifold of \eqref{spde}.
Hence \eqref{modelledness} expresses that $\delta\Pi_x$ is 
approximately a parameterization of this tangent space. 
Informally, our main result thus characterizes the inverse charts $\Pi_x$ 
via the transition maps $\Gamma_{yx}$ and 
via the tangent space through \eqref{magic} and \eqref{modelledness}. 
This is stable for rough noise as opposed to 
the description of the inverse charts directly via \eqref{Pi-Pi}.

%%%%%%%%%%%%%%%%%%%%%%%%%%%%%%%%%%%%%%%%%%%%%%%%%%%%%%%%%%%%%%%%%%%%%%%%%%%%%%%%%%%%

\subsection{Assumption and main results}\label{sec:ass}

In this section we make the heuristic discussion from the previous Section~\ref{sec:model} and Section~\ref{sec:stable} precise. 
We start by introducing the Malliavin derivative. 
Consider cylindrical functionals of the form 
\begin{equation}
F[\xi] = f((\xi,\zeta_1),\dots,(\xi,\zeta_N))
\end{equation}
for some $f\in C^\infty(\R^{1+d})$ and Schwartz functions 
$\zeta_1,\dots,\zeta_N\in\S(\R^{1+d})$, 
where $(\cdot,\cdot)$ denotes the pairing between a tempered distribution and a Schwartz function. 
For such cylindrical functionals we may define 
\begin{equation}\label{mall}
\frac{\partial F}{\partial\xi} [\xi] 
= \sum_{i=1}^N \partial_i f((\xi,\zeta_1),\dots,(\xi,\zeta_N)) \, \zeta_i \, ,
\end{equation}
and for suitable $\delta\xi:\R^{1+d}\to\R$ 
\begin{equation}\label{directional_derivative}
\delta F(\xi;\delta\xi)
= \Big(\frac{\partial F}{\partial\xi}[\xi], \delta\xi \Big)
= \int_{\R^{1+d}} dx\, \frac{\partial F}{\partial\xi}[\xi](x)\delta\xi(x) \, ; 
\end{equation}
we shall typically suppress the dependence on $\xi$ and $\delta\xi$ in the notation and simply write $\delta F$. 
We denote by $\|\cdot\|_*$ the Sobolev norm 
$\|\cdot\|_{\dot{H}^{2-\alpha-D/2}}$ defined in \eqref{eq:sob}, 
and the Malliavin-Sobolev space $\H$ is given by 
the completion of cylindrical functionals with respect to the stochastic norm 
\begin{equation}\label{MallSobNorm}
\|F\|_{\H} = \E^\frac{1}{2}\big( |F|^2
+ \big\|\tfrac{\partial F}{\partial\xi}\big\|_*^2 \big)^2 \, . 
\end{equation}
By $C(\H)$ we denote the linear space of continuous functions 
from $\R^{1+d}$ to $\H$, 
endowed with the topology given by the family of semi-norms 
$\sup_{y\in K} \|F(y)\|_{\H}$ for $K\subset\R^{1+d}$ compact, 
that grow at most polynomially at infinity, i.e.~there exists $n\in\N$ such that 
\begin{equation}\label{polygrowth}
\sup_{y\in\R^{1+d}} (1+|y|)^{-n} \, \|F(y)\|_\H <\infty \, .
\end{equation}

Next, we introduce the semigroup convolution, 
which we shall use as a convenient tool for weak formulations. 
We define the Schwartz function $\psi_t\in\S(\R^{1+d})$ through
\begin{equation}\label{psi}
\begin{split}
\partial_t \psi_t + (\partial_0-\Delta)(-\partial_0-\Delta)\psi_t &= 0, \\
\psi_{0} &= \text{Dirac at origin.}
\end{split}
\end{equation}
We will then typically write $f_t$ as shorthand for $f*\psi_t$. 
Note the semigroup property 
\begin{equation}\label{semigroup}
(f_t)_s = f_{t+s} \, , 
\end{equation}
and the scaling identity 
\begin{equation}\label{scaling_psi}
\psi_t(x) = (\sqrt[4]{t})^{-D} \psi_{t=1}\big(\tfrac{x_0}{(\sqrt[4]{t})^2} , \tfrac{x_1}{\sqrt[4]{t}} \, ,\dots,\tfrac{x_d}{\sqrt[4]{t}}\big) \, ,
\end{equation}
that we shall use frequently. 
From this scaling identity, since $\psi_{t=1}$ is a Schwartz function, 
we derive the moment bound 
\begin{equation}\label{momentbound}
\int_{\R^{1+d}} dz\, |\partial^\n \psi_t(y-z)| 
(\sqrt[4]{t} + |x-y| + |y-z|)^{\theta} 
\lesssim (\sqrt[4]{t})^{-|\n|} (\sqrt[4]{t}+|x-y|)^\theta \, ,
\end{equation}
for all $x,y\in\R^{1+d}$, $t>0$, $\theta>-D$ and $\n$. 

Equipped with this, we have all the tools to formulate Definition~\ref{def}, 
which makes the heuristic discussion from Section~\ref{sec:stable} precise. 

\begin{definition}\label{def}
Given an ensemble $\E$ on the space of tempered distributions $\S'(\R^{1+d})$, 
on which we denote realizations by $\xi$, 
we call the family of random fields $(\Pi,\G)=\{\Pi_{x\beta}(y),(\G_{xy})_\beta^\gamma\}_{x,y,\beta,\gamma}$ 
a \emph{model} for \eqref{spde} if the following holds 
almost surely:
\begin{itemize}[leftmargin=1.5em]
\item The realizations $\xi$ are related to the model by 
\begin{equation}\label{eh11}
(\partial_0-\Delta)\Pi_{x\,\beta=0}=\xi-\E\xi \, . 
\end{equation}
\item $\Pi_x$ takes values in $\T$, 
the purely polynomial part of $\Pi$ is given by \eqref{polypart}, 
its expectation is shift invariant in the sense of 
\begin{equation}\label{expectationshiftinvariant}
\E\Pi_{x\beta}(y)=\E\Pi_{x+z\,\beta}(y+z) \quad\text{for all }x,y,z\in\R^{1+d} \, ,
\end{equation}
and it satisfies for all $p<\infty$ and all multi-indices $\beta$ 
the stochastic estimate \eqref{estPi}. 
\item $\G_{xy}$ is an algebra endomorphism of $\R[[\z_k,\z_\n]]$ 
which satisfies \eqref{Gamma_zn} and \eqref{Gamma_zk}, 
the former for some $\pi^{(\n)}_{xy}\in\T$ subject to \eqref{rest_pin}. 
Furthermore, $\G$ is related to $\Pi$ and $\Pi^-:=(\partial_0-\Delta)P\Pi$ 
via the recentering \eqref{recenter} and \eqref{recenter-}, 
and it satisfies for all $p<\infty$ and all populated $\beta,\gamma$ the stochastic estimate \eqref{eh14}.
\item For singular multi-indices $|\beta|<2$ we have $\Pi_{x\beta}\in C(\H)$.
Moreover, for $\delta\xi\in\S$, $|\n|=1$ and $x\neq y$ there exist random 
$\d\pi^{(\n)}_{xy}\in Q\Ttilde$, 
where $Q$ denotes the projection to multi-indices satisfying $|\beta|<2$, 
such that $\d\G_{xy}$ defined by \eqref{dGamma} 
with $\d\pi^{(\0)}_{xy}=Q\delta\Pi_x(y)$ satisfies 
\begin{equation}
\lim_{z\rightarrow y}|y-z|^{-1} \, \mathbb{E} \big|\big(\delta\Pi_x-\delta\Pi_x(y)
-\d\G_{xy} Q \Pi_y\big)_\beta(z) \big| = 0 \, , \label{ho29} 
\end{equation}
and 
\begin{equation}\label{ho28}
\lim_{t\to0} \E\big|
\big(\delta\Pi^-_x\-\d\G_{xy} Q \Pi^-_{y} 
\-\sum_{k\ge 0}\mathsf{z}_k\Pi_x^k(y)
\Delta (\delta\Pi_x\-\d\G_{xy} Q \Pi_y)
\-\delta\xi\mathsf{1}\big)_{\beta t} (y) \big| =0 \, .
\end{equation}
Note that taking the Malliavin derivative of $\Pi^-_x$ and $\Delta\Pi_x$ 
is well-defined, since $\Pi_{x\beta}\in C(\H)$ implies 
$\Pi^-_{x\beta\,t},\, \Delta\Pi_{x\beta\,t} \in C(\H)$, 
see \cite[Lemma~A.2]{LOTT21}. 
\end{itemize}
\end{definition}

\noindent
To guarantee the existence of such a model, 
we make in line with \cite{LOTT21} the following assumption on the underlying noise ensemble. 

\begin{assumption}\label{ass}
The ensemble $\E$ is a probability measure on the space of tempered distributions $\S'(\R^{1+d})$, that w.l.o.g.~is centered, and 
\begin{itemize}[leftmargin=2em]
\item[i)] is invariant under space-time shifts $x\mapsto x+z$ for $z\in\R^{1+d}$, 
% i.e.~$\xi\overset{\textnormal{law}}{=}\xi(\cdot+h)$,
\item[ii)] is invariant under spatial reflections 
$x\mapsto (x_0,x_1,\dots,x_{i-1},-x_i,x_{i+1},\dots,x_d)$ for $i=1,\dots,d$,
% i.e.~$\xi\overset{\textnormal{law}}{=}\xi(R\cdot)$, 
\item[iii)] satisfies a spectral gap inequality: 
for $\alpha\in(\max\{0,1-D/4\},1)\setminus\Q$ there exists $C>0$ such that for all integrable cylindrical functionals $F$ holds 
\begin{equation}\label{sg}
\E(F-\E F)^2\leq C\,\E \big\|\tfrac{\partial F}{\partial \xi}\big\|_*^2\, . 
\end{equation}
In addition, we assume that the operator \eqref{mall} is closable 
w.r.t.~the topologies of $\E^\frac{1}{2}|\cdot|^2$ and $\E^\frac{1}{2}\|\cdot\|_*^2$.
\end{itemize}
\end{assumption}

\noindent
Let us briefly comment on the restriction of $\alpha$ in the spectral gap assumption. 
Note that the choice of $\|\cdot\|_*$ is made such that realizations $\xi$ 
will have annealed H\"older regularity $\alpha-2$. 
Hence the restriction $\alpha>0$ is dictated by subcriticality. 
The restriction $\alpha>1-D/4$ is necessary in reconstruction, see \eqref{kappa}, 
and is only present in a single spatial dimension $d=1$. 
Equation \eqref{spde} is not singular in the regime $\alpha>1$, 
thus no renormalization is required and simpler tools can be applied; 
see \cite{LO22} where the framework of the current work is applied to this simpler regime.
Finally, we consider $\alpha\not\in\Q$ to avoid integer exponents in 
Liouville-type arguments, 
which is crucial for our uniqueness result. 
This last restriction is much less dramatic when one is interested 
in model components $\Pi_\beta$ up to a certain threshold, 
e.g.~in the regime $\alpha\in(1/4,1/2)$ all statements remain true 
for all model components $\Pi_\beta$ with $|\beta|<2$, 
provided we avoid that $\alpha,2\alpha,\dots,7\alpha$ are integers. 

Under Assumption~\ref{ass}, the main result is as follows.

\begin{theorem}[Existence and Uniqueness]\label{thm}
Assume that the ensemble $\E$ satisfies Assumption~\ref{ass}. 
Then there exists a unique model $(\Pi,\G)$ for \eqref{spde}. 
\end{theorem}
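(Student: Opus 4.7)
The plan is to prove existence by invoking the construction from \cite{LOTT21} and then to prove uniqueness by induction over multi-indices along an ordering $\prec$ tailored to the triangularity properties of $\G$ and $\d\G$.

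For existence I would take the BPHZ-renormalized model built in \cite{LOTT21} for a smooth mollification $\xi_\tau$ of $\xi$, pass to the limit $\tau\to 0$ using the uniform stochastic estimates proved there for $\Pi$, $\G$, $\delta\Pi$ and $\d\G$, and check that all items of Definition~\ref{def} survive the limit. The only non-routine check is \eqref{ho28} and \eqref{ho29}: for smooth $\xi_\tau$ these identities hold pointwise as consequences of \eqref{magic} and \eqref{modelledness}, and the uniform bounds in $C(\H)$ together with the $\psi_t$-convolution (which is continuous in this topology) allow passage to the limit in the stated weak form.

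For uniqueness I would argue by induction on $\beta$ along a strict order $\prec$ chosen so that \emph{both} $\G_{xy}$ and $\d\G_{xy}$ are strictly triangular; as noted after \eqref{dGamma}, the homogeneity $|\cdot|$ alone does not suffice in the range $|\beta|<2$, so $\prec$ has to mix $|\cdot|$ with, for instance, the noise-homogeneity $[\cdot]$ from \eqref{noisehomogeneity}. The base case covers the purely polynomial multi-indices (fixed by \eqref{polypart}) and $\beta=0$ (fixed by \eqref{eh11} together with \eqref{estPi} and \eqref{expectationshiftinvariant} via a Liouville-type argument whose crucial input is $\alpha\notin\Q$). For the inductive step, suppose two models $(\Pi,\G)$ and $(\Pi',\G')$ agree on all $\gamma\prec\beta$. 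In the regime $|\beta|>2$ I invoke \eqref{recenter-diff}, which gives $\Pi^-_{x\beta}(y)=(\G_{xy}\Pi^-_y)_\beta(y)$ modulo a polynomial of degree $\leq|\beta|-2$; by strict triangularity of $\G_{xy}-\id$ with respect to $|\cdot|$ the right-hand side depends only on $\Pi^-_{y\gamma}$ with $\gamma\prec\beta$, hence is known. This fixes $\Pi^-_{x\beta}$, and then \eqref{linPDE} together with \eqref{estPi} determines $\Pi_{x\beta}$ uniquely; the new row of $\G$ is read off from \eqref{Gamma_zk} and the algebra-endomorphism property together with \eqref{recenter}.

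The regime $|\beta|<2$ is the main obstacle I anticipate, since strict triangularity of $\G_{xy}-\id$ fails and the argument above collapses. Here the whole point of the characterization via \eqref{ho28}--\eqref{ho29} comes into play: using triangularity of $\d\G$ along $\prec$, condition \eqref{ho29} determines $\d\pi^{(\n)}_{xy}$ for $|\n|=1$ in terms of lower-$\prec$ data and thus fixes $\d\G_{xy}Q$ on the populated part with $|\beta|<2$; substituting into \eqref{ho28} expresses $\delta\Pi^-_{x\beta}$ as a limit of $\psi_t$-convolutions of quantities already controlled by the inductive hypothesis. Combined with the PDE \eqref{linPDE} and \eqref{estPi} this pins down $\delta\Pi_{x\beta}$ as a function of $\xi$, and the spectral gap inequality \eqref{sg} upgrades equality of Malliavin derivatives to equality of random variables up to their expectation; \eqref{expectationshiftinvariant} together with \eqref{estPi} and a further Liouville argument exploiting $\alpha\notin\Q$ eliminates the expectation, closing the induction. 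The genuinely delicate point is the simultaneous construction of $\prec$ and the combinatorial verification that every occurrence of $\Pi_\gamma$, $\pi^{(\n)}_{xy\gamma}$ and $\d\pi^{(\n)}_{xy\gamma}$ appearing on the right-hand sides of \eqref{recenter-}, \eqref{magic} and \eqref{modelledness} indeed satisfies $\gamma\prec\beta$, so that the recursion actually closes in both regimes.
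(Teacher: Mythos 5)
Your uniqueness argument is essentially the paper's own proof (Proposition~\ref{unique}): induction along an order built from the noise homogeneity and the polynomial/$\z_0$ decorations rather than $|\cdot|$, base case from \eqref{polypart}, \eqref{eh11} and a Liouville lemma exploiting $\alpha\notin\Q$, the regime $|\beta|>2$ via \eqref{recenter-diff} and strict triangularity, and the regime $|\beta|<2$ via \eqref{ho29}, \eqref{ho28} and the spectral gap. One caveat on the ordering of steps there: the paper pins the \emph{expectation} at the level of $\Pi^-$ (Step~2a, using $\lim_{t\to\infty}\E\Pi^-_{x\beta t}(y)=0$ for $|\beta|<2$, the shift invariance \eqref{expectationshiftinvariant-} and the kernel $(\partial_0^2-\Delta^2)\psi$ integrating to zero), applies the spectral gap to $\Pi^-_{x\beta t}(y)$, and only then uses Liouville with \eqref{estPi} to pass from $\Pi^-$ to $\Pi$. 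Your variant, which first upgrades the uniqueness of $\delta\Pi^-_{x\beta}$ to that of $\delta\Pi_{x\beta}$ ``by the PDE \eqref{linPDE} and \eqref{estPi}'', needs a growth and vanishing bound on $\delta\Pi_x-\delta\widetilde\Pi_x$ to run Liouville, and \eqref{estPi} is a bound on $\Pi$, not on its Malliavin derivative; the paper's ordering avoids exactly this issue.

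The genuine gap is in the existence half. You propose to ``pass to the limit $\tau\to0$ using the uniform stochastic estimates proved there'', but uniform bounds such as \eqref{Pi} and \eqref{Gamma} do not produce a limit: they give at best tightness, and the paper explicitly discards the compactness route because the relation between a random variable and its Malliavin derivative is not preserved on the level of laws. What actually makes the limit exist is the Cauchy property in $\tau$, i.e.\ the increment estimates \eqref{Pi_cauchy}, \eqref{Gamma_cauchy}, \eqref{deltaPi_cauchy} and \eqref{dGamma_cauchy}; these are \emph{not} contained in \cite{LOTT21} but have to be established by re-running the entire inductive machinery with telescoping \eqref{telescope} and a careful bookkeeping of the $\epsilon$-losses \eqref{epsilon} in the integration steps — this is the content of Section~\ref{sec:cauchy} and constitutes the bulk of the existence proof. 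Likewise, passing \eqref{ho29} and \eqref{ho28} to the limit is not a soft continuity statement about $\psi_t$-convolution in $C(\H)$: it uses the quantitative bounds \eqref{eq:delta_pi_incr_generic} and \eqref{ho27} (the latter requiring the additional restriction \eqref{kappa2} on $\kappa$, absent from \cite{LOTT21}), the closability assumption in Assumption~\ref{ass}~iii) to identify $\frac{\partial}{\partial\xi}\Pi_{x\beta}(y)$ as the limit of $\frac{\partial}{\partial\xi}\Pi^{(\tau)}_{x\beta}(y)$, and the convergence of $\d\Gtau_{xy}$, for which \eqref{dGamma_cauchy} is precisely what is needed. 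Without these ingredients the existence part of your proposal does not close.
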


\noindent
We prove the uniqueness part of Theorem~\ref{thm} in Section~\ref{sec:unique}, 
and the existence part in Section~\ref{sec:exist}. 
We use this existence and uniqueness for the following convergence result, 
the proof of which is given in Section~\ref{sec:convergence}. 

\begin{theorem}[Convergence]\label{thm2}
Assume that a sequence of ensembles $(\E_n)_{n\in\N}$ satisfies Assumption~\ref{ass}, 
uniformly in $n$, 
and that $\E_n$ converges weakly to an ensemble $\E$ as $n\to\infty$. 

Then $\E$ satisfies Assumption~\ref{ass}, 
and the associated models $(\Pi^{(n)},\Gamma^{*(n)})$ converge componentwise in law with respect to the distance induced by 
\begin{align}
\vertiii{\Pi}_\beta 
&= \sup_{x\in\R^{1+d}} \sup_{x'\neq y'\in B_1(x)}
|x'-y'|^{-|\beta|+\kappa} (1+|x|)^{-2\kappa} \,| \Pi_{x'\beta}(y') | \, , 
\label{normModelPi} \\
\vertiii{\G}_\beta 
&= \sup_\gamma \sup_{x\in\R^{1+d}} \sup_{x'\neq y'\in B_1(x)} 
|x'-y'|^{-|\beta|+|\gamma|+\kappa} (1+|x|)^{-2\kappa} \,| (\G_{x'y'})_\beta^\gamma | \, , \label{normModelGamma}
\end{align}
to the model $(\Pi,\G)$ associated to $\E$, for any $\kappa>0$. 

Furthermore, the convergence holds in probability (resp.~in $L^p$ for all $p<\infty$), 
provided the underlying sequence of random variables $(\xi_n)_{n\in\N}$
converges in probability (resp.~in $L^p$ for all $p<\infty$) 
to a random variable $\xi$, 
and the corresponding laws satisfy Assumption~\ref{ass} uniformly in $n$. 
\end{theorem}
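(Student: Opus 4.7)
The plan is to proceed by a tightness-plus-uniqueness argument, with the uniqueness supplied by Theorem~\ref{thm}. First I would check that the limiting ensemble $\E$ inherits Assumption~\ref{ass}: translation and reflection invariance pass to weak limits by testing against Schwartz functions, while the spectral gap \eqref{sg} passes to the limit for any cylindrical $F$ because its left-hand side is a continuous functional of the law of $\xi$ (via a bounded continuous function of $(\xi,\zeta_1),\dots,(\xi,\zeta_N)$) and its right-hand side involves only the deterministic $\partial F/\partial\xi$, which does not depend on the ensemble at all. Closability of \eqref{mall} for $\E$ follows by an approximation argument combined with the uniform spectral gap. Having this, Theorem~\ref{thm} delivers a model $(\Pi,\G)$ for $\E$ and models $(\Pi^{(n)},\Gamma^{*(n)})$ for each $\E_n$; the proof of Theorem~\ref{thm} yields the estimates \eqref{estPi} and \eqref{eh14} with a constant depending on $\E_n$ only through the spectral gap constant, hence uniformly in $n$.

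The next step is tightness in the topology induced by $\vertiii{\cdot}_\beta$. The uniform-in-$n$ version of \eqref{estPi} and \eqref{eh14}, combined with Kolmogorov's continuity criterion on each ball $B_1(x)$ and the decay factor $(1+|x|)^{-2\kappa}$ built into \eqref{normModelPi}, \eqref{normModelGamma}, provides relative compactness of the laws of each componentwise coordinate $\Pi^{(n)}_{x'\beta}(y')$ and $(\Gamma^{*(n)}_{x'y'})_\beta^\gamma$ via an Arzelà--Ascoli-type argument (the small loss $\kappa$ in the Hölder exponent is what furnishes equicontinuity from mere boundedness). Passing to a joint subsequence along which $(\xi_n,\Pi^{(n)},\Gamma^{*(n)})$ converges in law, I would verify that every item of Definition~\ref{def} survives the limit. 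The equation \eqref{eh11}, the purely polynomial part \eqref{polypart}, the shift invariance \eqref{expectationshiftinvariant}, the algebraic properties \eqref{Gamma_zn}--\eqref{Gamma_zk}, and the recentering identities \eqref{recenter}, \eqref{recenter-} are either componentwise linear in $\xi$ or involve products of continuous objects (the presence of $\id-P$ in \eqref{recenter-} avoids singular products), so they all pass to the limit by the continuous mapping theorem; the stochastic estimates \eqref{estPi}, \eqref{eh14} pass by Fatou.

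The main obstacle is propagating the Malliavin characterization \eqref{ho29}, \eqref{ho28} to the limit, since the Malliavin derivative is not continuous under weak convergence of $\xi$. Here the plan is to exploit the uniform bounds on $\Pi^{(n)}_{x\beta}$ in $C(\H)$ that are produced inside the proof of Theorem~\ref{thm}: these give uniform-in-$n$ control of $\partial\Pi^{(n)}_{x\beta}/\partial\xi$ in the $\|\cdot\|_*$ norm, so by weak compactness and the closability assumed in Assumption~\ref{ass}~iii) one extracts a weak limit and identifies it with $\partial\Pi_{x\beta}/\partial\xi$. The random coefficients $\d\pi^{(\mathbf{n})(n)}_{xy}$ are controlled by the same uniform estimates (they are determined from $\delta\Pi^{(n)}_x$ via a linear algebraic relation on the finite-dimensional space $Q\widetilde{\mathsf{T}}^*$), so along a further subsequence they too converge in law to a limit $\d\pi^{(\mathbf{n})}_{xy}\in Q\widetilde{\mathsf{T}}^*$. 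Testing \eqref{ho29} and \eqref{ho28} against Schwartz $\delta\xi$ and a convolution kernel $\psi_t$, and using the uniform smallness of the right-hand sides as $z\to y$ resp.~$t\to 0$ (again a byproduct of the proof of Theorem~\ref{thm}), one obtains the corresponding identities for the limit. Uniqueness in Theorem~\ref{thm} then forces the subsequential limit to coincide with $(\Pi,\G)$, hence the whole sequence converges in law.

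Finally, the upgrade to convergence in probability, resp.~in $L^p$, is standard: since the limiting model is a measurable function of $\xi$ (by uniqueness), joint convergence in law of $(\xi_n,\Pi^{(n)},\Gamma^{*(n)})$ together with convergence of $\xi_n$ in probability (resp.~in $L^p$) upgrades the model convergence to the same mode, using that the uniform stochastic estimates \eqref{estPi}, \eqref{eh14} provide uniform integrability for any $p<\infty$.
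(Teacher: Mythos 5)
Your scheme --- extract a subsequential limit in law of $(\xi_n,\Pi^{(n)},\Gamma^{*(n)})$, verify Definition~\ref{def} for the limit, and conclude by the uniqueness of Theorem~\ref{thm} --- breaks down at exactly the step you call ``the main obstacle'', and the fix you sketch does not close it. The Malliavin derivative entering \eqref{ho29} and \eqref{ho28} is defined by closing the cylindrical operator \eqref{mall} in $L^2$ of the \emph{given} ensemble; the relation ``$G$ is the Malliavin derivative of $F$'' is therefore not a functional of the joint law of $(\xi,F)$ that is stable under weak convergence, and the closability in Assumption~\ref{ass}~iii) is a statement about one fixed measure. After Skorohod you have $\Pi^{(n)}_{x\beta}(y)\to\Pi_{x\beta}(y)$ almost surely, but $\frac{\partial}{\partial\xi}\Pi^{(n)}_{x\beta}(y)$ is a derivative with respect to $\xi_n$, closed in $L^2(\E_n)$; the uniform bounds let you extract a weak limit of these derivatives, yet without Gaussianity there is no integration-by-parts formula, and no cross-measure closability, that identifies this limit with $\frac{\partial}{\partial\xi}\Pi_{x\beta}(y)$ computed under $\E$. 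The same applies to the coefficients $\d\pi^{(\n)(n)}_{xy}$ and hence to the whole of \eqref{ho29}--\eqref{ho28}. This is precisely the difficulty the paper flags at the start of Section~3 (``on the level of the law of a random variable it seems to be tricky to preserve the relation between the random variable and its Malliavin derivative''), which is why the tightness-plus-identification route is deliberately avoided. A secondary inaccuracy: the right-hand side of \eqref{sg} is \emph{not} independent of the ensemble --- for cylindrical $F$, $\frac{\partial F}{\partial\xi}[\xi]$ depends on $\xi$ --- so even passing Assumption~\ref{ass}~iii) to the weak limit needs a truncation/continuity argument rather than the claim that it is deterministic.

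The paper's proof is instead quantitative and never passes a Malliavin derivative through a weak limit: Skorohod is used only to get $\xi_n\to\xi$ almost surely in $\S'(\R^{1+d})$, which Lemma~\ref{lem:convergence} upgrades to convergence in the annealed norm \eqref{annealed_xi}; then one writes $\Pi^{(n)}-\Pi=(\Pi^{(n)}-\Pi^{(n)(\tau)})+(\Pi^{(n)(\tau)}-\Pi^{(\tau)})+(\Pi^{(\tau)}-\Pi)$, controls the outer terms uniformly in $n$ by the Cauchy-in-$\tau$ estimates \eqref{Pi_cauchy}, \eqref{Gamma_cauchy} (which hold uniformly over ensembles satisfying Assumption~\ref{ass}), and controls the middle term for fixed $\tau$ by the deterministic continuity of $\xi\mapsto(\Pi^{(\tau)},\Gtau)$ from Lemma~\ref{lem:continuity}; Kolmogorov's criterion then converts the annealed bounds into convergence in the quenched norms \eqref{normModelPi}, \eqref{normModelGamma}. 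The limiting model is the one already built in Proposition~\ref{exist} as an $L^p(\E)$ limit of functionals of the single noise $\xi$, so no identification of limit Malliavin derivatives across changing measures is ever needed. Unless you can supply such an identification (e.g.\ a characterization of the derivative that is stable under weak convergence, which Assumption~\ref{ass} alone does not provide), your subsequential limit cannot be shown to satisfy the last item of Definition~\ref{def}, and the appeal to Theorem~\ref{thm} collapses; the remaining parts of your argument (tightness from the uniform \eqref{estPi}, \eqref{eh14}, passage of the algebraic identities, and the upgrade to convergence in probability or $L^p$) are in themselves reasonable.
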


As a simple application of the uniqueness of models, 
we obtain the following covariance properties of the model. 

\begin{corollary}\label{lem:covariant}
Assume that $(\Pi,\G)$ is a model for \eqref{spde}. 
If the ensemble $\E$ is invariant under the transformation $T$, 
where $T$ denotes either the space time shift $x\mapsto x+z$ for some $z\in\R^{1+d}$, 
or the spatial orthogonal transformation $x\mapsto(x_0,\bar{O}(x_1,\dots,x_d))$ 
for some orthogonal $\bar{O}\in\R^{d\times d}$, 
then almost surely  
\begin{equation}
\Pi_{Tx}[\xi](Ty) = \Pi_x[\xi(T\cdot)](y) \, , \quad 
\G_{Tx\,Ty}[\xi] = \G_{xy}[\xi(T\cdot)] \, . 
\end{equation}
Furthermore, if the ensemble $\E$ is invariant under 
$\xi\mapsto s^{2-\alpha}\xi(S\cdot)$, 
where $S$ denotes the anisotropic rescaling 
$x\mapsto(s^2x_0,sx_1,\dots,sx_d)$ for $s>0$, 
then almost surely
\begin{align}
s^{-|\beta|}\Pi_{Sx\,\beta}[\xi](Sy) 
&= \Pi_{x\beta}[s^{2-\alpha}\xi(S\cdot)](y) \, , \label{scalecovariantPi} \\
s^{-|\beta|+|\gamma|} \big(\G_{Sx\,Sy}[\xi]\big)_\beta^\gamma 
&= \big(\G_{xy}[s^{2-\alpha}\xi(S\cdot)]\big)_\beta^\gamma \, . \label{scalecovariantGamma}
\end{align}
\end{corollary}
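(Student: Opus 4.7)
The plan is to appeal to the uniqueness part of Theorem~\ref{thm}: for each transformation $T$ under consideration, I will construct a candidate pair from the given model, verify that it again satisfies Definition~\ref{def} for the ensemble $\E$, and conclude that it coincides almost surely with $(\Pi,\G)$---which after renaming $\xi\leftrightarrow\xi(T^{-1}\cdot)$ is precisely the claimed covariance identity.

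For $T$ a spacetime shift or spatial orthogonal transformation I would take
\begin{equation*}
\widetilde\Pi_x[\xi](y) := \Pi_{Tx}[\xi(T^{-1}\cdot)](Ty),\qquad
\tG_{xy}[\xi] := \G_{Tx\,Ty}[\xi(T^{-1}\cdot)] \, ,
\end{equation*}
while for the scaling $T=S$ the correct candidate is
\begin{equation*}
\widetilde\Pi_{x\beta}[\xi](y) := s^{-|\beta|}\Pi_{Sx\,\beta}[s^{-(2-\alpha)}\xi(S^{-1}\cdot)](Sy) \, ,
\end{equation*}
together with the analogous formula for $\tG$. The $T$-invariance of $\E$ makes $\xi(T^{-1}\cdot)$ (respectively $s^{-(2-\alpha)}\xi(S^{-1}\cdot)$) equidistributed with $\xi$, so every stochastic bound for $\Pi$ transfers to $\widetilde\Pi$ after a change of variables. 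Relation \eqref{eh11} follows from the chain rule for $(\partial_0-\Delta)$: the operator commutes with shifts and spatial orthogonals, and produces a factor $s^2$ under $S$ that cancels $s^{-\alpha}\cdot s^{-(2-\alpha)}$. The purely polynomial components \eqref{polypart} are recovered from $(Ty-Tx)^\n=(y-x)^\n$ for shifts, or $(Sy-Sx)^\n=s^{|\n|}(y-x)^\n$ under $S$, the latter combining with $s^{-|\beta|}$ at $\beta=e_\n$. The algebra endomorphism property, the relations \eqref{Gamma_zn}, \eqref{Gamma_zk}, and the recenterings \eqref{recenter}, \eqref{recenter-} are all inherited componentwise from $\G$, while the estimates \eqref{estPi} and \eqref{eh14} follow from a change of variables and $|Tx-Ty|=|x-y|$ for the isometric cases, respectively $|Sx-Sy|=s|x-y|$ for the scaling.

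The main difficulty will be the Malliavin characterization, i.e.~\eqref{ho29} and \eqref{ho28}. I would first derive how $\partial/\partial\xi$ acts on the composition $\xi\mapsto\xi(T^{-1}\cdot)$ at the level of cylindrical functionals: in the isometric case (Jacobian one) the duality pairing is preserved and one finds
\begin{equation*}
\tfrac{\partial\widetilde\Pi_x(y)}{\partial\xi}[\xi](u)
= \tfrac{\partial\Pi_{Tx}(Ty)}{\partial\xi}[\xi(T^{-1}\cdot)](Tu) \, ;
\end{equation*}
substituting this into \eqref{ho29} and \eqref{ho28} and changing the integration variable $z\to Tz$ (using that $\psi_t$ is invariant under shifts and spatial orthogonals) reduces the conditions for $(\widetilde\Pi,\tG)$ to those for $(\Pi,\G)$ evaluated at the transformed points. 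In the scaling case one additionally has to carry the Jacobian $s^D$ of $S^{-1}$, the noise prefactor $s^{-(2-\alpha)}$, the $s^{2-\alpha-D}$-scaling of the Sobolev norm $\|\cdot\|_*$ from \eqref{eq:sob}, and the parabolic scaling of $\psi_t$ from \eqref{scaling_psi}; these combine with $s^{-|\beta|}$ in exactly the way dictated by the homogeneity \eqref{homogeneity} so that the defining conditions for $\widetilde\Pi$ match those for $\Pi$. Once this is verified, together with the continuity $\widetilde\Pi_{x\beta}\in C(\H)$ for $|\beta|<2$, Theorem~\ref{thm} closes the argument.
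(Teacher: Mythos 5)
Your proposal is correct and follows essentially the same route as the paper: define the transformed candidate $(\widetilde\Pi,\tG)$ (with the homogeneity factors $s^{-|\beta|}$, $s^{-|\beta|+|\gamma|}$ in the scaling case), verify each item of Definition~\ref{def} — including the Malliavin conditions \eqref{ho29}, \eqref{ho28} via the transformation behaviour of $\psi_t$ and of the derivative direction — and conclude by the uniqueness part of Theorem~\ref{thm}. The only cosmetic difference is that you pre-compose with $T^{-1}$ so the candidate is a model for $\xi$ itself, whereas the paper checks that it is a model for the transformed noise $\widetilde\xi$, whose law coincides with $\E$ by assumption; these are equivalent formulations of the same argument.
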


\noindent
Let us emphasize that Corollary~\ref{lem:covariant} makes a couple of arguments 
from the heuristic discussion in Section~\ref{sec:model} rigorous, 
in particular \eqref{scaling} and \eqref{translation}. 
Moreover, it allows to appeal to invariances of the noise that are not preserved under approximation, 
where we think in particular of scaling invariance. 
We also want to point out that the listed transformations should rather be seen as exemplary, 
and many other transformations lead to analogous covariant transformations of the model. 

\begin{proof}[Proof of Corollary~\ref{lem:covariant}]
We will only give the argument for the scaling covariance. 
The statements for shift- and reflection covariance and for covariance under orthogonal transformations follow similarly. 
Let $(\Pi,\G)$ be the model associated to $\E$, and define
$\widetilde\Pi_{x\beta}(y):=s^{-|\beta|}\Pi_{Sx\,\beta}(Sy)$ 
and $(\tG_{xy})_\beta^\gamma = s^{-|\beta|+|\gamma|}(\G_{Sx\,Sy})_\beta^\gamma$. 
By the uniqueness part of Theorem~\ref{thm}, 
it is enough to show that $(\widetilde\Pi,\tG)$ satisfies 
Definition~\ref{def} with $\widetilde\xi$ given by $s^{2-\alpha}\xi(S\cdot)$, where we note that the corresponding $\widetilde\E$ coincides with $\E$ by assumption. 
The first item \eqref{eh11} follows from 
\begin{equation}
(\partial_0-\Delta)\widetilde\Pi_{x\,\beta=0}
= s^{2-\alpha} (\partial_0-\Delta)\Pi_{Sx\,\beta=0}(S\cdot)
= s^{2-\alpha} \xi(S\cdot) - s^{2-\alpha} \E\xi(S\cdot) \, .
\end{equation}
For purely polynomial multi-indices we observe 
$\widetilde\Pi_{xe_\n}(y) 
= s^{-|\n|} (Sy-Sx)^\n = (y-x)^\n$, 
and the shift invariance \eqref{expectationshiftinvariant} of the expectation of $\widetilde\Pi$ follows immediately from the one of $\Pi$. 
For the estimate \eqref{estPi} we note that 
\begin{equation}
\E^\frac{1}{p}|\widetilde\Pi_{x\beta}(y)|^p
= s^{-|\beta|} \E^\frac{1}{p}|\Pi_{Sx\,\beta}(Sy)|^p
\lesssim s^{-|\beta|} |Sx-Sy|^{|\beta|}
= |x-y|^{|\beta|} \, . 
\end{equation}
Clearly, $\tG_{xy}$ is still an algebra endomorphism, 
and \eqref{Gamma_zn} and \eqref{Gamma_zk} follow from the corresponding properties of $\G_{xy}$. 
The transformation \eqref{recenter} of $\widetilde\Pi$ follows by 
\begin{equation}
(\tG_{xy}\widetilde\Pi_y)_\beta 
= \sum_\gamma s^{-|\beta|+|\gamma|} (\G_{Sx\,Sy})_\beta^\gamma \, 
s^{-|\gamma|} \Pi_{Sy\,\gamma} 
= s^{-|\beta|} (\G_{Sx\,Sy}\Pi_{Sy})_\beta
\end{equation}
from the corresponding transformation of $\Pi$. 
Similarly, one can see the transformation \eqref{recenter-} of $\widetilde\Pi^-$ and the stochastic estimate \eqref{eh14}. 
Malliavin differentiability of $\widetilde\Pi_{x\beta}(y)$ is inherited from $\Pi$, 
and we define $\d\tG_{xy}:=s^{-|\beta|+|\gamma|}\d\G_{Sx\,Sy}$. 
With this, we obtain 
\begin{align}
&|y-z|^{-1} \E\big| \big( \delta\widetilde\Pi_x 
- \delta\widetilde\Pi_x(y) - \d\tG_{xy}Q\widetilde\Pi_y \big)_\beta (z) \big| \\
&= s^{1-|\beta|} |Sy-Sz|^{-1}  \E\big| \big( \delta\Pi_{Sx} 
- \delta\Pi_{Sx}(Sy) - \d\G_{Sx\,Sy}Q\Pi_{Sy} \big)_\beta (Sz) \big| \, ,
\end{align}
which for fixed $s>0$ converges to $0$ as $z\to y$. 
For the last item \eqref{ho28}, it is enough to note that 
for any distribution $f$ we have by duality and the scaling property \eqref{scaling_psi} that $(f(S\cdot))_t(y) = f_{st}(Sy)$. 
\end{proof}

\begin{remark}[Other equations]\label{rem:other_equations}
We note that in Definition~\ref{def}, 
only the recentering \eqref{recenter-} of $\Pi^-$, the order of vanishing in \eqref{ho29}, and \eqref{ho28} are equation specific. 
Working for instance with a multiplicative stochastic heat equation,
\begin{equation}
(\partial_0-\Delta) u = a(u)\xi \, ,
\end{equation}
we derive from the same principles as in Section~\ref{sec:model} a counterterm of the form $c[a(\cdot+u)]$, 
and the corresponding $\Pi_x^-$ is given by 
\begin{equation}
\Pi_x^-=\sum_{k\geq0}\z_k\Pi_x^k\xi-\sum_{l\geq0}\tfrac{1}{l!}\Pi_x^l (D^{(\0)})^l c \, .
\end{equation}
Accordingly, \eqref{recenter-} and \eqref{ho28} have to be replaced by the simpler 
\begin{equation}
\Pi_{x}^- = \G_{xy}\Pi_y^- 
\end{equation}
and 
\begin{equation}
\lim_{t\to0} \E\big|\big(\delta\Pi^-_x-\d\G_{xy} Q \Pi^-_{y} 
-\sum_{k\ge 0}\mathsf{z}_k\Pi_x^k(y)\delta\xi \big)_{\beta t}(y)\big| = 0 \, ,
\end{equation}
and \eqref{ho29} stays the same. 
For a version of the stochastic thin-film equation, 
\begin{equation}
(\partial_0+\Delta^2) u = \nabla\cdot(a(u)\nabla\Delta u + b(u)\xi) \, ,
\end{equation}
where the noise $\xi$ takes now values in $\R^d$, 
is stationary, invariant under $\xi \mapsto \bar{O}^T\xi(O\cdot)$ with $Ox=(x_0,\bar{O}(x_1,\dots,x_d))$ for orthogonal $\bar{O}\in\R^{d\times d}$, and $a,b$ are scalar-valued nonlinearities, 
we derive from similar principles a counterterm of the form $\nabla\cdot(c[a(\cdot+u), b(\cdot+u)] \nabla u)$ for a scalar valued functional $c$, 
and $\Pi^-_x$ takes the form of 
\begin{equation}
\Pi^-_x 
= \nabla\cdot \Big( \sum_{k\geq0} \z_k^{(a)} \Pi_x^k \nabla\Delta\Pi_x 
+ \sum_{\ell\geq0} \z_\ell^{(b)} \Pi_x^\ell \xi 
- \sum_{l\geq0} \tfrac{1}{l!} \Pi_x^l (D^{(\0)})^l c \nabla\Pi_x \Big) \, . 
\end{equation}
Here, $\z_k^{(a)}$ and $\z_\ell^{(b)}$ denote the coordinate functionals 
of the nonlinearities $a$ and $b$, respectively, 
and $D^{(\0)}$ is now given by 
$\sum_{k\geq0}(k+1)\z_{k+1}^{(a)}\partial_{\z_k^{(a)}} 
+ \sum_{\ell\geq0}(\ell+1)\z_{\ell+1}^{(b)}\partial_{\z_\ell^{(b)}}$. 
In this case, \eqref{recenter-} and \eqref{ho28} are in line with \cite{GT23} given by  
\begin{equation}
\Pi^-_x = \G_{xy} \Pi^-_y 
+ P \sum_{k\geq0} \z_k^{(a)} \big(\G_{xy}(\id-P)\Pi_y + \Pi_x(y)\big)^k \, 
\G_{xy}(\id-P)\nabla\Delta\Pi_y \, ,
\end{equation}
and 
\begin{align}
\lim_{t\to0} \E\big|\big(\delta\Pi^-_x-\d\G_{xy} Q \Pi^-_{y} 
&-\sum_{k\geq 0}\z_k^{(a)}\Pi_x^k(y)
\nabla\Delta (\delta\Pi_x-\d\G_{xy} Q \Pi_y) \\
&-\sum_{\ell\geq0}\z_\ell^{(b)}\Pi_x^\ell(y)\delta\xi\big)_{\beta t}(y) \big| = 0\, ,
\end{align}
and \eqref{ho29} is replaced by the higher order vanishing 
\begin{equation}
\lim_{z\to y} |y-z|^{-2} \, \mathbb{E} \big|\big(\delta\Pi_x-\delta\Pi_x(y)
-\d\G_{xy} Q \Pi_y\big)_\beta(z) \big| = 0 \, .
\end{equation}
\end{remark}

The following lemma collects several consequences of Definition~\ref{def} that will be useful later on. 

\begin{lemma}\label{lem:consequence}
Assume that $(\Pi,\G)$ is a model for \eqref{spde}. 
Then 
\begin{itemize}
\item[\rm i)] the expectation of $\Pi^-$ is shift invariant in the sense of 
\begin{equation}\label{expectationshiftinvariant-}
\E\Pi^-_{x\beta t}(y)=\E\Pi^-_{x+z \, \beta t}(y+z)
\quad\text{for all }x,y,z\in\R^{1+d}, \ t>0 \, , 
\end{equation}
and $\Pi^-$ satisfies for all multi-indices $\beta$ and $p<\infty$ 
\begin{equation}\label{Pi-} 
\mathbb{E}^\frac{1}{p}|\Pi^-_{x\beta t}(y)|^p 
\lesssim (\sqrt[4]{t})^{\alpha-2}(\sqrt[4]{t}+|x-y|)^{|\beta|-\alpha} \, . 
\end{equation}
\item[\rm ii)] $\G$ is triangular with respect to the homogeneity in the sense of \eqref{triGamma_hom}. 
Furthermore, $\G$ is transitive 
\begin{equation}\label{transitive}
\G_{xy}\G_{yz}=\G_{xz} \, , 
\end{equation}
and in addition to the mapping property \eqref{Gamma_preserves}, 
$\G_{xy}$ respects $\Tbar$ in the following sense, 
\begin{equation}\label{Gamma_preserves_Tbar}
(\G_{xy})_{e_\n}^\gamma \neq 0
\quad\implies\quad 
\gamma = e_\m \text{ for some }\m\neq\0 \, . 
\end{equation}
\item[\rm iii)] $\Pi$ and $\G$ are (annealed) H\"older continuous, 
more precisely, for every $\beta$ there exists $\epsilon>0$ such that 
for all $x',x'',y',y''\in\R^{1+d}$, $p<\infty$ and $\gamma$
\begin{align}
\E^\frac{1}{p} |\Pi_{x'\beta}(y')-\Pi_{x''\beta}(y'')|^p 
&\lesssim |x'-x''|^\epsilon (|x'-x''|+|x'-y'|)^{|\beta|-\epsilon}\\
&\quad+ |y'-y''|^\alpha (|y'-y''|+|x''-y''|)^{|\beta|-\alpha} \, , \\
\E^\frac{1}{p} |(\G_{x'y'}-\G_{x''y''})_\beta^\gamma|^p
&\lesssim |x'-x''|^\epsilon (|x'-x''|+|x'-y'|)^{|\beta|-|\gamma|-\epsilon}\\
&\quad+ |y'-y''|^\epsilon (|y'-y''|+|x''-y''|)^{|\beta|-|\gamma|-\epsilon} \, ,
\end{align}
with the understanding that all exponents are non-negative.
\end{itemize}
\end{lemma}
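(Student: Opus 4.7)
The plan is to first establish part (ii), which is purely algebraic given the data of a model, and then bootstrap to parts (i) and (iii) using the recentering formulas \eqref{recenter}, \eqref{recenter-} together with the stochastic bounds \eqref{estPi} and \eqref{eh14}.

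For (ii), I would prove triangularity \eqref{triGamma_hom} by exploiting that $\G_{xy}$ is an algebra endomorphism, so it suffices to track the effect on the generators $\z_k$ and $\z_\n$. Using the explicit form \eqref{Gamma_zk}, each monomial appearing in $\G_{xy}\z_k$ is of the shape $\z_{k+l}\z^{\beta'_1}\cdots\z^{\beta'_l}$ with $\beta'_i$ populated, hence of total homogeneity $\geq|e_k|$ with equality only at $l=0$; this relies on the near-additivity $|\beta+\beta'|=|\beta|+|\beta'|-\alpha$ and the fact that $|\beta'|\geq\alpha$ for any populated $\beta'$. For $\z_\n$ the restriction \eqref{rest_pin} yields the same conclusion directly, and multiplicativity propagates triangularity to all monomials. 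Property \eqref{Gamma_preserves_Tbar} follows from a similar generator-count: producing a single-generator monomial $\z_\m$ in $\G_{xy}\z^\gamma$ forces $\gamma=e_\n$ for some $\n$, because $\G_{xy}\z_k$ has no purely polynomial component. For transitivity \eqref{transitive}, I would verify agreement of $\G_{xy}\G_{yz}$ and $\G_{xz}$ on generators: on $\z_k$ via a direct binomial computation using $\Pi_x(z)=\G_{xy}\Pi_y(z)+\Pi_x(y)$; on $\z_\n$ one first extracts the explicit formula $(\G_{xy})^{e_\m}_{e_\n}=\binom{\n}{\m}(y-x)^{\n-\m}$ from \eqref{Gamma_preserves_Tbar} and the recentering of $\Pi_{xe_\n}$, checks the multinomial composition on the purely polynomial sector, and upgrades to arbitrary $\beta$ by a recursion on homogeneity using triangularity and the recentering of $\Pi_z$.

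For (i), the shift invariance \eqref{expectationshiftinvariant-} is immediate because $\Pi^-_{x\beta t}$ is obtained from $\Pi_x$ by the translation-invariant linear operation $f\mapsto((\partial_0-\Delta)Pf)*\psi_t$, so \eqref{expectationshiftinvariant} transfers. For the stochastic bound \eqref{Pi-}, I would combine the recentering \eqref{recenter-} with \eqref{recenter-diff} to decompose
\begin{equation*}
\Pi^-_{x\beta t}(y)=\sum_\gamma(\G_{xy})^\gamma_\beta\,\Pi^-_{y\gamma t}(y)+q_{\beta t}(y),
\end{equation*}
with $q_\beta$ a polynomial of degree $\leq|\beta|-2$. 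The $\G$-factor is controlled by \eqref{eh14}, and a diagonal estimate $\E^{1/p}|\Pi^-_{y\gamma t}(y)|^p\lesssim(\sqrt[4]{t})^{|\gamma|-2}$ follows from $\Pi^-_y=(\partial_0-\Delta)P\Pi_y$ combined with \eqref{estPi} and \eqref{momentbound}. Using $|\gamma|\geq\alpha$ (populated) and $|\gamma|\leq|\beta|$ (triangularity), a H\"older-type interpolation then yields the advertised $(\sqrt[4]{t})^{\alpha-2}(\sqrt[4]{t}+|x-y|)^{|\beta|-\alpha}$. The polynomial $q_{\beta t}(y)$ is treated by the same tools applied to the explicit second summand of \eqref{recenter-}, where the projection $\id-P$ guarantees the needed smoothness of the $\Pi_y$-factors.

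For (iii), the recentering provides the key decompositions: the $y$-increment $\Pi_{x'\beta}(y')-\Pi_{x'\beta}(y'')=\sum_\gamma(\G_{x'y''})^\gamma_\beta\Pi_{y''\gamma}(y')$ yields H\"older exponent $\alpha$ after combining \eqref{eh14}, \eqref{estPi}, triangularity and $|\gamma|\geq\alpha$; the $x$-increment $\Pi_{x'\beta}(y'')-\Pi_{x''\beta}(y'')=\sum_{\gamma\neq\beta}(\G_{x'x''})^\gamma_\beta\Pi_{x''\gamma}(y'')+\Pi_{x'\beta}(x'')$ yields H\"older exponent $\epsilon=\min\{|\beta|-|\gamma|:\gamma\text{ populated},\,\gamma\neq\beta,\,|\gamma|<|\beta|\}>0$, strictly positive by finiteness of the relevant $\gamma$-set together with the irrationality of $\alpha$. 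The $\G$-increment is reduced via transitivity to $\G_{x'y''}(\G_{y''y'}-\id)$ and $(\G_{x'x''}-\id)\G_{x''y''}$, each bounded analogously. I expect the main obstacle to be the polynomial remainder in (i): although \eqref{recenter-diff} provides the correct degree bound, extracting the sharp small-$t$ behaviour $(\sqrt[4]{t})^{\alpha-2}$ of $q_{\beta t}(y)$ requires a delicate tracking of the factors in the second summand of \eqref{recenter-}, which is ultimately tied to the regularity of $\xi$ itself via \eqref{eh11}.
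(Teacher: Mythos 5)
Your proposal is correct in substance, but it takes a partly different route from the paper, most visibly in part (i). For \eqref{Pi-} the paper never touches \eqref{recenter-}: it writes $\Pi^-_{x\beta t}(y)=\int dz\,(\partial_0-\Delta)\psi_t(y-z)\,(P\Pi_x)_\beta(z)$, inserts the recentering \eqref{recenter} of $\Pi$ itself, and uses that $(\partial_0-\Delta)\psi_t$ has vanishing mean to kill the constant $\Pi_x(y)$; then \eqref{estPi}, \eqref{eh14}, triangularity and \eqref{momentbound} give the bound with no polynomial remainder at all. Your decomposition via \eqref{recenter-} and \eqref{recenter-diff} also works, but your closing worry is misplaced: the remainder $q_\beta$ is a polynomial of degree $\leq|\beta|-2$ whose coefficients are products of purely polynomial entries of $\G_{xy}$ and point values $\Pi_x(y)$, hence controlled by \eqref{eh14} and \eqref{estPi}, so that $\E^{1/p}|q_{\beta t}(y)|^p\lesssim\sum_a|x-y|^a(\sqrt[4]{t})^{|\beta|-2-a}\leq(\sqrt[4]{t})^{\alpha-2}(\sqrt[4]{t}+|x-y|)^{|\beta|-\alpha}$; no delicate tracking and no recourse to \eqref{eh11} is needed.

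For (ii) the paper simply cites \cite{OST23} for \eqref{triGamma_hom} and \cite{LOTT21} for transitivity, and proves \eqref{Gamma_preserves_Tbar} by multiplicativity from the length-one cases together with the vanishing of the constant components $(\G_{xy})_0^{e_k}=0=(\G_{xy})_0^{e_\m}$. Your self-contained generator arguments are sound (near-additivity $|\beta+\beta'|=|\beta|+|\beta'|-\alpha$ plus $|\beta'|\geq\alpha$ gives triangularity, and the Vandermonde computation settles transitivity on the $\z_k$-sector), but two points need care: for \eqref{Gamma_preserves_Tbar}, the observation that $\G_{xy}\z_k$ has no purely polynomial component does not by itself exclude $\gamma$ built from two or more polynomial generators --- there you also need $(\G_{xy})_0^{e_\m}=0$, which follows from \eqref{rest_pin}; and transitivity on the $\z_\n$-sector (the identity $\pi^{(\n)}_{xz}=\pi^{(\n)}_{xy}+\G_{xy}\pi^{(\n)}_{yz}$, your ``recursion on homogeneity'') is the only genuinely nontrivial step and is left as a sketch, so making it precise essentially amounts to reproving the cited result rather than shortening the argument. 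Part (iii) matches the paper's proof (the identities \eqref{eh01} and \eqref{eh02}, H\"older's inequality, \eqref{estPi}, \eqref{eh14}, triangularity and local finiteness of homogeneities), up to the order in which you split the increment; note that irrationality of $\alpha$ is not needed there, since strictness of $|\gamma|<|\beta|$ already comes from \eqref{triGamma_hom}.
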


\begin{proof}
The stationarity \eqref{expectationshiftinvariant-} is an immediate consequence of 
$\Pi^-_{x\beta} = (\partial_0-\Delta)(P\Pi_x)_\beta$ 
and the corresponding shift invariance \eqref{expectationshiftinvariant} of $\E\Pi_{x\beta}(y)$. 
The estimate on $\Pi^-$ may be seen as follows. 
Since $(\partial_0-\Delta)\psi_t$ integrates to $0$ on $\R^{1+d}$, 
we obtain from the recentering \eqref{recenter} that 
\begin{align}
\Pi^-_{x\beta t}(y) 
&= \int_{\R^{1+d}} dz\, (\partial_0-\Delta)\psi_t(y-z) (P\Pi_x)_\beta(z) \\
&= \int_{\R^{1+d}} dz\, (\partial_0-\Delta)\psi_t(y-z) (P\G_{xy}\Pi_y)_\beta(z) \, .
\end{align}
Applying $\E^\frac{1}{p}|\cdot|^p$, the triangle inequality, 
\eqref{estPi} and \eqref{eh14}, this yields
\begin{equation}
\E^\frac{1}{p}| \Pi^-_{x\beta t}(y) |^p
\lesssim \int_{\R^{1+d}}dz\, |(\partial_0-\Delta)\psi_t(y-z)| 
\sum_{\gamma} |x-y|^{|\beta|-|\gamma|} |y-z|^{|\gamma|} \, , 
\end{equation}
where the sum is over finitely many $\gamma$ satisfying $|\gamma|\leq|\beta|$. 
By the moment bound \eqref{momentbound}, this is estimated by 
\begin{equation}
\sum_{\gamma} (\sqrt[4]{t})^{|\gamma|-2} |x-y|^{|\beta|-|\gamma|} \, , 
\end{equation}
which by $|\gamma|\geq\alpha$ is further bounded by the right-hand side of \eqref{Pi-}. 

We turn to ii). 
For a proof of \eqref{triGamma_hom} see \cite[Lemma~3]{OST23}. 
The transitivity is proven in \cite[Proposition~5.4]{LOTT21}, 
at least on $\T$; it immediately generalizes to $\R[[\z_k,\z_\n]]$ by multiplicativity. 
To see \eqref{Gamma_preserves_Tbar}, we first note that as a consequence of 
\eqref{Gamma_zn}, \eqref{Gamma_zk} and \eqref{rest_pin}, 
it is satisfied for multi-indices $\gamma$ of length one, 
i.e.~$\gamma=e_k,e_\m$. 
The general case follows by multiplicativity from 
$(\G_{xy})^{e_k}_0 = 0 = (\G_{xy})^{e_\m}_0$, which is again a 
consequence of \eqref{Gamma_zn}, \eqref{Gamma_zk} and \eqref{rest_pin}. 

We turn to the H\"older continuity iii). 
Using the recentering \eqref{recenter} we obtain
\begin{equation}\label{eh01}
\Pi_{x'}(y')-\Pi_{x''}(y'') 
= (\G_{x' x''}-\id)\Pi_{x''}(y')+\Pi_{x'}(x'')+\G_{x''y''}\Pi_{y''}(y') \, , 
\end{equation} 
which by H\"older's inequality together with the estimates 
\eqref{estPi} of $\Pi$ and \eqref{eh14} of $\G$ and the triangularity \eqref{triGamma_hom} yields 
\begin{align}
\E^\frac{1}{p} |\Pi_{x'\beta}(y')-\Pi_{x''\beta}(y'')|^p 
&\lesssim \sum_{|\gamma|<|\beta|} 
|x'-x''|^{|\beta|-|\gamma|}|x''-y'|^{|\gamma|}
+ |x'-x''|^{|\beta|} \\
&\quad+ \sum_{|\gamma|\leq|\beta|} |x''-y''|^{|\beta|-|\gamma|}|y'-y''|^{|\gamma|} \, , 
\end{align}
where we note that the sums over $\gamma$ are finite. 
In the first sum we use this finiteness and $|\gamma|<|\beta|$, 
to choose $\epsilon>0$ small enough such that $|\beta|-|\gamma|\geq\epsilon$. 
In the second sum we use that the homogeneity is bounded below by $\alpha$, which implies the desired continuity for $\Pi$. 
The argument for $\G$ is similar, based on 
\begin{equation}\label{eh02}
\G_{x'y'}-\G_{x''y''} 
= (\G_{x'x''}-\id)\G_{x''y'} - \G_{x''y'}(\G_{y'y''}-\id) \, , 
\end{equation}
which is a consequence of the transitivity \eqref{transitive}. 
\end{proof}

\begin{remark}[Comparison to \cite{Hai14}]\label{rem:comparison}
Let us briefly compare our definition of model to Hairer's \cite{Hai14}. 
Here, the model is a random object, which is referred to as random model in Hairer's language. 
The ``anchoring'' \eqref{eh11} and \eqref{polypart} is identical in both settings, 
as well as the estimate \eqref{estPi} of $\Pi$ apart from the fact that here it is formulated in 
a probabilistic as opposed to a deterministic way.
The shift invariance \eqref{expectationshiftinvariant} of $\E\Pi$ 
is assumed to obtain the corresponding invariance \eqref{expectationshiftinvariant-} of $\E\Pi^-$. 
By the BPHZ-choice of renormalization, see \eqref{bphz}, 
this expresses the fact that $c$ has no space-time dependence. 
The recentering \eqref{recenter} and \eqref{recenter-} and the estimate \eqref{eh14} of $\G$ 
coincide (again up to probabilistic/deterministic formulations). 
The properties \eqref{Gamma_zn}, \eqref{Gamma_zk} and multiplicativity of $\G$ are mainly made 
such that Lemma~\ref{lem:consequence}~ii) holds, which is what we will use in the sequel, 
and all of which is also present in \cite{Hai14} (note that \eqref{Gamma_preserves_Tbar} 
corresponds to $\Gamma\mathsf{\bar{T}}\subset\mathsf{\bar{T}}$). 
Furthermore, \eqref{Gamma_zn}, \eqref{Gamma_zk} and multiplicativity imply 
that $(\G_{xy})_\beta^\gamma$ for $\gamma$ not purely polynomial is determined by 
$\Pi_\gamma$ for $\gamma$ ``smaller'' than $\beta$, 
see Step~1 in the proof of Proposition~\ref{unique} for a precise statement. 
Similar results are also available in \cite{Hai14}. 
The main difference is the last item in Definition~\ref{def}, 
which has no analogue in \cite{Hai14}. 
It is however reasonable that it can be related to the pointed modelled 
distribution $H_{\tau}^x$ introduced in \cite{HS23}. 
For a more concise comparison of the two settings we refer to \cite[Section~5.3]{LOT23} for algebraic aspects 
and to \cite[Section 2.6]{LOTT21} for analytic and probabilistic aspects. 
\end{remark}

\begin{remark}[Comparison to \cite{OSSW21}]\label{rem:ossw}
Note that the model in \cite{OSSW21} rather corresponds to the equation 
\begin{equation}
(\partial_0-a_0\Delta)u=(a(u)-a_0)\Delta u+\xi
\end{equation}
with $a_0=a(0)$, 
which is equivalent to \eqref{spde} modulo the substitution ${a\mapsto a+1}$.
Adopting this perspective affects the model in two ways:
On the one hand, 
the Ansatz \eqref{ansatz} features only multi-indices $\beta$ with $\beta(k=0)=0$, 
hence $\beta$ is a multi-index over the smaller set $k\geq1$ and $\n\neq\0$.
On the other hand, 
the model $(\Pi,\Gamma^*)$ inherits a dependence on $a_0$ 
through the differential operator $(\partial_0-a_0\Delta)$.
As shown in \cite[Proposition~2.7]{LOTT21}, 
the stochastic estimates \eqref{estPi} of $\Pi$ and \eqref{eh14} of $\Gamma^*$ hold then locally uniformly for $a_0>0$.
We could therefore work in this setting as well and modify Definition~\ref{def} accordingly, 
with all results holding verbatim 
(the convergence in Theorem~\ref{thm2} holds then locally uniformly for $a_0>0$). 
Note that such a model satisfies then also \cite[Assumptions~1~and~2]{OSSW21}, 
up to the qualitative smoothness of $\Pi_x$ which is necessary to write down the renormalized equation.
The main result of \cite{OSSW21} is that under this assumption, 
any smooth solution of the renormalized equation is (uniformly in the qualitative smoothness) approximated 
by a suitable truncation of the series in \eqref{ansatz}, 
validating our ansatz.
Together with a robust reformulation of \eqref{spde}
in the space of modelled distributions, 
which makes sense in the rough setting as well, 
one would expect to be able to combine 
this a-priori estimate with a continuity method 
to construct a solution of \eqref{spde};
see the upcoming work \cite{BOS24+} for an implementation of this approach 
in the setting of a space-time periodic semilinear equation with additive noise. 
\end{remark}

%%%%%%%%%%%%%%%%%%%%%%%%%%%%%%%%%%%%%%%%%%%%%%%%%%%%%%%%%%%%%%%%%%%%%%%%%%%%%%%%%%%%

\section{Uniqueness of the model}\label{sec:unique}

The aim of this section is to prove the following proposition, 
which establishes the uniqueness part of Theorem~\ref{thm}. 

\begin{proposition}[Uniqueness]\label{unique}
Assume that $\E$ satisfies Assumption~\ref{ass}. 
Then a model $(\Pi,\G)$ for \eqref{spde} is unique. 
\end{proposition}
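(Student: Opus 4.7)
The plan is to fix the ensemble $\E$, suppose $(\Pi,\G)$ and $(\widetilde\Pi,\widetilde\G)$ are two models in the sense of Definition~\ref{def}, and argue by strong induction on multi-indices $\beta$ with respect to a well-ordering that refines the homogeneity $|\cdot|$. The base cases are the purely polynomial components, fixed by \eqref{polypart}, and $\beta=0$, where \eqref{eh11} combined with the growth bound \eqref{estPi} and a Liouville-type argument leveraging $\alpha\notin\Q$ pin $\Pi_{x\,\beta=0}$ down uniquely. At each inductive step I would first use multiplicativity of $\G_{xy}$ together with \eqref{Gamma_zn}, \eqref{Gamma_zk} and the restriction \eqref{rest_pin} to express $(\G_{xy})_\beta^\gamma$ as a polynomial in already-determined $\Pi_{x\gamma'}(y)$'s, so that knowing $\Pi$ at lower-order components automatically determines $\G$ at $\beta$ as well.

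For $\beta$ with $|\beta|>2$ I would then appeal to the recentering \eqref{recenter-} and its consequence \eqref{recenter-diff}, according to which $(\Pi^-_x-\G_{xy}\Pi^-_y)_\beta$ is a polynomial in $(y-x)$ of parabolic degree $\leq|\beta|-2$. Testing against $\psi_t(y-\cdot)$ and using the strict triangularity \eqref{triGamma_hom} together with the sharp bound \eqref{Pi-}, the only self-referential contribution $\Pi^-_{y\beta t}(y)$ disappears in the limit $t\to 0$, whereas all remaining terms involve only components of strictly smaller homogeneity. The residual polynomial ambiguity in $y-x$ would be removed using the shift invariance \eqref{expectationshiftinvariant-}, the bound \eqref{Pi-}, and $\alpha\notin\Q$.

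For the singular case $|\beta|<2$, where this triangularity trick fails, I would invoke the Malliavin characterization. The condition \eqref{ho29} should be seen as a linear system that uniquely determines the $\d\pi^{(\n)}_{xy}$ at $|\n|=1$: here one relies on the fact that $\d\G_{xy}$ is strictly triangular with respect to a refined ordering $\prec$ (necessarily different from $|\cdot|$), and that $\d\pi^{(\n)}_{xy}$ is constrained to $Q\Ttilde$. Plugging the resulting $\d\G_{xy}$ into \eqref{ho28} then yields an identity that determines $\delta\Pi^-_{x\beta}$ in terms of already-known quantities and $\delta\xi$, so that the spectral gap \eqref{sg} forces $\Pi^-_{x\beta}$ to coincide with $\widetilde\Pi^-_{x\beta}$ modulo its expectation. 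The expectation is fixed as before, after which $\Pi_{x\beta}$ is recovered by inverting \eqref{linPDE} (the caloric ambiguity being killed by \eqref{estPi} and $\alpha\notin\Q$). The principal obstacle is this last step: one must set up the refined ordering $\prec$, verify strict triangularity of $\d\G_{xy}$ with respect to it, and make rigorous sense of the product $\z_k\Pi_x^k(y)\Delta(\delta\Pi_x-\d\G_{xy}Q\Pi_y)(y)$ appearing in \eqref{ho28}, which is why the threshold $\alpha>1-D/4$ enters and why closability of the Malliavin derivative in Assumption~\ref{ass}~iii) is needed to propagate Malliavin--Sobolev regularity cleanly through the induction.
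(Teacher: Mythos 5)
Your overall strategy is the one the paper follows: induction over populated multi-indices with respect to a bespoke well-ordering, a Liouville argument (using \eqref{estPi} and $\alpha\notin\Q$) to pass from $\Pi^-_{x\beta}$ to $\Pi_{x\beta}$, the recentering \eqref{recenter-} together with \eqref{Pi-} for $|\beta|>2$, and for $|\beta|<2$ the combination of \eqref{ho29} (to pin down $\d\pi^{(\n)}_{xy}$), \eqref{ho28}, closability and the spectral gap \eqref{sg}, with the expectation recovered from \eqref{expectationshiftinvariant-} and the decay in \eqref{Pi-}. Two points in your sketch, however, do not survive scrutiny as stated.

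First, an ordering that \emph{refines the homogeneity} cannot do the job: for singular $\beta$ with $|\beta|<1$ one has in general $(\d\G_{xy})_\beta^{e_\n}=\d\pi^{(\n)}_{xy\beta}\neq0$ with $|e_\n|=1>|\beta|$ (this is precisely why $\delta\G$ had to be replaced by $\d\G$ in Section~\ref{sec:stable}), so no total order extending $|\cdot|$ can make $\d\G$ strictly triangular. The paper instead works with $|\beta|_\prec=[\beta]+\lambda_1\sum_{\n\neq\0}|\n|\beta(\n)+\lambda_2\beta(k=0)$, which is incomparable with $|\cdot|$, and verifies \eqref{triProduct_prec}--\eqref{tridGamma_prec}. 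Second, your claim that multiplicativity together with \eqref{Gamma_zn}, \eqref{Gamma_zk} makes $(\G_{xy})_\beta^\gamma$ a polynomial in already-determined $\Pi$'s is correct only for $\gamma$ not purely polynomial; the purely polynomial columns $(\G_{xy})_\beta^{e_\n}=\pi^{(\n)}_{xy\beta}$ are independent unknowns (they are exactly the free data in \eqref{Gamma_zn}), and your induction never identifies them, although at lower levels they enter the multiplicative expansion of $\G$, the second right-hand side term of \eqref{recenter-}, and (through $\G_{xy}D^{(\n)}$) the definition \eqref{dGamma} of $\d\G$. The paper closes this loop at the end of each level: once $\Pi_{x\beta}$ is unique, \eqref{recenter} gives $(\Pi_x-\Pi_x(y)-\G_{xy}P\Pi_y)_\beta=\sum_{\n\neq\0}(\G_{xy})_\beta^{e_\n}(\cdot-y)^\n$, from which the coefficients are read off; an analogous evaluation argument (at $y+\lambda\n$, divide by $\lambda$, let $\lambda\to0$, use \eqref{ho29}) is what makes your ``linear system'' determination of $\d\pi^{(\n)}_{xy\beta}$ precise. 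Finally, the difficulty you single out about giving meaning to the product in \eqref{ho28} and the threshold $\alpha>1-D/4$ belongs to the existence part: for uniqueness both models satisfy \eqref{ho28} by Definition~\ref{def}, and one merely subtracts the two identities.
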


\noindent
The proof proceeds inductive with respect to an ordering $\prec$ 
on multi-indices which we shall introduce now. 
As is clear from the discussion in Section~\ref{sec:stable}, 
this ordering can not be given by the homogeneity on multi-indices. 
Instead, we consider the following ordinal 
\begin{equation}
|\beta|_\prec 
:= [\beta] + \lambda_1 \sum_{\n\neq\0}|\n|\beta(\n) 
+ \lambda_2 \beta(k=0) \, , 
\end{equation}
where we fix $1>\lambda_1>\lambda_2>0$, and $[\beta]$ is defined in \eqref{noisehomogeneity}. 
We will then write 
\begin{align}
\beta'\prec\beta &\quad\iff\quad |\beta'|_\prec<|\beta|_\prec \, , \\
\beta'\preceq\beta &\quad\iff\quad (\beta'\prec\beta \text{ or } \beta'=\beta) \, .
\end{align}
The following lemma collects properties of $\prec$ that we will use in the sequel. 

\begin{lemma}\label{lem:ordering}
The ordering $\prec$ is coercive on populated multi-indices, 
meaning that $\{\gamma\,|\,\gamma\text{ populated and }\gamma\prec\beta\}$ is finite for every $\beta$. 
Furthermore, for all $\beta,\gamma$ and populated $\beta_1,\dots,\beta_{k+1}$ we have 
\begin{align}
e_k+\beta_1+\dots+\beta_{k+1}=\beta 
&\quad\implies\quad 
\beta_1,\dots,\beta_{k+1}\prec\beta \, , \label{triProduct_prec} \\
(\G_{xy}-\id)_\beta^\gamma \neq 0
&\quad\implies\quad {\gamma}\prec{\beta} \, , \label{triGamma_prec} \\
(\d\G_{xy})_\beta^\gamma \neq 0 
&\quad\implies\quad {\gamma}\prec{\beta} \, . \label{tridGamma_prec} 
\end{align}
\end{lemma}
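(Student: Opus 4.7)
The plan is to use the additivity $|\beta+\beta'|_\prec=|\beta|_\prec+|\beta'|_\prec$ together with two elementary lower bounds: $|e_k|_\prec\ge\lambda_2>0$ (with $|e_0|_\prec=\lambda_2$ and $|e_k|_\prec=k$ for $k\ge1$), and for every populated $\alpha$ the strict inequality $|\alpha|_\prec>-1$ (because $[\alpha]\ge0$ gives $|\alpha|_\prec\ge0$, while $\alpha=e_\n$ gives $|\alpha|_\prec=-1+\lambda_1|\n|\ge-1+\lambda_1$). Coercivity is then a direct counting exercise: when $[\gamma]\ge0$, each of the three non-negative contributions $[\gamma]$, $\lambda_1\sum|\n|\gamma(\n)$, $\lambda_2\gamma(k=0)$ to $|\gamma|_\prec<|\beta|_\prec$ is individually bounded by $|\beta|_\prec$, forcing $\gamma(k=0)$, the $\n$-support of $\gamma$, and $\sum_k k\gamma(k)=[\gamma]+\sum\gamma(\n)$ all to be bounded; the remaining case $\gamma=e_\n$ bounds $|\n|$ directly. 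And \eqref{triProduct_prec} follows from
\[|\beta|_\prec-|\beta_i|_\prec=|e_k|_\prec+\sum_{j\neq i}|\beta_j|_\prec,\]
which by the strict bound $|\beta_j|_\prec>-1$ summed over the $k$ indices $j\ne i$ exceeds $|e_k|_\prec-k$, i.e.\ $\ge0$ for $k\ge1$; for $k=0$ the sum is empty and the difference equals $\lambda_2>0$.

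For \eqref{triGamma_prec} I expand $\G_{xy}\z^\gamma$ multiplicatively from $\G_{xy}\z_k=\z_k+\sum_{l\ge1}\binom{k+l}{k}\z_{k+l}\Pi_x^l(y)$ and $\G_{xy}\z_\n=\z_\n+\pi^{(\n)}_{xy}$. Any non-identity $\beta$-coefficient arises by substituting in some $\z_{k_i}$-factor $e_{k_i+l_i}+\sum_m\alpha^{(i)}_m$ (with $l_i\ge1$ and populated $\alpha^{(i)}_m$), and/or in some $\z_{\n_j}$-factor a populated $\delta_j$ satisfying the homogeneity restriction $|\n_j|<|\delta_j|$ from \eqref{rest_pin} with $|\cdot|$ given by \eqref{homogeneity}. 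Each $\z_{k_i}$-substitution contributes
$l_i-\lambda_2\delta_{k_i,0}+\sum_m|\alpha^{(i)}_m|_\prec>l_i\lambda_1-\lambda_2\delta_{k_i,0}\ge\lambda_1-\lambda_2>0$.
A $\z_{\n_j}$-substitution into a purely polynomial $\delta_j=e_\m$ contributes $\lambda_1(|\m|-|\n_j|)\ge\lambda_1$, whereas a substitution into $\delta_j$ with $[\delta_j]\ge0$ requires inserting the homogeneity inequality $|\n_j|<\alpha(1+[\delta_j])+\sum_\m|\m|\delta_j(\m)$ into $|\delta_j|_\prec$ and using $\lambda_1\alpha<1$ to deduce
$|\delta_j|_\prec-|e_{\n_j}|_\prec>(1-\lambda_1\alpha)(1+[\delta_j])+\lambda_2\delta_j(k=0)>0$.

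Finally, \eqref{tridGamma_prec} follows by applying \eqref{triGamma_prec} inside $\d\G_{xy}=\sum_{|\n|\le1}\d\pi^{(\n)}_{xy}\G_{xy}D^{(\n)}$: on $\z^\gamma$, the operator $D^{(\n)}$ shifts by $e_{k+1}-e_k$ (for $\n=\0$) or by $-e_\n$ (for $|\n|=1$), the subsequent $\G_{xy}$ is $\prec$-triangular by \eqref{triGamma_prec}, and the multiplication by $\d\pi^{(\n)}_{xy}\in Q\Ttilde$ attaches a populated, \emph{non}-purely-polynomial multi-index $\alpha$ with $|\alpha|_\prec\ge0$; the net change in $|\cdot|_\prec$ is at least $1-\lambda_2>0$ for $\n=\0$ and at least $1-\lambda_1>0$ for $|\n|=1$. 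The main obstacle is the subcase $[\delta_j]\ge0$ of \eqref{triGamma_prec}: translating the homogeneity restriction on $\pi^{(\n)}_{xy}$ into the ordering $\prec$ is the delicate algebraic step and is precisely what dictates the parameter constraints $1>\lambda_1>\lambda_2>0$ combined with the subcriticality $\alpha<1$; all other cases are straightforward bookkeeping from additivity and the two basic lower bounds.
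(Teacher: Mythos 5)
Your proof is correct, but it takes a different route from the paper: the paper disposes of Lemma~\ref{lem:ordering} almost entirely by citation, observing that $\G_{xy}$ defined through \eqref{Gamma_zn} and \eqref{Gamma_zk} coincides with the exponential-formula definition of \cite{LOTT21} and then invoking (8.6), (8.9), (8.11) there (plus multiplicativity to pass from populated to general $\beta,\gamma$), whereas you give a self-contained combinatorial verification. Your key tools are the additivity of $|\cdot|_\prec$, the two lower bounds $|e_k|_\prec\geq\lambda_2$ and $|\alpha|_\prec\geq\lambda_1-1$ for populated $\alpha$, and — for the delicate case of a substitution $\z_{\n_j}\mapsto\pi^{(\n_j)}_{xy}$ with $[\delta_j]\geq0$ — the translation of the homogeneity restriction \eqref{rest_pin} into $|\delta_j|_\prec-|e_{\n_j}|_\prec>(1-\lambda_1\alpha)(1+[\delta_j])+\lambda_2\delta_j(k=0)>0$, which is exactly where $\lambda_1,\alpha<1$ enter; the bookkeeping for \eqref{triProduct_prec}, for the $\z_k$-substitutions (contribution $\geq l_i\lambda_1-\lambda_2\delta_{k_i,0}\geq\lambda_1-\lambda_2>0$; note this should be a non-strict $\geq$ at the intermediate step, which is harmless), and for \eqref{tridGamma_prec} via the shifts $e_{k+1}-e_k$ resp.\ $-e_\n$ of $D^{(\n)}$ all checks out. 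What your approach buys is independence from the exponential formula of \cite{LOTT21}: it works directly from multiplicativity, \eqref{Gamma_zk}, \eqref{Gamma_zn} and \eqref{rest_pin}, i.e.\ from the data actually postulated in Definition~\ref{def}, and it makes visible why the constraints $1>\lambda_1>\lambda_2>0$ and $\alpha<1$ are needed; what the paper's route buys is brevity and consistency with the construction it later re-uses. One small point worth making explicit in your Step for \eqref{tridGamma_prec}: that $\d\pi^{(\0)}_{xy}=Q\delta\Pi_x(y)$ has no purely polynomial components follows because $\delta$ annihilates the deterministic monomials $\Pi_{xe_\m}(y)=(y-x)^\m$ (and even without this, a component $e_\m$ with $|\m|=1$ would still leave a margin $\geq\lambda_1-\lambda_2>0$, so the conclusion is unaffected).
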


\begin{proof}
The coercitivity is an immediate consequences of the definition of $\prec$. 
A proof of \eqref{triProduct_prec} can be found in \cite[(8.6)]{LOTT21}. 
For \eqref{triGamma_prec} we note that $\G_{xy}$ defined through 
\eqref{Gamma_zn} and \eqref{Gamma_zk} coincides with $\G_{xy}$ 
defined through the exponential formula \cite[(2.44)]{LOTT21}. 
Hence \eqref{triGamma_prec} coincides with \cite[(8.9)]{LOTT21} for populated $\beta,\gamma$, 
and follows by multiplicativity for all $\beta,\gamma$. 
The proof of \eqref{tridGamma_prec} follows as in \cite[(8.11)]{LOTT21} from its definition \eqref{dGamma} and the corresponding property  \eqref{triGamma_prec} of $\G_{xy}$. 
\end{proof}

\noindent
Another crucial ingredient for uniqueness is the following Liouville type result. 

\begin{lemma}[Liouville]\label{liouville}
Let $\eta>0$, $\eta\not\in\N$, 
and assume for a random field $f$ that $\sup_x |x|^{-\eta}\E|f(x)|<\infty$,  
and that $(\partial_0-\Delta)f$ is a random polynomial of degree $\leq\eta+2$.
Then $f=0$.
\end{lemma}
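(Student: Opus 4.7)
My plan is to show that $f$ is almost surely a polynomial in $x$ and then exploit $\eta\notin\N$ to force it to vanish identically.

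For the first step I would use a distributional Fourier argument. By Tonelli and the growth hypothesis, $\E\int |f\phi|\,dx\leq C\int |x|^\eta|\phi(x)|\,dx<\infty$ for every Schwartz $\phi$, so $f$ is almost surely a tempered distribution. Fourier transforming $(\partial_0-\Delta)f=P$ in space--time gives $(i\kappa_0+|\kappa'|^2)\hat f=\hat P$, where $\kappa=(\kappa_0,\kappa')$ with $\kappa'$ spatial. Since $P$ is a polynomial, $\hat P$ is supported at $\kappa=0$; since the symbol $i\kappa_0+|\kappa'|^2$ vanishes only at $\kappa=0$, $\hat f$ is also supported at $\{0\}$ and is a finite linear combination of derivatives of $\delta_0$ of order bounded deterministically by $\eta+4$ (using $\deg P\leq\eta+2$). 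Inverse-transforming, $f$ is almost surely a polynomial in $x$ of uniformly bounded anisotropic degree.

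Writing $f(\omega,x)=\sum_{\mathbf n}c_{\mathbf n}(\omega)x^{\mathbf n}$ and using the anisotropic rescaling $T_\lambda x=(\lambda^2 x_0,\lambda x_1,\ldots,\lambda x_d)$, which satisfies $|T_\lambda x|=\lambda|x|$ and $(T_\lambda x)^{\mathbf n}=\lambda^{|\mathbf n|}x^{\mathbf n}$, for each fixed $x_*\neq 0$ I obtain $f(\omega,T_\lambda x_*)=\sum_{k\geq 0}a_k(\omega)\lambda^k$ with $a_k:=\sum_{|\mathbf n|=k}c_{\mathbf n}(\omega)\,x_*^{\mathbf n}$. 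The hypothesis at $T_\lambda x_*$ then reads $\E|\sum_k a_k\lambda^k|\leq C|x_*|^\eta\lambda^\eta$ uniformly in $\lambda>0$. Inverting a Vandermonde system at rescaled nodes $\mu_j\lambda$ (with $j=0,\ldots,M$ and $M$ a deterministic upper bound on the degree), each coefficient is recovered as $a_k=\lambda^{-k}\sum_j V^{-1}_{kj}\bigl(\sum_\ell a_\ell(\mu_j\lambda)^\ell\bigr)$, yielding $\E|a_k|\lesssim\lambda^{\eta-k}$. Sending $\lambda\to 0$ when $k<\eta$ and $\lambda\to\infty$ when $k>\eta$ forces $a_k=0$ almost surely. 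Since $\eta\notin\N$, every $k\in\N_0$ is either strictly less or strictly greater than $\eta$, so all $a_k$ vanish and $f(x_*)=0$ almost surely. A countable dense set of $x_*$'s together with the pathwise continuity of polynomials yields $f\equiv 0$ almost surely.

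The main obstacle is the pathwise Fourier step: one must verify that $\operatorname{supp}\hat f\subset\{0\}$ and the uniform bound on the order of $\hat f$ hold on a single almost sure event, which is essentially routine once $f$ and $P$ are a.s.\ tempered distributions. A more elementary alternative in the spirit of the rest of the paper would proceed through the semigroup convolution $\psi_t$: from $(\partial_0-\Delta)f_t=P_t$ and $\partial_t f_t=(\partial_0+\Delta)P_t$ one reads off that $f_t-f_s$ is a polynomial in $x$ of degree $\leq\eta$, and a caloric Liouville applied to $f_t$ for each $t>0$ would play the role of the Fourier step; the genuine difficulty there is to transfer the mean growth of $f_t$ into pathwise polynomial growth, which the Fourier approach sidesteps.
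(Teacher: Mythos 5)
There is a genuine gap in your first step: the claim that $\hat f$ is a combination of derivatives of $\delta_0$ of order \emph{bounded deterministically by} $\eta+4$ ``using $\deg P\leq\eta+2$'' is false, because the kernel of $\partial_0-\Delta$ contains polynomials of arbitrarily high degree (heat/caloric polynomials, e.g.\ $x_1^2+2x_0$, $x_1^4+12x_0x_1^2+12x_0^2$, and so on in every degree). So from $(\partial_0-\Delta)f=P$ alone you only get that $f$ is, pathwise, a particular polynomial solution of degree $\leq\eta+4$ plus a caloric polynomial of \emph{random, a priori unbounded} degree; the equation gives no deterministic degree bound. This matters because your second step uses that bound essentially: the Vandermonde inversion needs a fixed, deterministic number $M+1$ of nodes exceeding the degree before you may take expectations of the inverted system and conclude $\E|a_k|\lesssim\lambda^{\eta-k}$. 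As written, the number of unknowns is random and unbounded, so the inversion is not justified. The step can be repaired --- for instance, restrict to the events $\{\deg f\leq M\}$, run the Vandermonde/rescaling argument with the indicator inside the expectation to kill each $a_k\mathbf{1}_{\{\deg f\leq M\}}$, and then let $M\to\infty$ --- but some such device is needed; the degree control must ultimately come from the growth hypothesis $\E|f(x)|\lesssim|x|^\eta$, not from the equation.

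For contrast, this is exactly where the paper's proof puts its effort, and it avoids the issue without any pathwise Fourier analysis or temperedness discussion: since $(\partial_0-\Delta)f$ is a polynomial of degree $\leq\eta+2$, one has $\partial_t\partial^{\mathbf n}f_t=(\partial_0+\Delta)\partial^{\mathbf n}(\partial_0-\Delta)f_t=0$ for $|\mathbf n|>\eta$, so $\partial^{\mathbf n}f_t$ is $t$-independent there; the moment bound \eqref{momentbound} gives $\E|\partial^{\mathbf n}f_t(y)|\lesssim(\sqrt[4]{t})^{-|\mathbf n|}(\sqrt[4]{t}+|y|)^\eta\to0$ as $t\to\infty$, so all derivatives of order $>\eta$ vanish and $f$ is a polynomial of degree $<\eta$ (using $\eta\notin\N$), which the vanishing at the origin then kills. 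Note in particular that your closing worry about the semigroup route --- ``transferring mean growth into pathwise polynomial growth'' --- is not actually an obstacle: the paper works with $\E|\partial^{\mathbf n}f_t|$ throughout and only uses the $t$-independence pathwise at the end. Your overall architecture (polynomiality plus anisotropic rescaling and $\eta\notin\N$) is sound and genuinely different from the paper's, but as submitted the deterministic degree bound is wrong and the argument does not close without the fix indicated above.
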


\begin{proof}
The assumption implies 
$(\partial_0-\Delta)\partial^\n f = 0$ for $|\n|>\eta+2$, 
and by the defining property \eqref{psi} of $\psi_t$ we obtain 
$\partial_t\partial^\n f_t 
= (\partial_0-\Delta)(\partial_0+\Delta)\partial^\n f_t=0$
for $|\n|>\eta$. 
In particular, $\partial^\n f_t$ is independent of $t$ for $|\n|>\eta$. 
Furthermore, by assumption and the moment bound \eqref{momentbound} 
\begin{equation}
\E| \partial^\n f_t(y) |
\leq \int_{\R^{1+d}}dx\, |\partial^\n\psi_t(y-x)| \,\E| f(x) |
\lesssim (\sqrt[4]{t})^{-|\n|}(\sqrt[4]{t}+|x|)^\eta \, .
\end{equation}
From the limit $t\to\infty$, we deduce that almost surely 
$\partial^\n f_t(y)=0$ for all $|\n|>\eta$, $y\in\R^{1+d}$ and $t>0$. 
Hence $f$ is a polynomial of degree $\leq\eta$, 
and since $\eta\not\in\N$ this strengthens to $<\eta$. 
Since $f$ vanishes by assumption at the origin to order $\eta$, 
we learn $f=0$. 
\end{proof}

\begin{proof}[Proof of Proposition~\ref{unique}]
Assume that we are given two families of random fields 
$(\Pi,\G)$ and $(\widetilde\Pi,\tG)$, 
both satisfying Definition~\ref{def}. 
We will prove that their respective $\beta$-components coincide almost surely. 
First, note that it is enough to consider populated multi-indices. 
Indeed, for $\beta$ not populated, $\Pi_{x\beta}=0=\widetilde\Pi_{x\beta}$. 
For $(\G_{xy})_\beta^\gamma$ we note that populated multi-indices fully determine $\pi^{(\n)}_{xy}\in\T$ by $(\G_{xy}-\id)_\beta^{e_\n} = \pi^{(\n)}_{xy\beta}$, see \eqref{Gamma_zn}, 
which then determines $(\G_{xy})_\beta^\gamma$ for arbitrary multi-indices by multiplicativity. 

For the remaining populated multi-indices, 
we start with purely polynomial multi-indices $\beta=e_\n$.
From \eqref{polypart} we learn that $\Pi_{xe_\n}(y) = (y-x)^\n = \widetilde\Pi_{xe_\n}(y)$. 
Turning to $(\G_{xy})_{e_\n}^\gamma$, we observe that 
by \eqref{Gamma_preserves_Tbar} the only non-vanishing components 
are given by $(\G_{xy})_{e_\n}^{e_\m}$ for $\m\neq\0$, 
and \eqref{triGamma_hom} restricts to $|\m|\leq|\n|$.
From the $(\beta=e_\n)$-component of \eqref{recenter} we learn that 
\begin{align}
(\cdot-x)^\n 
= \Pi_{xe_\n} 
&= \sum_{0<|\m|\leq|\n|} (\G_{xy})_{e_\n}^{e_\m} \Pi_{ye_\m} + (y-x)^\n \\ 
&= \sum_{0<|\m|\leq|\n|} (\G_{xy})_{e_\n}^{e_\m} (\cdot-y)^{\m} + (y-x)^\n\, ,
\end{align}
which by the binomial formula shows that 
\begin{equation}\label{Gamma_poly}
(\G_{xy})_{e_\n}^{e_\m}=\tbinom{\n}{\m}(y-x)^{\n-\m} \, .
\end{equation} 
The same argument can be repeated for $\tG$, thus 
$(\G_{xy})_{e_\n}^\gamma = (\tG_{xy})_{e_\n}^\gamma$ for all $\gamma$. 

For the remaining populated but not purely polynomial multi-indices we proceed by induction with respect to $\prec$. 
It is convenient to explicitly include uniqueness of $\d\G_{xy}$ into the induction, 
we therefore assume that for $\delta\xi\in\S(\R^{1+d})$ and $x\neq y$ 
we are also given $\d\G_{xy}$ and $\d\tG_{xy}$ from Definition~\ref{def} 
corresponding to $(\Pi_x,\G_{xy})$ and $(\widetilde\Pi_x,\tG_{xy})$, respectively.
For the base case $\beta=0$, 
we recall from \eqref{eh11} 
that $(\partial_0-\Delta)(\Pi_{x0}-\widetilde\Pi_{x0})=0$. 
Using the estimate \eqref{estPi} and $|\beta=0|=\alpha\not\in\Q$, 
Liouville's principle Lemma~\ref{liouville} yields $\Pi_{x0}-\widetilde\Pi_{x0}=0$. 
For $(\G_{xy})_0^\gamma$ we note that by the triangularity  \eqref{triGamma_hom} 
the only non-vanishing component is $(\G_{xy})_0^0=1=(\tG_{xy})_0^0$.
By the triangularity \eqref{tridGamma_prec} of $\d\G_{xy}$, 
the only non-vanishing components of $(\d\G_{xy})_0^\gamma$ 
are given by $(\d\G_{xy})_0^{e_\n}$ with $|\n|=1$.
Using \eqref{polypart}, this yields $(\d\G_{xy}Q\Pi_y)_0 = \sum_{|\m|=1} (\cdot-y)^\m (\d\G_{xy})_0^{e_\m}$, 
which evaluated at $y+\lambda\n$ for $\lambda>0$ and $|\n|=1$ equals $\lambda(\d\G_{xy})_0^{e_\n}$.
Since the same holds for $\d\tG_{xy}$, by using the already established 
$\delta\Pi_{x0} = \delta\widetilde\Pi_{x0}$ we obtain 
\begin{align}
\lambda (\d\G_{xy}-\d\tG_{xy})_0^{e_\n}
&= (\delta\widetilde\Pi_x-\delta\widetilde\Pi_x(y)-\d\tG_{xy} Q \widetilde\Pi_y)_0(y+\lambda\n) \nonumber \\ 
&\, - (\delta\Pi_x-\delta\Pi_x(y)-\d\G_{xy} Q \Pi_y)_0(y+\lambda\n) \, .
\end{align}
Dividing by $\lambda$ and taking $\lambda\to 0$, we obtain from \eqref{ho29} 
that $(\d\G_{xy}-\d\tG_{xy})_0^{e_\n}=0$ almost surely, 
finishing the argument for the base case.

In the induction step we assume uniqueness of 
$\Pi_{x\beta'}$, $(\G_{xy})_{\beta'}^\gamma$ and $(\d\G_{xy})_{\beta'}^\gamma$ for $\beta'\prec\beta$ and all populated $\gamma$,
and show uniqueness of the corresponding $\beta$-components.
We first note that $\Pi_{x\beta'}$ determines unique 
$\Pi^-_{x\beta'}=(\partial_0-\Delta)(P\Pi_{x})_{\beta'}$, 
$\delta\Pi_{x\beta'}$ and $\delta\Pi^-_{x\beta' t}$.

{\bf Step 1.} 
$(\G_{xy})_\beta^\gamma$ is unique for $\gamma$ not purely polynomial. 
This is a consequence of the induction hypothesis together with 
the fact that for $\gamma$ not purely polynomial 
\begin{equation}
(\G_{xy})_\beta^\gamma 
\quad\textnormal{is determined by }\Pi_{x\beta'}\textnormal{ and } (\G_{xy})_{\beta'}
\textnormal{ for }\beta'\prec\beta \, , 
\end{equation}
which we shall establish now. 
Since $\gamma$ is assumed to be populated and not purely polynomial, 
we can write $\z^\gamma=\z_{k_1}\cdots \z_{k_i} \z_{\n_1}\cdots \z_{\n_j}$ 
for some $k_1,\dots,k_i\geq0$, $i\geq1$, 
and $\n_1,\dots,\n_j\neq\0$, $j\geq0$.
In this case, $[\gamma]\geq0$ translates into $k_1+\dots+k_i\geq j$.
We learn from \eqref{Gamma_zk} and the multiplicativity of $\G_{xy}$  
that $(\G_{xy})_\beta^\gamma$ is a sum consisting of terms of the form
\begin{equation}
\Pi_{x\beta_1^1}(y) \cdots \Pi_{x\beta_{l_1}^1}(y) \cdots
\Pi_{x\beta_1^i}(y) \cdots \Pi_{x\beta_{l_i}^i}(y)
(\G_{xy})_{\bar\beta_1}^{e_{\n_1}} \cdots
(\G_{xy})_{\bar\beta_j}^{e_{\n_j}} \, ,
\end{equation}
where $l_1,\dots,l_i\geq0$ and 
$ e_{k_1+l_1} + \cdots + e_{k_i+l_i} + 
\beta_1^1 + \cdots + \beta_{l_1}^1 + \cdots +
\beta_1^i + \cdots + \beta_{l_i}^i + 
\bar\beta_1 + \cdots + \bar\beta_j = \beta $.
We first claim that $|e_{k+l}+\beta_1+\dots+\beta_l|_\prec \geq k$. 
Indeed, if $k+l=0$, we obtain $|e_0|_\prec = \lambda_2 > 0 = k$. 
If $k+l>0$ and $l=0$, we obtain $|e_k|_\prec = k$,
and if $l>0$ we obtain by additivity of $|\cdot|_\prec\geq \lambda_1-1$ that 
$|e_{k+l}+\beta_1+\dots+\beta_l|_\prec \geq k+l+l(\lambda_1-1) > k$.
We now show that $\beta_1^1\prec\beta$, which in particular means that $l_1\geq1$.
Then by additivity of $|\cdot|_\prec$ and the above, we obtain
$|\beta|_\prec\geq k_1+l_1 + |\beta_1^1|_\prec + (l_1-1)(\lambda_1-1) + k_2+\cdots+k_i+j(\lambda_1-1)$,
which by $k_1+\dots+k_i\geq j$ is larger than $|\beta_1^1|_\prec+1+(l_1-1)\lambda_1+j\lambda_1>|\beta_1^1|_\prec$.
We come to $\bar\beta_1\prec\beta$, which in particular means that $j\geq1$. 
Similarly as above, we obtain
$|\beta|_\prec \geq 
k_1 + \dots + k_i + |\bar\beta_1|_\prec+(j-1)(\lambda_1-1)
\geq |\bar\beta_1|_\prec +1+ (j-1)\lambda_1 
> |\bar\beta_1|_\prec$.
By symmetry, this concludes the proof.

{\bf Step 2.} 
$\Pi^-_{x\beta}$ is unique.
In this step we distinguish $|\beta|<2$ from $|\beta|>2$.

{\bf Case} $|\beta|>2${\bf.} 
We rewrite \eqref{recenter-} to 
\begin{equation}
\Pi^-_x-\Pi^-_y 
= (\G_{xy}-\id)\Pi^-_y 
+ P \sum_{k\geq0}\z_k \big(\G_{xy}(\id-P)\Pi_y+\Pi_x(y)\big)^k \, 
\G_{xy}(\id-P)\Delta\Pi_y \, , 
\end{equation}
and note that the $\beta$-component of the right-hand side is unique by Step~1, 
the triangular properties \eqref{triProduct_prec} and \eqref{triGamma_prec},
and the induction hypothesis.
We therefore obtain $(\Pi^-_{x}-\Pi^-_{y})_\beta = (\widetilde\Pi^-_{x}-\widetilde\Pi^-_{y})_\beta$,
and hence
$(\Pi^-_{x}-\widetilde\Pi^-_{x})_{\beta\,t}(y) = (\Pi^-_{y} - \widetilde\Pi^-_{y})_{\beta\,t}(y)$.
By the triangle inequality and \eqref{Pi-}, we obtain 
$\E | (\Pi^-_{x}-\widetilde\Pi^-_{x})_{\beta\, t}(y) | 
\lesssim (\sqrt[4]{t})^{|\beta|-2}$, 
which by $|\beta|>2$ yields almost surely $\Pi^-_{x\beta}=\widetilde\Pi^-_{x\beta}$.

{\bf Case} $|\beta|<2${\bf.}
The proof is more complex for $|\beta|<2$ 
and relies on Malliavin differentiability and the extra ingredient $\d\G$.

{\bf Step 2a.} 
$\E\Pi^-_{x\beta\, t}(y)$ is unique.
Since $|\beta|<2$, we obtain from the estimate \eqref{Pi-} on $\Pi^-_{x\beta}$ in particular 
$\lim_{t\to\infty} \E\Pi^-_{x\beta\, t}(y) = 0$.
We can therefore write 
\begin{equation}
\E \Pi^-_{x\beta\, t}(y) 
= - \int_t^\infty \partial_s \E\Pi^-_{x\beta\, s}(y) \, ds \, .
\end{equation}
Hence it remains to check that $\partial_s \E\Pi^-_{x\beta\, s}(y)$ is unique.
By use of the semigroup property \eqref{semigroup} 
and the defining property \eqref{psi} of $\psi_t$,
we obtain
\begin{equation}
\partial_s \E\Pi^-_{x\beta\, s}(y) 
= \int_{\R^{1+d}} dz\, (\partial_0^2-\Delta^2)\psi_{s-\tau}(y-z) \E\Pi^-_{x\beta\,\tau}(z) \, . 
\end{equation}
Again by $|\beta|<2$, \eqref{recenter-diff} yields $\Pi^-_{x\beta}=(\G_{xz}\Pi^-_z)_\beta$.
Furthermore, by Lemma~\ref{lem:consequence} we know that $\E\Pi^-_{z\beta\tau}(z)$ does not depend on $z$, 
and since $(\partial_0^2-\Delta^2)\psi_{s-\tau}$ integrates to $0$, we obtain 
\begin{equation}
\partial_s \E\Pi^-_{x\beta\, s}(y) 
= \int_{\R^{1+d}} dz\, (\partial_0^2-\Delta^2)\psi_{s-\tau}(y-z) 
\E \big((\G_{xz}-\id)\Pi^-_{z}\big)_{\beta\,\tau}(z) \, . 
\end{equation}
Since $\Pi^-_z\in\Ttilde$ has no purely polynomial components, 
the right-hand side is unique by Step~1 
and the strict triangularity \eqref{triGamma_prec} 
combined with uniqueness of $\Pi^-_{z\beta'}$ for $\beta'\prec\beta$.

{\bf Step 2b.} 
$(\d\G_{xy})_\beta^\gamma$ is unique for $\gamma$ not purely polynomial. 
By definition \eqref{dGamma} of $\d\G_{xy}$ we have
\begin{equation}
(\d\G_{xy})_\beta^\gamma = \sum_{|\n|\leq 1}\sum_{\beta'+\beta''=\beta} 
\d\pi^{(\n)}_{xy\beta'} (\G_{xy}D^{(\n)})_{\beta''}^\gamma\, ,
\end{equation}
where $\d\pi^{(\0)}_{xy\beta'}=\delta\Pi_{x\beta'}(y)$, 
$\d\pi^{(\n)}_{xy\beta'}=(\d\G_{xy})_{\beta'}^{e_\n}$ for $|\n|=1$ 
and $(\G_{xy}D^{(\n)})_{\beta''}^\gamma 
= \sum_{\gamma'} (\G_{xy})_{\beta''}^{\gamma'}(D^{(\n)})_{\gamma'}^\gamma$. 
Since $\gamma$ is not purely polynomial by assumption, 
and $\gamma'$ inherits this property from $D^{(\n)}$ 
as can be easily seen from its definition \eqref{D0} and \eqref{Dn}, 
the claim follows from Step~1 and the induction hypothesis, 
provided $\beta'\prec\beta$ and $\beta''\preceq\beta$. 
Since $\d\pi^{(\n)}_{xy}\in\Ttilde$, we can restrict to 
$\beta'$ that are not purely polynomial, 
which implies $0\preceq\beta'$ and hence $\beta''\preceq\beta$. 
Similarly, since $\gamma'$ is not purely polynomial, 
$\beta''$ inherits this property from $\G_{xy}$ by \eqref{Gamma_preserves}, 
and thus $\beta'\preceq\beta$. 
If $\beta'=\beta$, then $\beta''=0$, and by the triangularity 
\eqref{triGamma_hom} of $\G_{xy}$ also $\gamma'=0$. 
However, $(D^{(\n)})_0^\gamma$ is only non-vanishing for purely polynomial $\gamma$, as can be seen from its definition \eqref{D0} and \eqref{Dn}, 
which contradicts our assumption and thus $\beta'\prec\beta$. 

{\bf Step 2c.} 
$\Pi^-_{x\beta}$ is unique.
We look at \eqref{ho28} and learn from Step~2b 
and the triangular properties \eqref{triProduct_prec} and  \eqref{tridGamma_prec} 
that all terms besides $\delta\Pi^-_{x\beta t}$ are unique at this stage of the induction.
Hence for $x\neq y$
\begin{align}
&(\delta\Pi^-_x-\delta\widetilde\Pi^-_x)_{\beta\, t}(y) \\
&= \big(\delta\Pi^-_x\-\d\G_{xy} Q \Pi^-_{y} 
\-\sum_{k\ge 0}\mathsf{z}_k\Pi_x^k(y)
\Delta (\delta\Pi_x\-\d\G_{xy} Q \Pi_y)
\-\delta\xi\mathsf{1}\big)_{\beta t} (y) \\
&- \big(\delta\widetilde\Pi^-_x\-\d\tG_{xy} Q \widetilde\Pi^-_{y} 
\-\sum_{k\ge 0}\mathsf{z}_k\widetilde\Pi_x^k(y)
\Delta (\delta\widetilde\Pi_x\-\d\tG_{xy} Q \widetilde\Pi_y)
\-\delta\xi\mathsf{1}\big)_{\beta t} (y) \, . 
\end{align}
Applying $\E|\cdot|$ and the triangle inequality, we learn from \eqref{ho28} that $\E|(\delta\Pi^-_x-\delta\widetilde\Pi^-_x)_{\beta t}(y)|\to0$ as $t\to0$. 
Note also that the semigroup property \eqref{semigroup} yields 
\begin{align}
\E| \delta\Pi^-_{x\beta \,T}(y) - \delta\widetilde\Pi^-_{x\beta \, T}(y)| 
&= \lim_{t\to0} \E|\delta\Pi^-_{x\beta \,T+t}(y) 
- \delta\widetilde\Pi^-_{x\beta \,T+t}(y)| \\
&\leq \lim_{t\to0} \int_{\R^{1+d}}dz\, |\psi_T(y-z)| \, 
\E|\delta\Pi^-_{x\beta \,t}(z) - \delta\widetilde\Pi^-_{x\beta \,t}(z)| \, . 
\end{align}
By the polynomial growth \eqref{polygrowth}, 
which by \cite[Lemma~A.2]{LOTT21} is inherited by $\delta\Pi^-_{x\beta t}$,
and since $\psi_T$ is a Schwartz function, 
the dominated convergence theorem thus yields 
for all $T>0$, $x\neq y\in\R^{1+d}$ and $\delta\xi\in\S(\R^{1+d})$, almost surely
\begin{equation}
(\delta\Pi^-_x-\delta\widetilde\Pi^-_x)_{\beta\, T}(y) =0 \, .
\end{equation}
Since $\S(\R^{1+d})$ is separable, we obtain 
$\frac{\partial}{\partial\xi} (\Pi^-_x - \widetilde\Pi^-_x)_{\beta\,T}(y)=0$ on $\S$ almost surely, 
and by density of $\S(\R^{1+d})$ in $\dot{H}^{\alpha-2+D/2}$ we obtain 
$\frac{\partial}{\partial\xi} (\Pi^-_x - \widetilde\Pi^-_x)_{\beta\,T}(y)=0$. 
Together with Step~2a, an application of the spectral gap inequality \eqref{sg} yields $(\Pi^-_x-\widetilde\Pi^-_x)_{\beta\,T}(y)=0$ almost surely, 
and by continuity of $(\Pi^-_x-\widetilde\Pi^-_x)_{\beta\,T}$ it vanishes also at $y=x$.
Since $T>0$ and $y\in\R^{1+d}$ were arbitrary, $\Pi^-_{x\beta}=\widetilde\Pi^-_{x\beta}$.

{\bf Step 3.} 
$\Pi_{x\beta}$ is unique.
Note that since $\beta$ is populated and not purely polynomial,
$|\beta|\not\in\N$ as a consequence of $\alpha\not\in\Q$. 
Since $(\partial_0-\Delta)(\Pi_{x\beta}-\widetilde\Pi_{x\beta})=\Pi^-_{x\beta}-\widetilde\Pi^-_{x\beta}=0$ by Step~2, 
we obtain $\Pi_{x\beta}=\widetilde\Pi_{x\beta}$ as an immediate consequence 
of the estimate \eqref{estPi} by Liouville's principle Lemma~\ref{liouville}. 

{\bf Step 4.} 
$(\G_{xy})_\beta^\gamma$ is unique. 
In view of Step~1 it remains to prove uniqueness of $(\G_{xy})_\beta^\gamma$ for purely polynomial $\gamma$. 
We rewrite \eqref{recenter} to $\Pi_x-\Pi_x(y)-\G_{xy}P\Pi_y = \G_{xy}(\id-P)\Pi_y$ 
and note that the $\beta$-component of the left-hand side is at this stage of the induction 
unique by Step~1, the triangularity \eqref{triGamma_prec} and Step~3.
By \eqref{polypart}, the $\beta$-component of the right-hand side equals 
\begin{equation}
\sum_{\n\neq\0} (\G_{xy})_\beta^{e_\n} (\cdot - y)^\n \, ,
\end{equation}
which is a finite sum due to \eqref{triGamma_prec} 
and establishes uniqueness of $(\G_{xy})_\beta^{e_\n}$.

{\bf Step 5.} 
$(\d\G_{xy})_\beta^\gamma$ is unique. 
By Step~2b it remains to establish uniqueness of $(\d\G_{xy})_\beta^\gamma$ for purely polynomial $\gamma$, 
and from the definition \eqref{dGamma} of $\d\G_{xy}$ we see that this is only non vanishing for $\gamma=e_\n$ with $|\n|=1$. 
Together with \eqref{polypart}, the already established 
$\delta\Pi_{x\beta} = \delta\widetilde\Pi_{x\beta}$, 
Step~2b and the triangularity \eqref{tridGamma_prec} of $\d\G_{xy}$, 
this yields
\begin{align}
&\sum_{|\m|=1}(\cdot-y)^\m (\d\G_{xy}-\d\tG_{xy})_\beta^{e_\m} 
= ((\d\G_{xy}-\d\tG_{xy}) Q (\id-P)\Pi_y)_\beta \\
&= -(\delta\Pi_x-\delta\Pi_x(y)-\d\G_{xy} Q \Pi_y)_\beta 
+ (\delta\widetilde\Pi_x-\delta\widetilde\Pi_x(y)-\d\tG_{xy} Q \widetilde\Pi_y)_\beta \, .
\end{align}
Evaluating at $y+\lambda\n$ for $\lambda>0$ and $|\n|=1$ and dividing by $\lambda$, we obtain
from \eqref{ho29} almost surely $(\d\G_{xy}-\d\tG_{xy})_\beta^{e_\n}=0$, which finishes the proof. 
\end{proof}

%%%%%%%%%%%%%%%%%%%%%%%%%%%%%%%%%%%%%%%%%%%%%%%%%%%%%%%%%%%%%%%%%%%%%%%%%%%%%%%%%%%%

\section{Existence of a model}\label{sec:exist}

To establish the existence part of Theorem~\ref{thm}, 
we consider $(\Pi^{(\tau)},\Gtau)$ associated to the mollified noise $\xi_\tau:=\xi*\psi_\tau$, 
which was constructed in \cite{LOTT21}. 
We start by checking that this construction indeed yields a model 
in the sense of Definition~\ref{def}, 
provided $\xi$ is replaced by the smooth $\xi_\tau$. 
Notice that the construction in \cite{LOTT21} is carried out for $d=1$ and $\alpha<1/2$, 
so that the regularity index $2-\alpha-D/2$ of the norm $\|\cdot\|_*$ is positive. 
However, by small adaptations of exponents and weights, 
see Appendix~\ref{weights}, 
the proof holds in arbitrary dimension and for $\alpha<1$. 
We refer to \cite{GT23}, 
where the same strategy is applied for $\alpha<1$ and in arbitrary dimension -- 
although for a slightly different equation. 

\begin{lemma}\label{smooth_model}
Assume that $\E$ satisfies Assumption~\ref{ass}. 
Then $(\Pi^{(\tau)},\Gtau)$ constructed in \cite{LOTT21} is a model for \eqref{spde} with $\xi$ replaced by $\xi_\tau$. 
\end{lemma}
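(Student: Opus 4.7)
The plan is to go through each clause of Definition~\ref{def} and verify that the construction of $(\Pi^{(\tau)},\Gtau)$ in \cite{LOTT21} delivers it, invoking the adaptations detailed in Appendix~\ref{weights} to extend the parameter range from $d=1$, $\alpha<1/2$ treated there to the present setting. For the first three bullets most of the work has already been carried out: the anchoring \eqref{eh11} reduces to $(\partial_0-\Delta)\Pi^{(\tau)}_{x\,\beta=0}=\xi_\tau$ because $\xi$ is centered by Assumption~\ref{ass}, which matches the normalization in \cite{LOTT21}; that $\Pi^{(\tau)}_x$ takes values in $\T$ and that its purely polynomial components are given by \eqref{polypart} is built into the construction; the shift invariance \eqref{expectationshiftinvariant} of $\E\Pi^{(\tau)}$ follows from the stationarity of $\xi$ combined with the translation covariance \eqref{translation} of the construction; and the stochastic estimate \eqref{estPi} is one of the main estimates of \cite{LOTT21}. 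Likewise, the algebra endomorphism property of $\Gtau_{xy}$, the conditions \eqref{Gamma_zn} and \eqref{Gamma_zk} with $\pi^{(\n)}_{xy}\in\T$ subject to \eqref{rest_pin}, the recentering \eqref{recenter} and \eqref{recenter-}, and the stochastic estimate \eqref{eh14} are all established there.

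The fourth bullet, which involves the Malliavin derivative, requires slightly more care. For smooth $\xi_\tau$ the construction realizes $\Pi^{(\tau)}_{x\beta}(y)$ as a smooth cylindrical functional of $\xi$; combining this smoothness with the stochastic estimate \eqref{estPi}, the annealed H\"older continuity of Lemma~\ref{lem:consequence}~iii), and the corresponding bound on the Malliavin derivative yields $\Pi^{(\tau)}_{x\beta}\in C(\H)$ for $|\beta|<2$ together with the polynomial growth \eqref{polygrowth}. The random $\d\pi^{(\n)}_{xy}\in Q\Ttilde$ for $|\n|=1$ are those constructed in \cite{LOTT21}, designed precisely so that the associated $\d\Gtau_{xy}$ defined via \eqref{dGamma} satisfies \eqref{modelledness} pointwise; in particular the expectation inside \eqref{ho29} is identically zero, trivially giving the required limit. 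Similarly, \eqref{ho28} reduces to the pointwise identity \eqref{magic} (which appears as \cite[(4.50)]{LOTT21} and was re-derived, for smooth noise, in the heuristic discussion of Section~\ref{sec:stable}), so that the quantity inside the limit in \eqref{ho28} vanishes identically before passing $t\to 0$.

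The main obstacle is not a genuine difficulty in the verification itself, which amounts to bookkeeping against the statements of \cite{LOTT21}, but rather the task of checking that the modifications of exponents and weights required to pass from the parameter range $d=1$, $\alpha<1/2$ treated there to our range $d\geq 1$, $\alpha\in(\max\{0,1-D/4\},1)\setminus\Q$ still produce the necessary estimates and algebraic identities. This is what Appendix~\ref{weights} is devoted to, and an analogous extension has already been carried out successfully in \cite{GT23} for a closely related equation.
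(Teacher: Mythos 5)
Your overall strategy is the paper's: walk through Definition~\ref{def} clause by clause, quote the corresponding statements of \cite{LOTT21}, and note that the extension from $d=1$, $\alpha<1/2$ to the present range is a matter of adapted exponents and weights (Appendix~\ref{weights}). The first three bullets are handled as you describe. However, your treatment of the fourth bullet contains a genuine error: the quantities inside \eqref{ho29} and \eqref{ho28} do \emph{not} vanish identically for smooth noise. The maps $\d\Gtau_{xy}$ are built so that $\delta\Pi^{(\tau)}_x-\delta\Pi^{(\tau)}_x(y)-\d\Gtau_{xy}Q\Pi^{(\tau)}_y$ is a \emph{remainder of positive order}, not zero; this is exactly what \eqref{modelledness} asserts (order $\alpha+D/2$), and it is incompatible with your claim that the expression in \eqref{ho29} is identically zero. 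Likewise, \eqref{magic} is a point-evaluation identity at $y$, whereas \eqref{ho28} involves the semigroup convolution $F^{(\tau)}_{xy\,\beta t}(y)$ of the function $F^{(\tau)}_{xy}$ from \eqref{Fxy}, which samples values in a whole neighborhood of $y$ and is nonzero for $t>0$; so one cannot conclude the limit by saying the integrand ``vanishes identically before passing $t\to0$.''

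What is actually needed, and what the paper does, is quantitative: one quotes \cite[(4.86)]{LOTT21} (restated as \eqref{eq:delta_pi_incr_generic}), whose vanishing order $\kappa+\alpha>1$ gives \eqref{ho29} in the limit $z\to y$, and \cite[(4.71)]{LOTT21} in the form \eqref{ho27}, whose order $(\sqrt[4]{t})^{\kappa+2\alpha-2}$ with $\kappa+2\alpha>2$ gives \eqref{ho28} as $t\to0$. Moreover, to state (4.71) in the clean form \eqref{ho27} the paper has to impose the additional upper restriction \eqref{kappa2} on $\kappa$, which is not present in \cite{LOTT21} and which your proposal does not address. Without this quantitative step and the accompanying exponent bookkeeping the limits \eqref{ho29} and \eqref{ho28} are not established, so this part of your argument needs to be replaced rather than merely tightened.
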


\begin{proof}
For the proof we just collect all the necessary properties of 
$\Pi^{(\tau)}$ and $\Gtau$ from \cite{LOTT21}. 
Indeed, $\Pi^{(\tau)}_{x0}$ satisfies the linear stochastic heat equation \eqref{eh11} with $\xi$ replaced by $\xi_\tau$ as a consequence of \cite[(2.18), (2.35)]{LOTT21}. 
The purely polynomial part $\Pi_{x e_\n}$ is in agreement with \eqref{polypart} by \cite[(2.21)]{LOTT21}, 
and the shift invariance \eqref{expectationshiftinvariant} follows from \cite[(5.2)]{LOTT21}. 
The mapping properties \eqref{Gamma_zn} and \eqref{Gamma_zk} of $\Gtau$ 
are captured by \cite[(2.51)]{LOTT21} and \cite[(2.50)]{LOTT21}, respectively, 
where $\pi^{(\n)(\tau)}$ is restricted by \eqref{rest_pin} due to \cite[(2.46)]{LOTT21}. 
The recentering \eqref{recenter} of $\Pi^{(\tau)}$ and \eqref{recenter-} of $\Pi^{-(\tau)}$ 
is stated in \cite[(2.61), (2.64)]{LOTT21} and \cite[(2.63)]{LOTT21}, respectively. 
From \cite[Section 7]{LOTT21} we know 
that\footnote{note that compared to \cite{LOTT21}, 
$\H$ here is denoted by $\H^2$ there, 
and $C(\H)$ here corresponds to $C^0(\H)$ there with the additional constraint of polynomial growth, see \eqref{polygrowth}} 
$\Pi^{(\tau)}_{x\beta} \in C(\H)$, 
and the ansatz \eqref{dGamma} for $\d\Gtau$ is the same as in \cite[(2.64), (4.40)]{LOTT21}. 
The stochastic estimates \eqref{estPi} and \eqref{eh14} are established in \cite[(2.36), (2.37)]{LOTT21}, where we point out the uniformity in $\tau>0$, 
\begin{align}
\sup_{\tau>0} \, \sup_{x\neq y} |x-y|^{-|\beta|} \, 
\mathbb{E}^\frac{1}{p}|\Pi^{(\tau)}_{x\beta}(y)|^p 
&< \infty \, , \label{Pi} \\
\sup_{\tau>0} \, \sup_{x\neq y} |x-y|^{-(|\beta|-|\gamma|)} \, 
\mathbb{E}^\frac{1}{p}|(\Gtau_{xy})_{\beta}^{\gamma}|^p 
&<\infty \, . \label{Gamma}
\end{align}
It remains to check the properties \eqref{ho29} and \eqref{ho28}.
For this, we recall from \cite[(4.86)]{LOTT21} 
\begin{align}
&\mathbb{E}^\frac{1}{q'} \big|\big(\delta\Pi^{(\tau)}_x-\delta\Pi^{(\tau)}_x(y)
-\d\Gtau_{xy} Q \Pi^{(\tau)}_y\big)_\beta (z) \big|^{q'} \\
&\quad\lesssim |z-y|^{\kappa+\alpha} (|z-y|+|y-x|)^{|\beta|-\alpha} (w_x(y)+w_x(z)) \, , 
\label{eq:delta_pi_incr_generic} 
\end{align}
where $q'<q\leq2$, with $q$ denoting a generic conjugate exponent of $2\leq p<\infty$. 
Hence, the constant in $\lesssim$ also depends on $q'<q$ when it comes to estimates of Malliavin derivatives. 
The weights $w_x$ are a small adaptation of those in \cite{LOTT21}, 
and are given in Appendix~\ref{weights}.
Here, $\kappa$ is restricted by 
\begin{subnumcases}
{
    2 < \kappa+2\alpha < \label{kappa}
}
\tfrac{D}{2}+2\alpha, \label{kappa1} \\
\min\big\{|\beta| \, \big|\, \beta \text{ populated and } |\beta|>2 \big\} \label{kappa2},
\end{subnumcases}
where the restriction \eqref{kappa1} is for the weight $w_x(y)$ to be locally (square) integrable, see \cite[(4.36)]{LOTT21}, 
and the restriction $2<\kappa+2\alpha$ originates 
from a reconstruction argument, see \cite[(4.51)]{LOTT21}. 
Since $\alpha>1-D/4$, cf.~Assumption~\ref{ass}~(iii), 
it is possible to choose such a $\kappa$. 
Also the restriction \eqref{kappa2}, which is not present in \cite{LOTT21}, 
is an admissible choice since the set of homogeneities is locally finite. 
The reason for this further restriction on $\kappa$ is that then \cite[(4.71)]{LOTT21} can be stated as 
\begin{equation}\label{ho27}
\mathbb{E}^\frac{1}{q'} | F^{(\tau)}_{xy\beta t}(y)|^{q'} 
\lesssim  (\sqrt[4]{t})^{\kappa+2\alpha-2} (\sqrt[4]{t}+|y-x|)^{|\beta|-2\alpha} w_x(y)\, , 
\end{equation}
with the understanding that $|\beta|\geq2\alpha$ unless the left-hand side vanishes, 
and where $F^{(\tau)}_{xy}$ is given by 
\begin{equation}\label{Fxy}
F^{(\tau)}_{xy} := \delta\Pi^{-(\tau)}_x - \d\Gtau_{xy} Q \Pi^{-(\tau)}_{y} 
- \sum_{k\ge 0} \z_k (\Pi^{(\tau)}_x)^k(y) \Delta (\delta\Pi^{(\tau)}_x - \d\Gtau_{xy} Q \Pi^{(\tau)}_y)
-\delta\xi_\tau\mathsf{1} \, . 
\end{equation}
Since by assumption $\kappa+\alpha>2-\alpha>1$, 
and since $w_x(y)$ is finite for $\delta\xi\in\S$ and $x\neq y$,  
\eqref{eq:delta_pi_incr_generic} implies \eqref{ho29}. 
From \eqref{ho27} and since $\kappa+2\alpha>2$ by \eqref{kappa}, 
we obtain \eqref{ho28}. 
\end{proof}

In view of the already established uniqueness result, 
to prove convergence of $(\Pi^{(\tau)},\Gtau)$
it is tempting to try to appeal to tightness, 
which follows from the uniform bounds \eqref{Pi} and \eqref{Gamma}.
However, difficulties arise since on the level of the law of a random variable it seems to be tricky to preserve the relation between the random variable and its Malliavin derivative. 

Instead, we follow a different approach. 
In fact, the estimates established in \cite{LOTT21} upgrade to estimates of increments and yield that $(\Pi^{(\tau)},\Gtau)$ is Cauchy in $\tau$. 
More precisely, for every $\beta$ there exists $\epsilon>0$ such that
for all $p<\infty$, $x,y\in\R^{1+d}$ and $\tau,\tau'>0$
\begin{align}
\mathbb{E}^\frac{1}{p}|\Pi^{(\tau)}_{x\beta}(y) - \Pi^{(\tau')}_{x\beta}(y)|^p 
&\lesssim (\sqrt[4]{|\tau-\tau'|})^\epsilon \, |x-y|^{|\beta|-\epsilon} \, , \label{Pi_cauchy} \\
\mathbb{E}^\frac{1}{p}|(\Gtau_{xy})_{\beta}^{\gamma} - (\Gtaut_{xy})_{\beta}^{\gamma}|^p 
&\lesssim (\sqrt[4]{|\tau-\tau'|})^\epsilon \, |x-y|^{|\beta|-|\gamma|-\epsilon} \, , \label{Gamma_cauchy}
\end{align}
with the understanding that $|\beta|-|\gamma|-\epsilon>0$ unless the left-hand side vanishes. 
This allows to define the random variables 
\begin{equation}\label{defLimit}
\Pi_{x\beta}(y) := \lim_{\tau\to0} \Pi^{(\tau)}_{x\beta}(y)\, , \quad
(\G_{xy})_\beta^\gamma := \lim_{\tau\to0} (\Gtau_{xy})_\beta^\gamma\, , 
\end{equation}
where both limits have to be understood w.r.t.~$\E^\frac{1}{p}|\cdot|^p$.
To check that $(\Pi,\G)$ defined in this way is indeed a model for \eqref{spde}, 
we will also make use of the corresponding estimates of the Malliavin derivative
\begin{equation}
\E^{\frac{1}{q'}} |\delta\Pi^{(\tau)}_{x\beta}(y) - \delta\Pi^{(\tau')}_{x\beta}(y)|^{q'} 
\lesssim (\sqrt[4]{|\tau-\tau'|})^\epsilon \, |x-y|^{|\beta|-\epsilon} \, \bar w\, , \label{deltaPi_cauchy} 
\end{equation}
together with 
\begin{equation}\label{dGamma_cauchy}
\E^\frac{1}{q'} | (\d\Gtau_{xy})_\beta^\gamma - (\d\Gtaut_{xy})_\beta^\gamma |^{q'} 
\lesssim (\sqrt[4]{|\tau-\tau'|})^{\epsilon} \, |x-y|^{\kappa+|\beta|-|\gamma|-\epsilon} w_x(y) \, ,
\end{equation}
where the weights $\bar{w}$ and $w_x(y)$ are given in Appendix~\ref{weights}.
It is not difficult to see that the identical strategy of \cite{LOTT21} yields these estimates. 
For completeness we walk the reader through and point out the necessary adaptations, see Section~\ref{sec:cauchy}. 
Before we do so, we shall however argue that $(\Pi,\G)$ is indeed~a~model. 

\begin{proposition}[Existence]\label{exist}
Assume that $\E$ satisfies Assumption~\ref{ass}. 
Then there exists a model $(\Pi,\G)$ for \eqref{spde}. 
\end{proposition}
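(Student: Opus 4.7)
The plan is to take $(\Pi,\G)$ as the $\tau\to 0$ limits in \eqref{defLimit}, whose existence is guaranteed by the Cauchy estimates \eqref{Pi_cauchy} and \eqref{Gamma_cauchy}, and then to verify each item of Definition~\ref{def} by passing the corresponding property of the smooth model $(\Pi^{(\tau)},\Gtau)$ from Lemma~\ref{smooth_model} to the limit. The stochastic estimates \eqref{estPi} and \eqref{eh14} for $(\Pi,\G)$ are immediate from the uniform bounds \eqref{Pi} and \eqref{Gamma} together with $L^p$-convergence.

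I would first dispatch the algebraic and deterministic items. The linear equation \eqref{eh11} for $\Pi^{(\tau)}_{x\,\beta=0}$ passes to the limit since $\xi_\tau\to \xi$ in $\S'$; the purely polynomial part \eqref{polypart} is independent of $\tau$; the shift invariance \eqref{expectationshiftinvariant} of $\E\Pi^{(\tau)}$ survives the $L^1$-limit. For $\G$, multiplicativity and the identities \eqref{Gamma_zn}, \eqref{Gamma_zk} subject to \eqref{rest_pin} are polynomial relations in finitely many entries of $\Gtau$, hence stable under componentwise $L^p$-convergence; the recentering identities \eqref{recenter} and \eqref{recenter-} follow the same way, with $\Pi^-_x=(\partial_0-\Delta)P\Pi_x$ tautologically preserved.

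The main work is the last item. By \eqref{deltaPi_cauchy}, once paired with a fixed $\delta\xi\in\S$, the sequence $\delta\Pi^{(\tau)}_{x\beta}(y)$ is Cauchy in $L^{q'}$, since the weight $\bar w$ is integrable against Schwartz functions. Combined with the $L^p$-convergence of $\Pi^{(\tau)}_{x\beta}(y)$ itself and the closability of $\partial/\partial\xi$ from Assumption~\ref{ass}~iii), this identifies the limit as the directional Malliavin derivative of $\Pi_{x\beta}(y)$; the polynomial growth \eqref{polygrowth}, uniform in $\tau$, then delivers $\Pi_{x\beta}\in C(\H)$ for $|\beta|<2$. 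The Cauchy estimate \eqref{dGamma_cauchy} analogously provides $\d\pi^{(\n)}_{xy}\in Q\Ttilde$ for $|\n|\leq 1$ as $\tau\to 0$ limits, from which $\d\G_{xy}$ is constructed via \eqref{dGamma}.

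With these limits in hand, \eqref{ho29} and \eqref{ho28} reduce to passing \eqref{eq:delta_pi_incr_generic} and \eqref{ho27} through the limit, the Cauchy estimates permitting to replace $\Pi^{(\tau)}$ by $\Pi$ and $\xi_\tau$ by $\xi$ inside \eqref{Fxy}; the resulting bounds on $\E|(\delta\Pi_x-\delta\Pi_x(y)-\d\G_{xy} Q\Pi_y)_\beta(z)|$ and on $\E|F_{xy\beta t}(y)|$ carry the same exponents in $|z-y|$ and $\sqrt[4]{t}$, with $w_x(y)$ finite since $\delta\xi\in\S$ and $x\neq y$. As $\kappa+\alpha>2-\alpha>1$ this gives \eqref{ho29}, and as $\kappa+2\alpha>2$ it gives \eqref{ho28}. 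I expect the main obstacle to be the identification in the third paragraph: ensuring that taking $\delta$ and taking $\tau\to 0$ can be legitimately interchanged for the rough limit, which is precisely where closability of $\partial/\partial\xi$ and the integrability of $w_x$ and $\bar w$ against Schwartz functions are decisive.
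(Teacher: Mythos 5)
Your proposal follows essentially the same route as the paper: define $(\Pi,\G)$ by the limits \eqref{defLimit}, pass the algebraic identities and the uniform bounds \eqref{Pi}, \eqref{Gamma} to the limit, and obtain the Malliavin items by combining the Cauchy estimates \eqref{deltaPi_cauchy}, \eqref{dGamma_cauchy} with closability of $\partial/\partial\xi$ before transferring \eqref{eq:delta_pi_incr_generic} and \eqref{ho27} to the limit as in Lemma~\ref{smooth_model}. The only refinement in the paper's proof is that \eqref{deltaPi_cauchy} is dualized (via \eqref{wbar<Sobolev}) to get Cauchyness of $\frac{\partial}{\partial\xi}\Pi^{(\tau)}_{x\beta}(y)$ in $\E^\frac{1}{2}\|\cdot\|_*^2$, rather than only directionwise for fixed $\delta\xi$, which is what closability and membership in $\H$ actually require.
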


\begin{proof}
Assumption~\ref{ass} is exactly\footnote{
up to the restriction $d=1$ and $\alpha<1/2$ as mentioned at the beginning of this section} 
the setting in which \cite{LOTT21} was carried out. Therefore we just need to check that $(\Pi,\G)$ defined by \eqref{defLimit} satisfies Definition~\ref{def}. 
Applying triangle inequality and the estimate \eqref{Pi}, we have
\begin{align}
\E^\frac{1}{p}|\Pi_{x\beta}(y)|^p 
&\leq \E^\frac{1}{p}| \Pi_{x\beta}(y)-\Pi^{(\tau)}_{x\beta}(y) |^p 
+ \E^\frac{1}{p}| \Pi^{(\tau)}_{x\beta}(y)|^p \\
&\lesssim \E^\frac{1}{p}| \Pi_{x\beta}(y)-\Pi^{(\tau)}_{x\beta}(y) |^p 
+ |x-y|^{|\beta|} \, .
\end{align}
Taking the limit $\tau\to0$ %on both sides 
yields \eqref{estPi}. 
The argument for \eqref{eh14} is similar. 

Preliminary for what follows, we note that $\Pi_x^-:=(\partial_0-\Delta)P\Pi_x$ satisfies for all $t>0$ and $y\in\R^{1+d}$
\begin{equation}\label{Pi-_converges}
\lim_{\tau\to0}\,\E^\frac{1}{p}| \Pi^{-}_{x\beta t}(y) - \Pi^{-(\tau)}_{x\beta t}(y)|^p =0 \, .
\end{equation}
Indeed, the left-hand side is bounded by 
\begin{equation}
\lim_{\tau\to0}\int_{\R^{1+d}} dz\, |(\partial_0-\Delta)\psi_t(y-z)| \, \E^\frac{1}{p} |\Pi_{x\beta}(z)-\Pi^{(\tau)}_{x\beta}(z)|^p \, ,
\end{equation}
and by the already established \eqref{estPi} we may appeal 
to the dominated convergence theorem to obtain \eqref{Pi-_converges}. 

We turn to \eqref{eh11}, for which we have to show that $\Pi^-_{x0}=\xi$. 
By the triangle inequality and \eqref{eh11} for $\Pi^{(\tau)}_{x0}$ we obtain
\begin{equation}
\E| \Pi^-_{x0t}(y) - \xi_t(y)| 
\leq \E| \Pi^-_{x0t}(y) - \Pi^{-(\tau)}_{x0t}(y)| 
+ \E| \xi_{\tau+t}(y) - \xi_t(y)| \, . 
\end{equation}
The first term on the right-hand side converges to $0$ as $\tau\to0$ by \eqref{Pi-_converges}, 
and so does the second term on the right-hand side, see e.g.~the upcoming \eqref{Pi-_cauchy}. 
Hence $\E|\Pi^-_{x0t}(y)-\xi_t(y)|=0$ for all $t>0$ and $y\in\R^{1+d}$, 
which yields \eqref{eh11}. 
For \eqref{polypart} we note that $\E|\Pi_{x e_\n}(y) - (y-x)^\n| = \E| \Pi_{x e_\n}(y) - \Pi^{(\tau)}_{x e_\n}(y)|$, 
which converges to $0$ as $\tau\to0$. 
Similarly, $\E\Pi_{x\beta}(y)-\E\Pi_{x+z\,\beta}(y+z) 
= \E(\Pi_{x\beta}(y)-\Pi^{(\tau)}_{x\beta}(y) )
+ \E(\Pi^{(\tau)}_{x+z\,\beta}(y+z)-\Pi_{x+z\,\beta}(y+z))$, 
where the right-hand side converges to $0$ as $\tau\to0$. 
The arguments for \eqref{recenter}, \eqref{Gamma_zn}, \eqref{Gamma_zk} 
and \eqref{recenter-} are similar. 

We turn to Malliavin differentiability. 
By dualizing \eqref{deltaPi_cauchy} we observe that $\frac{\partial}{\partial\xi}\Pi^{(\tau)}_{x\beta}(y)$ 
is a Cauchy sequence w.r.t.~$\E^\frac{1}{2}\|\cdot\|_*^2$. 
Together with \eqref{Pi_cauchy} and the closability assumption of $\frac{\partial}{\partial\xi}$, 
we obtain $\Pi_{x\beta}(y)\in\H$ and 
$\frac{\partial}{\partial\xi}\Pi^{(\tau)}_{x\beta}(y)\to\frac{\partial}{\partial\xi}\Pi_{x\beta}(y)$ w.r.t.~$\E^\frac{1}{2}\|\cdot\|_*^2$ as $\tau\to0$.
With this at hand we are in position to check \eqref{ho29} and \eqref{ho28}. 
First, note that \eqref{dGamma_cauchy} implies that for fixed $\delta\xi\in\S$ and $x\neq y$, 
$(\d\Gtau_{xy})_\beta^\gamma\to(\d\G_{xy})_\beta^\gamma$ w.r.t.~$\E^\frac{1}{q'}|\cdot|^{q'}$ as $\tau\to0$, 
which serves as the definition of $\d\G_{xy}$. 
Since $(\d\Gtau_{xy})_\beta^{e_\n} = \d\pi^{(\n)(\tau)}_{xy\beta}$, this implies in particular 
$\d\pi^{(\n)(\tau)}_{xy\beta}\to\d\pi^{(\n)}_{xy\beta}$ w.r.t.~$\E^\frac{1}{q'}|\cdot|^{q'}$ as $\tau\to0$, 
which we again see as the definition of $\d\pi^{(\n)}_{xy}$. 
By H\"older's inequality and $\Gtau_{xy}\to\G_{xy}$ w.r.t.~$\E^\frac{1}{p}|\cdot|^p$ as $\tau\to0$, 
one finds that the structure \eqref{dGamma} is preserved in the limit. 
Analogous to checking that \eqref{estPi} and \eqref{eh14} are preserved in the limit, 
we obtain that \eqref{eq:delta_pi_incr_generic} is preserved in the limit. 
In the same way we obtained \eqref{Pi-_converges}, 
we use \eqref{deltaPi_cauchy} to argue in favor of
\begin{equation}
\lim_{\tau\to0} \, \E^\frac{1}{q'}| \delta\Pi^-_{x\beta t}(y) - \delta\Pi^{-(\tau)}_{x\beta t}(y) |^{q'} = 0\, , 
\end{equation}
which allows together with the above established convergences to also preserve \eqref{ho27} in the limit. 
Having \eqref{eq:delta_pi_incr_generic} and \eqref{ho27}, 
we can proceed as in the proof of Lemma~\ref{smooth_model} to obtain \eqref{ho29} and \eqref{ho28}.
\end{proof}

%%%%%%%%%%%%%%%%%%%%%%%%%%%%%%%%%%%%%%%%%%%%%%%%%%%%%%%%%%%%%%%%%%%%%%%%%%%%%%%%%%%%
\subsection{Proof of the Cauchy property}\label{sec:cauchy}

In this section we give the argument for \eqref{Pi_cauchy}, 
\eqref{Gamma_cauchy}, \eqref{deltaPi_cauchy} and \eqref{dGamma_cauchy}.
To not change the exponent $\epsilon$ from line to line, 
we fix a multi-index $\beta$, and choose $\epsilon$ that works for all $\beta'\preceq\beta$. 
As it turns out, this is the case for $\epsilon$ satisfying 
\begin{subnumcases}
{
    0<\epsilon< \label{epsilon}
}
	|\beta'|-|\beta''|, \quad \text{for all }\beta',\beta''\text{ satisfying } |\beta''|<|\beta'|\leq|\beta|, \label{epsilon1}\\
	\kappa+2\alpha-2. \label{epsilon2}
\end{subnumcases}
This is an admissible choice\footnote{
w.l.o.g. choose $\beta$ such that $|\beta|\geq 2\alpha$, 
so that there exist $\beta',\beta''$ with $|\beta''|<|\beta'|\leq|\beta|$}, 
since the set of homogeneities is locally finite and since $\kappa+2\alpha-2>0$, see \eqref{kappa}. 
In the following, we will establish 
$\eqref{Pi_cauchy}_{\beta'}$,
$\eqref{Gamma_cauchy}_{\beta'}$,
$\eqref{deltaPi_cauchy}_{\beta'}$ and 
$\eqref{dGamma_cauchy}_{\beta'}$
for this choice of $\epsilon$ and for all $\beta'\preceq\beta$.
Similar to \cite{LOTT21}, we shall 
include the upcoming estimates 
$\eqref{Pi-_cauchy}_{\beta'}$ -- 
% $\eqref{expectPi-_cauchy}_{\beta'}$,
$\eqref{deltaPi-_cauchy}_{\beta'}$, 
$\eqref{pin_cauchy}_{\beta'}$ --
% $\eqref{deltaGamma_cauchy}_{\beta'}$, 
% $\eqref{dGamma_inc_cauchy}_{\beta'}$, 
$\eqref{deltaPi-_inc_cauchy}_{\beta'}$ and 
$\eqref{deltapin_cauchy}_{\beta'}$ -- 
% $\eqref{deltaPi_inc_cauchy}_{\beta'}$, 
% $\eqref{dpin_inc_cauchy}_{\beta'}$ and 
$\eqref{dpin_cauchy}_{\beta'}$ 
for the same choice of $\epsilon$ and for all $\beta'\preceq\beta$. 

First, these estimates are verified for all purely polynomial multi-indices. 
In fact, all left-hand sides of all these estimates are identically zero for purely polynomial $\beta'$. 
For $\Pi_{x\beta'}$ and $\delta\Pi_{x\beta'}$ this is the case by \eqref{polypart}, 
and for $\Pi^-_{x\beta'}$ and $\delta\Pi^-_{x\beta'}$ this is a consequence thereof.
For $(\G_{xy})_{\beta'}$ and $(\delta\G_{xy})_{\beta'}$ this follows from \eqref{Gamma_preserves_Tbar} and \eqref{Gamma_poly}, 
and for $(\d\G_{xy})_{\beta'}$ this is a consequence of \eqref{dGamma_preserves}. 
For $\pi^{(\n)}_{xy\beta'}$, $\delta\pi^{(\n)}_{xy\beta'}$ and $\d\pi^{(\n)}_{xy\beta'}$ this follows 
from the corresponding statements of $(\G_{xy})_{\beta'}$, $(\delta\G_{xy})_{\beta'}$ and $(\d\G_{xy})_{\beta'}$.

For the remaining multi-indices we proceed by induction w.r.t.~$\prec$, 
where the already established estimates for the multi-indices $\beta=e_\n$ with $|\n|=1$ serve as the base case. 
In the induction step, we follow the strategy of \cite{LOTT21}, 
where in addition we will frequently use telescoping for products in form of
\begin{equation}\label{telescope}
\pi_1\cdots\pi_k - \pi'_1\cdots\pi'_k 
= \sum_{i=1}^k \pi_1\cdots\pi_{i-1} (\pi_i-\pi'_i) \pi'_{i+1}\cdots\pi'_k \, . 
\end{equation}
The main philosophy is as follows: 
we use the algebraic identities of \cite{LOTT21}, 
apply \eqref{telescope} on the products that arise, 
and estimate all factors as in \cite{LOTT21} except increments, 
where we apply the new established estimates. 
Clearly, algebraic properties like triangularity are preserved, 
so that the inductive structure of the proof does not change. 
Also the homogeneities add up in the same way as in \cite{LOTT21}, 
although sometimes (especially in integration) care has to be taken 
that the $\epsilon$-loss in the exponents does not create problems. 
The following lemma illustrates this in a simple setup. 

In all following statements, Lemma~\ref{alg1} -- Lemma~\ref{3pt4}, 
we shall always assume that all estimates from \cite{LOTT21} hold, 
without explicitly stating them in the assumption. 
Furthermore, all statements of the induction hypothesis will be implicitly 
assumed to hold for all integrability exponents $p<\infty$ and $q'<2$, 
all space-time points $x,y,z\in\R^{1+d}$, 
and every convolution parameter $t>0$. 
For example, by $\eqref{Pi_cauchy}_{\prec\beta}$ we mean the 
estimate for all $p<\infty$, $x,y\in\R^{1+d}$ and for all multi-indices $\beta'\prec\beta$. 

\begin{lemma}[Algebraic argument I']\label{alg1}
Assume that $\eqref{pin_cauchy}_{\prec\beta}$ holds.
Then $\eqref{Gamma_cauchy}_\beta^\gamma$ holds for all $\gamma$ not purely polynomial. 
\end{lemma}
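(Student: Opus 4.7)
The plan is to express $(\Gtau_{xy})_\beta^\gamma$ for $\gamma$ not purely polynomial as a finite sum of products of factors of the form $\Pi^{(\tau)}_{x\beta'}(y)$ and $\pi^{(\n)(\tau)}_{xy\bar\beta}$ with $\beta',\bar\beta\prec\beta$, exactly as in Step~1 of the proof of Proposition~\ref{unique}. Concretely, since $\gamma$ is populated and not purely polynomial I write $\z^\gamma=\z_{k_1}\cdots\z_{k_i}\z_{\n_1}\cdots\z_{\n_j}$ with $i\geq 1$, and by multiplicativity of $\Gtau_{xy}$ together with \eqref{Gamma_zn} and \eqref{Gamma_zk} I obtain $(\Gtau_{xy})_\beta^\gamma$ as a finite sum of monomials
\begin{equation*}
\Pi^{(\tau)}_{x\beta_1^1}(y)\cdots\Pi^{(\tau)}_{x\beta_{l_i}^i}(y)\,\pi^{(\n_1)(\tau)}_{xy\bar\beta_1}\cdots\pi^{(\n_j)(\tau)}_{xy\bar\beta_j}\,,
\end{equation*}
where all the $\beta_m^k$ and $\bar\beta_m$ are populated, sum to $\beta$ in the appropriate sense, and satisfy the strict bounds $\beta_m^k\prec\beta$ and $\bar\beta_m\prec\beta$ established in Step~1 of Proposition~\ref{unique}.

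Next, I telescope the difference $(\Gtau_{xy})_\beta^\gamma-(\Gtaut_{xy})_\beta^\gamma$ monomial by monomial using \eqref{telescope}, so that in each resulting term exactly one factor is an increment in $\tau$. The increment factor is estimated by the induction hypothesis: an increment of $\pi^{(\n)}$ by $\eqref{pin_cauchy}_{\prec\beta}$, and an increment of $\Pi_{x\beta'}(y)$ (which is implicitly part of the induction hypothesis at this stage) by $\eqref{Pi_cauchy}_{\prec\beta}$. The remaining factors are bounded in $L^p$ uniformly in $\tau$ via \eqref{Pi} and the corresponding uniform bound for $\pi^{(\n)(\tau)}$ coming from \eqref{Gamma} and \eqref{Gamma_zn}. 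H\"older's inequality with appropriately chosen exponents then combines the $L^p$-norms into the target $L^p$-estimate.

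The bookkeeping step I expect to be the main obstacle is the verification that the homogeneities add up to produce exactly $|x-y|^{|\beta|-|\gamma|-\epsilon}$. Each factor $\Pi^{(\tau)}_{x\beta_m^k}(y)$ contributes $|x-y|^{|\beta_m^k|}$, each $\pi^{(\n_m)(\tau)}_{xy\bar\beta_m}$ contributes $|x-y|^{|\bar\beta_m|-|\n_m|}$, and the increment factor contributes $(\sqrt[4]{|\tau-\tau'|})^\epsilon\,|x-y|^{-\epsilon}$ in addition to its own homogeneity. Using additivity of $|\cdot|$ together with the multi-index arithmetic $\sum|\beta_m^k|+\sum(|\bar\beta_m|-|\n_m|)+\sum|\n_m|=|\beta|-\sum_{m=1}^i|\e_{k_m}|$, and the fact that $\sum_m|\e_{k_m}|$ matches $|\gamma|$ coming from $\z_{k_1}\cdots\z_{k_i}$, I get the correct net exponent $|\beta|-|\gamma|-\epsilon$. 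The non-negativity $|\beta|-|\gamma|-\epsilon\geq 0$ follows from the strict triangularity \eqref{triGamma_hom} together with the choice \eqref{epsilon1} of $\epsilon$, which guarantees that $\epsilon$ does not exceed the minimal gap between distinct homogeneities up to $|\beta|$.
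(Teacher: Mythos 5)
Your route is essentially the paper's: the paper expands $(\Gtau_{xy})_\beta^\gamma-(\Gtaut_{xy})_\beta^\gamma$ via the exponential formula \cite[(2.44)]{LOTT21}, which (as recorded in the proof of Lemma~\ref{lem:ordering}) coincides with the map determined by multiplicativity, \eqref{Gamma_zn} and \eqref{Gamma_zk} that you use; it then telescopes via \eqref{telescope}, applies H\"older's inequality with the uniform bounds \eqref{Pi} and \eqref{Gamma} on all factors except the single increment, estimates that increment by the induction hypothesis, and concludes positivity of $|\beta|-|\gamma|-\epsilon$ from the triangularity \eqref{triGamma_hom} and the choice \eqref{epsilon1}, exactly as you do. Your additional use of $\eqref{Pi_cauchy}_{\prec\beta}$ beyond the stated hypothesis $\eqref{pin_cauchy}_{\prec\beta}$ is harmless: it is available at this stage of the induction, and it is precisely what is needed for the increments of the factors $\pi^{(\0)(\tau)}_{xy}=\Pi^{(\tau)}_x(y)$ that also appear in the paper's exponential-formula expansion.

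The one point to repair is the homogeneity bookkeeping, whose intermediate identities are false as written even though your final exponent is correct. The homogeneity \eqref{homogeneity} is not additive (rather $|\beta'+\beta''|=|\beta'|+|\beta''|-\alpha$), the constraint involves $e_{k_m+l_m}$ and not $e_{k_m}$, and neither of your two displayed identities holds in general (for instance $\sum_{m=1}^i|\e_{k_m}|\neq|\gamma|$ whenever $j\geq1$ or $i\geq2$); the two errors happen to cancel. The correct count is: from $e_{k_1+l_1}+\cdots+e_{k_i+l_i}+\sum_{r,m}\beta^r_m+\sum_m\bar\beta_m=\beta$, additivity of $|\cdot|-\alpha$, $|e_{k+l}|=\alpha(1+k+l)$ and $|\gamma|=\alpha(1+\sum_r k_r-j)+\sum_m|\n_m|$ one obtains
\begin{equation}
\sum_{r,m}|\beta^r_m|+\sum_{m=1}^j\big(|\bar\beta_m|-|\n_m|\big)=|\beta|-|\gamma|\,,
\end{equation}
which is exactly the total exponent contributed by the non-increment factors, the increment supplying the extra $(\sqrt[4]{|\tau-\tau'|})^\epsilon|x-y|^{-\epsilon}$; with this correction your argument matches the paper's proof.
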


\begin{proof}
We use the exponential formula \cite[(2.44)]{LOTT21} to see
\begin{align}\label{exponentialformula}
&(\Gtau_{xy})_\beta^\gamma-(\Gtaut_{xy})_\beta^\gamma\\ 
&= \sum_{\substack{k\geq0 \\ \n_1,\dots,\n_k \\ \beta_1+\cdots+\beta_{k+1}=\beta}} \big(\pi^{(\n_1)(\tau)}_{xy\beta_1} 
\cdots \pi^{(\n_k)(\tau)}_{xy\beta_k} - \pi^{(\n_1)(\tau')}_{xy\beta_1}
\cdots \pi^{(\n_k)(\tau')}_{xy\beta_k}\big) \big(D^{(\n_1)}\cdots D^{(\n_k)}\big)_{\beta_{k+1}}^\gamma \, .
\end{align}
After telescoping \eqref{telescope}, we follow the proof of \cite[Lemma~4.3]{LO22} to obtain \eqref{Gamma_cauchy} by H\"older's inequality and using estimates of $\pi^{(\n)(\tau)}_{xy\beta'}$. 
The only modification is that on the increment $\pi^{(\n_i)(\tau)}_{xy\beta_i}-\pi^{(\n_i)(\tau')}_{xy\beta_i}$ we use the estimate \eqref{pin_cauchy}.
Note that the triangularity \eqref{triGamma_hom} implies that the left-hand side of \eqref{Gamma_cauchy} vanishes for $\beta=\gamma$, 
and for all other multi-indices where the left-hand side does not vanish 
we have $|\beta|>|\gamma|$, 
which by the choice of $\epsilon$ in \eqref{epsilon1} implies that $|\beta|-|\gamma|-\epsilon>0$.
\end{proof}

Next, we establish 
\begin{equation}\label{Pi-_cauchy} 
\mathbb{E}^\frac{1}{p}|\Pi^{-(\tau)}_{x\beta t}(y) - \Pi^{-(\tau')}_{x\beta t}(y)|^p 
\lesssim (\sqrt[4]{|\tau-\tau'|})^\epsilon \, (\sqrt[4]{t})^{\alpha-2-\epsilon}(\sqrt[4]{t}+|x-y|)^{|\beta|-\alpha} \, , 
\end{equation}
where in the proof we distinguish multi-indices $\beta$ satisfying $|\beta|<2$ from $|\beta|>2$, and start with the latter. 

\begin{lemma}[Reconstruction I']\label{rec1}
Assume $|\beta|>2$, that $\eqref{Pi-_cauchy}_{\prec\beta}$ and
$\eqref{pin_cauchy}_{\prec\beta}$ hold, 
and that $\eqref{Gamma_cauchy}_\beta^{\gamma}$ holds for all $\gamma$ not purely polynomial.
Then $\eqref{Pi-_cauchy}_\beta$ holds.
\end{lemma}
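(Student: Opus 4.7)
The plan is to mirror the reconstruction argument for the uniform-in-$\tau$ bound \eqref{Pi-} (in the regime $|\beta|>2$) from \cite[Section~4]{LOTT21}, inserting a telescoping \eqref{telescope} at each occurrence of a random product to extract the Cauchy factor $(\sqrt[4]{|\tau-\tau'|})^\epsilon$ from one factor by appeal to the induction hypothesis.

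I would start from the identity
\[
\Pi^{-(\tau)}_{x\beta t}(y) = \int (\partial_0-\Delta)\psi_t(y-z)\,(P\Gtau_{xy}\Pi^{(\tau)}_y)_\beta(z)\,dz,
\]
derived as in the proof of Lemma~\ref{lem:consequence}~i) by substituting the recentering \eqref{recenter} into $\Pi^{-(\tau)}=(\partial_0-\Delta)P\Pi^{(\tau)}$ and noting that the constant $\Pi^{(\tau)}_x(y)$ is annihilated, since $(\partial_0-\Delta)\psi_t$ integrates to zero. Subtracting the analogous expression for $\tau'$ and telescoping \eqref{telescope} the product in the integrand splits it, by the triangularity \eqref{triGamma_hom} and the local finiteness of homogeneities, into a finite sum over multi-indices $\gamma$ with $|\gamma|\leq|\beta|$, in which each summand carries either a $\G$-increment $(\Gtau_{xy}-\Gtaut_{xy})_\beta^\gamma$ or a $\Pi$-increment $(\Pi^{(\tau)}-\Pi^{(\tau')})_{y\gamma}$.

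For $\gamma$ not purely polynomial with $|\gamma|<|\beta|$, the $\G$-increment is controlled by the hypothesis $\eqref{Gamma_cauchy}_\beta^\gamma$ and the $\Pi$-increment by $\eqref{Pi_cauchy}_{\prec\beta}$, while the non-increment factors are handled by the uniform-in-$\tau$ bounds \eqref{Pi} and \eqref{Gamma}. For $\gamma=e_\n$ purely polynomial, the deterministic form \eqref{polypart} makes the $\Pi$-increment vanish, and the $\G$-increment reduces via \eqref{Gamma_zn} to an increment of $\pi^{(\n)}_{xy\beta}$ bounded by $\eqref{pin_cauchy}_{\prec\beta}$. An application of H\"older's inequality, the moment bound \eqref{momentbound} for the integration against $(\partial_0-\Delta)\psi_t$, and summation over the finitely many $\gamma$ using $|\gamma|\geq\alpha$ together with the constraints \eqref{epsilon} would then produce the right-hand side of $\eqref{Pi-_cauchy}_\beta$.

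The main obstacle is the diagonal contribution $\gamma=\beta$ in this telescoping. Here $(\Gtau_{xy}-\Gtaut_{xy})_\beta^\beta=0$, so the $\G$-increment piece vanishes; but the $\Pi$-increment piece, after integration against $(\partial_0-\Delta)\psi_t$, collapses to the quantity $\Pi^{-(\tau)}_{y\beta t}(y)-\Pi^{-(\tau')}_{y\beta t}(y)$, i.e.\ the target quantity at $x=y$, which cannot be handled by the induction hypothesis alone. I expect to resolve this by exploiting the strict triangularity of the $\Pi^-$-recentering \eqref{recenter-} in combination with \eqref{recenter-diff}: since $(\Pi^-_x-\Gtau_{xy}\Pi^-_y)_\beta$ is a polynomial of degree $\leq|\beta|-2$ whose constituents come from strictly lower-order data ($\Pi_y$, $\G_{xy}$, $\pi^{(\n)}_{xy}$ at multi-indices $\prec\beta$), the $\gamma=\beta$ diagonal can be re-routed through this polynomial remainder and the strictly triangular piece $(\G_{xy}-\id)\Pi^-_y$, both of which are accessible via the induction and the scale-decomposition argument giving \eqref{ho27} in \cite{LOTT21}. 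This is exactly the point at which the regime $|\beta|>2$ becomes manageable without appealing to Malliavin calculus, in contrast to the case $|\beta|<2$ treated in the subsequent lemmas.
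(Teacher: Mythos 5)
Your off-diagonal computation is essentially the coherence part of the paper's argument, but the two places where the lemma actually has content are not supplied. First, the diagonal term: your proposed fix via \eqref{recenter-}/\eqref{recenter-diff} is circular at a fixed scale, because recentering $\Pi^{-(\tau)}_y$ to any other base point $z$ produces, besides the strictly triangular piece and lower-order products, the quantity $\Pi^{-(\tau)}_{z\beta t}$ evaluated near its own base point, i.e.\ a diagonal term of exactly the same type; no single application of the recentering eliminates it. What the paper does is prove the base-point continuity estimate \eqref{eh25} for the Cauchy difference (this is where \eqref{recenter-}, telescoping \eqref{telescope} and the assumed increment bounds enter) and then invoke the standard multi-scale reconstruction argument as in \cite[Proposition~4.2]{LOTT21} (cf.\ also \cite[Lemma~4.8]{LO22}): one iterates over scales, uses coherence at each scale, and anchors at the vanishing of the pointwise diagonal, $\Pi^{-(\tau)}_{y\beta}(y)=0$ in the smooth setting for $|\beta|>2$ by \eqref{Pi-}. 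The scale sum converges precisely because $(\alpha-2-\epsilon)+(|\beta|-\alpha)=|\beta|-2-\epsilon>0$, which is guaranteed by $|\beta|>2$ together with the choice \eqref{epsilon1}; this positivity is the only place the hypothesis $|\beta|>2$ enters, and your proposal neither identifies nor verifies it (your pointer to the ``scale-decomposition argument giving \eqref{ho27}'' refers to the Malliavin-based reconstruction used for $|\beta|<2$, not the relevant tool here).

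Second, your expansion of $(P\Gtau_{xy}\Pi^{(\tau)}_y)_\beta$ into $\sum_\gamma(\Gtau_{xy})_\beta^\gamma\Pi^{(\tau)}_{y\gamma}$ exposes the purely polynomial columns at level $\beta$: by \eqref{Gamma_zn} the corresponding $\G$-increment is $\pi^{(\n)(\tau)}_{xy\beta}-\pi^{(\n)(\tau')}_{xy\beta}$, i.e.\ the $\beta$-component, not a component $\prec\beta$, so it is \emph{not} covered by $\eqref{pin_cauchy}_{\prec\beta}$ as you claim. In the induction, $\eqref{pin_cauchy}_\beta$ is only obtained later (Lemma~\ref{3pt1}) from $\eqref{Pi_cauchy}_{\preceq\beta}$, which itself requires $\eqref{Pi-_cauchy}_\beta$ via Lemma~\ref{int1} --- so as organized your argument is circular; and these polynomial contributions do not vanish (e.g.\ $|\n|=2$ yields nonzero constants after applying $\partial_0-\Delta$). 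The paper's route through \eqref{recenter-} avoids this: there the purely polynomial data enters only through $\pi^{(\n)}_{xy\beta'}$ with $\beta'\prec\beta$ by \eqref{triProduct_prec}, while level-$\beta$ $\G$-increments occur only paired with $\Pi^-_{y\gamma}$ for $\gamma$ not purely polynomial, exactly what the hypothesis $\eqref{Gamma_cauchy}_\beta^\gamma$ covers. (Minor: your use of $\eqref{Pi_cauchy}_{\prec\beta}$ is not among the lemma's stated hypotheses, though available in the global induction; estimating those terms directly by $\eqref{Pi-_cauchy}_{\prec\beta}$, as the statement intends, avoids even that.)
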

\begin{proof}
As in \cite[Proposition~4.2]{LOTT21}, \eqref{Pi-_cauchy} follows by a standard reconstruction argument, provided we have continuity in the base point in form of 
\begin{align}\label{eh25}
\mathbb{E}^\frac{1}{p}|(\Pi^{-(\tau)}_{x}-\Pi^{-(\tau)}_{y})_{\beta t}(y) 
- (\Pi^{-(\tau')}_{x}-\Pi^{-(\tau')}_{y})_{\beta t}(y)|^p \\
\lesssim (\sqrt[4]{|\tau-\tau'|})^\epsilon \, (\sqrt[4]{t})^{\alpha-2-\epsilon}(\sqrt[4]{t}+|x-y|)^{|\beta|-\alpha} \, , 
\end{align}
and the sum of the exponents $\alpha-2-\epsilon$ and $|\beta|-\alpha$ is positive. 
This positivity is guaranteed by \eqref{epsilon1} due to the assumption $|\beta|>2$. 
Appealing to the recentering \eqref{recenter-} of $\Pi^-$ and telescoping \eqref{telescope}, 
the remaining part of the proof follows the lines of the one of \cite[Proposition~4.2]{LOTT21}, 
with the only modification that on the increments we use the estimates  \eqref{Gamma_cauchy}, \eqref{Pi-_cauchy} and \eqref{pin_cauchy} on 
$\Gtau$, $\Pi^{-(\tau)}$ and $\pi^{(\n)(\tau)}$, respectively. 
\end{proof}

For multi-indices $\beta$ satisfying $|\beta|<2$ we first note that 
\begin{equation}\label{expectPi-_cauchy}
|\mathbb{E} \Pi^{-(\tau)}_{x\beta t}(y)-\E\Pi^{-(\tau')}_{x\beta t}(y)|
\lesssim (\sqrt[4]{|\tau-\tau'|})^\epsilon \, (\sqrt[4]{t})^{\alpha-2-\epsilon}(\sqrt[4]{t}+|x-y|)^{|\beta|-\alpha} \, , 
\end{equation}
which follows by the same argument as in \cite[Proposition~4.7]{LOTT21}, 
again by telescoping \eqref{telescope}, and using on increments the estimates $\eqref{Pi-_cauchy}_{\prec\beta}$ and the already established $\eqref{Gamma_cauchy}_\beta^{\gamma}$ for $\gamma$ not purely polynomial. 
It remains to prove 
\begin{equation}\label{deltaPi-_cauchy}
\E^\frac{1}{q'} | \delta\Pi^{-(\tau)}_{x\beta t}(y) - \delta\Pi^{-(\tau')}_{x\beta t}(y) |^{q'}
\lesssim (\sqrt[4]{|\tau-\tau'|})^\epsilon \, (\sqrt[4]{t})^{\alpha-2-\epsilon} (\sqrt[4]{t}+|x-y|)^{|\beta|-\alpha} \bar w \, , 
\end{equation}
which by dualization and \eqref{wbar<Sobolev}, and using the spectral gap inequality \eqref{sg} will together with $\eqref{expectPi-_cauchy}_\beta$ analogous to \cite[Section~4.3]{LOTT21} imply $\eqref{Pi-_cauchy}_\beta$ for multi-indices $\beta$ with $|\beta|<2$. 
We postpone the argument for \eqref{deltaPi-_cauchy} for a moment, and continue with the following integration lemma. 

\begin{lemma}[Integration I']\label{int1}
Assume that $\eqref{Pi-_cauchy}_\beta$ holds.
Then $\eqref{Pi_cauchy}_\beta$ holds.
\end{lemma}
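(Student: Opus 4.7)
The plan is to import the integration strategy of \cite[Proposition~4.11]{LOTT21}, specialized to increments in $\tau$. Set
\[
f := \Pi^{(\tau)}_{x\beta} - \Pi^{(\tau')}_{x\beta}\,, \qquad g := \Pi^{-(\tau)}_{x\beta} - \Pi^{-(\tau')}_{x\beta}\,,
\]
so that $(\partial_0-\Delta)f = g$. The uniform input \eqref{Pi} furnishes $\E^\frac{1}{p}|f(y)|^p \lesssim |x-y|^{|\beta|}$ without a $|\tau-\tau'|$-factor, and in particular $f(x)=0$ almost surely for the multi-indices at hand (since $|\beta|>0$). The defining property \eqref{psi} of $\psi_t$ together with the PDE yields the semigroup identity $\partial_t f_t(y) = (\partial_0+\Delta)g_t(y)$, which integrated from $0$ to $t$ gives
\[
f_t(y) - f(y) = \int_0^t (\partial_0+\Delta)g_s(y)\,ds\,.
\]
Bounding the integrand via $\eqref{Pi-_cauchy}_\beta$ and the moment bound \eqref{momentbound} (which absorbs the extra $(\sqrt[4]{s})^{-2}$ produced by $\partial_0+\Delta$), and integrating in $s$ (finite near $s=0$ by choosing $\epsilon<\alpha$ consistently with \eqref{epsilon}), yields
\[
\E^\frac{1}{p}\Big|\int_0^t(\partial_0+\Delta)g_s(y)\,ds\Big|^p \lesssim (\sqrt[4]{|\tau-\tau'|})^\epsilon\,(\sqrt[4]{t}+|x-y|)^{|\beta|-\epsilon}\,.
\]

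The remaining task is to supply a matching Cauchy bound on $\E^\frac{1}{p}|f_t(y)|^p$, which I plan to obtain by Taylor expanding $f_t$ around $x$ up to order $N := \lfloor|\beta|\rfloor+1 > |\beta|$, where $|\beta|\notin\N$ is guaranteed by Assumption~\ref{ass}~iii). For $|\n|<|\beta|$, the vanishing of $f$ at $x$ and the moment bound \eqref{momentbound} give $\partial^\n f_t(x)\to 0$ as $t\to 0$; the integrated semigroup identity $\partial^\n f_t(x) = \int_0^t\partial^\n(\partial_0+\Delta)g_s(x)\,ds$ combined with $\eqref{Pi-_cauchy}_\beta$ at $y=x$ then produces
\[
\E^\frac{1}{p}|\partial^\n f_t(x)|^p \lesssim (\sqrt[4]{|\tau-\tau'|})^\epsilon\,(\sqrt[4]{t})^{|\beta|-|\n|-\epsilon}\,.
\]
For $|\n|=N>|\beta|$, the uniform bound on $f$ together with \eqref{momentbound} instead forces $\partial^\n f_T(y)\to 0$ as $T\to\infty$, permitting a Liouville-type representation $\partial^\n f_t(y) = -\int_t^\infty\partial^\n(\partial_0+\Delta)g_s(y)\,ds$, whose analogous estimate is uniform for intermediate points on the segment $[x,y]$. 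With the choice $\sqrt[4]{t}\sim|x-y|$, both the polynomial Taylor terms and the remainder $R_N$ contribute at the balanced rate $(\sqrt[4]{|\tau-\tau'|})^\epsilon|x-y|^{|\beta|-\epsilon}$, yielding $\E^\frac{1}{p}|f_t(y)|^p \lesssim (\sqrt[4]{|\tau-\tau'|})^\epsilon|x-y|^{|\beta|-\epsilon}$. Combined with the representation from the first paragraph, this closes $\eqref{Pi_cauchy}_\beta$.

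The main obstacle is precisely the Taylor assembly: each $\partial^\n f_t(x)$ for $|\n|<|\beta|$ must vanish as $t\to 0$ at a rate visible through the Cauchy estimate on $g$ (ensured by the anchoring $f(x)=0$), while the remainder at order $N>|\beta|$ must decay at the same matching rate (ensured by the Liouville representation, relying crucially on $|\beta|\notin\N$). The scale $\sqrt[4]{t}\sim|x-y|$ is the one balancing these contributions, in the spirit of the standard Schauder argument.
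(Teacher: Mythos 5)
Your argument is correct and is essentially the paper's own proof: what you assemble by hand --- the semigroup identity $\partial_t f_t=(\partial_0+\Delta)g_t$ for your $f$ and $g$, anchored at $t=0$ via the order-$|\beta|$ vanishing of $f$ at $x$ and at $t=\infty$ for derivatives of parabolic order $>|\beta|$, combined with a Taylor expansion at $x$ and the choice $\sqrt[4]{t}\sim|x-y|$ --- amounts to re-deriving the solution formula \eqref{representation} together with the near-field/far-field splitting that the paper instead quotes from \cite{LOTT21}. The exponent bookkeeping is the same as in the paper: integrability at $t=0$ from $\alpha-\epsilon>0$ and $|\beta|-|\n|-\epsilon>0$ for $|\n|<|\beta|$, and at $t=\infty$ from $|\beta|-|\n|-\epsilon<0$ for $|\n|>|\beta|$, all guaranteed by \eqref{epsilon1}.
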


\begin{proof}
The proof follows the one of \cite[Proposition~4.3]{LOTT21} based on the solution formula 
\begin{equation}\label{representation}
\Pi^{(\tau)}_{x\beta}(y) = -\int_0^\infty dt\, (1-{\rm T}_x^{|\beta|})(\partial_0+\Delta)
\Pi^{-(\tau)}_{x\beta t}(y) \, . 
\end{equation}
A little bit of care has to be taken concerning integrability at $t=0$ and $t=\infty$ due to the loss of $\epsilon$ in the exponents, 
which we check in the following by pointing out how the exponents change compared to the aforementioned reference. 
We apply \eqref{representation} where we replace 
$\Pi^{(\tau)}_{x\beta}(y)$ by the increment 
$\Pi^{(\tau)}_{x\beta}(y)-\Pi^{(\tau')}_{x\beta}(y)$, 
and split the integral into the near field $t\leq|x-y|^4$ and the far field $t\geq|x-y|^4$.
In the near field, we split the integral into its contributions form $1$ and ${\rm T}_x^{|\beta|}$, 
where ${\rm T}_x^{|\beta|}$ denotes the operation of taking 
the Taylor polynomial of degree $<|\beta|$ centered at $x$.
On the former, the order of vanishing in $\sqrt[4]{t}$ is given by $\alpha-4-\epsilon$, which is integrable at $t=0$ since $\alpha-\epsilon>0$ by the choice of $\epsilon$ in \eqref{epsilon1}. 
On the latter, the order of vanishing in $\sqrt[4]{t}$ is given by $|\beta|-4-|\m|-\epsilon$ with $\m$ restricted to $|\m|<|\beta|$. 
This restriction of $\m$ implies by \eqref{epsilon1} that $|\beta|-|\m|-\epsilon>0$, 
hence also this contribution is integrable at $t=0$. 
For the far field contribution, we note that the Taylor remainder has a growth rate in $\sqrt[4]{t}$ given by $|\beta|-4-|\m|-\epsilon$ where this time $\m$ is restricted to $|\m|\geq|\beta|$. 
Therefore $|\beta|-|\m|-\epsilon<0$, which makes this contribution  integrable at $t=\infty$. 
\end{proof}

Using the estimates \eqref{Pi_cauchy} on $\Pi$ and \eqref{Gamma_cauchy} on $\G$, 
the proof of the following lemma follows the one of \cite[Proposition 4.4]{LOTT21}, 
in addition by telescoping \eqref{telescope}, 
and using that $|\beta|>|\n|$ implies $|\beta|>|\n|+\epsilon$ by the choice of $\epsilon$ in \eqref{epsilon1}. 

\begin{lemma}[Three-point argument I']\label{3pt1}
Assume that $\eqref{Pi_cauchy}_{\preceq\beta}$ holds, and that 
$\eqref{Gamma_cauchy}_{\beta}^{\gamma}$ holds for all $\gamma$ not purely polynomial. 
Then 
\begin{equation}\label{pin_cauchy} 
\mathbb{E}^\frac{1}{p}|\pi^{(\n)(\tau)}_{xy\beta} - \pi^{(\n)(\tau')}_{xy\beta}|^p 
\lesssim (\sqrt[4]{|\tau-\tau'|})^\epsilon \, |x-y|^{|\beta|-|\n|-\epsilon} \, , 
\end{equation}
with the understanding that $|\beta|-|\n|-\epsilon>0$ unless the left-hand side vanishes. 
In particular, $\eqref{Gamma_cauchy}_\beta^\gamma$ holds for all $\gamma$.
\end{lemma}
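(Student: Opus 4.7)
The plan is to follow the three-point strategy of \cite[Proposition~4.4]{LOTT21}, upgraded to Cauchy increments in $\tau$ via the telescoping identity \eqref{telescope}. The starting point is the recentering \eqref{recenter}, which combined with \eqref{polypart}, $\pi^{(\n)}_{xy}\in\T$, the restriction \eqref{rest_pin} and the triangularity \eqref{triGamma_prec} yields, for every $z\in\R^{1+d}$ and every $\beta$ not purely polynomial, the extraction identity
\begin{equation}
\sum_{|\n|<|\beta|} \pi^{(\n)(\tau)}_{xy\beta}(z-y)^\n
= \Pi^{(\tau)}_{x\beta}(z) - \Pi^{(\tau)}_{x\beta}(y) - \Pi^{(\tau)}_{y\beta}(z)
- \sum_{\substack{\gamma \prec \beta \\ \gamma \text{ not purely polynomial}}} (\Gtau_{xy})_\beta^\gamma \Pi^{(\tau)}_{y\gamma}(z) \, ,
\end{equation}
together with its counterpart for $\tau'$. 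Recall that purely polynomial multi-indices have already been handled trivially at the outset of the Cauchy induction.

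Subtracting the two identities, applying the triangle inequality followed by the telescoping \eqref{telescope} on every product $(\Gtau_{xy})_\beta^\gamma\Pi^{(\tau)}_{y\gamma}(z)$, I would then combine H\"older's inequality with the available inductive hypotheses: $\eqref{Pi_cauchy}_{\preceq\beta}$ for the three $\Pi^{(\tau)}-\Pi^{(\tau')}$ contributions and for the $\Pi^{(\tau)}_{y\gamma}-\Pi^{(\tau')}_{y\gamma}$ terms (which by \eqref{triGamma_prec} involve only $\gamma\prec\beta$), and $\eqref{Gamma_cauchy}_\beta^\gamma$ for $\gamma$ not purely polynomial, while bounding the non-incremented factors by the uniform estimates \eqref{Pi} and \eqref{Gamma}. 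Collecting the homogeneities exactly as in \cite[Proposition~4.4]{LOTT21}, and localizing to points $z$ with $|z-y|\lesssim|x-y|$, should produce
\begin{equation}
\E^\frac{1}{p}\Big|\sum_{|\n|<|\beta|} \big(\pi^{(\n)(\tau)}_{xy\beta}-\pi^{(\n)(\tau')}_{xy\beta}\big)(z-y)^\n\Big|^p
\lesssim (\sqrt[4]{|\tau-\tau'|})^\epsilon \, |x-y|^{|\beta|-\epsilon} \, .
\end{equation}

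To disentangle the individual $\n$-contributions I would then evaluate at a finite family of points $z_j=y+|x-y|\,\zeta_j$, where $\{\zeta_j\}$ is chosen so that the matrix $\big((\zeta_j)^\n\big)_{j,\,|\n|<|\beta|}$ is invertible (a parabolically rescaled Vandermonde-type construction, which is the three-point ingredient of \cite[Proposition~4.4]{LOTT21}). Inverting the resulting linear system with constants uniform in the scale $|x-y|$ delivers \eqref{pin_cauchy}. The concluding sentence of the lemma is then immediate: since $(\G_{xy})_\beta^{e_\n}=\pi^{(\n)}_{xy\beta}$ for $\beta$ not purely polynomial, and the purely polynomial case was already treated trivially, combining the present bound with the assumed $\eqref{Gamma_cauchy}_\beta^\gamma$ for $\gamma$ not purely polynomial yields $\eqref{Gamma_cauchy}_\beta^\gamma$ for all $\gamma$.

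The only genuine subtlety is the accounting of the $\epsilon$-loss, so that the target exponent $|\beta|-|\n|-\epsilon$ remains strictly positive on the support $|\n|<|\beta|$ dictated by \eqref{rest_pin}. This positivity is precisely what the choice of $\epsilon$ in \eqref{epsilon1} was tailored to guarantee, exploiting that the relevant set of homogeneity differences is finite. Beyond this bookkeeping, the argument is a direct transcription of \cite[Proposition~4.4]{LOTT21}, with \eqref{telescope} supplying the passage from the single-parameter estimates of the existing proof to the $\tau$-increment formulation required here.
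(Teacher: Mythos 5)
Your proposal is correct and follows essentially the same route as the paper, which simply invokes the three-point argument of \cite[Proposition~4.4]{LOTT21} upgraded by the telescoping identity \eqref{telescope}, with the increments estimated via $\eqref{Pi_cauchy}_{\preceq\beta}$ and $\eqref{Gamma_cauchy}_\beta^\gamma$ and the non-incremented factors via the uniform bounds \eqref{Pi}, \eqref{Gamma}. Your extraction identity from \eqref{recenter}, the coefficient recovery at scale $|x-y|$, and the observation that $|\n|<|\beta|$ upgrades to $|\beta|-|\n|-\epsilon>0$ by \eqref{epsilon1} are exactly the ingredients the paper relies on.
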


In the remaining part of this section we provide the argument for \eqref{deltaPi-_cauchy}. 
First, by applying the derivative $\delta$ to the exponential formula \eqref{exponentialformula} 
and using Leibniz rule, we obtain the following lemma.

\begin{lemma}[Algebraic argument II']\label{alg2}
Assume that $\eqref{pin_cauchy}_{\prec\beta}$\,and $\eqref{deltapin_cauchy}_{\prec\beta}$ hold. 
Then for all $\gamma$ not purely polynomial 
\begin{equation}\label{deltaGamma_cauchy} 
\mathbb{E}^\frac{1}{q'}|(\delta\Gtau_{xy})_{\beta}^{\gamma} - (\delta\Gtaut_{xy})_{\beta}^{\gamma}|^{q'} 
\lesssim (\sqrt[4]{|\tau-\tau'|})^\epsilon \, |x-y|^{|\beta|-|\gamma|-\epsilon} \bar w \, , 
\end{equation}
with the understanding that $|\beta|-|\gamma|-\epsilon>0$ unless the left-hand side vanishes. 
\end{lemma}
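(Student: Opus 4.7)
The plan is to follow the same strategy as in Lemma~\ref{alg1}, with one preliminary step: apply the Malliavin derivative $\delta$ to the exponential formula \eqref{exponentialformula} via Leibniz' rule. This expresses $(\delta\Gtau_{xy})_\beta^\gamma$ as a finite sum, involving multi-indices $\beta_1,\dots,\beta_{k+1}\prec\beta$ by the triangularity \eqref{triProduct_prec}, of terms of the form
\begin{equation}
\sum_{i=1}^k \pi^{(\n_1)(\tau)}_{xy\beta_1}\cdots \delta\pi^{(\n_i)(\tau)}_{xy\beta_i}\cdots \pi^{(\n_k)(\tau)}_{xy\beta_k} \, \big(D^{(\n_1)}\cdots D^{(\n_k)}\big)_{\beta_{k+1}}^\gamma,
\end{equation}
and analogously with $\tau$ replaced by $\tau'$. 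The assumption that $\gamma$ is not purely polynomial ensures as in Lemma~\ref{alg1} that at least one derivation $D^{(\n_i)}$ acts nontrivially, which is needed for the relevant sum to be nonzero.

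Next I would form the difference of the $\tau$- and $\tau'$-versions and telescope each inner product of $k+1$ factors via \eqref{telescope}. This yields a finite sum in which every summand carries exactly one increment: either a plain increment $\pi^{(\n)(\tau)}_{xy\beta'}-\pi^{(\n)(\tau')}_{xy\beta'}$ (controlled by the induction hypothesis $\eqref{pin_cauchy}_{\prec\beta}$) or a Malliavin-derivative increment $\delta\pi^{(\n)(\tau)}_{xy\beta'}-\delta\pi^{(\n)(\tau')}_{xy\beta'}$ (controlled by $\eqref{deltapin_cauchy}_{\prec\beta}$). All remaining non-incremental factors are estimated via the uniform-in-$\tau$ bounds on $\pi^{(\n)(\tau)}$ and $\delta\pi^{(\n)(\tau)}$ established in \cite{LOTT21}.

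The estimates are then assembled via a weighted H\"older inequality: the unique $\delta$-factor present in each summand, which is the one responsible for producing the weight $\bar w$, is taken in $L^{q'}$, while every other stochastic factor is taken in $L^p$ for a sufficiently large $p$; the purely algebraic coefficient $(D^{(\n_1)}\cdots D^{(\n_k)})_{\beta_{k+1}}^\gamma$ is a deterministic constant. Exactly as in the proof of Lemma~\ref{alg1} (following the template of \cite[Lemma~4.3]{LO22}), the exponents in $|x-y|$ add up to $|\beta|-|\gamma|$, so that combined with the decorrelation $(\sqrt[4]{|\tau-\tau'|})^\epsilon$ from the incremental factor and the weight $\bar w$ from the $\delta$-factor we arrive at the desired bound \eqref{deltaGamma_cauchy}.

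The main obstacle is the bookkeeping of the $\epsilon$-loss: the $\epsilon$ deficit lands on a single $|x-y|$-exponent, and one must check it remains non-negative unless the corresponding factor vanishes. This is guaranteed by the choice \eqref{epsilon1} of $\epsilon$ together with the triangularity \eqref{triGamma_hom}, which forces the left-hand side to vanish unless $|\gamma|<|\beta|$; in this case every relevant sub-exponent of the form $|\beta_i|-|\n_i|$, as well as the global exponent $|\beta|-|\gamma|$, is either zero or at least $\epsilon$.
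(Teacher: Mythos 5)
Your proposal is correct and takes essentially the same route as the paper: there the lemma is obtained precisely by applying $\delta$ to the exponential formula \eqref{exponentialformula} via Leibniz' rule and then rerunning the telescoping/H\"older argument of Lemma~\ref{alg1}, with $\eqref{pin_cauchy}_{\prec\beta}$ resp.\ $\eqref{deltapin_cauchy}_{\prec\beta}$ on the single incremental factor and the uniform-in-$\tau$ bounds of \cite{LOTT21} (the $\delta$-factor carrying the weight $\bar w$) on the remaining factors, the exponents adding up to $|\beta|-|\gamma|-\epsilon$ exactly as you describe. The only cosmetic slip is attributing the $\prec$-triangularity of the exponential-formula decomposition to \eqref{triProduct_prec}; the relevant triangularity is the one behind \eqref{triGamma_prec} (cf.\ \cite[(8.9)]{LOTT21}), but this does not affect the argument.
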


Next, we observe that analogous to Lemma~\ref{alg1}, 
the proof of \cite[Proposition~4.11]{LOTT21} in addition with telescoping \eqref{telescope}, the estimates \eqref{Gamma_cauchy} on $\G$ and \eqref{dpin_inc_cauchy} on $\d\pi^{(\n)}$ yield the following.

\begin{lemma}[Algebraic argument III']\label{alg3}
Assume that $\eqref{dpin_inc_cauchy}_{\prec\beta}$ holds, 
and that $\eqref{Gamma_cauchy}_\beta^\gamma$ holds for all $\gamma$ not purely polynomial.
Then for all $\gamma$ not purely polynomial
\begin{align}
&\mathbb{E}^\frac{1}{q'}\big|\big((\d\Gtau_{xy}-\d\Gtau_{xz}\Gtau_{zy}) Q \big)_\beta^\gamma - 
\big((\d\Gtaut_{xy}-\d\Gtaut_{xz}\Gtaut_{zy}) Q \big)_\beta^\gamma \big|^{q'} \\
&\lesssim \Big(\sum_{|\n|=1} \mathbf{1}_{\gamma(\n)=0}\, (\sqrt[4]{|\tau-\tau'|})^\epsilon \, |y-z|^{\kappa+\alpha-\epsilon} 
(|y-z|+|z-x|)^{|\beta|-|\gamma|-\alpha} \\  
&\hphantom{\lesssim} + \sum_{|\n|=1}\mathbf{1}_{\gamma(\n)=1}\, (\sqrt[4]{|\tau-\tau'|})^\epsilon \, |y-z|^{\kappa+\alpha-1-\epsilon} 
(|y-z|+|z-x|)^{|\beta|-|\gamma|-\alpha+1}\Big) \\ 
&\hphantom{\lesssim}\ \cdot (w_x(y)+w_x(z)) \, . \label{dGamma_inc_cauchy} 
\end{align}
\end{lemma}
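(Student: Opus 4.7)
The plan is to imitate the proof of \cite[Proposition~4.11]{LOTT21} verbatim, carrying an extra telescoping argument that introduces the factor $(\sqrt[4]{|\tau-\tau'|})^\epsilon$ through exactly one increment in each resulting summand. We fix $\gamma$ not purely polynomial throughout.

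First I would recall the algebraic identity from \cite{LOTT21} that, starting from the definition \eqref{dGamma} of $\d\G_{xy}$ and the transitivity \eqref{transitive} $\G_{xy}=\G_{xz}\G_{zy}$, expresses $(\d\G_{xy}-\d\G_{xz}\G_{zy})Q$ as a finite sum of products of the form
\begin{equation*}
\bigl(\d\pi^{(\n)}_{xy\beta'}-(\d\G_{xz}\G_{zy}Q)_{\beta'}^{e_\n}\bigr)\,(\G_{xy})_{\beta''}^{\gamma'}\,(D^{(\n)})_{\gamma'}^\gamma
\end{equation*}
with $|\n|\le 1$ and $\beta'+\beta''=\beta$, modulo the purely polynomial part (killed by $Q$). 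The crucial point is that the first factor is precisely the increment object estimated in $\eqref{dpin_inc_cauchy}_{\prec\beta}$, and the case distinction $\gamma(\n)=0$ versus $\gamma(\n)=1$ arises from whether $(D^{(\n)})_{\gamma'}^\gamma$ is nonzero with $\gamma'=\gamma+e_\n$ (killing a $\z_\n$ in $\gamma'$) or with $\gamma'=\gamma$, which dictates the exponent shift by $|\n|=1$ in the second line of \eqref{dGamma_inc_cauchy}.

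Next, on the difference of the $\tau$- and $\tau'$-versions of each such summand I would apply the telescoping identity \eqref{telescope}, producing three summands per term in which exactly one factor is an increment, while the remaining factors are taken at either parameter. H\"older's inequality in the stochastic norms (with exponents matching $q'<q\le 2$, the same combinatorics as in \cite{LOTT21}) then separates the estimates. On the increment factor I would use: $\eqref{dpin_inc_cauchy}_{\prec\beta}$ when the first factor carries the $(\sqrt[4]{|\tau-\tau'|})^\epsilon$; $\eqref{Gamma_cauchy}_{\beta}^{\gamma}$ (for $\gamma$ not purely polynomial, which is preserved under the algebraic operations of this step) when the second factor carries it. The remaining (non-incremental) factors are bounded by the $\tau$-uniform estimates established in \cite{LOTT21}, as recalled around \eqref{Pi}--\eqref{Gamma} and in \eqref{eq:delta_pi_incr_generic}, which provide in particular the weight $w_x(y)+w_x(z)$.

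The summation of homogeneities then proceeds exactly as in \cite[Proposition~4.11]{LOTT21}: the sum $\beta'+\beta''=\beta$ together with the scaling $|\beta'|\ge|\n|+\alpha+\kappa$ and $|\beta''|\ge|\gamma'|$ reproduces the factor $|y-z|^{\kappa+\alpha}(|y-z|+|z-x|)^{|\beta|-|\gamma|-\alpha}$ (respectively with an extra $|y-z|^{-1}$ when $\gamma(\n)=1$), while the telescoping costs one $\epsilon$ in the exponent of $|y-z|$, consistent with the admissible choice \eqref{epsilon}. The main thing to watch out for is that, after subtracting $\epsilon$, the resulting exponent of $|y-z|$ remains positive in the summands where $\eqref{dpin_inc_cauchy}_{\prec\beta}$ or $\eqref{Gamma_cauchy}_\beta^\gamma$ is invoked: this is guaranteed by \eqref{epsilon1} applied to the relevant populated multi-indices $\beta'\prec\beta$ appearing in the algebraic decomposition, together with $\kappa+\alpha-1-\epsilon>0$ from \eqref{kappa1} and \eqref{epsilon2}. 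I expect the main bookkeeping obstacle to be confirming this positivity in the $\gamma(\n)=1$ branch, since there the base exponent is already reduced by $1$ before the $\epsilon$-loss is accounted for; but here $\kappa+\alpha>2-\alpha+\alpha=2$ leaves enough room.
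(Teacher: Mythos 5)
Your proposal takes essentially the same route as the paper, whose proof of Lemma~\ref{alg3} is precisely the recipe you describe: rerun the proof of \cite[Proposition~4.11]{LOTT21}, telescope the $\tau,\tau'$-difference via \eqref{telescope}, apply H\"older, use $\eqref{dpin_inc_cauchy}_{\prec\beta}$ resp.~\eqref{Gamma_cauchy} on the single incremented factor and the $\tau$-uniform estimates of \cite{LOTT21} on the remaining factors, with the $\epsilon$-bookkeeping governed by \eqref{epsilon}. Two minor caveats: your displayed identity is a simplification of the actual decomposition in \cite{LOTT21} (the action on the generators $\z_k$ and on $\z_\m$ with $|\m|\geq2$ produces additional terms, and it is from there that the $\gamma(\n)=0$ versus $\gamma(\n)=1$ branches really originate), and the closing inequality ``$\kappa+\alpha>2$'' is not what \eqref{kappa} gives --- but the positivity you actually need, $\kappa+\alpha-1-\epsilon>0$, does follow, since \eqref{epsilon2} yields $\kappa+\alpha-1-\epsilon>1-\alpha>0$.
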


Similarly, from \cite[Proposition~4.16]{LOTT21} and by telescoping \eqref{telescope} we obtain the following.

\begin{lemma}[Algebraic argument IV']\label{alg4}
Assume that $\eqref{dpin_cauchy}_{\prec\beta}$ holds, 
and that $\eqref{Gamma_cauchy}_{\preceq\beta}^{\gamma}$ holds for all $\gamma$ not purely polynomial.
Then $\eqref{dGamma_cauchy}_\beta^\gamma$ holds for all $\gamma$ not purely polynomial. 
\end{lemma}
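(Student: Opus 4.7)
The strategy mirrors the one used in Lemma~\ref{alg1} (algebraic argument for $\G$), now applied to the Leibniz-type structure \eqref{dGamma} of $\d\G_{xy}$. Since $\d\G_{xy} = \sum_{|\n|\le 1}\d\pi^{(\n)}_{xy}\,\G_{xy}D^{(\n)}$, reading off the $(\beta,\gamma)$-coefficient gives
\begin{equation}
(\d\G^{(\tau)}_{xy})_\beta^\gamma
= \sum_{|\n|\le 1}\sum_{\beta'+\beta''=\beta}
\d\pi^{(\n)(\tau)}_{xy\,\beta'}\,(\G^{(\tau)}_{xy}D^{(\n)})_{\beta''}^\gamma\, .
\end{equation}
The plan is to subtract the analogous identity with $\tau'$ in place of $\tau$ and insert the telescoping identity \eqref{telescope} so that each resulting summand is either a $\tau$-increment of $\d\pi^{(\n)}$ multiplied by a uniformly bounded $\G^{(\tau)}D^{(\n)}$-factor, or a bounded $\d\pi^{(\n)(\tau')}$-factor multiplied by the $\tau$-increment of $\G D^{(\n)}$.

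Next, I would apply H\"older's inequality to each of these two kinds of terms, balancing the exponents so that the Malliavin-type factor (the $\d\pi^{(\n)}$- or $\G$-increment on which we use the dualized Malliavin bound) carries the exponent $q'$ and the other one carries some sufficiently large $p<\infty$. On the increment factors I would use the induction hypothesis $\eqref{dpin_cauchy}_{\prec\beta}$ and the hypothesis $\eqref{Gamma_cauchy}_{\preceq\beta}^{\gamma}$ (both of which are applicable because the $\prec$-triangularity of $\d\G$, cf.\ \eqref{tridGamma_prec}, forces $\beta'\prec\beta$ whenever $\d\pi^{(\n)}_{xy\,\beta'}$ contributes nontrivially, as verified in Step~2b of the proof of Proposition~\ref{unique}). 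On the non-increment factors I would use the uniform bounds of \cite{LOTT21}, i.e.\ \eqref{Pi}--\eqref{Gamma} and the analogous uniform estimate on $\d\pi^{(\n)(\tau')}$. Since $\d\pi^{(\n)}_{xy}\in Q\Ttilde$ for $\n\ne\0$ and $D^{(\n)}$ preserves ``not purely polynomial'' on $\gamma$, the assumption on $\gamma$ is compatible with all the inductive inputs.

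The accounting of exponents is the main place where care is needed. Homogeneities add up as $|\beta'|+|\beta''|=|\beta|+\alpha$ in a product of two formal series, while $D^{(\0)}$ raises the homogeneity of its argument by $\alpha$ and $D^{(\n)}$ with $|\n|=1$ raises it by $1$, so that the matrix coefficient $(\G_{xy}D^{(\n)})_{\beta''}^\gamma$ scales like $|x-y|^{|\beta''|-|\gamma|-\alpha}$ in the first case and like $|x-y|^{|\beta''|-|\gamma|+|\n|}$ in the second; together with the weighted bound $|x-y|^{\kappa+|\beta'|-|\n|-\epsilon}w_x(y)$ on the $\d\pi^{(\n)}$-increment this precisely collapses to $(\sqrt[4]{|\tau-\tau'|})^\epsilon|x-y|^{\kappa+|\beta|-|\gamma|-\epsilon}w_x(y)$. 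The second telescoping term is treated identically, trading the roles of the two factors. The choice of $\epsilon$ in \eqref{epsilon1} guarantees that the resulting exponent on $|x-y|$ is strictly positive whenever the left-hand side of $\eqref{dGamma_cauchy}_\beta^\gamma$ is nontrivial.

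The main obstacle I anticipate is keeping track of the combination of $\alpha$-shifts from formal-series multiplication and from the $D^{(\n)}$-actions, together with the $\kappa$ appearing in the weighted estimates; in particular one has to ensure that the hypothesis $\gamma$ not purely polynomial propagates through the intermediate sum indices (the $\gamma'$ appearing in $(\G_{xy})_{\beta''}^{\gamma'}(D^{(\n)})_{\gamma'}^\gamma$) so that the inductive input $\eqref{Gamma_cauchy}_{\preceq\beta}^{\gamma'}$ is indeed available, exactly as in the corresponding bookkeeping of Step~2b in the proof of Proposition~\ref{unique}.
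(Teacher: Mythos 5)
Your plan is correct and follows essentially the same route as the paper, which obtains the lemma by combining the argument of \cite[Proposition~4.16]{LOTT21} with the telescoping identity \eqref{telescope}: expand $(\d\Gtau_{xy})_\beta^\gamma$ via \eqref{dGamma}, telescope the $\tau$--$\tau'$ difference, apply H\"older's inequality, use $\eqref{dpin_cauchy}_{\prec\beta}$ and $\eqref{Gamma_cauchy}_{\preceq\beta}$ on the increments and the uniform bounds of \cite{LOTT21} on the remaining factors, with the triangularity and purely-polynomial bookkeeping exactly as in Step~2b of the proof of Proposition~\ref{unique}. One small slip in your exponent accounting: for $|\n|=1$ the intermediate index obeys $|\gamma'|=|\gamma|-|\n|+\alpha$, so $(\G_{xy}D^{(\n)})_{\beta''}^\gamma$ scales like $|x-y|^{|\beta''|-|\gamma|+|\n|-\alpha}$ rather than $|x-y|^{|\beta''|-|\gamma|+|\n|}$; with this correction the exponents indeed add up to the claimed $\kappa+|\beta|-|\gamma|-\epsilon$, since $|\beta'|+|\beta''|=|\beta|+\alpha$.
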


Equipped with these estimates, we can carry out a reconstruction argument. 

\begin{lemma}[Reconstruction III']\label{rec3}
Assume that $\eqref{Pi_cauchy}_{\prec\beta}$,\,$\eqref{Gamma_cauchy}_{\prec\beta}$,\,$\eqref{Pi-_cauchy}_{\prec\beta}$\,and 
$\eqref{deltaPi_inc_cauchy}_{\prec\beta}$ hold, 
and that $\eqref{dGamma_cauchy}_{\preceq\beta}^\gamma$ and $\eqref{dGamma_inc_cauchy}_{\preceq\beta}^\gamma$ hold for all $\gamma$ not purely polynomial. 
Then 
\begin{align}
&\mathbb{E}^\frac{1}{q'} \big| \big(\delta\Pi^{-(\tau)}_x - \d\Gtau_{xz}Q\Pi^{-(\tau)}_z\big)_{\beta t}(y)
- \big(\delta\Pi^{-(\tau')}_x - \d\Gtaut_{xz}Q\Pi^{-(\tau')}_z\big)_{\beta t}(y) \big|^{q'} \\
&\lesssim  (\sqrt[4]{|\tau-\tau'|})^\epsilon \, (\sqrt[4]{t})^{\alpha-2-\epsilon} (\sqrt[4]{t}+|y-z|)^{\kappa}
(\sqrt[4]{t}+|y-z|+|x-z|)^{|\beta|-\alpha} \\
&\ \cdot (w_x(y)+w_x(z)) \, . \label{deltaPi-_inc_cauchy}
\end{align}
\end{lemma}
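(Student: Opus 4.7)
The plan is to follow the reconstruction strategy of \cite[Proposition~4.12]{LOTT21}, upgraded to handle the Cauchy-in-$\tau$ increments. As in the established pattern (compare Lemma~\ref{rec1} above), the estimate \eqref{deltaPi-_inc_cauchy} will be reduced to a continuity-in-base-point estimate, which then feeds into a standard reconstruction argument based on convolution against $\psi_t$ and the semigroup identity \eqref{semigroup}. The main novelty compared to \cite{LOTT21} is the extra factor $(\sqrt[4]{|\tau-\tau'|})^\epsilon$, which will appear whenever we hit an increment $(\cdot)^{(\tau)}-(\cdot)^{(\tau')}$ and replace the uniform bound by one of the already established Cauchy estimates.

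First I would derive the relevant three-point algebraic identity, namely the expression for
$(\delta\Pi^{-}_x-\d\G_{xz}Q\Pi^{-}_z)_\beta - (\delta\Pi^{-}_x-\d\G_{xy}Q\Pi^{-}_y)_\beta$
obtained by inserting $\d\G_{xz}Q\Pi^{-}_z - \d\G_{xy}Q\Pi^{-}_y
= (\d\G_{xz}-\d\G_{xy}\G_{yz})Q\Pi^{-}_z + \d\G_{xy}(\G_{yz}Q\Pi^{-}_z - Q\Pi^{-}_y)$,
where the second piece is controlled via the recentering \eqref{recenter-} of $\Pi^-$. Forming the $\tau,\tau'$-difference and telescoping via \eqref{telescope}, every resulting summand contains exactly one increment factor, to which I apply one of $\eqref{Gamma_cauchy}_{\prec\beta}$, $\eqref{Pi-_cauchy}_{\prec\beta}$, $\eqref{Pi_cauchy}_{\prec\beta}$, $\eqref{dGamma_cauchy}_{\preceq\beta}^\gamma$ or $\eqref{dGamma_inc_cauchy}_{\preceq\beta}^\gamma$; the remaining (non-increment) factors are bounded by the uniform estimates from \cite{LOTT21}. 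Only multi-indices $\gamma$ that are not purely polynomial appear, since $\d\G$ and $Q\Pi^-$ take values in $\Ttilde$, so Lemmas~\ref{alg4} and \ref{alg3} apply verbatim.

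Next, the continuity-in-base-point input for reconstruction will take the form
\begin{align}
&\E^{\frac{1}{q'}}\big|\big((\delta\Pi^{-(\tau)}_x-\d\Gtau_{xz}Q\Pi^{-(\tau)}_z)-(\delta\Pi^{-(\tau)}_x-\d\Gtau_{xy}Q\Pi^{-(\tau)}_y)\big)_{\beta t}(y)\\
&\hphantom{\E^{\frac{1}{q'}}\big|\big((\delta\Pi^{-(\tau)}_x)}- \text{analogue with }\tau\leftrightarrow\tau'\big|^{q'}\\
&\lesssim (\sqrt[4]{|\tau-\tau'|})^\epsilon (\sqrt[4]{t})^{\kappa+2\alpha-2-\epsilon}(\sqrt[4]{t}+|y-z|)^{-\kappa+\text{something}}\cdots,
\end{align}
with the exponents arranged so that summing over dyadic scales converges; this is where the restriction \eqref{epsilon2}, $\epsilon<\kappa+2\alpha-2$, is essential, exactly as \eqref{kappa1} was essential in \cite{LOTT21}. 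The moment bound \eqref{momentbound} together with the weight properties (see Appendix~\ref{weights}) will let me absorb the growth in $|y-z|$ into $(\sqrt[4]{t}+|y-z|)^\kappa(\sqrt[4]{t}+|y-z|+|x-z|)^{|\beta|-\alpha}(w_x(y)+w_x(z))$.

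Finally, I feed this continuity estimate into the standard reconstruction lemma (e.g.\ as in \cite[Lemma~4.8]{LOTT21}) by writing
\begin{equation}
(\delta\Pi^{-(\tau)}_x-\d\Gtau_{xz}Q\Pi^{-(\tau)}_z)_{\beta t}(y)
= \int \psi_{t/2}(y-y')\,(\delta\Pi^{-(\tau)}_x-\d\Gtau_{xy'}Q\Pi^{-(\tau)}_{y'})_{\beta t/2}(y')\,dy'
\end{equation}
and using a telescoping over dyadic scales $t=2^{-n}$ together with the already-established bound $\eqref{ho27}$ as an anchor, to which we add its $\tau,\tau'$-difference. The main obstacle will be the careful bookkeeping of exponents in the dyadic sum: one has to check both summability at $t\to 0$ and at the intermediate scale $t\sim(|y-z|+|x-z|)^4$, and $\epsilon$ must be chosen small enough (condition \eqref{epsilon2}) to preserve strict positivity of $\kappa+2\alpha-2-\epsilon$. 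All algebraic steps and the weight arithmetic are identical to those in \cite{LOTT21}; only the replacement of uniform bounds by Cauchy bounds on the increments is new, and produces precisely the prefactor $(\sqrt[4]{|\tau-\tau'|})^\epsilon$ in \eqref{deltaPi-_inc_cauchy}.
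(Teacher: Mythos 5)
Your overall architecture (base-point continuity of a germ in the secondary base point, then a reconstruction/dyadic-telescoping argument, with every $\tau$-increment handled by \eqref{telescope} and the already established Cauchy bounds) is the same as the paper's, which follows \cite[Proposition~4.12]{LOTT21}. However, there is a genuine gap in which object you feed into reconstruction. The paper does not reconstruct $\delta\Pi^{-(\tau)}_x-\d\Gtau_{xz}Q\Pi^{-(\tau)}_z$ directly; it reconstructs the increment of $F^{(\tau)}_{xz}$ defined in \eqref{Fxy}, i.e.\ with the product term $\sum_k\z_k(\Pi^{(\tau)}_x)^k(z)\Delta(\delta\Pi^{(\tau)}_x-\d\Gtau_{xz}Q\Pi^{(\tau)}_z)$ and $\delta\xi_\tau\mathsf{1}$ subtracted. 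This matters because the anchor of the reconstruction is the vanishing of the germ on the diagonal, $\lim_{t\to0}\E|F^{(\tau)}_{xz\beta t}(z)-F^{(\tau')}_{xz\beta t}(z)|=0$ (a consequence of \cite[(4.68)]{LOTT21} and the triangle inequality); the quantity you propose to reconstruct does \emph{not} vanish on the diagonal -- its diagonal value is precisely the product term plus $\delta\xi_\tau$, which only decays like $(\sqrt[4]{t})^{\kappa+\alpha-2}$, an exponent that may be negative. Your proposed anchor, ``\eqref{ho27} together with its $\tau,\tau'$-difference'', is not available: \eqref{ho27} carries no factor $(\sqrt[4]{|\tau-\tau'|})^\epsilon$, and its $\tau,\tau'$-difference is essentially the statement you are trying to prove, so this step is circular. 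In the paper the Cauchy factor enters exclusively through the base-point continuity \eqref{eh27} of the $F$-increment, with only qualitative vanishing used at the bottom scale.

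As a consequence, a whole necessary step is missing from your proposal: after reconstructing the $F$-increment one must add back the increment of the product term plus $\delta\xi_\tau\mathsf{1}$ and estimate it by the same right-hand side (the paper's \eqref{eh28}). This is where the hypothesis $\eqref{deltaPi_inc_cauchy}_{\prec\beta}$ -- which appears in the lemma's assumptions but is never invoked in your proof -- is used (Cauchy control of the modelledness remainder $\delta\Pi_x-\d\G_{xy}Q\Pi_y$ entering through its Laplacian), together with $\eqref{Pi_cauchy}_{\prec\beta}$ and the $\d\G$-estimates; and for $\beta=0$ one needs an additional estimate not present in \cite{LOTT21}, namely
\begin{equation}
\E^\frac{1}{q'}|\delta\xi_{t+\tau}(y)-\delta\xi_{t+\tau'}(y)|^{q'}
\lesssim (\sqrt[4]{|\tau-\tau'|})^\epsilon\,(\sqrt[4]{t})^{\alpha-2+\kappa-\epsilon}w(y)\, ,
\end{equation}
obtained by interpolating the uniform bound \eqref{basecase} with a bound on $\partial_s\delta\xi_s$ via the defining property \eqref{psi}. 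Your opening three-point identity and the use of \eqref{dGamma_inc_cauchy}, \eqref{recenter-} and \eqref{telescope} for the base-point continuity are in the right spirit (they mirror the proof of \eqref{eh27}), and the role of \eqref{epsilon2} is correctly identified; but without switching to the $F$-germ and without the separate estimate of the product-term and $\delta\xi$ increments, the argument as written does not close.
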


\begin{proof}
The proof follows the one of \cite[Proposition~4.12]{LOTT21}. 
We will first establish a continuity condition of $F^{(\tau)}_{xy}-F^{(\tau')}_{xy}$ in the secondary base point $y$, 
\begin{align}
&\E^\frac{1}{q'} \big| (F^{(\tau)}_{xy}-F^{(\tau')}_{xy})_{\beta t}(y) 
- (F^{(\tau)}_{xz}-F^{(\tau')}_{xz})_{\beta t}(y) \big|^{q'} \\
&\lesssim (\sqrt[4]{|\tau-\tau'|})^\epsilon \, (\sqrt[4]{t})^{\alpha-2-\epsilon} (\sqrt[4]{t}+|y-z|)^{\kappa+\alpha} \\
&\ \cdot (\sqrt[4]{t}+|y-z|+|x-z|)^{|\beta|-2\alpha} (w_x(y)+w_x(z)) \, ,
\label{eh27}
\end{align}
where we recall that $F^{(\tau)}_{xy}$ is defined in \eqref{Fxy}. 
Since the sum of the exponents $\alpha-2-\epsilon$ and $\kappa+\alpha$ is positive, which is a consequence of \eqref{epsilon2}, 
and since 
\begin{equation}
\lim_{t\to0} \E^\frac{1}{q'}| F^{(\tau)}_{xz\beta t}(z)-F^{(\tau')}_{xz\beta t}(z)|^{q'} = 0 \, ,
\end{equation}
which is a consequence of \cite[(4.68)]{LOTT21} by the triangle inequality,
a general reconstruction argument implies\footnote{
it actually implies the stronger estimate where $\kappa$ is replaced by $\kappa+\alpha$ and $|\beta|-\alpha$ is replaced by $|\beta|-2\alpha$,
which however we won't make use of} 
\begin{align}
\mathbb{E}^\frac{1}{q'} | F^{(\tau)}_{xz\beta t}(y) - F^{(\tau')}_{xz\beta t}(y)|^{q'} 
&\lesssim  (\sqrt[4]{|\tau-\tau'|})^\epsilon \, (\sqrt[4]{t})^{\alpha-2-\epsilon} (\sqrt[4]{t}+|y-z|)^{\kappa} \\
&\ \cdot (\sqrt[4]{t}+|y-z|+|x-z|)^{|\beta|-\alpha} (w_x(y)+w_x(z)) \, . 
\end{align}
Together with 
\begin{align}
&\E^\frac{1}{q'} \big| \big( \sum_k \z_k (\Pi^{(\tau)}_x (z))^k 
\Delta (\delta\Pi^{(\tau)}_x - \d\Gtau_{xz}Q\Pi^{(\tau)}_z) + \delta\xi_\tau 1\big)_{\beta t}(y) \\
&\qquad- \big( \sum_k \z_k (\Pi^{(\tau')}_x (z))^k 
\Delta (\delta\Pi^{(\tau')}_x - \d\Gtaut_{xz}Q\Pi^{(\tau')}_z) + \delta\xi_{\tau'} 1\big)_{\beta t}(y) \big|^{q'} \\
&\lesssim (\sqrt[4]{|\tau-\tau'|})^\epsilon \, (\sqrt[4]{t})^{\alpha-2-\epsilon} (\sqrt[4]{t}+|y-z|)^{\kappa} \\
&\ \cdot (\sqrt[4]{t}+|y-z|+|x-z|)^{|\beta|-\alpha} (w_x(y)+w_x(z)) \, ,
\label{eh28}
\end{align}
this implies the desired \eqref{deltaPi-_inc_cauchy}. 

We shall now give the argument for \eqref{eh27} and \eqref{eh28}.
Actually, the argument for \eqref{eh27} is identical to the one of \cite[(4.69)]{LOTT21}, with the same modification as always: 
use telescoping \eqref{telescope} and apply on increments the estimates of the assumption of the lemma. 
The same applies to \eqref{eh28}, 
however, the following additional estimate has to be carried out for $\beta=0$:
\begin{equation}
\E^\frac{1}{q'}|\delta\xi_{t+\tau}(y)-\delta\xi_{t+\tau'}(y)|^{q'}
\lesssim (\sqrt[4]{|\tau-\tau'|})^\epsilon \, (\sqrt[4]{t})^{\alpha-2+\kappa-\epsilon} w(y)\, .
\end{equation}
We obtain this estimate by interpolating between the two estimates
\begin{align}
\E^\frac{1}{q'}|\delta\xi_{t+\tau}(y)-\delta\xi_{t+\tau'}(y)|^{q'}
&\lesssim (\sqrt[4]{t})^{\alpha-2+\kappa} w(y) \, , \\
\E^\frac{1}{q'}|\delta\xi_{t+\tau}(y)-\delta\xi_{t+\tau'}(y)|^{q'}
&\lesssim |\tau-\tau'| \, (\sqrt[4]{t})^{\alpha-2+\kappa-4} w(y) \, .
\end{align}
The former is a consequence of the triangle inequality and 
\eqref{basecase} with $\n=\0$, since $\alpha-2+\kappa\leq0$ by \eqref{kappa2}.
For the latter, we assume w.l.o.g.~that $\tau'\leq\tau$, so that
\begin{equation}
\E^\frac{1}{q'}| \delta\xi_{t+\tau}(y)-\delta\xi_{t+\tau'}(y)|^{q'} 
\leq \int_{t+\tau'}^{t+\tau} ds\, \E^\frac{1}{q'}| \partial_s \delta\xi_s(y) |^{q'} \, .
\end{equation}
By the defining property \eqref{psi} of $\psi$ we have $\partial_s\delta\xi_s = (\partial_0^2-\Delta^2)\delta\xi_s$, 
and the claim follows from \eqref{basecase} with $|\n|=4$. 
\end{proof}

As in \cite[Proposition~4.18]{LOTT21}, this allows to achieve the estimate \eqref{deltaPi-_cauchy} 
by appealing to 
\begin{align}
\delta\Pi^{-(\tau)}_x - \delta\Pi^{-(\tau')}_x 
&= (\delta\Pi^{-(\tau)}_x - \d\Gtau_{xz}Q\Pi^{-(\tau)}_z) - (\delta\Pi^{-(\tau')}_x - \d\Gtaut_{xz}Q\Pi^{-(\tau')}_z) \\
&\,+ \d\Gtau_{xz}Q\Pi^{-(\tau)}_z - \d\Gtaut_{xz}Q\Pi^{-(\tau')}_z \, , 
\end{align}
and averaging in the secondary base point $z$ using \eqref{average_ball_w}.

\begin{lemma}\label{averaging}
Assume that $\eqref{Pi-_cauchy}_{\prec\beta}$, $\eqref{deltaPi-_cauchy}_{\prec\beta}$, 
$\eqref{dGamma_cauchy}_{\beta}$ and $\eqref{deltaPi-_inc_cauchy}_{\beta}$ hold,
and that $\eqref{Gamma_cauchy}_\beta^\gamma$ and $\eqref{deltaGamma_cauchy}_\beta^\gamma$ 
hold for all $\gamma$ not purely polynomial. 
Then $\eqref{deltaPi-_cauchy}_\beta$ holds.
\end{lemma}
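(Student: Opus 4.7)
The plan is to follow exactly the decomposition suggested immediately before the lemma statement: for any secondary base point $z\in\R^{1+d}$ write
\begin{align}
\delta\Pi^{-(\tau)}_x - \delta\Pi^{-(\tau')}_x
&= \bigl(\delta\Pi^{-(\tau)}_x - \d\Gtau_{xz}Q\Pi^{-(\tau)}_z\bigr)
- \bigl(\delta\Pi^{-(\tau')}_x - \d\Gtaut_{xz}Q\Pi^{-(\tau')}_z\bigr) \\
&\quad + \bigl(\d\Gtau_{xz}Q\Pi^{-(\tau)}_z - \d\Gtaut_{xz}Q\Pi^{-(\tau')}_z\bigr),
\end{align}
convolve with $\psi_t$, evaluate at $y$, take the $\beta$-component and then the $\E^{1/q'}|\cdot|^{q'}$-norm, and finally average in $z$ over a ball centered at $y$ of radius comparable to $\sqrt[4]{t}+|x-y|$.

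For the first difference (the ``reconstruction gap''), I would apply the hypothesis $\eqref{deltaPi-_inc_cauchy}_\beta$ directly, which already provides the correct $(\sqrt[4]{|\tau-\tau'|})^\epsilon(\sqrt[4]{t})^{\alpha-2-\epsilon}$ prefactor, together with the spatial profile $(\sqrt[4]{t}+|y-z|)^\kappa(\sqrt[4]{t}+|y-z|+|x-z|)^{|\beta|-\alpha}$ and the pointwise weight $w_x(y)+w_x(z)$. For the second difference I would telescope once,
\begin{equation}
\d\Gtau_{xz}Q\Pi^{-(\tau)}_z - \d\Gtaut_{xz}Q\Pi^{-(\tau')}_z
= (\d\Gtau_{xz}-\d\Gtaut_{xz})\,Q\Pi^{-(\tau)}_z + \d\Gtaut_{xz}\,Q(\Pi^{-(\tau)}_z-\Pi^{-(\tau')}_z),
\end{equation}
and bound each summand componentwise in $\gamma$. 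Because $Q\Pi^-_z$ is supported on non-purely-polynomial multi-indices, and because $\d\G_{xz}$ is strictly triangular with respect to $\prec$ by \eqref{tridGamma_prec}, the only relevant $\gamma$ satisfy $\gamma\prec\beta$ and are not purely polynomial; thus $\eqref{dGamma_cauchy}_\beta^\gamma$ applies to the first summand (combined with the uniform bound \eqref{Pi-} on $\Pi^{-(\tau)}_z$), while the analogous uniform bound on $\d\Gtaut_{xz}$ from \cite{LOTT21} together with $\eqref{Pi-_cauchy}_{\prec\beta}$ handles the second summand. Convolution with $(\partial_0-\Delta)\psi_t$ and the moment bound \eqref{momentbound} convert the resulting polynomial growth in $|y-z|$ and $|x-z|$ into the correct $(\sqrt[4]{t})^{\alpha-2-\epsilon}(\sqrt[4]{t}+|x-y|+|y-z|)^{|\beta|-\alpha}$ behaviour.

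At this stage, for each choice of $z$ in a ball $B_\rho(y)$ with $\rho\sim\sqrt[4]{t}+|x-y|$, both contributions are controlled by $(\sqrt[4]{|\tau-\tau'|})^\epsilon(\sqrt[4]{t})^{\alpha-2-\epsilon}(\sqrt[4]{t}+|x-y|)^{|\beta|-\alpha}(w_x(y)+w_x(z))$. Averaging over this ball and invoking \eqref{average_ball_w} replaces the pointwise weight $w_x(y)+w_x(z)$ by the global weight $\bar w$, yielding exactly $\eqref{deltaPi-_cauchy}_\beta$.

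The main obstacle I anticipate is the exponent bookkeeping when averaging: one must check that the homogeneity factor $(\sqrt[4]{t}+|y-z|)^\kappa$ in $\eqref{deltaPi-_inc_cauchy}_\beta$ and the near-field behaviour of $w_x(\cdot)$ are compatible — i.e., integrable — on the chosen ball. This is precisely where the admissibility conditions \eqref{kappa} on $\kappa$ enter: the lower bound $\kappa+2\alpha>2$ ensures $\alpha-2-\epsilon+\kappa>-\alpha$ so the integration in $z$ is not absorbed incorrectly, while the upper bound \eqref{kappa1} guarantees local square integrability of $w_x$ and thus of the averaged weight $\bar w$. Everything else is the same routine telescoping already deployed throughout Section~\ref{sec:cauchy}.
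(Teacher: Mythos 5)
Your proposal is correct and follows essentially the same route as the paper, which handles this lemma exactly by the displayed decomposition into the reconstruction gap $\delta\Pi^{-(\tau)}_x-\d\Gtau_{xz}Q\Pi^{-(\tau)}_z$ (controlled by $\eqref{deltaPi-_inc_cauchy}_\beta$) plus the telescoped $\d\G$-terms (controlled by $\eqref{dGamma_cauchy}$, the uniform bounds of \cite{LOTT21}, and $\eqref{Pi-_cauchy}_{\prec\beta}$, using the triangularity \eqref{tridGamma_prec}), followed by averaging in the secondary base point $z$ over a ball of radius $\sim\sqrt[4]{t}+|x-y|$ via \eqref{average_ball_w}, as in \cite[Proposition~4.18]{LOTT21}. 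Only a cosmetic remark: the restriction $\kappa+2\alpha>2$ is not what the averaging step relies on (it enters in the reconstruction Lemma~\ref{rec3}); here one only needs $\kappa<D/2$ from \eqref{kappa1} for \eqref{average_ball_w} and the local square integrability of $|\cdot-x|^{-\kappa}$, together with the cancellation of $(\sqrt[4]{t}+|y-z|)^{\kappa}$ against the factor $\rho^{-\kappa}$ produced by the averaged weight.
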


The same arguments as in Lemma~\ref{int1} apply to see the following.

\begin{lemma}[Integration II']\label{int2}
Assume that $\eqref{deltaPi-_cauchy}_\beta$ holds. 
Then $\eqref{deltaPi_cauchy}_\beta$ holds.
\end{lemma}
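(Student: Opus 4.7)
The plan is to mimic the proof of Lemma~\ref{int1} verbatim, this time applied to the Malliavin derivative. First, I would observe that $\delta$ commutes with the spatial convolution defining $\psi_t$ and with the differential operator $(\partial_0+\Delta)$, so applying $\delta$ to the integral representation \eqref{representation} and subtracting the analogous formula for $\tau'$ yields
\begin{equation*}
\delta\Pi^{(\tau)}_{x\beta}(y) - \delta\Pi^{(\tau')}_{x\beta}(y)
= -\int_0^\infty dt\, (1-{\rm T}_x^{|\beta|})(\partial_0+\Delta)\big(\delta\Pi^{-(\tau)}_{x\beta t}(y) - \delta\Pi^{-(\tau')}_{x\beta t}(y)\big).
\end{equation*}
Then I would apply $\E^{1/q'}|\cdot|^{q'}$ and use Minkowski's inequality to pass the norm inside the integral, at which point the integrand is controlled by the assumed hypothesis \eqref{deltaPi-_cauchy}.

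The key observation is that the right-hand side of \eqref{deltaPi-_cauchy} has exactly the same dependence on $\sqrt[4]{t}$ and on $|x-y|$ as the right-hand side of \eqref{Pi-_cauchy} used in Lemma~\ref{int1}, up to the overall multiplicative weight $\bar w$ which is independent of $x$, $y$ and $t$. Consequently, the split into near field $t \leq |x-y|^4$ (further decomposed into the $1$- and ${\rm T}_x^{|\beta|}$-contributions) and far field $t \geq |x-y|^4$, together with the integrability checks at $t=0$ and $t=\infty$, proceeds identically to Lemma~\ref{int1}: the $1$-part gives a $\sqrt[4]{t}$-exponent $\alpha-4-\epsilon$ with integrability at $t=0$ secured by $\alpha-\epsilon>0$ via \eqref{epsilon1}; the ${\rm T}_x^{|\beta|}$-part contributes, for each $\m$ with $|\m|<|\beta|$, the integrable exponent $|\beta|-4-|\m|-\epsilon$, again by \eqref{epsilon1}; and the far-field Taylor remainder carries exponents $|\beta|-4-|\m|-\epsilon$ with $|\m|>|\beta|$ (using $|\beta|\notin\N$), integrable at $t=\infty$.

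I do not expect any genuine obstacle here: all the technical content has already been absorbed into \eqref{deltaPi-_cauchy} (whose proof is the work of Lemma~\ref{averaging} together with the preceding reconstruction and three-point arguments), and the deterministic weight $\bar w$ simply factors through both $\E^{1/q'}$ and the $dt$-integration without interacting with the exponent bookkeeping. Assembling the three contributions yields, on top of the prefactor $(\sqrt[4]{|\tau-\tau'|})^\epsilon\,\bar w$, the spatial rate $|x-y|^{|\beta|-\epsilon}$, which is precisely \eqref{deltaPi_cauchy} for $\beta$.
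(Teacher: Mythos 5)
Your proposal is correct and follows essentially the same route as the paper, which itself proves this lemma simply by invoking the argument of Lemma~\ref{int1}: apply the representation formula \eqref{representation} to the Malliavin-differentiated increment, and redo the near-field/far-field exponent bookkeeping, noting that the deterministic weight $\bar w$ is independent of $x,y,t$ and factors out. The only cosmetic difference is your remark that $|\m|\geq|\beta|$ upgrades to $|\m|>|\beta|$ via $|\beta|\notin\N$, which is harmless since already $|\m|\geq|\beta|$ together with $\epsilon>0$ gives the needed negativity $|\beta|-|\m|-\epsilon<0$ at $t=\infty$.
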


This in turn serves as input for the next lemma, 
the proof of which is analogous to the one of \cite[Proposition~4.10]{LOTT21}, using in addition telescoping \eqref{telescope}.

\begin{lemma}[Three-point argument II']\label{3pt2}
Assume that $\eqref{Pi_cauchy}_{\prec\beta}$, $\eqref{deltaPi_cauchy}_{\preceq\beta}$ hold, 
and that $\eqref{Gamma_cauchy}_\beta^\gamma$ and $\eqref{deltaGamma_cauchy}_\beta^\gamma$ hold for all $\gamma$ not purely polynomial.
Then 
\begin{equation}\label{deltapin_cauchy}
\E^{\frac{1}{q'}} |\delta\pi^{(\n)(\tau)}_{xy\beta} - \delta\pi^{(\n)(\tau')}_{xy\beta}|^{q'} 
\lesssim (\sqrt[4]{|\tau-\tau'|})^\epsilon |y-x|^{|\beta|-|\n|-\epsilon} \, \bar w \, , 
\end{equation}
with the understanding that $|\beta|-|\n|-\epsilon>0$ unless the left-hand side vanishes. 
In particular, $\eqref{deltaGamma_cauchy}_\beta^\gamma$ holds for all $\gamma$.
\end{lemma}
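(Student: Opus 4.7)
The plan is to reproduce the three-point argument of \cite[Proposition~4.10]{LOTT21} in its Cauchy variant. The main algebraic input is the three-point identity for $\delta\pi^{(\n)(\tau)}$, which follows by applying $\delta$ to the identity $\pi^{(\n)(\tau)}_{xy} = \pi^{(\n)(\tau)}_{xz} + \Gtau_{xz}\pi^{(\n)(\tau)}_{zy}$ (itself a consequence of transitivity \eqref{transitive} together with \eqref{Gamma_zn}) and reads
\[
\delta\pi^{(\n)(\tau)}_{xy} = \delta\pi^{(\n)(\tau)}_{xz} + \delta\Gtau_{xz}\pi^{(\n)(\tau)}_{zy} + \Gtau_{xz}\delta\pi^{(\n)(\tau)}_{zy}\,.
\]
In \cite[Proposition~4.10]{LOTT21} this identity is then averaged in the auxiliary base point $z$, exploiting triangularity and the estimates on the constituents, to extract a quantitative bound on $\delta\pi^{(\n)(\tau)}_{xy\beta}$ uniformly in $\tau$.

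First I would take the $(\tau)-(\tau')$ difference of this identity, telescope each arising product via \eqref{telescope}, and apply H\"older's inequality. In every resulting term exactly one factor is a $(\tau)-(\tau')$-increment, to which the corresponding Cauchy estimate available at the current stage of the induction is applied: $\eqref{Pi_cauchy}_{\prec\beta}$ on increments of $\Pi^{(\cdot)}$, $\eqref{Gamma_cauchy}_\beta^\gamma$ on those of $\Gtau$ for $\gamma$ not purely polynomial, $\eqref{deltaPi_cauchy}_{\preceq\beta}$ on those of $\delta\Pi^{(\cdot)}$, and $\eqref{deltaGamma_cauchy}_\beta^\gamma$ on those of $\delta\Gtau$ for $\gamma$ not purely polynomial; the remaining factors are bounded uniformly in $\tau$ by the estimates of \cite{LOTT21}. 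Each term then inherits the prefactor $(\sqrt[4]{|\tau-\tau'|})^\epsilon$ from its unique increment factor, while the spatial homogeneities add up exactly as in \cite[Proposition~4.10]{LOTT21} with an overall loss of $\epsilon$. The weight $\bar w$ enters through the factor carrying the Malliavin derivative, in line with the weight bookkeeping of \cite{LOTT21}. The positivity $|\beta|-|\n|-\epsilon>0$ whenever the left-hand side does not vanish follows from $|\beta|>|\n|$ together with the choice \eqref{epsilon1} of $\epsilon$.

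For the \emph{in particular} statement, the case of $\gamma$ not purely polynomial is Lemma~\ref{alg2} applied with the now available $\eqref{pin_cauchy}_{\preceq\beta}$ and $\eqref{deltapin_cauchy}_{\preceq\beta}$. For purely polynomial $\gamma=e_\n$, applying $\delta$ to \eqref{Gamma_zn} gives $(\delta\G_{xy})_\beta^{e_\n}=\delta\pi^{(\n)}_{xy\beta}$, reducing the claim directly to $\eqref{deltapin_cauchy}_\beta$; for larger purely polynomial $\gamma$ one uses multiplicativity of $\G_{xy}$ together with the Leibniz rule for $\delta$ and one further application of \eqref{telescope}, which reduces matters to the length-one case combined with estimates on $\pi^{(\cdot)(\cdot)}$ and their increments. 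I expect the main obstacle to lie in the careful exponent and weight bookkeeping, specifically in ensuring that the uniform $\epsilon$-loss does not spoil the integrability underlying the $z$-averaging of the three-point step; this is precisely where the restriction \eqref{epsilon1} on $\epsilon$ enters.
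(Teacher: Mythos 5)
Your overall bookkeeping scheme (take the $\tau$--$\tau'$ difference, telescope via \eqref{telescope}, apply H\"older, put the Cauchy estimate on the single increment factor and the uniform estimates of \cite{LOTT21} on the rest, let the Malliavin factor carry $\bar w$) is exactly the paper's philosophy. The problem is the algebraic identity you place at the core. The cocycle relation obtained by applying $\delta$ to $\pi^{(\n)(\tau)}_{xy}=\pi^{(\n)(\tau)}_{xz}+\Gtau_{xz}\pi^{(\n)(\tau)}_{zy}$, namely
\[
\delta\pi^{(\n)(\tau)}_{xy}=\delta\pi^{(\n)(\tau)}_{xz}+\delta\Gtau_{xz}\,\pi^{(\n)(\tau)}_{zy}+\Gtau_{xz}\,\delta\pi^{(\n)(\tau)}_{zy}\,,
\]
cannot produce \eqref{deltapin_cauchy}: in its $\beta$-component the unknown reappears on the right-hand side, once as $\delta\pi^{(\n)(\tau)}_{xz\beta}$ and once as $\delta\pi^{(\n)(\tau)}_{zy\beta}$ with coefficient $(\Gtau_{xz})_\beta^\beta=1$, and no averaging in the auxiliary point $z$ attaches any smallness to these terms, so the argument is circular. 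This identity (in its $\d\G$-form, where it holds only up to an error that must be estimated) is the structure behind Lemma~\ref{3pt3}, not behind \cite[Proposition~4.10]{LOTT21}. Relatedly, your concern about ``integrability underlying the $z$-averaging'' is moot: a three-point argument involves no integration in $z$; the $\epsilon$-integrability issues arise only in the integration lemmas.

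What the paper (following \cite[Proposition~4.10]{LOTT21}) actually uses is the Malliavin derivative of the recentering \eqref{recenter} with the purely polynomial part isolated, i.e.\ the $\delta$-version of \cite[(4.16)]{LOTT21}: since $(\id-P)\Pi^{(\tau)}_y$ is deterministic,
\[
\sum_{\n\neq\0}\delta\pi^{(\n)(\tau)}_{xy\beta}\,(z-y)^\n
=\big(\delta\Pi^{(\tau)}_x-\delta\Pi^{(\tau)}_x(y)
-\delta\Gtau_{xy}P\Pi^{(\tau)}_y-\Gtau_{xy}P\delta\Pi^{(\tau)}_y\big)_\beta(z)\,.
\]
Taking the $\tau$--$\tau'$ increment of this identity, telescoping, and evaluating at points $z$ with $|z-y|\sim|x-y|$ extracts the finitely many coefficients (only $|\n|<|\beta|$ contribute, cf.~\eqref{rest_pin}), and the right-hand side involves precisely the assumed quantities: $\eqref{deltaPi_cauchy}_{\preceq\beta}$ for the $\delta\Pi$-increments (the term with $(\Gtau_{xy})_\beta^\beta=1$ is why $\preceq\beta$ rather than $\prec\beta$ is needed), $\eqref{Pi_cauchy}_{\prec\beta}$ for the $\Pi$-factors multiplying $\delta\Gtau_{xy}$, whose entries inherit the strict triangularity \eqref{triGamma_prec}, and $\eqref{Gamma_cauchy}_\beta^\gamma$, $\eqref{deltaGamma_cauchy}_\beta^\gamma$ for $\gamma$ not purely polynomial, with the remaining factors bounded uniformly in $\tau$ as in \cite{LOTT21}. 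Finally, two small points on the ``in particular'': purely polynomial multi-indices are by definition exactly the $e_\n$, so the only new content is $(\delta\Gtau_{xy})_\beta^{e_\n}=\delta\pi^{(\n)(\tau)}_{xy\beta}$, which is \eqref{deltapin_cauchy} itself --- there is no ``larger purely polynomial $\gamma$'' to treat --- and the non-purely-polynomial case needs no appeal to Lemma~\ref{alg2}, since it is already among the hypotheses of the present lemma.
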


Furthermore, \eqref{deltaPi-_inc_cauchy} can be integrated to obtain the following.

\begin{lemma}[Integration III']\label{int3}
Assume that $\eqref{deltaPi-_inc_cauchy}_\beta$ holds.
Then 
\begin{align}
&\mathbb{E}^\frac{1}{q'} \big|\big(\delta\Pi^{(\tau)}_x-\delta\Pi^{(\tau)}_x(y)
-\d\Gtau_{xy} Q \Pi^{(\tau)}_y\big)_\beta (z) \\
&\qquad- \big(\delta\Pi^{(\tau')}_x-\delta\Pi^{(\tau')}_x(y)
-\d\Gtaut_{xy} Q \Pi^{(\tau')}_y\big)_\beta (z) \big|^{q'} \\
&\lesssim (\sqrt[4]{|\tau-\tau'|})^\epsilon \, |z-y|^{\kappa+\alpha-\epsilon} 
(|z-y|+|y-x|)^{|\beta|-\alpha} (w_x(y)+w_x(z)) \, . 
\label{deltaPi_inc_cauchy}
\end{align}
\end{lemma}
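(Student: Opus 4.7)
The plan is to adapt the integration argument of Lemma~\ref{int1}, now applied to the auxiliary quantity
\begin{equation*}
E^{(\tau)}_{xy\beta}(z) := \big(\delta\Pi^{(\tau)}_x-\delta\Pi^{(\tau)}_x(y)-\d\Gtau_{xy} Q \Pi^{(\tau)}_y\big)_\beta (z)
\end{equation*}
and its $\tau'$-twin $E^{(\tau')}_{xy\beta}(z)$, so that the target \eqref{deltaPi_inc_cauchy} becomes an estimate on $E^{(\tau)}_{xy\beta}(z)-E^{(\tau')}_{xy\beta}(z)$. By construction $E^{(\tau)}_{xy\beta}(y)=0$. First I would verify the PDE $(\partial_0-\Delta) E^{(\tau)}_{xy\beta} = \big(\delta\Pi^{-(\tau)}_x-\d\Gtau_{xy} Q \Pi^{-(\tau)}_y\big)_\beta$: the constant $\delta\Pi^{(\tau)}_x(y)$ is annihilated by $(\partial_0-\Delta)$; moreover $(\partial_0-\Delta)$ commutes with $\d\Gtau_{xy}Q$, and the purely polynomial components of $\Pi^{(\tau)}_y$ surviving $Q$ are only the $\z_{e_i}(z_i-y_i)$ for $i\ge 1$ (those with $|\n|=1$), which are killed by $(\partial_0-\Delta)$, so $Q(\partial_0-\Delta)\Pi^{(\tau)}_y$ reduces to $Q\Pi^{-(\tau)}_y$.

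Second, I would invoke a representation formula in the spirit of \eqref{representation}, now with the Taylor subtraction centred at the secondary base point $y$:
\begin{equation*}
(E^{(\tau)}_{xy\beta}-E^{(\tau')}_{xy\beta})(z) = -\int_0^\infty dt\, (1-{\rm T}_y^\sigma)(\partial_0+\Delta)\, G^{(\tau,\tau')}_{xy\beta t}(z),
\end{equation*}
where $G^{(\tau,\tau')}_{xy} := (\delta\Pi^{-(\tau)}_x-\d\Gtau_{xy} Q \Pi^{-(\tau)}_y) - (\delta\Pi^{-(\tau')}_x-\d\Gtaut_{xy} Q \Pi^{-(\tau')}_y)$ and $\sigma$ is chosen slightly above $\kappa+\alpha$, non-integer thanks to $\alpha\notin\Q$. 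The boundary value at $t=0$ is fixed by $E^{(\tau)}_{xy\beta}(y)=0$ and continuity in the base point, while the vanishing at $t=\infty$ follows from the Taylor subtraction combined with the growth implicit in \eqref{deltaPi-_inc_cauchy}.

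Third, I would split the $t$-integration into the near field $t\le |y-z|^4$ and the far field $t\ge|y-z|^4$. In the near field, applying the input \eqref{deltaPi-_inc_cauchy} to $G^{(\tau,\tau')}$ and noting that $(\partial_0+\Delta)\psi_t$ contributes an extra $(\sqrt[4]{t})^{-2}$ via \eqref{momentbound}, the relevant integrand scales as $(\sqrt[4]{|\tau-\tau'|})^\epsilon(\sqrt[4]{t})^{\alpha-4-\epsilon}(\sqrt[4]{t}+|y-z|)^\kappa(\sqrt[4]{t}+|y-z|+|x-z|)^{|\beta|-\alpha}(w_x(y)+w_x(z))$. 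Since $\alpha>\epsilon$ by \eqref{epsilon1}, integrating in $dt$ from $0$ to $|y-z|^4$ yields the prefactor $|y-z|^{\alpha-\epsilon}$; combined with the remaining factors evaluated at $\sqrt[4]{t}\sim|y-z|$ and the triangle inequality $|x-z|\le|x-y|+|y-z|$, this produces the target $|y-z|^{\kappa+\alpha-\epsilon}(|y-z|+|y-x|)^{|\beta|-\alpha}$ times the weight. In the far field, the Taylor remainder at $y$ of order $\sigma$ contributes a factor $(|y-z|/\sqrt[4]{t})^\sigma$, and choosing $\sigma$ slightly above $\kappa+|\beta|-\epsilon$ renders the tail integral convergent with the same bound.

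The main obstacle I foresee is the careful exponent bookkeeping around the Taylor subtraction: one must place $\sigma$ strictly between two integer thresholds so that the near-field ``$1$''-contribution and the far-field Taylor-remainder contribution both deliver the correct power of $|y-z|$ at the matching scale $\sqrt[4]{t}\sim|y-z|$. Both checks rely on $\alpha\notin\Q$ and on the choice of $\epsilon$ fixed by \eqref{epsilon1}--\eqref{epsilon2}, and they are the direct analogue of the integrability checks already carried out in Lemma~\ref{int1}.
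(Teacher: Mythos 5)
Your overall strategy (integrate the PDE satisfied by $E^{(\tau)}_{xy\beta}$ against the semigroup, feed in \eqref{deltaPi-_inc_cauchy}, and do near/far field bookkeeping as in Lemma~\ref{int1}) is indeed the spirit of the paper's proof, which simply follows \cite[Proposition~4.14]{LOTT21} and checks the $\epsilon$-shifted integrability. The PDE identity $(\partial_0-\Delta)E^{(\tau)}_{xy\beta}=(\delta\Pi^{-(\tau)}_x-\d\Gtau_{xy}Q\Pi^{-(\tau)}_y)_\beta$ is also correct. However, there is a genuine gap in your treatment of the far field, visible already in the internal inconsistency about the Taylor order $\sigma$: you first take $\sigma$ slightly above $\kappa+\alpha$ (which is the only choice compatible with the structure of $E$, since by \eqref{dGamma} only the constant $\delta\Pi_x(y)$ and the $|\n|=1$ terms $\d\pi^{(\n)}_{xy}(\cdot-y)^\n$ are subtracted, and $\kappa+\alpha<2$ by \eqref{kappa}), but then claim far-field convergence by choosing $\sigma$ slightly above $\kappa+|\beta|-\epsilon$. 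These are incompatible as soon as $|\beta|>\alpha$. With $\sigma\in(1,2)$ the far-field integrand, estimated through the order-two Taylor remainder and \eqref{deltaPi-_inc_cauchy}, behaves like $|y-z|^2(\sqrt[4]{t})^{\kappa+|\beta|-\epsilon-6}$, which is \emph{not} integrable at $t=\infty$ once $\kappa+|\beta|\geq 2+\epsilon$ (note $\kappa>2-2\alpha$, so this fails even for some $|\beta|<2$). With $\sigma>\kappa+|\beta|-\epsilon\geq 2$ the represented function is no longer $E$: it differs from $E$ by a polynomial in $(z-y)$ of degree $\geq2$, and your proposed identification ``$E(y)=0$ plus continuity'' cannot fix such a polynomial — indeed $E$ is only expected to vanish to order $\kappa+\alpha<2$ at $z=y$, and second-order subtractions have no counterpart in \eqref{dGamma}. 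This is precisely why the actual argument of \cite[Proposition~4.14]{LOTT21}, which the paper invokes, works with a three-fold splitting — near field $t\leq|y-z|^4$, intermediate range $|y-z|^4\leq t\leq|x-z|^4$, far field $t\geq\max\{|y-z|^4,|x-z|^4\}$ — exploiting the two spatial scales in the input separately; the paper's only additional checks are that $\alpha-\epsilon>0$ (from \eqref{epsilon1}) and $\kappa+\alpha-1-\epsilon>0$ (from \eqref{epsilon2}) keep the near field convergent, while in the far field the extra factor $(\sqrt[4]{t})^{-\epsilon}$ only improves matters relative to the $\tau$-uniform argument.

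A smaller omission pointing in the same direction: in the near field you only estimate the ``$1$''-contribution (using $\alpha-\epsilon>0$), but not the contributions of the subtracted constant and first-order terms, whose convergence at $t=0$ is exactly what requires $\kappa+\alpha-1-\epsilon>0$, i.e.\ condition \eqref{epsilon2} — the one integrability condition the paper's proof singles out. So the exponent bookkeeping you flag as the main obstacle is indeed where the proposal, as written, does not go through; the remedy is to reproduce the range decomposition and term-by-term treatment of \cite[Proposition~4.14]{LOTT21} rather than a two-range argument with a single adjustable Taylor order.
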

\begin{proof}
The proof follows \cite[Proposition~4.14]{LOTT21}, 
where as in Lemma~\ref{int1} some care has to be taken concerning integrability at $t=0$ and $t=\infty$.
In the near field range $t\leq|y-z|^4$ it is enough to mention that by \eqref{epsilon2} we have 
$\alpha-1+\kappa-\epsilon>0$, and that by \eqref{epsilon1} we have $\alpha-\epsilon>0$.
In the far field range $t\geq\max\{|y-z|^4,|x-z|^4\}$ 
the factor $t^{-\epsilon}$ actually improves integrability,
and in the intermediate range $|y-z|^4\leq t\leq|x-z|^4$ the question of integrability doesn't come up.
\end{proof}

Once more, we can as in \cite[Proposition~4.15]{LOTT21} use this estimate,
together with \eqref{telescope}, to obtain the next lemma.

\begin{lemma}[Three-point argument III']\label{3pt3}
Assume that $\eqref{Pi_cauchy}_{\prec\beta}$, 
$\eqref{pin_cauchy}_{\prec\beta}$ and $\eqref{deltaPi_inc_cauchy}_\beta$ hold, and that $\eqref{Gamma_cauchy}_{\prec\beta}^\gamma$, 
$\eqref{dGamma_cauchy}_\beta^\gamma$ and $\eqref{dGamma_inc_cauchy}_\beta^\gamma$ hold for all $\gamma$ not purely polynomial. 
Then for all $\n$ with $|\n|\leq1$, 
\begin{align}
&\E^\frac{1}{q'} \big| \big(\d\pi^{(\n)(\tau)}_{xy} - \d\pi^{(\n)(\tau)}_{xz} -\d\Gtau_{xz}\pi^{(\n)(\tau)}_{zy}\big)_\beta \\
&\qquad- \big(\d\pi^{(\n)(\tau')}_{xy} - \d\pi^{(\n)(\tau')}_{xz} - \d\Gtaut_{xz}\pi^{(\n)(\tau')}_{zy} \big)_\beta \big|^{q'} \\
&\lesssim (\sqrt[4]{|\tau-\tau'|})^\epsilon \, |y-z|^{\kappa+\alpha-|\n|-\epsilon} (|y-z|+|z-x|)^{|\beta|-\alpha} 
(w_x(y)+w_x(z)) \, . \label{dpin_inc_cauchy}
\end{align}
In particular, $\eqref{dGamma_inc_cauchy}_\beta^\gamma$ holds for all $\gamma$.
\end{lemma}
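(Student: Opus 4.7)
The plan is to mirror \cite[Proposition~4.15]{LOTT21} at the level of increments in the mollification parameter. The starting point is an algebraic three-point identity expressing
\begin{equation}
\d\pi^{(\n)}_{xy}-\d\pi^{(\n)}_{xz}-\d\G_{xz}\pi^{(\n)}_{zy}
\end{equation}
in terms of a reconstruction-type residual associated with $\delta\Pi_x-\delta\Pi_x(\cdot)-\d\G_{x\cdot}Q\Pi_\cdot$ evaluated at the two secondary base points $y$ and $z$. This identity is derived, as in \cite{LOTT21}, by applying the formal Leibniz rule $\d\G_{xy}=\d\G_{xz}\G_{zy}+\G_{xz}\d\G_{zy}$ to $\z_\n$ with $|\n|=1$, extracting the $\d\pi^{(\n)}$-components via \eqref{dGamma} and \eqref{Gamma_zn}, and noting that the defect from exact validity of this Leibniz rule on the mollified level is precisely the modelledness residual whose $\tau$-increments are controlled by $\eqref{deltaPi_inc_cauchy}_\beta$.

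Taking the $\tau$ versus $\tau'$ difference of this identity, every product is split by telescoping \eqref{telescope}, e.g.
\begin{align}
\d\Gtau_{xz}\pi^{(\n)(\tau)}_{zy}-\d\Gtaut_{xz}\pi^{(\n)(\tau')}_{zy}
&=\big(\d\Gtau_{xz}-\d\Gtaut_{xz}\big)\pi^{(\n)(\tau)}_{zy}\\
&\quad+\d\Gtaut_{xz}\big(\pi^{(\n)(\tau)}_{zy}-\pi^{(\n)(\tau')}_{zy}\big).
\end{align}
On each non-increment factor I invoke the uniform estimates \eqref{Pi} and \eqref{Gamma} from \cite{LOTT21}; on the $\pi^{(\n)}$-increment I use $\eqref{pin_cauchy}_{\prec\beta}$, on the $\d\G$-increment I use $\eqref{dGamma_cauchy}_\beta^\gamma$ for $\gamma$ not purely polynomial together with $\eqref{dGamma_inc_cauchy}_\beta^\gamma$, and on the residual itself I use $\eqref{deltaPi_inc_cauchy}_\beta$. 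A H\"older argument with the usual splitting of integrability exponents $p,q$ then assembles these into the asserted scaling $(\sqrt[4]{|\tau-\tau'|})^\epsilon\,|y-z|^{\kappa+\alpha-|\n|-\epsilon}(|y-z|+|z-x|)^{|\beta|-\alpha}$, where the single $\epsilon$-loss is absorbed in one factor thanks to \eqref{epsilon1}.

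The main obstacle I anticipate is the bookkeeping of the weights $w_x$: products of weights must, after Cauchy--Schwarz-type absorption, reduce to $w_x(y)+w_x(z)$ on the right-hand side. This relies on the weight arithmetic collected in Appendix~\ref{weights} and closely parallels the corresponding passage in \cite{LOTT21}. For the \emph{in particular} assertion, one observes that $(\d\G_{xy})_\beta^{e_\n}=\d\pi^{(\n)}_{xy\beta}$ for $|\n|=1$ by \eqref{dGamma}, while $(\d\G_{xz}\G_{zy})_\beta^{e_\n}$ reduces via \eqref{Gamma_preserves_Tbar} to a finite sum in the $\d\pi^{(\m)}_{xz}$ and the polynomial entries \eqref{Gamma_poly} of $\G_{zy}$; combining the just-established $\eqref{dpin_inc_cauchy}_\beta$ with $\eqref{pin_cauchy}_{\prec\beta}$ and $\eqref{Gamma_cauchy}_{\prec\beta}$ via telescoping \eqref{telescope} then yields $\eqref{dGamma_inc_cauchy}_\beta^\gamma$ for purely polynomial $\gamma$, which together with the already available non-purely-polynomial case covers all $\gamma$.
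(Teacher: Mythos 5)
Your overall strategy is the paper's: the proof is the $\tau$-increment version of \cite[Proposition~4.15]{LOTT21}, obtained by telescoping \eqref{telescope} and using $\eqref{deltaPi_inc_cauchy}_\beta$, $\eqref{dGamma_inc_cauchy}_\beta$, $\eqref{dGamma_cauchy}_\beta$, $\eqref{pin_cauchy}_{\prec\beta}$, $\eqref{Gamma_cauchy}_{\prec\beta}$ on increments while estimating all other factors by the uniform bounds of \cite{LOTT21}, with the weights handled as in Appendix~\ref{weights}. However, the algebraic identity you start from is not the one in play. The rule $\d\G_{xy}=\d\G_{xz}\G_{zy}+\G_{xz}\d\G_{zy}$ is the (formal) cocycle identity for $\delta\G$, not for $\d\G$; what is actually controlled is the defect $(\d\G_{xy}-\d\G_{xz}\G_{zy})Q$ with \emph{no} $\G_{xz}\d\G_{zy}$ term, and this defect is not ``precisely the modelledness residual'' but is \emph{estimated} by evaluating the residuals $\delta\Pi_x-\delta\Pi_x(y)-\d\G_{xy}Q\Pi_y$ and $\delta\Pi_x-\delta\Pi_x(z)-\d\G_{xz}Q\Pi_z$ at points at distance $\sim|y-z|$ from $y$, combined with the recentering \eqref{recenter} of $\Pi$. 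This matters: if you keep the extra term $\G_{xz}\d\G_{zy}$, its $e_\n$-column is $\sum_{\gamma'}(\G_{xz})_\beta^{\gamma'}\d\pi^{(\n)}_{zy\gamma'}$, so its $\tau$-increments would require \eqref{dpin_cauchy}-type control of $\d\pi^{(\n)(\tau)}_{zy}$ at level $\beta$, which is only produced in the subsequent Lemma~\ref{3pt4} --- the argument would be circular. The correct bookkeeping is: by \eqref{Gamma_zn}, $(\G_{zy})_{\gamma'}^{e_\n}=(\id)_{\gamma'}^{e_\n}+\pi^{(\n)}_{zy\gamma'}$, whence $(\d\G_{xy}-\d\G_{xz}\G_{zy})_\beta^{e_\n}=(\d\pi^{(\n)}_{xy}-\d\pi^{(\n)}_{xz}-\d\G_{xz}\pi^{(\n)}_{zy})_\beta$ \emph{exactly}; the quantity in \eqref{dpin_inc_cauchy} is thus the purely polynomial column of the defect, i.e.\ the piece not covered by Lemma~\ref{alg3}, and it is extracted from the modelledness residuals as described above.

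For the same reason your treatment of the ``in particular'' assertion is more complicated than needed, and partly misattributed: \eqref{dpin_inc_cauchy} for $|\n|=1$ \emph{is} $\eqref{dGamma_inc_cauchy}_\beta^{e_\n}$ (with $|\gamma|=|\n|=1$ you land in the $\gamma(\n)=1$ branch, and the exponents match), so no further telescoping with \eqref{pin_cauchy} or \eqref{Gamma_cauchy} is required. Moreover \eqref{Gamma_preserves_Tbar} and \eqref{Gamma_poly} are not the relevant inputs there: they govern purely polynomial \emph{rows} of $\G$, whereas the $e_\n$-\emph{column} of $\G_{zy}$ carries, by \eqref{Gamma_zn}, the entries $\pi^{(\n)}_{zy\gamma'}$ that produce precisely the term $\d\G_{xz}\pi^{(\n)}_{zy}$ already present in \eqref{dpin_inc_cauchy}. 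With the identity set up correctly, the rest of your plan (telescoping, H\"older with adjusted integrability exponents, absorbing the $\epsilon$-loss via \eqref{epsilon1}, and reducing weight products to $w_x(y)+w_x(z)$) goes through as in the paper.
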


Finally, the same strategy as in \cite[Proposition~4.17]{LOTT21} yields the following lemma, 
which concludes the induction step.

\begin{lemma}[Three-point argument IV']\label{3pt4}
Assume that $\eqref{Pi_cauchy}_{\prec\beta}$, 
$\eqref{deltaPi_cauchy}_{\prec\beta}$ and
$\eqref{deltaPi_inc_cauchy}_{\prec\beta}$ hold, 
and that $\eqref{dGamma_cauchy}_\beta^\gamma$ holds for all $\gamma$ not purely polynomial. 
Then for all $\n$ with $|\n|\leq1$,
\begin{equation}\label{dpin_cauchy}
\E^\frac{1}{q'} | \d\pi^{(\n)(\tau)}_{xy\beta} - \d\pi^{(\n)(\tau')}_{xy\beta} |^{q'} 
\lesssim (\sqrt[4]{|\tau-\tau'|})^\epsilon \, |x-y|^{\kappa+|\beta|-|\n|-\epsilon} \, w_x(y) \, . 
\end{equation}
In particular, $\eqref{dGamma_cauchy}_\beta^\gamma$ holds for all $\gamma$.
\end{lemma}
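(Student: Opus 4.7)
The plan is to follow the strategy of \cite[Proposition~4.17]{LOTT21}, carrying out the $\tau$-increment via telescoping \eqref{telescope} exactly as in the preceding lemmas of this section. The key observation is that for $|\n|=1$ the coefficient $\d\pi^{(\n)(\tau)}_{xy\beta}$ coincides with the purely polynomial matrix entry $(\d\Gtau_{xy})_\beta^{e_\n}$, and can be extracted from the modelledness defect
\[
R^{(\tau)}_{xy}(z) := \big(\delta\Pi^{(\tau)}_x-\delta\Pi^{(\tau)}_x(y)-\d\Gtau_{xy}Q\Pi^{(\tau)}_y\big)(z)
\]
by evaluating at $z=y+\lambda\hat{\n}$ for a suitable $\lambda>0$ and unit direction $\hat{\n}$ with $(\hat{\n})^\m=\delta^\n_\m$. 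Indeed, splitting $\d\Gtau_{xy}Q\Pi^{(\tau)}_y$ via \eqref{dGamma} into its contributions from purely polynomial and non-purely-polynomial parts of $Q\Pi^{(\tau)}_y$, and using that $\Gtau_{xy}$ acts trivially on the scalars $(\cdot-y)^{\m}$, one gets an identity of the form
\[
\lambda\,\d\pi^{(\n)(\tau)}_{xy\beta} = -R^{(\tau)}_{xy,\beta}(y+\lambda\hat{\n}) + \delta\Pi^{(\tau)}_{x\beta}(y+\lambda\hat{\n}) - \delta\Pi^{(\tau)}_{x\beta}(y) - \sum_{\gamma\text{ not p.p.}} (\d\Gtau_{xy})_\beta^{\gamma}\,\Pi^{(\tau)}_{y\gamma}(y+\lambda\hat{\n}).
\]

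Subtracting the analogous identity with $\tau'$, dividing by $\lambda$, and applying the triangle inequality under $\E^{1/q'}|\cdot|^{q'}$, I would estimate each term separately. The defect increment is controlled using the induction hypothesis $\eqref{deltaPi_inc_cauchy}_{\prec\beta}$ combined with the already established $\eqref{dGamma_cauchy}_\beta^\gamma$ for non-p.p.\ $\gamma$ (which, via \eqref{dGamma}, also reduces the $\beta$-component of $\d\Gtau_{xy}Q\Pi^{(\tau)}_y$ on non-p.p.\ directions to products of $\d\pi^{(\m)(\tau)}_{xy,\beta'}$ for $\beta'\prec\beta$). The boundary contribution $(\delta\Pi^{(\tau)}-\delta\Pi^{(\tau')})_{x\beta}$ is handled by $\eqref{deltaPi_cauchy}_{\prec\beta}$ after telescoping against $\delta\Pi^{(\tau)}_{x\beta}(y)$. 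The sum over non-purely-polynomial $\gamma$ is estimated by applying \eqref{telescope} to the product $(\d\Gtau_{xy})_\beta^\gamma\,\Pi^{(\tau)}_{y\gamma}(\cdot)$, using $\eqref{dGamma_cauchy}_\beta^\gamma$ (given by hypothesis) and $\eqref{Pi_cauchy}_{\prec\beta}$ (note that $|\gamma|<|\beta|$ by the triangularity \eqref{triGamma_hom} inherited by $\d\G$). Choosing $\lambda\sim|x-y|$ balances the exponents so that, after dividing by $\lambda$, the bound has the desired form $(\sqrt[4]{|\tau-\tau'|})^\epsilon|x-y|^{\kappa+|\beta|-|\n|-\epsilon}w_x(y)$; comparability $w_x(y+\lambda\hat{\n})\lesssim w_x(y)$ for $\lambda\lesssim|x-y|$ is a standard property of the weights of Appendix~\ref{weights}. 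The case $|\n|=0$, where $\d\pi^{(\0)(\tau)}_{xy\beta}=Q\delta\Pi^{(\tau)}_{x\beta}(y)$, is treated analogously by taking the limit $\lambda\to0$ in the extraction identity, exploiting that $R^{(\tau)}_{xy,\beta}(y)=0$ by \eqref{ho29}.

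Once $\eqref{dpin_cauchy}_\beta$ is established for $|\n|\le 1$, the in-particular statement $\eqref{dGamma_cauchy}_\beta^\gamma$ for all $\gamma$ follows by plugging back into the expansion
\[
(\d\Gtau_{xy})_\beta^\gamma - (\d\Gtaut_{xy})_\beta^\gamma = \sum_{|\n|\le 1}\sum_{\beta_1+\beta_2=\beta}\Big(\d\pi^{(\n)(\tau)}_{xy,\beta_1}(\Gtau_{xy}D^{(\n)})_{\beta_2}^\gamma - \d\pi^{(\n)(\tau')}_{xy,\beta_1}(\Gtaut_{xy}D^{(\n)})_{\beta_2}^\gamma\Big),
\]
telescoping on the two-factor product, and using $\eqref{dpin_cauchy}_\beta$ together with $\eqref{Gamma_cauchy}_{\preceq\beta}$ to handle the remaining increments. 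The main obstacle will be the careful bookkeeping of exponents and weights. One must verify in particular that $\kappa+\alpha-\epsilon>1$ (guaranteed by \eqref{epsilon2}) so that $R$ dominates its neighbors after division by $\lambda$, and that the $w_x(y)$ weight in the target is compatible with the $\bar w$ weight appearing for the boundary term $\delta\Pi^{(\tau)}_{x\beta}(y)$; the latter is achieved by extracting that term through the modelledness identity rather than via the bare estimate $\eqref{deltaPi_cauchy}_{\prec\beta}$, so that the stronger weight $w_x(y)$ emerges naturally from $R^{(\tau)}_{xy,\beta}$.
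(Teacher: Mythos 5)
Your extraction identity is the right mechanism and is essentially the paper's (i.e.\ \cite[Proposition~4.17]{LOTT21}'s) three-point strategy, the same device the uniqueness proof uses when it evaluates at $y+\lambda\n$ to identify $(\d\G_{xy})_\beta^{e_\n}$. However, two steps of your estimate do not hold as written. The more serious one is the weight. Evaluating at the single point $z=y+\lambda\n$ leaves, via $\eqref{deltaPi_inc_cauchy}$, the factor $w_x(y)+w_x(z)$, and your claimed comparability $w_x(y+\lambda\n)\lesssim w_x(y)$ for $\lambda\lesssim|x-y|$ is \emph{not} a property of the weights of Appendix~\ref{weights}: by \eqref{w_gain}, $w^2(z)=\int dz'\,|z-z'|^{-2\kappa}\,\E^{\frac{2}{q}}|\cdots\delta\xi(z')|^q$ can be arbitrarily larger than $w^2(y)$ for nearby $z$ (concentrate the integrand near $z$), while the implicit constants must be uniform in $\delta\xi$. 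Since \eqref{dpin_cauchy} asserts the weight $w_x(y)$ alone, this is precisely the delicate point. The scheme of the paper removes the secondary-point weight by \emph{averaging} over the evaluation point rather than by pointwise comparison: replace the single evaluation by an average over $z\in B_\lambda(y)$ with $\lambda\sim|x-y|$ against a kernel dual to the degree-one monomials (which still isolates $\d\pi^{(\n)(\tau)}_{xy\beta}$), pull the average inside $\E^{\frac{1}{q'}}|\cdot|^{q'}$, and use \eqref{average_ball_w} together with $\fint_{B_\lambda(y)}dz\,|x-z|^{-\kappa}\lesssim|x-y|^{-\kappa}$ to turn the averaged $w_x(z)$ into $w_x(y)$; this is exactly what the averaging properties \eqref{average_psi_w}, \eqref{average_ball_w} are recorded for (cf.\ Lemma~\ref{averaging}).

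The second issue is the bookkeeping of inputs. The defect increment $R^{(\tau)}-R^{(\tau')}$ you estimate is precisely the $\beta$-component controlled by $\eqref{deltaPi_inc_cauchy}_\beta$; it cannot be assembled from $\eqref{deltaPi_inc_cauchy}_{\prec\beta}$ together with $\eqref{dGamma_cauchy}_\beta$, because the terms $\delta\Pi^{(\tau)}_{x\beta}(z)-\delta\Pi^{(\tau)}_{x\beta}(y)$ live at level $\beta$. Likewise your boundary term, and the case $\n=\0$ where $\d\pi^{(\0)(\tau)}_{xy\beta}=Q\delta\Pi^{(\tau)}_{x\beta}(y)$, require $\eqref{deltaPi_cauchy}_\beta$ rather than the $\prec\beta$ version (for $\n=\0$ the appeal to \eqref{ho29} is also misplaced: in the mollified setting one simply uses the definition of $\d\pi^{(\0)(\tau)}$ and $\bar w\leq|x-y|^\kappa w_x(y)$). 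These $\beta$-level Cauchy estimates are in fact available at this stage of the induction, being the outputs of Lemma~\ref{int2} and Lemma~\ref{int3} which precede the present lemma, so the argument can be closed; but attributing them to the $\prec\beta$ hypotheses, as you do, is a genuine logical gap. With these two repairs the exponent bookkeeping you indicate ($\lambda\sim|x-y|$, $\bar w\leq|x-y|^{\kappa}w_x(y)$, telescoping with $\eqref{dGamma_cauchy}_\beta$ and $\eqref{Pi_cauchy}_{\prec\beta}$ for the non-purely-polynomial sum) does give \eqref{dpin_cauchy}, and your derivation of the ``in particular'' statement from \eqref{dGamma} by telescoping is fine.
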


%%%%%%%%%%%%%%%%%%%%%%%%%%%%%%%%%%%%%%%%%%%%%%%%%%%%%%%%%%%%%%%%%%%%%%%%%%%%%%%

\section{Convergence of models}\label{sec:convergence}

In this section we provide the proof of Theorem~\ref{thm2}. 
A first technical ingredient is the following lemma, 
which is concerned with upgrading the probabilistic convergence $\xi_n\to\xi$ 
in the space of tempered distributions to convergence with respect to 
an annealed H\"older-type norm. 

\begin{lemma}\label{lem:convergence}
Assume that the laws of $\xi_n,\xi$ satisfy Assumption~\ref{ass}, 
uniformly in $n$, 
and that $\xi_n\to\xi$ either almost surely or w.r.t.~$\E^\frac{1}{p}|\cdot|^p$ 
on the space of tempered distributions. 
Then $\xi_n\to\xi$ with respect to the annealed norm 
\begin{equation}\label{annealed_xi}
\| \xi\|_{\alpha-2,\epsilon} 
:= \sup_{t>0} \sup_{x\in\R^{1+d}} \frac{\E^\frac{1}{p'}| \xi_t(x)|^{p'} }{
(\sqrt[4]{t})^{\alpha-2-\epsilon} (1+\sqrt[4]{t}+|x|)^{2\epsilon}}
\end{equation}
for every $\epsilon>0$, 
where $p'<\infty$ is arbitrary in case of assuming almost sure convergence, 
and $p'=p$ in case of assuming convergence w.r.t.~$\E^\frac{1}{p}|\cdot|^p$. 
\end{lemma}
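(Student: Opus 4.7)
The strategy is to combine a uniform moment bound on $(\xi_n)_t(x)$ coming from the spectral gap with a compactness argument in $(t,x)$, reducing matters to pointwise convergence on a compact region.

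First, I invoke the uniform moment bound
\begin{equation}
\sup_{n} \,\mathbb{E}^{1/p}|(\xi_n)_t(x)|^p \lesssim_p (\sqrt[4]{t})^{\alpha-2}, \qquad p<\infty,
\end{equation}
valid uniformly in $(t,x)$ and $n$. This follows from an iterated application of the spectral gap \eqref{sg} to the linear functional $\xi\mapsto \xi_t(x)=(\xi,\psi_t(x-\cdot))$, whose Malliavin derivative is the deterministic Schwartz function $\psi_t(x-\cdot)$. The bound $\|\psi_t(x-\cdot)\|_*\lesssim(\sqrt[4]{t})^{\alpha-2}$ follows from the scaling identity \eqref{scaling_psi}, while stationarity (Assumption~\ref{ass}~i)) together with centeredness kills $\mathbb{E}\xi_t(x)$. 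The same bound transfers to the limit $\xi$ by Fatou.

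Second, I split the supremum over $(t,x)$ into a compact region $K_{\delta,T,R}:=\{\delta^4\le t\le T^4\}\times \overline{B_R(0)}$ and its complement. On the complement, the uniform moment bound applied to $\xi_n$ and $\xi$ and the triangle inequality give
\begin{equation}
\frac{\mathbb{E}^{1/p'}|(\xi_n-\xi)_t(x)|^{p'}}{(\sqrt[4]{t})^{\alpha-2-\epsilon}(1+\sqrt[4]{t}+|x|)^{2\epsilon}}
\;\lesssim\; \frac{(\sqrt[4]{t})^{\epsilon}}{(1+\sqrt[4]{t}+|x|)^{2\epsilon}},
\end{equation}
and the right-hand side tends to $0$ as $\sqrt[4]{t}\to 0$, as $\sqrt[4]{t}\to\infty$, or as $|x|\to\infty$. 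Hence, for any prescribed $\eta>0$, one can choose $\delta$, $T$, $R$ such that the quotient is bounded by $\eta$ outside $K_{\delta,T,R}$ uniformly in $n$.

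Third, inside $K_{\delta,T,R}$ I would argue by uniform convergence. For fixed $(t,x)$, testing the distributional convergence with $\psi_t(x-\cdot)\in\mathcal S$ yields $\mathbb{E}^{1/p'}|(\xi_n-\xi)_t(x)|^{p'}\to 0$: in the $L^p$-case this is immediate with $p'=p$, while in the a.s.-case a.s.\ convergence at fixed $(t,x)$ combines with uniform integrability from the uniform $L^{p''}$-bound for some $p''>p'$ to give $L^{p'}$-convergence. Equicontinuity in $(t,x)\in K_{\delta,T,R}$ follows by applying the spectral gap to $(\xi_n)_t(x)-(\xi_n)_s(y)$, which reduces the question to continuity of the map $(t,x)\mapsto \psi_t(x-\cdot)$ in $\|\cdot\|_*$, valid on $K_{\delta,T,R}$ where $t$ is bounded away from $0$; the moment interpolation $\|\cdot\|_{p'}\leq\|\cdot\|_2^{1-\theta}\|\cdot\|_{p''}^{\theta}$ promotes the $L^2$-increment bound to an $L^{p'}$-bound. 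An Arzel\`a--Ascoli-type argument then yields uniform convergence on $K_{\delta,T,R}$, which combined with the previous step concludes the proof. I expect the main obstacle to be the third step, in particular the upgrade from pointwise $L^{p'}$-convergence to uniform convergence on $K_{\delta,T,R}$ in the a.s.\ case, where the interplay of equicontinuity from the spectral gap and uniform higher moments has to be exploited carefully.
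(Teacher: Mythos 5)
Your proposal is correct and follows essentially the same route as the paper: a uniform spectral-gap moment bound $\E^{1/p}|(\xi_n)_t(x)|^p\lesssim(\sqrt[4]{t})^{\alpha-2}$, decay of the weighted quotient outside a compact set in $(t,x)$, and on the compact set an Arzel\`a--Ascoli argument combining pointwise convergence (from testing against $\psi_t(x-\cdot)$, with uniform higher moments handling the almost-sure case) with equicontinuity coming from increment estimates in $x$ and $t$ that are uniform in $n$. The only cosmetic difference is that the paper packages the weighted quotient into a single function $f_n(t,x)$ and verifies its equicontinuity directly via the estimates of \cite{LOTT21}, whereas you phrase the same bounds through continuity of $(t,x)\mapsto\psi_t(x-\cdot)$ in $\|\cdot\|_*$; this changes nothing of substance.
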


\noindent
Here, we keep track of the small scale regularity 
as well as the large scale regularity 
as an artefact of working on the whole space. 
The reason for losing an $\epsilon$ on small and large scales, 
although working with annealed norms, 
is a consequence of ``trading regularity for convergence''. 
This was already present in \eqref{Pi-_cauchy}, 
where even for $\xi_\tau$ we had to give up an $\epsilon$ 
of regularity to achieve the convergence $\xi_\tau\to\xi$ as $\tau\to0$. 

The proof of Lemma~\ref{lem:convergence} is an application 
of the Arzela-Ascoli theorem, and is deferred to Section~\ref{sec:proofs_continuity}. 
Another ingredient for the proof of Theorem~\ref{thm2} is the following lemma, 
which for $\tau>0$ establishes continuity of the map 
$\xi\mapsto(\Pi^{(\tau)},\Gtau)$ in a suitable (annealed) topology.

\begin{lemma}\label{lem:continuity}
Assume that $\E$ satisfies Assumption~\ref{ass}. 
Fix $\tau>0$, $p<\infty$ and a multi-index $\beta$. 
Then there exists $\epsilon>0$ small enough and $p'<\infty$ large enough, 
such that the map $\xi\mapsto (\Pi^{(\tau)}_\beta,\Gtau_\beta)$ is continuous, 
provided the space of $\xi$ is equipped with the norm 
$\|\cdot\|_{\alpha-2,\epsilon}$ defined 
in \eqref{annealed_xi} with integrability exponent $p'$, 
and the space of $(\Pi^{(\tau)}_\beta,\Gtau_\beta)$ is equipped with the norm 
\begin{equation}\label{annealed_model}
\|(\Pi,\G)\|_\beta 
:= \|\Pi\|_\beta 
+ [\Pi]_\beta
+ \|\G\|_\beta \, ,
\end{equation}
where
\begin{align}
\|\Pi\|_\beta
&:= \sup_{x\neq y} \frac{\E^\frac{1}{p}|\Pi_{x\beta}(y)|^p}{|x-y|^{|\beta|-\epsilon}(1+|x|+|y|)^{2\epsilon}} \, , \label{normPi} \\
[\Pi]_\beta
&:= \sup_{x,y\neq z}\sup_{|\n|=2} \frac{\E^\frac{1}{p}|\partial^\n\Pi_{x\beta}(y)-\partial^\n\Pi_{x\beta}(z)|^p}{|y\-z|^{\alpha-\epsilon} 
(1\+|x\-y|\+|x\-z|)^{|\beta|-\alpha} 
(1\+|x|\+|y|\+|z|)^{2\epsilon}} \, ,\hspace{5ex} \label{normPi2} \\
\|\G\|_\beta
&:= \sup_{x\neq y} \sup_{\gamma\prec\beta} 
\frac{\E^\frac{1}{p}|(\G_{xy})_\beta^\gamma|^p}{|x-y|^{|\beta|-|\gamma|-\epsilon}(1+|x|+|y|)^{2\epsilon}} \, . \label{normGamma} 
\end{align}
\end{lemma}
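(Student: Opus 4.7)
The plan is to proceed by induction with respect to the ordering $\prec$, mirroring the scheme developed in Section~\ref{sec:cauchy} but with the quantitative gain $(\sqrt[4]{|\tau-\tau'|})^\epsilon$ replaced by $\|\xi-\xi'\|_{\alpha-2,\epsilon}$. As a preliminary, we observe that since $\psi_\tau$ is a Schwartz function, writing $\partial^\n\xi_\tau = \partial^\n\psi_{\tau/2} * \xi_{\tau/2}$ via the semigroup property \eqref{semigroup} and convolving against the bound defining $\|\cdot\|_{\alpha-2,\epsilon}$ yields, for every multi-index $\n$,
\begin{equation*}
\sup_{y\in\R^{1+d}} (1+|y|)^{-2\epsilon}\,\E^{1/p'}|\partial^\n(\xi_\tau-\xi'_\tau)(y)|^{p'}
\leq C(\tau,\n,\epsilon)\,\|\xi-\xi'\|_{\alpha-2,\epsilon},
\end{equation*}
with the constant depending on $\tau$ but not on $\xi,\xi'$.

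For the base case $\beta=0$, the defining relation \eqref{eh11} applied to the difference reads
\begin{equation*}
(\partial_0-\Delta)\bigl(\Pi^{(\tau)}_{x0}[\xi]-\Pi^{(\tau)}_{x0}[\xi']\bigr)=(\xi_\tau-\xi'_\tau)-\E(\xi_\tau-\xi'_\tau),
\end{equation*}
and the representation \eqref{representation} together with the preliminary estimate controls $\|\Pi_0\|_0$ and $[\Pi_0]_0$ at integrability $p'=p$; the relevant $\Gtau$-components either vanish by \eqref{triGamma_hom} or equal $1$, and thus cancel in the difference.

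For the induction step, we cycle through the four building blocks of Section~\ref{sec:cauchy}: the exponential formula for $\Gtau$, the recentering \eqref{recenter-} for $\Pi^{-(\tau)}$, the integration formula \eqref{representation} for $\Pi^{(\tau)}$, and the three-point argument for $\pi^{(\n)(\tau)}$. Each identity is applied to $\xi$ and $\xi'$ separately, subtracted, and telescoped via \eqref{telescope}; in every resulting term one factor is an increment already controlled by the induction hypothesis (or by the preliminary estimate when the increment is $\xi_\tau-\xi'_\tau$), while the remaining factors are bounded uniformly using the stationary estimates \eqref{Pi} and \eqref{Gamma}. H\"older's inequality distributes integrability and the moment bound \eqref{momentbound} handles convolutions. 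Finally, the seminorm $[\Pi^{(\tau)}_\beta]_\beta$ in \eqref{normPi2} reduces via parabolic Schauder to bounds on $\Pi^{-(\tau)}_\beta$ and $\Pi^{(\tau)}_\beta$ already established in the induction.

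The main obstacle is the bookkeeping of integrability exponents and spatial weights. Since $\Pi^{(\tau)}_\beta$ is a multilinear expression in $\xi_\tau$ of degree $[\beta]+1$, each telescoping produces a product whose evaluation via H\"older requires $p'$ of order $p([\beta]+1)$, and this loss compounds through the induction. Likewise, the weight $(1+|x|+|y|+|z|)^{2\epsilon}$ in the model norms must absorb the $(1+|\cdot|)^{2\epsilon}$ weights arising in each telescoped factor. Choosing the initial $\epsilon$ on the noise side to be a small fraction of the target $\epsilon$ on the model side, and $p'$ sufficiently large depending on $\beta$, handles both issues simultaneously; since $\beta$ is fixed and populated multi-indices $\beta'\preceq\beta$ are finite in number by Lemma~\ref{lem:ordering}, this is an admissible choice.
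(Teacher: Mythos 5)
Your overall architecture (induction in $\prec$, telescoping \eqref{telescope}, bounded factors from \eqref{Pi} and \eqref{Gamma}, integration via \eqref{representation}, three-point argument for the purely polynomial part of $\Gtau$) matches the paper's Steps 1--4 and part of Step 3, but there is a genuine gap at the heart of the induction: the treatment of $\Pi^{-(\tau)}_\beta$ for singular multi-indices $|\beta|<2$. In Section~\ref{sec:cauchy} (and in \cite{LOTT21}) these components are \emph{not} obtained from the recentering \eqref{recenter-} alone -- reconstruction only closes for $|\beta|>2$. For $|\beta|<2$ the Cauchy scheme estimates the Malliavin derivative $\delta\Pi^{-}$ and the expectation separately and then invokes the spectral gap inequality \eqref{sg}. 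That device is unavailable here: \eqref{sg} controls the fluctuation of a functional of a \emph{single} noise about its own mean, whereas you need to estimate the difference $\Pi^{(\tau)}[\xi]-\Pi^{(\tau)}[\tilde\xi]$ of models built from two different noises, which is not of that form. Your list of ``four building blocks'' therefore does not close the induction for $|\beta|<2$. Related to this, you never address the noise dependence of the renormalization constants: $\Pi^{-(\tau)}_\beta$ contains $c^{(\tau)}_\beta$, fixed by the BPHZ condition \eqref{bphz}, and the difference $|c^{(\tau)}_\beta-\tilde c^{(\tau)}_\beta|$ must be estimated as part of the continuity statement; this is exactly the content of \eqref{eh39} in the paper's Step~5 and has no counterpart in your plan.

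The paper's way out is to exploit that $\tau>0$ is fixed, i.e.~the qualitatively smooth setting: it works directly with the hierarchy \eqref{eh43}, estimating the products $\Pi^k\Delta\Pi$ pointwise. This is also the reason the seminorm \eqref{normPi2} (and the companion seminorm $[\cdot]^-_\beta$ measuring H\"older continuity of the unconvolved $\Pi^-$) appears in the norm at all -- it is an \emph{input} for multiplying $\Delta\Pi^{(\tau)}$ against $\Pi^{(\tau)}$ and for controlling $c^{(\tau)}$, not merely an output recovered by Schauder theory as you suggest. The price is that the constants degenerate as $\tau\to0$ (the $\lesssim_\tau$ in Steps 1, 4, 5, 6) and that one only obtains H\"older continuity with exponent $\epsilon/(2+\epsilon)$ (Step~6, via interpolation in the large-time regime), rather than the Lipschitz-type bound your scheme implicitly aims for. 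Without either the spectral-gap route (inapplicable to differences of two noises) or this smooth-hierarchy route (absent from your proposal, along with the counterterm estimate), the induction cannot pass through the singular multi-indices, so the argument as proposed does not go through.
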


One should expect Lemma~\ref{lem:continuity} to be true, 
since on the one hand, \eqref{normPi} and \eqref{normGamma}, 
which analogously to \eqref{Pi-} imply 
$\E^\frac{1}{p}|\Pi^-_{x\beta t}(y)|^p 
\leq (\sqrt[4]{t})^{\alpha-2-\epsilon} 
(\sqrt[4]{t}+|x-y|)^{|\beta|-\alpha} 
(1+\sqrt[4]{t}+|x|+|y|)^{2\epsilon}$, 
yield good enough estimates on small and large scales 
in order for the integral in the solution formula \eqref{representation} 
to converge at $t=0$ and $t=\infty$; 
on the other hand, 
\eqref{normPi2} yields enough small scale regularity 
in order for the products $\Pi^k\Delta\Pi$ in \eqref{defPi-components} of 
$\Pi^{-}$ to be well defined. 

We give a quantitative proof of Lemma~\ref{lem:continuity} 
establishing even H\"older continuity in Section~\ref{sec:proofs_continuity}, 
the arguments of which are an adaptation of 
the arguments of the proof of \cite[Remark~2.3]{LOTT21}, 
which establishes for $|\n|=2$
\begin{align}
|c^{(\tau)}_\beta| 
&\lesssim (\sqrt[4]{\tau})^{|\beta|-2} \, , \label{c_tau} \\
\E^\frac{1}{p}| \partial^\n \Pi^{(\tau)}_{x\beta}(y)|^p 
&\lesssim (\sqrt[4]{\tau})^{\alpha-2}(\sqrt[4]{\tau}+|x-y|)^{|\beta|-\alpha} \, , 
\label{Pi_tau} \\
\E^\frac{1}{p}| \partial^\n \Pi^{(\tau)}_{x\beta}(y)
- \partial^\n \Pi^{(\tau)}_{x\beta}(z)|^p 
&\lesssim (\sqrt[4]{\tau})^{-2} |y\-z|^\alpha 
(\sqrt[4]{\tau}\+|x\-y|\+|x\-z|)^{|\beta|-\alpha} \, . \label{Pi_Holder} 
\end{align}
One just has to ensure that the $\epsilon$-loss in the exponents 
does not cause any problems. 

With these two preliminary lemmas, 
we can give the proof of Theorem~\ref{thm2}.

\begin{proof}[Proof of Theorem~\ref{thm2}]
Let $(\E_n)_{n\in\N}$ be a sequence of ensembles satisfying Assumption~\ref{ass} uniformly in $n$, 
that converges weakly to an ensemble $\E$ which then automatically satisfies Assumption~\ref{ass}. 
It is convenient for the proof to work with random variables on a common probability space. 
We therefore appeal to Skorohod's representation theorem 
to obtain random variables on a common probability space, 
which we denote by $\xi_n,\xi$, 
with their corresponding law given by the ensembles $\E_n,\E$, 
and such that up to a subsequence that we do not relabel $\xi_n\to\xi$ almost surely on the space of tempered distributions. 
A little bit of care has to be taken here, since we apply Skorohod's representation theorem 
on the space of tempered distributions which is not a metric space.\footnote{
While this could be avoided, it has the advantage that the proofs of the three cases convergences in law resp.~in probability resp.~in $L^p$ are very similar.} 
A suitable version of this theorem can be found in \cite[Example~5.1.I]{BBK05}, 
and requires to check that the $\xi_n$ are tight and the laws of $\xi_n,\xi$ are Radon measures.
The former follows from the uniform moment bound 
$\E^\frac{1}{p}|(\xi_n)_t(x)|^p\lesssim (\sqrt[4]{t})^{\alpha-2}$, 
see \eqref{Pi-}, 
which is a consequence of the spectral gap assumption.
The latter is a consequence of the following: by Kolmogorov's criterion and again by the moment bound, 
the law of $\xi_n$ concentrates on a compact subset of $\S'$, 
namely a weighted and sufficiently negative H\"older space; 
as a Borel probability measure supported on a compact set, it is therefore a Radon measure. 

Recall from Lemma~\ref{lem:convergence}, that the almost sure convergence 
implies $\xi_n\to\xi$ with respect to $\|\cdot\|_{\alpha-2,\epsilon}$ 
for every $\epsilon>0$. 
Furthermore, the map $\xi\mapsto(\Pi^{(\tau)}_\beta,\Gtau_\beta)$ is continuous for fixed $\tau>0$ by Lemma~\ref{lem:continuity}, 
provided the space of $\xi$ is equipped with $\|\cdot\|_{\alpha-2,\epsilon}$ for $\epsilon>0$ small enough, 
and the space of $(\Pi^{(\tau)}_\beta,\Gtau_\beta)$ is equipped with $\|\cdot\|_\beta$ defined in \eqref{annealed_model}. 
Note, that the norm $\|\cdot\|_\beta$ satisfies by definition 
\begin{align}
\E^\frac{1}{p}|\Pi_{x\beta}(y)|^p 
&\leq |x-y|^{|\beta|-\epsilon} (1+|x|+|y|)^{2\epsilon} \, 
\|(\Pi,\G)\|_\beta \, , \label{eh29}\\
\E^\frac{1}{p}|(\G_{xy})_\beta^\gamma|^p 
&\leq |x-y|^{|\beta|-|\gamma|-\epsilon} (1+|x|+|y|)^{2\epsilon} \, 
\|(\Pi,\G)\|_\beta \, .
\end{align}
These preliminary remarks allow for the conclusion as we argue now.
Denoting the models associated to $\xi$, $\xi_\tau$, $\xi_n$ and $(\xi_n)_\tau$ by 
$\Pi$, $\Pi^{(\tau)}$, $\Pi^{(n)}$ and $\Pi^{(n)(\tau)}$, respectively, we have 
\begin{align}
\E^\frac{1}{p}| \Pi^{(n)}_{x\beta}(y) - \Pi_{x\beta}(y)|^p 
&\leq \E^\frac{1}{p}| \Pi^{(n)}_{x\beta}(y) - \Pi^{(n)(\tau)}_{x\beta}(y)|^p \\
&\,+ \E^\frac{1}{p}| \Pi^{(n)(\tau)}_{x\beta}(y) - \Pi^{(\tau)}_{x\beta}(y)|^p 
+ \E^\frac{1}{p}| \Pi^{(\tau)}_{x\beta}(y) - \Pi_{x\beta}(y)|^p \, .
\end{align}
Since the Cauchy property \eqref{Pi_cauchy} was uniform in the class of ensembles satisfying Assumption~\ref{ass}, 
for given $\eta>0$ there exists $\tau>0$ such that the first and the third right-hand side contribution are bounded by 
$\eta \, |x-y|^{|\beta|-\epsilon}$, uniformly in $n$. 
The second right-hand side term is by \eqref{eh29}
bounded by $|x-y|^{|\beta|-\epsilon}(1+|x|+|y|)^{2\epsilon} \|(\Pi^{(n)(\tau)}-\Pi^{(\tau)}, \Gamma^{*(n)(\tau)}-\Gtau)\|_\beta$, 
which for $n$ large enough and fixed $\tau>0$ is dominated by $\eta\,|x-y|^{|\beta|-\epsilon}(1+|x|+|y|)^{2\epsilon}$ due to continuity. 
Altogether we obtain
\begin{equation}
\lim_{n\to\infty} \sup_{x\neq y} 
|x-y|^{-|\beta|+\epsilon} (1+|x|+|y|)^{-2\epsilon} \, \E^\frac{1}{p}| \Pi^{(n)}_{x\beta}(y) - \Pi_{x\beta}(y)|^p = 0 \, . 
\end{equation}
The same game can be played with $\G$, to obtain
\begin{equation}
\lim_{n\to\infty} \sup_{x\neq y} |x-y|^{-|\beta|+|\gamma|+\epsilon} (1+|x|+|y|)^{-2\epsilon} \, 
\E^\frac{1}{p}| (\Gamma^{*(n)}_{xy})_\beta^\gamma - (\G_{xy})_\beta^\gamma|^p = 0 \, . 
\end{equation}
We turn this annealed convergence by Kolmogorov's criterion into a quenched convergence result, 
possibly along a further subsequence. 
A suitable version of Kolmogorov's criterion can be found in \cite[Proposition~B.3]{HS23}, 
which we apply here with $F_x=|z|^{-|\beta|+\epsilon}(\Pi^{(n)}_{x\beta}(x+z)-\Pi_{x\beta}(x+z))$, 
and where we note that the necessary continuity follows from Lemma~\ref{lem:consequence}~iii).
This yields for $x\in\R^{1+d}$ and $\kappa>\epsilon$
\begin{equation}
\E^\frac{1}{p}\big| \sup_{x'\neq y'\in B_1(x)} |x'-y'|^{-|\beta|+\kappa} \big(\Pi^{(n)}_{x'\beta}(y') - \Pi_{x'\beta}(y') 
\big) \big|^p \lesssim (1+|x|)^{2\epsilon} o_n(1) \, . 
\end{equation}
Noting that the supremum over $x\in\R^{1+d}$ in the definition \eqref{normModelPi} of $\vertiii{\Pi}_\beta$ 
can be replaced by $x\in (1+d)^{-1/2}\Z^{1+d}$, 
and using $\E(\sup_{x\in\Z^{1+d}}|f(x)|)^p\leq\sum_{x\in\Z^{1+d}}\E|f(x)|^p$, 
which is a consequence of the obvious 
$\sup_{x\in\Z^{1+d}} |f(x)|\leq (\sum_{x\in\Z^{1+d}} |f(x)|^p)^{1/p}$, we get
\begin{equation}
\E\vertiii{\Pi^{(n)}-\Pi}^p_\beta 
\lesssim \sum_{x\in(1+d)^{-1/2}\Z^{1+d}} 
\frac{(1+|x|)^{p2\epsilon}}{(1+|x|)^{p2\kappa}} \, o_n(1) \, .
\end{equation}
By $\kappa>\epsilon$ we can choose $p<\infty$ large enough 
such that this series is finite. 
The analogous result can be obtained for $\G$, hence 
\begin{align}
\lim_{n\to\infty} \E^\frac{1}{p} 
\vertiii{\Pi^{(n)}-\Pi}^p_\beta&=0 \, , \\
\lim_{n\to\infty} \E^\frac{1}{p} 
\vertiii{\Gamma^{*(n)} - \G }^p_\beta &= 0 \, ,
\end{align}
which in turn yields the desired convergence in law. 
That we had to pass to a subsequence for Skorohod's theorem is not an issue, 
since for any subsequence of $(\Pi^{(n)},\Gamma^{*(n)})$ we can start the proof again 
with the corresponding subsequence of $\E_n$, 
and obtain convergence in law of a further subsequence of $(\Pi^{(n)},\Gamma^{*(n)})$ to $(\Pi,\G)$, 
establishing convergence in law of the whole sequence $(\Pi^{(n)},\Gamma^{*(n)})$. 

To argue in favor of the same statement, with convergence in law 
replaced by convergence in probability, we proceed similarly. 
Instead of appealing to Skorohod's representation theorem, 
we pass to a subsequence to obtain almost sure convergence of $\xi_{n_k}$ to $\xi$ in the space of tempered distributions. 
From here, we follow the above proof to obtain $L^p$ convergence of 
$(\Pi^{(n_k)},\Gamma^{*(n_k)})$ to $(\Pi,\G)$, 
which implies almost sure convergence of a further subsequence. 
For any other subsequence of $(\Pi^{(n)},\Gamma^{*(n)})$ we 
can repeat this argument with the corresponding subsequence of $\xi_n$, 
which yields almost sure convergence of a further subsequence of $(\Pi^{(n)},\Gamma^{*(n)})$ to $(\Pi,\G)$. 
By the subsequence criterion \cite[Lemma~4.2]{Kal02}, 
this implies convergence in probability of the whole sequence 
$(\Pi^{(n)},\Gamma^{*(n)})$ to $(\Pi,\G)$. 

Assuming $L^p$ convergence for all $p<\infty$, 
we infer directly from Lemma~\ref{lem:convergence} 
the convergence $\xi_n\to\xi$ with respect to the annealed norm defined in \eqref{annealed_xi}, 
and the desired $L^p$ convergence of $(\Pi^{(n)},\Gamma^{*(n)})$ follows as above. 
\end{proof}

%%%%%%%%%%%%%%%%%%%%%%%%%%%%%%%%%%%%%%%%%%%%%%%%%%%%%%
\subsection{Proof of Lemma~\ref{annealed_xi} and Lemma~\ref{lem:continuity}}\label{sec:proofs_continuity}

\begin{proof}[Proof of Lemma~\ref{annealed_xi}]
Let $\eta>0$, we will show that $\lim_{n\to\infty}\|\xi_n-\xi\|_{\alpha-2,\epsilon}<\eta$. 
Recall from \eqref{Pi-} that the spectral gap inequality implies $\E^\frac{1}{p}|\xi_t(x)|^p \lesssim (\sqrt[4]{t})^{\alpha-2}$, 
where the implicit constant depends on $p$, but not on $t>0$ or $x\in\R^{1+d}$. 
The same holds by assumption for $\xi_n$, where the implicit constant does not depend on $n$ either. 
Therefore, for $f_n:(0,\infty)\times\R^{1+d}\to\R$ defined by 
\begin{equation}
f_n(t,x):=(\sqrt[4]{t})^{2-\alpha+\epsilon}(1+\sqrt[4]{t}+|x|)^{-2\epsilon} \, 
\E^\frac{1}{p}|(\xi_n)_t(x)-\xi_t(x)|^p \, , 
\end{equation}
we obtain by the triangle inequality
\begin{equation}\label{fn_bounded}
f_n(t,x) \lesssim 
\frac{(\sqrt[4]{t})^\epsilon}{(1+\sqrt[4]{t}+|x|)^{2\epsilon}} \, .
\end{equation}
Hence we can find a compact set $K\subset(0,\infty)\times\R^{1+d}$ 
such that the supremum of $f_n$ on the complement of $K$ is bounded by $\eta$, uniformly in $n$. 
Notice that $f_n$ is chosen such that 
$\|\xi_n-\xi\|_{\alpha-2,\epsilon}=\sup_{t,x}f_n(t,x)$. 
To conclude, it therefore remains to establish 
\begin{equation}
\lim_{n\to\infty} \sup_{(t,x)\in K} f_n(t,x) = 0 \, .
\end{equation}
This in turn, is a consequence of Arzel\`a Ascoli, 
which yields a uniformly convergent subsequence $f_{n_k}$. 
If $\xi_n\to\xi$ almost surely on $\S'$, 
then the pointwise limit of $f_n(t,x)$ is $0$ by the dominated convergence theorem. 
If $\xi_n\to\xi$ w.r.t.~$\E^\frac{1}{p}|\cdot|^p$ on $\S'$, 
then again the pointwise limit of $f_n(t,x)$ is $0$.
In either case, we conclude that the uniform limit of $f_{n_k}$ has to be $0$, 
and as a consequence the whole sequence $f_n$ converges uniformly to $0$. 
It remains to check the assumptions of the Arzel\`a Ascoli theorem. 
By \eqref{fn_bounded}, the sequence $f_n$ is uniformly bounded on the compact set $K$.
For equicontinuity, we first note that by the triangle inequality
\begin{align}
|f_n(t,x)-f_n(t',x')| &\leq
\E^\frac{1}{p}\big| (\sqrt[4]{t})^{2-\alpha+\epsilon}(1+\sqrt[4]{t}+|x|)^{-2\epsilon} (\xi_n)_t(x) \\
&\, - (\sqrt[4]{t'})^{2-\alpha+\epsilon}(1+\sqrt[4]{t'}+|x'|)^{-2\epsilon} (\xi_n)_{t'}(x') \big|^p \\
&\, + \E^\frac{1}{p}\big| (\sqrt[4]{t})^{2-\alpha+\epsilon}(1+\sqrt[4]{t}+|x|)^{-2\epsilon} \xi_t(x) \\
&\, - (\sqrt[4]{t'})^{2-\alpha+\epsilon}(1+\sqrt[4]{t'}+|x'|)^{-2\epsilon} \xi_{t'}(x') \big|^p \, .
\end{align}
The first two right-hand side terms are further bounded by 
\begin{align}
&\big|(\sqrt[4]{t})^{2-\alpha+\epsilon}(1+\sqrt[4]{t}+|x|)^{-2\epsilon}
-(\sqrt[4]{t'})^{2-\alpha+\epsilon}(1+\sqrt[4]{t'}+|x'|)^{-2\epsilon} \big|
\, \E^\frac{1}{p}| (\xi_n)_t(x)|^p \\
& + (\sqrt[4]{t'})^{2-\alpha+\epsilon}(1+\sqrt[4]{t'}+|x'|)^{-2\epsilon} \, 
\E^\frac{1}{p}\big| (\xi_n)_t(x)-(\xi_n)_{t'}(x') \big|^p \, .
\end{align}
The first term satisfies the desired continuity, 
since $t\mapsto(\sqrt[4]{t})^{2-\alpha+\epsilon}(1+\sqrt[4]{t}+|x|)^{-2\epsilon}$ is (uniformly) continuous on $K$, 
and since $\E^\frac{1}{p}|(\xi_n)_t(x)|^p\lesssim (\sqrt[4]{t})^{\alpha-2}$ is bounded uniformly in $n$ and $(t,x)\in K$. 
For the second term, we note that $(\sqrt[4]{t'})^{2-\alpha+\epsilon}(1+\sqrt[4]{t'}+|x'|)^{-2\epsilon}$ is uniformly bounded for $(t',x')\in K$, 
and it remains to establish equicontinuity of 
$\E^\frac{1}{p} | (\xi_n)_t(x)-(\xi_n)_{t'}(x') |^p$. 
This is a consequence of 
\begin{equation}
\E^\frac{1}{p} | (\xi_n)_t(x)-(\xi_n)_{t}(x') |^p 
\lesssim (\sqrt[4]{t})^{-2} |x-x'|^\alpha \, ,
\end{equation}
which was established in \cite[(6.3)]{LOTT21},
together with
\begin{equation}
\E^\frac{1}{p} | (\xi_n)_t(x')-(\xi_n)_{t'}(x') |^p 
\leq |t-t'| \, \E^\frac{1}{p}| (\partial_0^2-\Delta^2)(\xi_n)_{\bar t}(x')|^p \lesssim |t-t'| \, (\sqrt[4]{\bar t}\,)^{\alpha-6} 
\end{equation}
for some $\bar t$ between $t$ and $t'$, 
which is a consequence of the mean value theorem, the semigroup property \eqref{semigroup} and the moment bound \eqref{momentbound}. 
The terms involving $\xi$ instead of $\xi_n$ can be treated in the same way. 
\end{proof}

\begin{proof}[Proof of Lemma~\ref{lem:continuity}]
Let $(\Pi^{(\tau)},\Gtau)$ and $(\widetilde\Pi^{(\tau)},\tGtau)$ be the models associated to $\xi$ and $\tilde\xi$. 
We will show that $\|(\Pi^{(\tau)},\Gtau)-(\widetilde\Pi^{(\tau)},\tGtau)\|_\beta$ 
is arbitrarily small, provided $\|\xi-\tilde\xi\|_{\alpha-2,\epsilon}$ is sufficiently small. 
Since the only norm we put on $\xi,\tilde\xi$ is $\|\cdot\|_{\alpha-2,\epsilon}$, 
we will for the ease of notation simply write $\|\xi\|$ instead. 
The proof proceeds inductively, for which it is convenient to introduce the norms
\begin{align}
\|\Pi\|^-_\beta 
&:= \sup_{t>0}\sup_{x,y} \frac{\E^\frac{1}{p}|\Pi^-_{x\beta t}(y)|^p}{(\sqrt[4]{t})^{\alpha-2-\epsilon} 
(\sqrt[4]{t}+|x-y|)^{|\beta|-\alpha} (1+\sqrt[4]{t}+|x|+|y|)^{2\epsilon}} \, , \\
[\Pi]^-_{\beta} 
&:= \sup_{x,y\neq z} \frac{\E^\frac{1}{p}|\Pi^-_{x\beta}(y)-\Pi^-_{x\beta}(z)|^p}{|y-z|^{\alpha-\epsilon} 
(1+|x-y|+|x-z|)^{|\beta|-\alpha} 
(1+|x|+|y|+|z|)^{2\epsilon}} \, .
\end{align}
With this notation at hand, 
we will show in Step~1 that the linear map $\xi\mapsto\Pi^{-(\tau)}_0=\xi_\tau$ is continuous, 
i.e.~we establish 
\begin{equation}\label{step1}
\|\Pi^{(\tau)}\|^-_0 
+ [\Pi^{(\tau)}]^-_{0}
\lesssim_\tau \|\xi\| \, , \tag{S1}
\end{equation}
where we want to emphasize that the implicit constant in $\lesssim_\tau$ does depend on $\tau$ (and blows up as $\tau\to0$). 
In Step~2 we establish one part of the continuity of the map $\Pi^{-(\tau)}_\beta\mapsto\Pi^{(\tau)}_\beta$, 
namely 
\begin{equation}\label{step2}
\|\Pi^{(\tau)} - \widetilde\Pi^{(\tau)}\|_\beta 
\lesssim \|\Pi^{(\tau)}-\widetilde\Pi^{(\tau)}\|^-_\beta \, . \tag{S2}
\end{equation}
In Step~3 we will prove that 
\begin{equation}\label{step3}
\|\Gtau-\tGtau\|_\beta\lesssim \|\Pi^{(\tau)}-\widetilde\Pi^{(\tau)}\|_{\preceq\beta} \, , \tag{S3}
\end{equation}
where $\|\cdot\|_{\preceq\beta}:=\sum_{\beta'\preceq\beta}\|\cdot\|_{\beta'}$. 
We will use this in Step~4 to show 
\begin{equation}\label{step4}
[\Pi^{(\tau)}-\widetilde\Pi^{(\tau)} ]_{\beta} 
\lesssim_\tau \|\Pi^{(\tau)}-\widetilde\Pi^{(\tau)}\|^-_{\preceq\beta} 
+ [\Pi^{(\tau)}-\widetilde\Pi^{(\tau)}]^-_{\preceq\beta} \, , \tag{S4} 
\end{equation}
and in Step~5 we establish for $\beta\neq0$ 
\begin{equation}\label{step5}
[\Pi^{(\tau)}-\widetilde\Pi^{(\tau)}]^-_{\beta} 
\lesssim_\tau \|\Pi^{(\tau)}-\widetilde\Pi^{(\tau)}\|_{\prec\beta} 
+ [\Pi^{(\tau)}-\widetilde\Pi^{(\tau)}]_{\prec\beta} 
+ \|\Pi^{(\tau)}-\widetilde\Pi^{(\tau)}\|^-_{\prec\beta} \, . \tag{S5} 
\end{equation}
Finally, in Step~6 we show for $\beta\neq0$ that 
\begin{align}
\|\Pi^{(\tau)}\-\widetilde\Pi^{(\tau)}\|^-_\beta 
\lesssim_\tau \big(
&\|\Pi^{(\tau)}\-\widetilde\Pi^{(\tau)}\|_{\prec\beta} 
\+ [\Pi^{(\tau)}\-\widetilde\Pi^{(\tau)}]_{\prec\beta} 
\+ \|\Pi^{(\tau)}\-\widetilde\Pi^{(\tau)}\|^-_{\prec\beta}
\big)^{\epsilon/(2+\epsilon)} \\
+ &\|\Pi^{(\tau)}\-\widetilde\Pi^{(\tau)}\|_{\prec\beta} 
\+ [\Pi^{(\tau)}\-\widetilde\Pi^{(\tau)}]_{\prec\beta} 
\+ \|\Pi^{(\tau)}\-\widetilde\Pi^{(\tau)}\|^-_{\prec\beta} \, ,
\tag{S6} \label{step6}
\end{align}
which finishes the argument. 

Note that the reason for $p'$ in the definition \eqref{annealed_xi} 
of $\|\xi\|$ 
being different from $p$ in the definition \eqref{annealed_model} 
of $\|(\Pi,\G)\|_\beta$, 
is that along the way we appeal to H\"older's inequality. 
Due to the triangularity \eqref{triProduct_prec}, \eqref{triGamma_prec}, \eqref{tridGamma_prec} and the coercitivity of $\prec$, 
we will do so a finite number of times to reach the multi-index $\beta$, 
and hence it is possible to choose $p'<\infty$ large enough. 
We will be cavalier in the following and not distinguish 
in the notation between $p'$ and $p$, 
and implicitly always assume we made the appropriate change in exponents. 
For $\epsilon$, the following choice will turn out to be suitable. 
We choose $\epsilon>0$ satisfying \eqref{epsilon1} and  
\begin{equation}\label{epsilon3}
\epsilon<n-|\beta'|, \quad
\text{for all } \beta' \text{ and } n\in\N \text{ satisfying } |\beta'|\leq|\beta| \text{ and } |\beta'|<n \, ,
\end{equation}
which is an admissible choice since the set of homogeneities is locally finite. 

%%%%%%%%%%%%%% STEP 1 %%%%%%%%%%%%%%
{\bf Step 1 \rm (Proof of \eqref{step1})\bf.} 
By the definition of $\|\xi\|$ in \eqref{annealed_xi}, 
we see that $\E^\frac{1}{p}| \xi_{\tau+t}(x) |^p \leq (\sqrt[4]{t+\tau})^{\alpha-2-\epsilon} (1+\sqrt[4]{t+\tau}+|x|)^{2\epsilon} \|\xi\|$, 
which since $\alpha-2-\epsilon<0$ and $\tau\lesssim 1$ yields
$\|\Pi^{(\tau)}\|^-_0\lesssim\|\xi\|$. 
To argue for $[\Pi^{(\tau)}]^-_{0}\lesssim_\tau \|\xi\|$ we use the triangle inequality to obtain 
\begin{align}
\E^\frac{1}{p}| \xi_\tau(y)-\xi_\tau(z)|^p
&\leq \E^\frac{1}{p}| \xi_\tau(y)-\xi_{\tau+t}(y)|^p \\
&\,+ \E^\frac{1}{p}| \xi_{\tau+t}(y)-\xi_{\tau+t}(z)|^p 
+\E^\frac{1}{p}| \xi_{\tau+t}(z)-\xi_\tau(z)|^p \, .
\end{align}
The first right-hand side term can be rewritten as 
$\E^\frac{1}{p}|\int_0^t ds\,\partial_s\xi_{\tau+s}(y)|^p$, 
which by the defining property \eqref{psi} of $\psi$, 
the moment bound \eqref{momentbound} and the definition of $\|\xi\|$ can be estimated by  
\begin{equation}
\int_0^t ds\, (\sqrt[4]{\tau+s})^{\alpha-2-\epsilon-4}(1+\sqrt[4]{\tau+s}+|y|)^{2\epsilon} \|\xi\| \, .
\end{equation}
Since $\alpha-\epsilon>0$ by \eqref{epsilon1} and $\tau\lesssim1$, this is further bounded by 
\begin{equation}
(\sqrt[4]{\tau})^{-2} (\sqrt[4]{t})^{\alpha-\epsilon} (1+\sqrt[4]{t}+|y|)^{2\epsilon}\|\xi\| \, .
\end{equation}
The same bound holds for the third right-hand side term with $y$ replaced by $z$. 
The second right-hand side term we bound using the mean value theorem and the definition of $\|\xi\|$ by
\begin{equation}
\big(|y-z| (\sqrt[4]{\tau+t})^{\alpha-2-\epsilon-1}
+ |y-z|^2 (\sqrt[4]{\tau+t})^{\alpha-2-\epsilon-2}\big) (1+\sqrt[4]{\tau+t}+|y|+|z|)^{2\epsilon} \|\xi\| \, , 
\end{equation}
which since $\alpha-\epsilon-1<0$ and $\tau\lesssim1$ is further estimated by 
\begin{equation}
(\sqrt[4]{\tau})^{-2} (|y-z| (\sqrt[4]{t})^{\alpha-\epsilon-1}
+ |y-z|^2 (\sqrt[4]{t})^{\alpha-\epsilon-2}) 
(1+\sqrt[4]{t}+|y|+|z|)^{2\epsilon} \|\xi\| \, .
\end{equation}
Choosing $t=|y-z|^4$, we obtain altogether 
\begin{equation}
\E^\frac{1}{p}| \xi_\tau(y)-\xi_\tau(z)|^p
\lesssim (\sqrt[4]{\tau})^{-2} |y-z|^{\alpha-\epsilon} (1+|y|+|z|)^{2\epsilon}\|\xi\| \, ,
\end{equation}
which yields the desired $[\Pi^{(\tau)}]^-_{0}\lesssim_\tau \|\xi\|$.

%%%%%%%%%%%%%% STEP 2 %%%%%%%%%%%%%%
{\bf Step 2 \rm (Proof of \eqref{step2})\bf.}
We use the representation formula \eqref{representation} of $\Pi^{(\tau)}$. 
By linearity it is enough to show 
$\|\Pi^{(\tau)} \|_\beta\leq\|\Pi^{(\tau)}\|^-_\beta$. 
The proof is analogous to the one of Lemma~\ref{int1}. 
In the near field $0\leq t\leq|x-y|^4$ we split the contributions from $1$ and ${\rm T}_x^{|\beta|}$, and bound the former by the definition of $\|\Pi^{(\tau)}\|_\beta^-$ by 
\begin{equation}
\int_0^{|x-y|^4} dt\, (\sqrt[4]{t})^{\alpha-2-\epsilon-2} 
(\sqrt[4]{t}+|x-y|)^{|\beta|-\alpha} 
(1+\sqrt[4]{t}+|x|+|y|)^{2\epsilon} \|\Pi^{(\tau)}\|_\beta^- \, ,
\end{equation}
which since $\alpha-\epsilon>0$ is bounded by 
$|x-y|^{|\beta|-\epsilon}(1+|x|+|y|)^{2\epsilon} \|\Pi^{(\tau)}\|^-_\beta$. 
The contribution from ${\rm T}_x^{|\beta|}$ is again by the definition of $\|\Pi^{(\tau)}\|_\beta^-$ bounded by 
\begin{equation}
\int_0^{|x-y|^4} dt\, \sum_{|\n|<|\beta|} (\sqrt[4]{t})^{\alpha-2-\epsilon-2-|\n|} (\sqrt[4]{t})^{|\beta|-\alpha}
(1+\sqrt[4]{t}+|x|)^{2\epsilon}  \|\Pi^{(\tau)}\|_\beta^- |x-y|^{|\n|} \, . 
\end{equation}
By the choice of $\epsilon$ in \eqref{epsilon1}, $|\beta|>|\n|$ implies $|\beta|-|\n|-\epsilon>0$, hence also this contribution can be further estimated by $|x-y|^{|\beta|-\epsilon}(1+|x|+|y|)^{2\epsilon} \|\Pi^{(\tau)}\|^-_\beta$. 
In the far field $|x-y|^4\leq t<\infty$ we use the Taylor remainder to estimate this contribution by 
\begin{equation}
\int_{|x-y|^4}^\infty dt \sum_{\substack{|\n|\geq|\beta|\\n_0+\dots+n_d<|\beta|+1}} 
(\sqrt[4]{t})^{\alpha-2-\epsilon-2-|\n|} (\sqrt[4]{t})^{|\beta|-\alpha}
(1+\sqrt[4]{t}+|x|)^{2\epsilon} \|\Pi^{(\tau)}\|_\beta^- |x-y|^{|\n|} \, . 
\end{equation}
Since $\beta$ is not purely polynomial, 
$|\beta|\leq|\n|$ implies $|\beta|<|\n|$, 
and the choice of $\epsilon$ in \eqref{epsilon3} 
implies $|\beta|-|\n|+\epsilon<0$. 
Hence also this contribution is further estimated by 
$|x-y|^{|\beta|-\epsilon}(1+|x|+|y|)^{2\epsilon} \|\Pi^{(\tau)}\|^-_\beta$, 
which finishes the proof of $\|\Pi^{(\tau)} \|_\beta\lesssim \|\Pi^{(\tau)}\|^-_\beta$. 

%%%%%%%%%%%%%% STEP 3 %%%%%%%%%%%%%%
{\bf Step 3 \rm (Proof of \eqref{step3})\bf.}
As a first ingredient we prove 
\begin{equation}\label{eh30}
\| (\Gtau-\tGtau)P\|_\beta
\lesssim \|\Gtau-\tGtau\|_{\prec\beta} \, ,
\end{equation}
for which we have to show that for $\gamma$ not purely polynomial
\begin{equation}
\E^\frac{1}{p} | (\Gtau_{xy})_\beta^\gamma - (\tGtau_{xy})_\beta^\gamma|^p 
\lesssim |x-y|^{|\beta|-|\gamma|-\epsilon} (1+|x|+|y|)^{2\epsilon} \|\Gtau-\tGtau\|_{\prec\beta} \, . 
\end{equation}
The proof is analogous to the one of Lemma~\ref{alg1}. 
After appealing to the exponential formula \eqref{exponentialformula} and telescoping \eqref{telescope}, 
we observe that by $\pi^{(\n)(\tau)}_{xy\beta'} = (\Gtau_{xy}-\id)_{\beta'}^{e_\n}$, see \eqref{Gamma_zn}, 
we have to estimate linear combinations of terms of the form 
\begin{equation}
(\tGtau_{xy}-\id)_{\beta_1}^{e_{\n_1}} \cdots (\tGtau_{xy}-\id)_{\beta_{i-1}}^{e_{\n_{i-1}}}
(\Gtau_{xy}-\tGtau_{xy})_{\beta_i}^{e_{\n_i}}
(\Gtau_{xy}-\id)_{\beta_{i+1}}^{e_{\n_{i+1}}} \cdots (\Gtau_{xy}-\id)_{\beta_k}^{e_{\n_k}}
\end{equation}
for $\beta_1,\dots,\beta_k\prec\beta$. 
By H\"older's inequality we can use on the first $i-1$ and the last $k-1$ terms the 
bound \eqref{eh14} on $\Gtau$, 
which was uniform in the class of ensembles satisfying Assumption~\ref{ass}, 
while on the increment we appeal to the definition of $\|\Gtau\|_{\beta_i}$ to obtain an estimate by 
\begin{equation}
|x-y|^{|\beta_1|+\cdots+|\beta_k|-|\n_1|-\cdots-|\n_k|-\epsilon} (1+|x|+|y|)^{2\epsilon} \|\Gtau-\tGtau\|_{\beta_i} \, . 
\end{equation}
As in the proof of Lemma~\ref{alg1}, the exponent of $|x-y|$ equals $|\beta|-|\gamma|-\epsilon$, 
so that we obtain \eqref{eh30}. 

We turn to purely polynomial multi-indices $\gamma$, where the argument is based on \cite[(4.16)]{LOTT21}, which reads
\begin{equation}
\sum_\n (\Gtau_{xy}-\id)_\beta^{e_\n} (z-y)^\n 
= \Pi^{(\tau)}_x(z) - \Pi^{(\tau)}_y(z) - (\Gtau_{xy}-\id) P \Pi^{(\tau)}_y(z) \, . 
\end{equation}
As in the proof of \cite[Proposition~4.4]{LOTT21}, by using in addition 
telescoping \eqref{telescope}, 
we can argue in favor of 
\begin{align}
\E^\frac{1}{p}|(\Gtau_{xy}-\tGtau_{xy})_\beta^{e_\n} |^p
&\lesssim |x-y|^{|\beta|-|\n|} (1+|x|+|y|)^{2\epsilon} \\
&\ \cdot \big( \| \Pi^{(\tau)} - \widetilde\Pi^{(\tau)} \|_{\preceq\beta} 
+ \|(\Gtau_{xy}-\tGtau_{xy})P\|_\beta\big) \, ,
\end{align}
which together with the already established \eqref{eh30} yields \eqref{step3}. 

%%%%%%%%%%%%%% STEP 4 %%%%%%%%%%%%%%
{\bf Step 4 \rm (Proof of \eqref{step4})\bf.}
We will establish bounds of increments of $\partial_1^2\Pi^{(\tau)}_{x\beta}$, 
the remaining derivatives can be bounded similarly. 
As a first ingredient, we prove for all $\n\neq\0$
\begin{align}
&\E^\frac{1}{p}| \partial^\n \partial_1^2 \Pi^{(\tau)}_{x\beta t}(x)|^p \\ 
&\lesssim (\sqrt[4]{t})^{\alpha-\epsilon-|\n|} 
(1+\sqrt[4]{t})^{|\beta|-\alpha}
(1+\sqrt[4]{t}+|x|)^{2\epsilon} 
\big( \|\Pi^{(\tau)}\|^-_{\beta} + [\Pi^{(\tau)}]^-_{\beta} \big) \, . \label{eh31}
\end{align}
Next, we note that \eqref{Pi_Holder} yields by the moment bound \eqref{momentbound} 
\begin{align}
\E^\frac{1}{p} | \partial^\n \partial_1^2 \Pi^{(\tau)}_{x\beta t}(x)|^p
&\leq \int_{\R^{1+d}} dy\, |\partial^\n\psi_t(x-y)| \, \E^\frac{1}{p}| \partial_1^2\Pi^{(\tau)}_{x\beta}(y) 
- \partial_1^2\Pi^{(\tau)}_{x\beta}(x) |^p \\
&\lesssim_\tau \int_{\R^{1+d}} dy\, |\partial^\n\psi_t(x-y)| \, (1+|x-y|)^{|\beta|-\alpha} |x-y|^\alpha \\
&\lesssim_\tau (\sqrt[4]{t})^{\alpha-|\n|} (1+\sqrt[4]{t})^{|\beta|-\alpha} \, . \label{eh32}
\end{align}
By using $\partial_1^2\Gtau_{xy}\Pi^{(\tau)}_y = \partial_1^2\Pi^{(\tau)}_x$, see \eqref{recenter}, 
we observe 
\begin{align}
&\E^\frac{1}{p} \big| \partial^\n \partial_1^2 \Pi^{(\tau)}_{x\beta t}(y) 
- \partial^\n \partial_1^2 \widetilde\Pi^{(\tau)}_{x\beta t}(y)\big|^p  \\
&\leq \sum_{\gamma\preceq\beta} \E^\frac{1}{p}\big| (\Gtau_{xy})_\beta^\gamma \big( \partial^\n\partial_1^2\Pi_{y\gamma t}(y) 
- \partial^\n\partial_1^2\widetilde\Pi^{(\tau)}_{y\gamma t}(y)\big) \big|^p \\
&\ + \sum_{\gamma\preceq\beta} \E^\frac{1}{p}\big| \big(\Gtau_{xy}-\tGtau_{xy}\big)_\beta^\gamma \, 
\partial^\n\partial_1^2\widetilde\Pi^{(\tau)}_{y\gamma t}(y) \big|^p \, .
\end{align}
After applying H\"older's inequality, 
we appeal for the first right-hand side term to \eqref{eh14} and 
to \eqref{eh31} with $\Pi^{(\tau)}$ replaced by $\Pi^{(\tau)}-\widetilde\Pi^{(\tau)}$, 
and for the second right-hand side term to \eqref{step3} and \eqref{eh32}, 
to obtain an estimate by 
\begin{align}
&\sum_{\gamma\preceq\beta} |x-y|^{|\beta|-|\gamma|} 
(\sqrt[4]{t})^{\alpha-\epsilon-|\n|} 
(1+\sqrt[4]{t})^{|\gamma|-\alpha}
(1+\sqrt[4]{t}+|y|)^{2\epsilon} \\[-2ex] 
&\hspace{5ex} \cdot \big( \|\Pi^{(\tau)}-\widetilde\Pi^{(\tau)}\|^-_{\gamma} + [\Pi^{(\tau)}-\widetilde\Pi^{(\tau)}]^-_{\gamma} \big) \\
&+ \sum_{\gamma\preceq\beta} |x-y|^{|\beta|-|\gamma|-\epsilon}(1+|x|+|y|)^{2\epsilon} 
\|\Pi^{(\tau)}-\widetilde\Pi^{(\tau)}\|_{\preceq\beta} \,
(\sqrt[4]{t})^{\alpha-|\n|} (1+\sqrt[4]{t})^{|\gamma|-\alpha} \, ,
\end{align}
which yields
\begin{align}
&\E^\frac{1}{p} \big| \partial^\n \partial_1^2 \Pi^{(\tau)}_{x\beta t}(y) 
- \partial^\n \partial_1^2 \widetilde\Pi^{(\tau)}_{x\beta t}(y)\big|^p \\
&\lesssim_\tau (\sqrt[4]{t})^{\alpha-\epsilon-|\n|} 
(1+\sqrt[4]{t}+|x-y|)^{|\beta|-\alpha} (1+\sqrt[4]{t}+|x|+|y|)^{2\epsilon} \\ 
&\ \cdot \big( \|\Pi^{(\tau)}-\widetilde\Pi^{(\tau)}\|^-_{\preceq\beta} 
+ [\Pi^{(\tau)}-\widetilde\Pi^{(\tau)}]^-_{\preceq\beta} \big) \, .
\end{align}
As in \cite[Proof of Remark~2.3]{LOTT21}, 
this weak bound can be upgraded to a pointwise bound, yielding \eqref{step4}. 

It remains to provide the argument for \eqref{eh31}. 
For this, we note that the in Step~2 established $\|\Pi^{(\tau)} \|_\beta\lesssim \|\Pi^{(\tau)}\|^-_\beta$ 
implies by the moment bound \eqref{momentbound} 
\begin{equation}\label{eh33}
\E^\frac{1}{p}| \partial^\n \partial_1^2 \Pi^{(\tau)}_{x\beta t}(x) |^p 
\lesssim (\sqrt[4]{t})^{|\beta|-\epsilon-2-|\n|} 
(1+\sqrt[4]{t}+|x|)^{2\epsilon} \, 
\|\Pi^{(\tau)}\|^-_\beta \, . 
\end{equation}
Equipped with this, we first consider the regime $t\geq1$. 
In this case, \eqref{eh33} is a stronger estimate than \eqref{eh31}. 
In the regime $t\leq1$, we distinguish the case $|\beta|>3$ and $|\beta|<3$. 
For the former, we use again \eqref{eh33}, 
and argue again that it is stronger than \eqref{eh31}. 
To see this, we split the exponent of $\sqrt[4]{t}$ into $\alpha-\epsilon-|\n|$ and $|\beta|-\alpha-2$, 
and argue that the latter is positive: 
by $|\beta|>3$, this is a consequence of $\alpha<1$. 
We turn to the case $|\beta|<3$. 
Here, we appeal to the solution formula \eqref{representation} 
and note that the Taylor polynomial drops out due to $|\beta|<3$ and $\n\neq\0$, so that
\begin{equation}
\partial^\n\partial_1^2\Pi^{(\tau)}_{x\beta t}(x) 
= -\int_t^\infty ds\, \partial^\n\partial_1^2 (\partial_0+\Delta)
\Pi^{-(\tau)}_{x\beta s}(x) \, .
\end{equation}
The $\E^\frac{1}{p}|\cdot|^p$-norm of the far field contribution $s\geq1$ 
is by the definition of $\|\Pi^{(\tau)}\|^-_\beta$ estimated by 
\begin{equation}
\int_1^\infty ds\, (\sqrt[4]{s})^{|\beta|-2-\epsilon-|\n|-4}(1+\sqrt[4]{s}+|x|)^{2\epsilon} \|\Pi^{(\tau)}\|^-_\beta \, .
\end{equation}
Since $|\beta|<3$, we have by the choice of $\epsilon$ in \eqref{epsilon3} that $|\beta|+\epsilon<3$. 
Thus $|\beta|+\epsilon-|\n|-6 < -|\n|-3$, which by $\n\neq\0$ is $\leq-4$. 
Hence the integral converges at $s=\infty$ and is $\lesssim (1+|x|)^{2\epsilon} \|\Pi^{(\tau)}\|^-_\beta$, 
which by $t\leq1$ is trivially estimated by the right-hand side of $\eqref{eh31}$. 
In the intermediate regime of $t\leq s\leq1$ we rewrite
\begin{align}
&\int_t^1 ds\, \partial^\n\partial_1^2(\partial_0+\Delta)\Pi^{-(\tau)}_{x\beta s}(x) \\
&= \int_t^1 ds \int_{\R^{1+d}} dz \, \partial^\n\partial_1^2(\partial_0+\Delta)\psi_s(x-z)
\big( \Pi^{-(\tau)}_{x\beta}(z) - \Pi^{-(\tau)}_{x\beta}(x) \big) \, . 
\end{align}
By the definition of $[\Pi^{(\tau)}]^-_{\beta}$, 
the $\E^\frac{1}{p}|\cdot|^p$-norm of this contribution is estimated by 
\begin{align}
\int_t^1 ds \int_{\R^{1+d}} dz \, &| \partial^\n\partial_1^2(\partial_0+\Delta)\psi_s(x-z) | \\
&\cdot |x-z|^{\alpha-\epsilon} 
(1+|x-z|)^{|\beta|-\alpha}
(1+|x|+|z|)^{2\epsilon} \, [\Pi^{(\tau)}]^-_\beta \, ,
\end{align}
which by the moment bound \eqref{momentbound} is estimated by 
\begin{align}
&\int_t^1 ds \, (\sqrt[4]{s})^{\alpha-\epsilon-|\n|-4} 
(1+\sqrt[4]{s})^{|\beta|-\alpha}
(1+\sqrt[4]{s}+|x|)^{2\epsilon} \, [\Pi^{(\tau)}]^-_\beta \\
&\lesssim \big((1+|x|)^{2\epsilon} 
+ (\sqrt[4]{t})^{\alpha-\epsilon-|\n|} 
(1+\sqrt[4]{t})^{|\beta|-\alpha}
(1+\sqrt[4]{t}+|x|)^{2\epsilon} \big) \, [\Pi^{(\tau)}]^-_\beta \, .
\end{align}
Again, since $t\leq1$ and $\n\neq\0$, this is trivially estimated by the right-hand side of \eqref{eh31}. 

%%%%%%%%%%%%%% STEP 5 %%%%%%%%%%%%%%
{\bf Step 5 \rm (Proof of \eqref{step5})\bf.}
In preparation for later, we first establish 
\begin{equation}\label{eh37}
\E^\frac{1}{p}| \Delta \Pi^{(\tau)}_{x\beta}(x) |^p 
\lesssim (1+|x|)^{2\epsilon} \big( \|\Pi\|_\beta + [\Pi]_\beta \big) \, .
\end{equation}
Indeed, using that $\psi_t$ integrates to $1$, we rewrite 
\begin{equation}
\Delta \Pi^{(\tau)}_{x\beta}(x) 
=  \Delta \Pi^{(\tau)}_{x\beta t}(x) 
+ \int_{\R^{1+d}} dz \, \psi_t(x-z) \, \big(
\Delta \Pi^{(\tau)}_{x\beta}(x) - \Delta \Pi^{(\tau)}_{x\beta}(z) \big) \, ,
\end{equation}
to see from the definitions of $\|\Pi^{(\tau)}\|_\beta$ and $[\Pi^{(\tau)}]_\beta$ that 
\begin{align}
&\E^\frac{1}{p}| \Delta \Pi^{(\tau)}_{x\beta}(x) |^p \\
&\lesssim (\sqrt[4]{t})^{|\beta|-\epsilon-2} (1+\sqrt[4]{t}+|x|)^{2\epsilon} \, \|\Pi^{(\tau)}\|_\beta \\
&\, + \int_{\R^{1+d}} dz \, |\psi_t(x-z)| 
|x-z|^{\alpha-\epsilon} 
(1+|x-z|)^{|\beta|-\alpha} 
(1+|x|+|z|)^{2\epsilon} \, [\Pi^{(\tau)}]_\beta \\
&\lesssim 
\big((\sqrt[4]{t})^{\alpha-\epsilon-2} + (\sqrt[4]{t})^{\alpha-\epsilon} \big)
(1+\sqrt[4]{t})^{|\beta|-\alpha} 
(1+\sqrt[4]{t}+|x|)^{2\epsilon} \, \big(\|\Pi^{(\tau)}\|_\beta + [\Pi^{(\tau)}]_\beta \big) \, ,
\end{align}
which for $t=1$ yields \eqref{eh37}. 
Using $\Delta\Pi^{(\tau)}_{x\beta}(y) = (\Gtau_{xy}\Delta\Pi^{(\tau)}_{y})_{\beta}(y)$, 
we obtain from the definition of $\|\Gtau\|_\beta$ together with 
\eqref{step3} and \eqref{Pi_tau}, 
and from \eqref{eh14} and \eqref{eh37} 
\begin{align}
&\E^\frac{1}{p}| \Delta \Pi^{(\tau)}_{x\beta}(y) - \Delta \widetilde\Pi^{(\tau)}_{x\beta}(y) |^p \\ 
&\leq \E^\frac{1}{p}\big| \big( (\Gtau_{xy}-\tGtau_{xy}) 
\Delta\Pi^{(\tau)}_{y} \big)_\beta(y) \big|^p 
+ \E^\frac{1}{p}\big|\big( \tGtau_{xy} (\Delta\Pi^{(\tau)}_{y} 
- \Delta\widetilde\Pi^{(\tau)}_{y})\big)_\beta(y) \big|^p \\
&\lesssim_\tau  \sum_{\gamma\prec\beta} |x-y|^{|\beta|-|\gamma|-\epsilon} 
(1+|x|+|y|)^{2\epsilon} \|\Pi^{(\tau)}-\widetilde\Pi^{(\tau)}\|_{\preceq\beta} \\
&\, +\sum_{\gamma\preceq\beta} |x-y|^{|\beta|-|\gamma|} (1+|y|)^{2\epsilon} 
\big(\|\Pi^{(\tau)}-\widetilde\Pi^{(\tau)}\|_\gamma + [\Pi^{(\tau)}-\widetilde\Pi^{(\tau)}]_\gamma \big) \, , 
\end{align}
where the first sum restricts to $\gamma\prec\beta$ since $\Gtau-\tGtau=\Gtau-\id+\id-\tGtau$ is strictly triangular. 
This implies $|\beta|>|\gamma|$, which by the choice of $\epsilon$ in \eqref{epsilon1} upgrades to 
$|\beta|-|\gamma|-\epsilon>0$. 
Thus, also using $|\cdot|\geq\alpha$, we have 
$|x-y|^{|\beta|-|\gamma|-\epsilon} 
\leq (1+|x-y|)^{|\beta|-|\gamma|-\epsilon} 
\leq (1+|x-y|)^{|\beta|-\alpha}$, 
which together with $|x-y|^{|\beta|-|\gamma|} 
\leq (1+|x-y|)^{|\beta|-\alpha}$
implies 
\begin{align}
\E^\frac{1}{p}| \Delta \Pi^{(\tau)}_{x\beta}(y) - \Delta \widetilde\Pi^{(\tau)}_{x\beta}(y) |^p 
&\lesssim_\tau  (1+|x-y|)^{|\beta|-\alpha} 
(1+|x|+|y|)^{2\epsilon} \\ 
&\ \cdot \big(\|\Pi^{(\tau)}-\widetilde\Pi^{(\tau)}\|_{\preceq\beta} 
+ [\Pi^{(\tau)}-\widetilde\Pi^{(\tau)}]_{\preceq\beta} \big) \, . 
\label{eh40}
\end{align}
Equipped with this, we show 
\begin{equation}\label{eh39}
|c^{(\tau)}_\beta - \tilde c^{(\tau)}_\beta |
\lesssim_\tau  \|\Pi^{(\tau)}-\widetilde\Pi^{(\tau)}\|_{\prec\beta} 
+ [\Pi^{(\tau)}-\widetilde\Pi^{(\tau)}]_{\prec\beta} 
+ \|\Pi^{(\tau)}-\widetilde\Pi^{(\tau)}\|^-_{\prec\beta} \, .
\end{equation}
Recall from \cite[(2.38)]{LOTT21} that $c^{(\tau)}_\beta$ is chosen 
in such a way that 
\begin{equation}\label{bphz}
\lim_{t\to\infty} \E \Pi^{-(\tau)}_{x\beta t}(x) = 0 \,.
\end{equation}
Hence, \eqref{eh39} is true for $\beta=0$ by $c^{(\tau)}_0 = 0 =  \tilde c^{(\tau)}_0$ as a consequence of the centeredness in Assumption~\ref{ass}. 
For the remaining multi-indices, we proceed by induction and assume $\eqref{eh39}_{\prec\beta}$.
Note that 
\begin{equation}
c^{(\tau)}_\beta = \underbrace{\E(\Pi^{-(\tau)}_x+c^{(\tau)} )_{\beta t}(x)}_{=: \it I} 
+ \underbrace{ \int_t^\infty ds\, \partial_s \E\Pi^{-(\tau)}_{x\beta s}(x) }_{=: \it II} \, ,
\end{equation}
where the choice $t=1$ and $x=0$ will turn out to be convenient, 
and recall from \cite[(4.19)]{LOTT21} that 
\begin{equation}
\partial_s \E\Pi^{-(\tau)}_{x\beta s}(y) 
= \int_{\R^{1+d}} dz \, (\partial_0^2-\Delta^2) \psi_{s/2}(y-z) \, 
\E\big( (\Gtau_{xz}-\id)\Pi^{-(\tau)}_{z\,s/2}\big)_\beta (z) \, .
\end{equation}
Using this, we obtain from \eqref{eh30} \& \eqref{step3}, \eqref{eh14}, 
\eqref{Pi-} and the definition of $\|\Pi^{(\tau)}\|^-_\beta$
\begin{align}
&\big| \partial_s \E (\Pi^{-(\tau)}_{x \beta}-\widetilde\Pi^{-(\tau)}_{x \beta})_{s}(y) \big| \\
&\lesssim \int_{\R^{1+d}} dz \, |(\partial_0^2-\Delta^2) 
\psi_{s/2}(y-z)| \\ 
&\ \cdot \Big(
\sum_{\gamma\prec\beta} |x-z|^{|\beta|-|\gamma|-\epsilon} (1+|x|+|z|)^{2\epsilon} 
\|\Pi^{(\tau)}-\widetilde\Pi^{(\tau)}\|_{\prec\beta} (\sqrt[4]{s})^{|\gamma|-2} \\ 
&\ \hphantom{\Big(} + \sum_{\gamma\prec\beta} |x-z|^{|\beta|-|\gamma|} 
(\sqrt[4]{s})^{|\gamma|-2-\epsilon}
(1+\sqrt[4]{s}+|z|)^{2\epsilon} 
\|\Pi^{(\tau)}-\widetilde\Pi^{(\tau)}\|^-_{\gamma} \Big) \, , 
\end{align}
which by the moment bound \eqref{momentbound} is bounded by 
\begin{align}
&(\sqrt[4]{s})^{\alpha-2-\epsilon-4} (\sqrt[4]{s}+|x-y|)^{|\beta|-\alpha} 
(1+\sqrt[4]{s}+|x|+|y|)^{2\epsilon} \\
&\ \cdot \big( \|\Pi^{(\tau)}-\widetilde\Pi^{(\tau)}\|_{\prec\beta} 
+ \|\Pi^{(\tau)}-\widetilde\Pi^{(\tau)}\|^-_{\prec\beta} \big) \, . 
\end{align}
Since $|\beta|<2$ upgrades to $|\beta|-2+\epsilon<0$ by the choice of $\epsilon$ in \eqref{epsilon3}, 
this yields
\begin{align}
|{\it II} - \widetilde{\it II} |
&= \Big| \int_t^\infty ds\, \partial_s \E\big(\Pi^{-(\tau)}_{x\beta} - \widetilde\Pi^{-(\tau)}_{x\beta}\big)_s(x) \Big| \\
&\lesssim (\sqrt[4]{t})^{|\beta|-2-\epsilon} (1+\sqrt[4]{t}+|x|)^{2\epsilon} 
\big( \|\Pi^{(\tau)}-\widetilde\Pi^{(\tau)}\|_{\prec\beta} 
+ \|\Pi^{(\tau)}-\widetilde\Pi^{(\tau)}\|^-_{\prec\beta} \big) \, , 
\end{align}
which for $t=1$ and $x=0$ is estimated by the right-hand side of \eqref{eh39}. 

We turn to the estimate of ${\it I} - \widetilde{\it I}$.
Since we are in the smooth setting with $\tau>0$ fixed, 
we can appeal to the hierarchy from \eqref{defPi-} in the form of 
\begin{equation}\label{eh43}
\Pi_x^{-(\tau)}+c^{(\tau)} = P\underbrace{ \sum_{k\geq0} \z_k
(\Pi^{(\tau)}_x)^k \Delta\Pi^{(\tau)}_x }_{=:\it I'}
+ \xi_\tau 1
- \underbrace{\sum_{k\geq1} \tfrac{1}{k!} (\Pi^{(\tau)}_x)^k (D^{(\0)})^k c^{(\tau)} }_{=:\it I''} \, ,
\end{equation}
where we note that since we are in the induction step (hence $\beta\neq0$), 
the term $\xi_\tau 1$ is irrelevant here. 
To estimate $|\E ({\it I'}-{\it \widetilde I'})_{\beta t}(x)|$ 
we appeal to telescoping \eqref{telescope} 
and H\"older's inequality, 
to the definition of $\|\Pi^{(\tau)}\|_\beta$, 
\eqref{estPi}, \eqref{Pi_tau}, $\eqref{eh40}_{\prec\beta}$ 
and the moment bound \eqref{momentbound} to obtain 
\begin{equation}
|\E ({\it I'}-{\it \widetilde I'})_{\beta t}(x)|
\lesssim_\tau (1+\sqrt[4]{t})^{|\beta|} (1+\sqrt[4]{t}+|x|)^{2\epsilon} 
\big( \|\Pi^{(\tau)}-\widetilde\Pi^{(\tau)}\|_{\prec\beta} 
+ [\Pi^{(\tau)}-\widetilde\Pi^{(\tau)}]_{\prec\beta} \big) \, , 
\end{equation}
which for $t=1$ and $x=0$ is again estimated by the right-hand side of \eqref{eh39}. 
Similarly, we estimate $|\E ({\it I''}-{\it \widetilde I''})_{\beta t}(x)|$ 
by the right-hand side of \eqref{eh39}: 
appealing to the definition of $\|\Pi^{(\tau)}\|_\beta$, 
\eqref{estPi}, \eqref{c_tau}, $\eqref{eh39}_{\prec\beta}$ 
and the moment bound \eqref{momentbound} leads with the 
choice of $t=1$ and $x=0$ to the desired result. 
This concludes the argument for \eqref{eh39}.

Having \eqref{eh40} and \eqref{eh39} we are ready to prove \eqref{step5}. 
For this, we note that the quantity we have to estimate 
is the $\beta$-component of 
\begin{align}
&\big(\Pi^{-(\tau)}_x(y) 
- \Pi^{-(\tau)}_x(z) \big)
- \big( \widetilde\Pi^{-(\tau)}_x(y) 
- \widetilde\Pi^{-(\tau)}_x(z) \big) \\
&= \sum_{k\geq0} \z_k (\Pi^{(\tau)}_x)^k(y) 
\big(\Delta\Pi^{(\tau)}_x(y) 
- \Delta\Pi^{(\tau)}_x(z) 
- \Delta\widetilde\Pi^{(\tau)}_x(y) 
+ \Delta\widetilde\Pi^{(\tau)}_x(z))\big) \label{part1} \\
&\,+ \sum_{k\geq1} \z_k 
\big( (\Pi^{(\tau)}_x)^k(y) 
- (\widetilde\Pi^{(\tau)}_x)^k(y) \big)
\big(\Delta\widetilde\Pi^{(\tau)}_x(y) 
- \Delta\widetilde\Pi^{(\tau)}_x(z) \big) \label{part2} \\
&\,+ \sum_{k\geq1} \z_k 
\big( (\Pi^{(\tau)}_x)^k(y) 
- (\Pi^{(\tau)}_x)^k(z)
- (\widetilde\Pi^{(\tau)}_x)^k(y) 
+ (\widetilde\Pi^{(\tau)}_x)^k(z) \big)
\Delta\Pi^{(\tau)}_x(z) \label{part3} \\
&\,+ \sum_{k\geq1} \z_k 
\big( (\widetilde\Pi^{(\tau)}_x)^k(y) 
- (\widetilde\Pi^{(\tau)}_x)^k(z) \big)
\big(\Delta\Pi^{(\tau)}_x(z) 
- \Delta\widetilde\Pi^{(\tau)}_x(z) \big) \label{part4} \\
&\,- \sum_{k\geq1} \tfrac{1}{k!} 
\big( (\Pi^{(\tau)}_x)^k(y) 
- (\Pi^{(\tau)}_x)^k(z)
- (\widetilde\Pi^{(\tau)}_x)^k(y) 
+ (\widetilde\Pi^{(\tau)}_x)^k(z) \big)
(D^{(\0)})^k c^{(\tau)} \qquad \label{part5} \\
&\,- \sum_{k\geq1} \tfrac{1}{k!} 
\big( (\widetilde\Pi^{(\tau)}_x)^k(y) 
- (\widetilde\Pi^{(\tau)}_x)^k(z) \big)
(D^{(\0)})^k (c^{(\tau)}-\widetilde c^{(\tau)} ) \, . \label{part6} 
\end{align}
We treat the terms of \eqref{part1} -- \eqref{part6} separately.
For \eqref{part1} we appeal to H\"older's inequality, \eqref{estPi} 
and the definition of $[\Pi^{(\tau)}]^-_{\beta_{k+1}}$ to obtain 
\begin{align}
\E^\frac{1}{p}| \eqref{part1}_\beta |^p
&\lesssim \sum_{k\geq0}\sum_{e_k+\beta_1+\dots+\beta_{k+1}=\beta} 
|x-y|^{|\beta_1|+\cdots+|\beta_k|} |y-z|^{\alpha-\epsilon} \, 
[\Pi^{(\tau)}-\widetilde\Pi^{(\tau)}]^-_{\beta_{k+1}} \\
&\ \cdot (1+|x-y|+|x-z|)^{|\beta_{k+1}|-\alpha} 
(1+|x|+|y|+|z|)^{2\epsilon} \, , 
\end{align}
which due to $|\beta_1|+\cdots+|\beta_{k+1}|=|\beta|$ 
is bounded by 
\begin{equation}
|y-z|^{\alpha-\epsilon} 
(1+|x-y|+|x-z|)^{|\beta|-\alpha} 
(1+|x|+|y|+|z|)^{2\epsilon} 
[\Pi^{(\tau)}-\widetilde\Pi^{(\tau)}]^-_{\prec\beta} \, . 
\end{equation}
We turn to \eqref{part2}, where we note that by telescoping 
\begin{equation}
(\Pi^{(\tau)}_x)^k(y) 
- (\widetilde\Pi^{(\tau)}_x)^k(y) 
= \sum_{k_1+k_2=k-1} (\Pi^{(\tau)}_x)^{k_1}(y) 
(\widetilde\Pi^{(\tau)}_x)^{k_1}(y) 
\big( \Pi^{(\tau)}_x(y) - \widetilde\Pi^{(\tau)}_x(y) \big) \, . 
\end{equation}
Hence we may appeal to H\"older's inequality, 
\eqref{estPi}, the definition of $\|\Pi^{(\tau)}\|_{\beta'}$ 
and \eqref{Pi_Holder} to obtain 
\begin{align}
\E^\frac{1}{p}| \eqref{part2}_\beta |^p
&\lesssim_\tau \sum_{k\geq1}\sum_{e_k+\beta_1+\dots+\beta_{k+1}=\beta} 
|x-y|^{|\beta_1|+\cdots+|\beta_k|-\epsilon} (1+|x|+|y|)^{2\epsilon} \\
&\ \cdot \|\Pi^{(\tau)}-\widetilde\Pi^{(\tau)}\|_{\beta_k} 
|y-z|^\alpha (1+|x-y|+|x-z|)^{|\beta_{k+1}|-\alpha} \, , 
\end{align}
which again by $|\beta_1|+\cdots+|\beta_{k+1}|=|\beta|$ 
is bounded by 
\begin{equation}
|y-z|^{\alpha-\epsilon} 
(1+|x-y|+|x-z|)^{|\beta|-\alpha} 
(1+|x|+|y|+|z|)^{2\epsilon} 
\|\Pi^{(\tau)}-\widetilde\Pi^{(\tau)}\|_{\prec\beta} \, . 
\end{equation}
We turn to \eqref{part3}, which can be estimated as \eqref{part2}, 
by replacing \eqref{Pi_Holder} by \eqref{Pi_tau} 
and using in addition 
\begin{align}
&\E^\frac{1}{p}| \Pi^{(\tau)}_{x\beta}(y)-\Pi^{(\tau)}_{x\beta}(z) 
-\widetilde\Pi^{(\tau)}_{x\beta}(y) + \widetilde\Pi^{(\tau)}_{x\beta}(z)|^p \\
&\lesssim |y-z|^{\alpha-\epsilon} (1+|x-y|+|x-z|)^{|\beta|-\alpha} 
(1+|x|+|y|+|z|)^{2\epsilon} 
\|\Pi^{(\tau)}-\widetilde\Pi^{(\tau)}\|_{\preceq\beta} \, . 
\label{eh45}
\end{align}
In turn, this last estimate follows from 
$\Pi^{(\tau)}_x(y)-\Pi^{(\tau)}_x(z) = \Gtau_{xz}\Pi_z(y)$, 
see \eqref{recenter}, together with \eqref{estPi}, \eqref{eh14}, \eqref{step3}, 
and the definitions of $\|\G\|_\beta$ and $\|\Pi^{(\tau)}\|_\beta$. 

Also \eqref{part4} can be estimated analogously, 
using in addition \eqref{eh40} and 
\begin{equation}\label{eh44}
\E^\frac{1}{p}| \Pi^{(\tau)}_{x\beta}(y) - \Pi^{(\tau)}_{x\beta}(z)|^p
\lesssim |y-z|^{\alpha} (|x-y|+|x-z|)^{|\beta|-\alpha} \, ,
\end{equation}
which follows again from \eqref{recenter}, \eqref{estPi} and \eqref{eh14}. 

We turn to \eqref{part6}, where we first note that by telescoping 
\begin{equation}
(\widetilde\Pi^{(\tau)}_x)^k(y) 
- (\widetilde\Pi^{(\tau)}_x)^k(z) 
= \sum_{k_1+k_2=k-1} (\widetilde\Pi^{(\tau)}_x)^{k_1}(y) 
(\widetilde\Pi^{(\tau)}_x)^{k_2}(z) 
\big( \widetilde\Pi^{(\tau)}_x(y) - \widetilde\Pi^{(\tau)}_x(z)\big) \, . 
\end{equation}
Using this, together with H\"older's inequality, 
\eqref{estPi} and \eqref{eh44}, we obtain 
\begin{align}
\E^\frac{1}{p}| \eqref{part6}_\beta |^p 
&\lesssim \sum_{k\geq1} \sum_{\beta_1+\cdots+\beta_{k+1}=\beta} 
(|x-y|+|x-z|)^{|\beta_1|+\cdots+|\beta_k|-\alpha} 
|y-z|^{\alpha} \\
&\ \cdot |\big((D^{(\0)})^k (c^{(\tau)}-\widetilde c^{(\tau)})\big)_{\beta_{k+1}} | \, . 
\end{align}
On the one hand, the last factor is by \eqref{eh39} estimated by 
$ \|\Pi^{(\tau)}-\widetilde\Pi^{(\tau)}\|_{\prec\beta} 
+ [\Pi^{(\tau)}-\widetilde\Pi^{(\tau)}]_{\prec\beta} 
+ \|\Pi^{(\tau)}-\widetilde\Pi^{(\tau)}\|^-_{\prec\beta} $.
On the other hand, the last factor is by the definition \eqref{D0} of $D^{(\0)}$ only non-vanishing, 
if $|\beta_{k+1}|\geq\alpha(k+1)$; 
by the definition \eqref{homogeneity} of the homogeneity this implies 
$|\beta_1|+\cdots+|\beta_k| 
= |\beta|+k\alpha-|\beta_{k+1}| 
\leq |\beta|-\alpha$, thus
$(|x-y|+|x-z|)^{|\beta_1|+\cdots+|\beta_k|-\alpha} 
\leq (1+|x-y|+|x-z|)^{|\beta|-\alpha}$. 
In total we get 
\begin{align}
\E^\frac{1}{p}| \eqref{part6}_\beta |^p 
&\lesssim |y-z|^{\alpha-\epsilon} (1+|x-y|+|x-z|)^{|\beta|-\alpha} 
(1+|x|+|y|+|z|)^{2\epsilon} \\
&\ \cdot \big(
\|\Pi^{(\tau)}-\widetilde\Pi^{(\tau)}\|_{\prec\beta} 
+ [\Pi^{(\tau)}-\widetilde\Pi^{(\tau)}]_{\prec\beta} 
+ \|\Pi^{(\tau)}-\widetilde\Pi^{(\tau)}\|^-_{\prec\beta} \big) \, .
\end{align}
Finally, \eqref{part5} can be estimated by the same arguments, 
using in addition \eqref{eh45}. 

%%%%%%%%%%%%%% STEP 6 %%%%%%%%%%%%%%
{\bf Step 6 \rm (Proof of \eqref{step6})\bf.}
We distinguish the cases $|\beta|>2$ and $|\beta|<2$.
For the former, we proceed identically to the proof of Lemma~\ref{rec1} 
and start with  
\begin{align}
&\mathbb{E}^\frac{1}{p}|(\Pi^{-(\tau)}_{x}-\Pi^{-(\tau)}_{y})_{\beta t}(y) 
- (\widetilde\Pi^{-(\tau)}_{x}-\widetilde\Pi^{-(\tau)}_{y})_{\beta t}(y)|^p \\
&\lesssim (\sqrt[4]{t})^{\alpha-2-\epsilon} 
(\sqrt[4]{t}+|x-y|)^{|\beta|-\alpha} 
(1+\sqrt[4]{t}+|x|+|y|)^{2\epsilon} \\
&\ \cdot \big(\|\Pi^{(\tau)}-\widetilde\Pi^{(\tau)}\|_{\prec\beta} 
+ [\Pi^{(\tau)}-\widetilde\Pi^{(\tau)}]_{\prec\beta} 
+ \|\Pi^{(\tau)}-\widetilde\Pi^{(\tau)}\|^-_{\prec\beta} \big) \\
&\ \cdot \big(\|\Pi^{(\tau)}-\widetilde\Pi^{(\tau)}\|_{\prec\beta} 
+ [\Pi^{(\tau)}-\widetilde\Pi^{(\tau)}]_{\prec\beta} 
+ \|\Pi^{(\tau)}-\widetilde\Pi^{(\tau)}\|^-_{\prec\beta} \big)^{\bar\epsilon} \, . 
\end{align}
Indeed, this estimate follows from \eqref{recenter-}, 
telescoping \eqref{telescope}, and
the already established \eqref{eh30} \& \eqref{step3} and 
$\eqref{step6}_{\prec\beta}$. 
By a general reconstruction argument, see \cite[Lemma~4.8]{LO22}, 
this implies 
\begin{align}
&\mathbb{E}^\frac{1}{p}| \Pi^{-(\tau)}_{x \beta t}(y) 
- \widetilde\Pi^{-(\tau)}_{x \beta t}(y) |^p \\ 
&\lesssim (\sqrt[4]{t})^{\alpha-2-\epsilon} 
(\sqrt[4]{t}+|x-y|)^{|\beta|-\alpha} 
(1+\sqrt[4]{t}+|x|+|y|)^{2\epsilon} \\
&\ \cdot \big(\|\Pi^{(\tau)}-\widetilde\Pi^{(\tau)}\|_{\prec\beta} 
+ [\Pi^{(\tau)}-\widetilde\Pi^{(\tau)}]_{\prec\beta} 
+ \|\Pi^{(\tau)}-\widetilde\Pi^{(\tau)}\|^-_{\prec\beta} \big) \\
&\ \cdot \big(\|\Pi^{(\tau)}-\widetilde\Pi^{(\tau)}\|_{\prec\beta} 
+ [\Pi^{(\tau)}-\widetilde\Pi^{(\tau)}]_{\prec\beta} 
+ \|\Pi^{(\tau)}-\widetilde\Pi^{(\tau)}\|^-_{\prec\beta} \big)^{\bar\epsilon} \, , 
\end{align}
which yields \eqref{step6} for $|\beta|>2$. 

We turn to the case $|\beta|<2$, and start with 
\begin{align}
&\E^\frac{1}{p}| \Pi^{-(\tau)}_{x\beta}(y) - \widetilde\Pi^{-(\tau)}_{x\beta}(y) |^p 
\lesssim_\tau  (1+|x-y|)^{|\beta|-\alpha} 
(1+|x|+|y|)^{2\epsilon} \\
&\ \cdot \big(\|\Pi^{(\tau)}-\widetilde\Pi^{(\tau)}\|_{\prec\beta} 
+ [\Pi^{(\tau)}-\widetilde\Pi^{(\tau)}]_{\prec\beta} 
+ \|\Pi^{(\tau)}-\widetilde\Pi^{(\tau)}\|^-_{\prec\beta} \big) \, . 
\label{eh38}
\end{align}
Indeed, this follows as the estimates of ${\it I'}-{\it \widetilde I'}$ 
and ${\it I''}-{\it \widetilde I''}$ from \eqref{eh43} in Step~5. 
The only modification is that here we include $c^{(\tau)}$ into ${\it I''}$, 
hence the sum in ${\it I''}$ starts now with $k=0$, 
and we therefore have to include in addition 
the already established $\eqref{eh39}_\beta$ in our assumption. 

We use \eqref{eh38} to make a step towards \eqref{step6} and prove 
\begin{align}
&\sup_{0<t<1} \sup_{x,y} \frac{\E^\frac{1}{p}|\Pi^{-(\tau)}_{x\beta t}(y) - \widetilde\Pi^{-(\tau)}_{x\beta t}(y)|^p}{
(\sqrt[4]{t})^{\alpha-2-\epsilon} 
(\sqrt[4]{t}+|x-y|)^{|\beta|-\alpha} (1+\sqrt[4]{t}+|x|+|y|)^{2\epsilon}} \\[1ex]
&\lesssim_\tau \|\Pi^{(\tau)}-\widetilde\Pi^{(\tau)}\|_{\prec\beta} 
+ [\Pi^{(\tau)}-\widetilde\Pi^{(\tau)}]_{\prec\beta} 
+ \|\Pi^{(\tau)}-\widetilde\Pi^{(\tau)}\|^-_{\prec\beta} \, . \label{eh41}
\end{align}
Indeed, \eqref{eh38} yields
\begin{align}
&\E^\frac{1}{p}|\Pi^{-(\tau)}_{x\beta t}(y) 
- \widetilde\Pi^{-(\tau)}_{x\beta t}(y)|^p \\ 
&\leq \int_{\R^{1+d}} dz\, |\psi_t(y-z)| \, 
\E^\frac{1}{p}| \Pi^{-(\tau)}_{x\beta}(z) - \widetilde\Pi^{-(\tau)}_{x\beta}(z)|^p \\
&\lesssim_\tau \int_{\R^{1+d}} dz\, |\psi_t(y-z)| \, 
(1+|x-z|)^{|\beta|-\alpha} (1+|x|+|z|)^{2\epsilon} \\ 
&\ \cdot \big(\|\Pi^{(\tau)}-\widetilde\Pi^{(\tau)}\|_{\prec\beta} 
+ [\Pi^{(\tau)}-\widetilde\Pi^{(\tau)}]_{\prec\beta} 
+ \|\Pi^{(\tau)}-\widetilde\Pi^{(\tau)}\|^-_{\prec\beta} \big) \, , 
\end{align}
which by the moment bound \eqref{momentbound} and for $t\leq1$ is bounded by 
\begin{equation}
(1+|x-y|)^{|\beta|-\alpha} (1+|x|+|y|)^{2\epsilon} 
\big(\|\Pi^{(\tau)}-\widetilde\Pi^{(\tau)}\|_{\prec\beta} 
+ [\Pi^{(\tau)}-\widetilde\Pi^{(\tau)}]_{\prec\beta} 
+ \|\Pi^{(\tau)}-\widetilde\Pi^{(\tau)}\|^-_{\prec\beta} \big) \, . 
\end{equation}
Then \eqref{eh41} follows from the fact that the map 
$t\mapsto (\sqrt[4]{t})^{\alpha-2-\epsilon}(\sqrt[4]{t}+|x-y|)^{|\beta|-\alpha}$ 
is monotone decreasing on $(0,1]$ and takes at $1$ the value $(1+|x-y|)^{|\beta|-\alpha}$. 
The monotonicity is seen as follows: 
since $\alpha-2-\epsilon<0$, the map $t\mapsto (\sqrt[4]{t})^{(\alpha-2-\epsilon)/(|\beta|-\alpha)}$ 
is monotone decreasing; 
since $|\beta|<2$ implies $1+\frac{\alpha-2-\epsilon}{|\beta|-\alpha}<0$, also 
$t\mapsto (\sqrt[4]{t})^{1+(\alpha-2-\epsilon)/(|\beta|-\alpha)}$ is monotone decreasing. 

It remains to bound the opposite regime of \eqref{eh41}, namely $t\geq1$. 
For this, we first note that by the triangle inequality 
\begin{align}
\E^\frac{1}{p}| \Pi^{-(\tau)}_{x\beta t}(y) 
- \widetilde\Pi^{-(\tau)}_{x\beta t}(y) |^p 
&\leq \E^\frac{1}{p} \Big| \int_t^T ds\, \partial_s 
\big( \Pi^{-(\tau)}_{x\beta s}(y) 
- \widetilde\Pi^{-(\tau)}_{x\beta s}(y) \big) \Big|^p \\ 
&\, + \E^\frac{1}{p}| \Pi^{-(\tau)}_{x\beta T}(y)|^p 
+ \E^\frac{1}{p}| \widetilde\Pi^{-(\tau)}_{x\beta T}(y)|^p \, . 
\end{align}
Since the estimate \eqref{Pi-} implies $\E^\frac{1}{p}|\Pi^{-(\tau)}_{x\beta T}(y)|^p\to0$ as $T\to\infty$ for $|\beta|<2$, 
we obtain 
\begin{equation}
\E^\frac{1}{p}| \Pi^{-(\tau)}_{x\beta t}(y) 
- \widetilde\Pi^{-(\tau)}_{x\beta t}(y) |^p 
\leq \int_t^\infty ds\, \E^\frac{1}{p} \big| \partial_s 
\big( \Pi^{-(\tau)}_{x\beta s}(y) 
- \widetilde\Pi^{-(\tau)}_{x\beta s}(y) \big) \big|^p \, . 
\end{equation}
To estimate this right-hand side, 
we first note that the definition of $[\Pi^{(\tau)}]^-_\beta$ together with the moment bound \eqref{momentbound} yields 
\begin{align}
\E^\frac{1}{p}| \Pi^{-(\tau)}_{x\beta t}(y) - \Pi^{-(\tau)}_{x\beta t}(z) |^p
&\lesssim |y-z|^{\alpha-\epsilon} 
(1+\sqrt[4]{t}+|x-y|+|x-z|)^{|\beta|-\alpha} \\ 
&\ \cdot (1+\sqrt[4]{t}+|x|+|y|+|z|)^{2\epsilon} 
[\Pi^{(\tau)}]^-_\beta \, . 
\end{align}
Interpolating with 
\begin{equation}
\E^\frac{1}{p}| \Pi^{-(\tau)}_{x\beta t}(y) - \Pi^{-(\tau)}_{x\beta t}(z) |^p
\lesssim (\sqrt[4]{t})^{\alpha-2} 
(\sqrt[4]{t}+|x-y|+|x-z|)^{|\beta|-\alpha} \, , 
\end{equation}
which is a consequence of \eqref{Pi-} by the triangle inequality, 
we obtain for every $\bar\epsilon\in(0,1)$ 
\begin{align}
&\E^\frac{1}{p}| \Pi^{-(\tau)}_{x\beta t}(y) - \Pi^{-(\tau)}_{x\beta t}(z) |^p
\lesssim (\sqrt[4]{t})^{(\alpha-2)(1-\bar\epsilon)} 
|y-z|^{(\alpha-\epsilon)\bar\epsilon} \\
&\ \cdot (1\+\sqrt[4]{t}\+|x\-y|\+|x\-z|)^{|\beta|-\alpha} 
(1\+\sqrt[4]{t}\+|x|\+|y|\+|z|)^{2\epsilon\bar\epsilon} 
\big([\Pi^{(\tau)}]^-_\beta\big)^{\bar\epsilon} \, . \label{eh46}
\end{align}
Equipped with this, 
we first use the defining property \eqref{psi} of $\psi_t$, 
the semigroup property \eqref{semigroup}, 
and that $(\partial_0^2-\Delta^2)\psi_{s/2}$ integrates to $0$, 
to obtain 
\begin{align}
\partial_s \big( \Pi^{-(\tau)}_{x\beta s}(y) 
- \widetilde\Pi^{-(\tau)}_{x\beta s}(y) \big)  
&= \int_{\R^{1+d}} dz \, 
(\partial_0^2-\Delta^2) \psi_{s/2}(y-z) \\
&\ \cdot \big( \Pi^{-(\tau)}_{x\beta s/2}(z) - \Pi^{-(\tau)}_{x\beta s/2}(y)
- \widetilde\Pi^{-(\tau)}_{x\beta s/2}(z) +\widetilde\Pi^{-(\tau)}_{x\beta s/2}(y)\big) \, ,
\end{align}
which by \eqref{eh46} and the moment bound \eqref{momentbound} yields
\begin{align}
&\E^\frac{1}{p} \big| \partial_s 
\big( \Pi^{-(\tau)}_{x\beta s}(y) 
- \widetilde\Pi^{-(\tau)}_{x\beta s}(y) \big) \big|^p \\ 
&\lesssim  \int_{\R^{1+d}} dz \, 
|(\partial_0^2-\Delta^2) \psi_{s/2}(y-z)| \, 
(\sqrt[4]{s})^{(\alpha-2)(1-\bar\epsilon)} 
|y-z|^{(\alpha-\epsilon)\bar\epsilon} \\
&\ \cdot (1+\sqrt[4]{s}+|x-y|+|x-z|)^{|\beta|-\alpha} 
(1+\sqrt[4]{s}+|x|+|y|+|z|)^{2\epsilon\bar\epsilon} 
\big([\Pi^{(\tau)}\-\widetilde\Pi^{(\tau)}]^-_\beta\big)^{\bar\epsilon} \\
&\lesssim (\sqrt[4]{s})^{\alpha-2+\bar\epsilon(2-\epsilon)-4}
(1\+\sqrt[4]{s}\+|x\-y|)^{|\beta|-\alpha} 
(1\+\sqrt[4]{s}\+|x|\+|y|)^{2\epsilon\bar\epsilon} 
\big([\Pi^{(\tau)}\-\widetilde\Pi^{(\tau)}]^-_\beta\big)^{\bar\epsilon} \, .
\end{align}
For $s$ large, the effective exponent on $\sqrt[4]{s}$ equals 
$|\beta|-6+\bar\epsilon(2+\epsilon)$. 
Choosing $\bar\epsilon=\epsilon/(2+\epsilon)$, 
this exponent equals $|\beta|-6+\epsilon$. 
The choice of $\epsilon$ in \eqref{epsilon3} implies that $|\beta|<2$ 
strengthens to $|\beta|+\epsilon<2$, and thus the exponent is
$<-4$, which makes $(\sqrt[4]{s})^{|\beta|-6+\epsilon}$ 
integrable at $s=\infty$. 
We therefore get for this choice of $\bar\epsilon$
\begin{align}
&\E^\frac{1}{p}| \Pi^{-(\tau)}_{x\beta t}(y) 
- \widetilde\Pi^{-(\tau)}_{x\beta t}(y) |^p \\
&\lesssim \int_t^\infty ds\, 
(\sqrt[4]{s})^{\alpha-2+\bar\epsilon(2-\epsilon)-4}
(1+\sqrt[4]{s}+|x-y|)^{|\beta|-\alpha} 
(1+\sqrt[4]{s}+|x|+|y|)^{2\epsilon\bar\epsilon} \\ 
&\ \cdot \big([\Pi^{(\tau)}-\widetilde\Pi^{(\tau)}]^-_\beta\big)^{\bar\epsilon} \\
&\lesssim (\sqrt[4]{t})^{\alpha-2+\bar\epsilon(2-\epsilon)}
(1\+\sqrt[4]{t}\+|x-y|)^{|\beta|-\alpha} 
(1\+\sqrt[4]{t}\+|x|\+|y|)^{2\epsilon\bar\epsilon} 
\big([\Pi^{(\tau)}\-\widetilde\Pi^{(\tau)}]^-_\beta\big)^{\bar\epsilon} \, .
\end{align}
This is further bounded by 
\begin{equation}
(\sqrt[4]{t})^{\alpha-2-\epsilon} 
(1+\sqrt[4]{t}+|x-y|)^{|\beta|-\alpha} 
(1+\sqrt[4]{t}+|x|+|y|)^{\epsilon+\bar\epsilon(2+\epsilon)} 
\big([\Pi^{(\tau)}-\widetilde\Pi^{(\tau)}]^-_\beta\big)^{\bar\epsilon} \, , 
\end{equation}
which yields by $\bar\epsilon=\epsilon/(2+\epsilon)$ 
\begin{equation}
\sup_{t\geq1} \sup_{x,y} \frac{\E^\frac{1}{p}|\Pi^{-(\tau)}_{x\beta t}(y) 
- \widetilde\Pi^{-(\tau)}_{x\beta t}(y)|^p}{
(\sqrt[4]{t})^{\alpha-2-\epsilon} 
(\sqrt[4]{t}+|x-y|)^{|\beta|-\alpha} (1+\sqrt[4]{t}+|x|+|y|)^{2\epsilon}} 
\lesssim \big([\Pi^{(\tau)}-\widetilde\Pi^{(\tau)}]^-_\beta\big)^{\bar\epsilon} \, . 
\end{equation}
Together with the already established $\eqref{step5}_\beta$ 
this finishes the proof of \eqref{step6}.\qedhere
\end{proof}

%%%%%%%%%%%%%%%%%%%%%%%%%%%%%%%%%%%%%%%%%%%%%%%%%%%%%%%%%%%%%%%%%%%%%%%%%%%%%%%%%%%%

\appendix

\section{Weights}\label{weights}

In this section we adapt the weights $\bar{w}, w(y), w_x(y)$ from 
\cite[(4.23), (4.37), (4.46)]{LOTT21} appropriately, 
so that we can move beyond the limitation of the regularity index 
$2-\alpha-D/2$ of the norm $\|\cdot\|_*$ from the spectral gap assumption 
\eqref{sg} being positive. 
We provide all properties of these weights that we used along the way 
and that are used in the aforementioned reference. 

We start with the weight $\bar{w}$, which we define by 
\begin{equation}\label{w_bar}
\bar w := \Big(\int_{\R^{1+d}} dz\, \E^\frac{2}{q}\big| 
(-\partial_0^2+\Delta^2)^{\frac{1}{4}(\alpha-2+D/2)} \, \delta\xi(z) \big|^q\Big)^{\frac{1}{2}} \, . 
\end{equation}
Note that in view of $q\leq2$ by Minkowski's inequality
\begin{equation}\label{wbar<Sobolev}
\bar{w}\leq \E^\frac{1}{q} \big\|\delta\xi\big\|_{\dot{H}^{\alpha-2+D/2}}^q \, ,
\end{equation}
which is necessary to obtain 
from an estimate like \eqref{deltaPi-_cauchy} 
by an application of the spectral gap inequality \eqref{sg} and dualization 
an estimate like \eqref{Pi-_cauchy}. 

We turn to $w(y)$, which we define by 
\begin{equation}\label{w_gain} 
w(y) := \Big( \int_{\R^{1+d}} dz\, |y-z|^{-2\kappa} \, \E^\frac{2}{q}\big| 
(-\partial_0^2+\Delta^2)^{\frac{1}{4}(\alpha-2+D/2)} \, \delta\xi(z) \big|^q\Big)^{\frac{1}{2}} \, . 
\end{equation}
In reconstruction Lemma~\ref{rec3} we used the estimate 
\begin{equation}\label{basecase}
\E^\frac{1}{q}| \partial^\n \delta\xi_t(y) |^{q} 
\lesssim (\sqrt[4]{t})^{\alpha-2+\kappa-|\n|} w(y) \, ,
\end{equation}
which by  
\begin{align}
&\partial^\n\delta\xi_t(y) = \int%_{\R^{1+d}}
dz\, \partial^\n(-\partial_0^2\+\Delta^2)^{-\frac{1}{4}(\alpha-2+D/2)} \psi_t(y\-z) (-\partial_0^2\+\Delta^2)^{\frac{1}{4}(\alpha-2+D/2)}\delta\xi(z), 
\end{align}
follows from Cauchy-Schwarz and 
\begin{equation}
\Big(\int_{\R^{1+d}} dz\, |z|^{2\kappa} \, 
\big|(-\partial_0^2+\Delta^2)^{-\frac{1}{4}(\alpha-2+D/2)}
\psi_t(z) \big|^2\Big)^\frac{1}{2}
\lesssim (\sqrt[4]{t})^{\alpha-2+\kappa} \, , 
\end{equation}
which is a consequence of the scaling identity \eqref{scaling_psi}. 
Furthermore, $w(y)$ behaves well under (square) averaging, 
\begin{align}
\Big(\int_{\R^{1+d}} dz\, |\psi_t(y-z)| \, w^2(z) \Big)^\frac{1}{2}
&\lesssim \min \big( w(y), (\sqrt[4]{t})^{-\kappa} \bar{w} \big) \, , 
\label{average_psi_w}\\
\Big( \fint_{B_\lambda(y)} dz\, w^2(z) \Big)^\frac{1}{2}
&\lesssim \lambda^{-\kappa} \bar{w} \label{average_ball_w} \, .
\end{align}
The former is a consequence of 
\begin{equation}\label{eh47}
\int_{\R^{1+d}} dz\, |\psi_t(y-z)| \, |x-z|^{-2\kappa} 
\lesssim (\sqrt[4]{t}+|x-y|)^{-2\kappa}\, , 
\end{equation}
which relies on $\kappa<D/2$, see \eqref{kappa1}, 
and a similar bound for averages over balls establishes the latter. 

Finally, we define $w_x(y)$ by 
\vspace{-1ex}
\begin{equation}\label{w_centered}
\vspace{-1ex}
w_x(y) := |y-x|^{-\kappa} \bar w + w(y) \, , 
\end{equation}
which by \eqref{average_psi_w} and \eqref{eh47} satisfies 
\vspace{-1ex}
\begin{equation}
\int_{\R^{1+d}}dz\, |\psi_t(y-z)|\, w_x^2(z)
\lesssim w_x^2(y) \, .
\end{equation}
%
%%%%%%%%%%%%%%%%%%%%%%%%%%%%%%%%%%%%%%%%%%%%%%%%%%%%%%%%%%%%%%%%%%%%%%%%%%%%%%%%%%%%

\section*{Acknowledgements}

The author would like to thank Felix Otto and Rishabh Gvalani 
for many discussions during the course of this work, 
and Lucas Broux, Ajay Chandra, Rhys Steele, Pavlos Tsatsoulis and Christian Wagner for helpful comments.
This work was completed while the author held a position at 
the Max--Planck Institute for Mathematics in the Sciences; 
funding and working conditions are gratefully% acknowledged.

%
%%%%%%%%%%%%%%%%%%%%%%%%%%%%%%%%%%%%%%%%%%%%%%%%%%%%%%%%%%%%%%%%%%%%%%%%%%%%%%%%%%%%
%
%
%\section*{Statements and Declarations}
%The authors have no competing interests to declare that are relevant to the content of this article.
%
% DO NOT REMOVE THIS
\pdfbookmark{References}{references}
\addtocontents{toc}{\protect\contentsline{section}{References}{\thepage}{references.0}}
%
%%%%%%%%%%%%%%%%%%%%%%%%%%%%%%%%%%%%%%%%%%%%%%%%%%%%%%%%%%%%%%%%%%%%%%%%%%%%%%%%%%%%
%
% BIBLIOGRAPHY
\bibliographystyle{alphaurl}
\small
\bibliography{characterization}{}
%
%%%%%%%%%%%%%%%%%%%%%%%%%%%%%%%%%%%%%%%%%%%%%%%%%%%%%%%%%%%%%%%%%%%%%%%%%%%%%%%%%%%%
%
\end{document}